\newtheorem{thm}{Theorem}[section]
\newtheorem{cor}[thm]{Corollary}
\newtheorem{lem}[thm]{Lemma}
\newtheorem{prop}[thm]{Proposition}
\newtheorem{fact}[thm]{Fact}
\newtheorem{prob}[thm]{Problem}
\theoremstyle{definition}
\newtheorem{defn}[thm]{Definition}
\theoremstyle{remark}
\numberwithin{equation}{section}
\begin{document}

\title{Parabolic Catalan numbers count flagged Schur functions; Convexity of tableau sets for Demazure characters}

\author{Robert A. Proctor \\ University of North Carolina \\ Chapel Hill, NC 27599 U.S.A. \\ rap@email.unc.edu \and Matthew J. Willis \\ Wesleyan University \\ Middletown, CT 06457 U.S.A. \\ mjwillis@wesleyan.edu}

\maketitle

\vspace{-1.5pc}

\begingroup
    \fontsize{10pt}{12pt}\selectfont
    \begin{spacing}{1.1}

\centerline{\textbf{Abstract}}

\begin{adjustwidth*}{.5in}{.5in}

\noindent Shuffles of cards are $n$-multipermutations with suit multiplicities specified by some subset $R$ of $\{1, ... ,$ $n-1 \}$.  Their ``inverses'' are ordered partitions of $\{1, ... , n \}$ whose block sizes derive from $R$; these are essentially our ``$R$-permutations''.  The $R$-permutations depict the minimum length coset representatives in $W^J$ for the quotient of $S_n$ by the parabolic subgroup $W_J$, where $J$ is the complement of $R$.  We refer to those that blockwise avoid the pattern 312 as ``312-avoiding $R$-permutations'' and define the ``parabolic $R$-Catalan number'' to be the number of them.  When $R = \{1, ... , n-1 \}$ this is the usual Catalan number, which counts 312-avoiding permutations.  Let $\lambda$ be a partition of $N$ with at most $n$ parts whose set of shape column lengths less than $n$ is $R$.  We show that the number of distinct flagged Schur functions formed on the shape of $\lambda$ is the parabolic $R$-Catalan number, and list over a dozen other kinds of $n$-tuples and phenomena concerning flagged Schur functions and Demazure characters that are also enumerated by this quantity.  (Godbole, Goyt, Herdan, and Pudwell had independently just introduced such a notion of pattern avoiding for ordered partitions and had launched the study of their enumeration.)  Let $\pi$ be an $R$-permutation.  We view the Demazure character (key polynomial) indexed by $(\lambda, \pi)$ as the sum of the content weight monomials for our ``$\pi$-Demazure'' semistandard Young tableaux of shape $\lambda$ with entries from $\{ 1, ... , n \}$.  We show that the set of these tableaux is convex in $\mathbb{Z}^N$ if and only if $\pi$ is a 312-avoiding $R$-permutation.  As usual, a flagged Schur function is defined to be the sum of the content weight monomials for the semistandard tableaux of shape $\lambda$ whose entries are row-wise bounded by a given weakly increasing $n$-tuple.  We consider more general ``row bound sums'' for which the row bounds may form any $n$-tuple.  Reiner and Shimozono and then Postnikov and Stanley obtained results concerning coincidences between flagged Schur functions and Demazure characters:  when $\lambda$ is strict, the flagged Schur functions exactly coincide with the 312-avoiding Demazure characters.  For general $\lambda$, we introduce more precise indexing sets of $n$-tuple bounds for the row bound sums.  These indexing schemes and the convexity results are used to sharpen their coincidence results and to extend them to general row bound sums.  Now their coincidences are precisely indexed and are shown to hold at the deeper level of coinciding underlying tableau sets.  The most efficient indexing $n$-tuples for the row bound sums that can arise as flagged Schur functions are the new ``gapless $R$-tuples'';  these bijectively arise from the 312-avoiding $R$-permutations via the application of an ``$R$-ranking'' map.

\end{adjustwidth*}

\end{spacing}

\endgroup

\vspace{.5pc}\noindent\textbf{Keywords.}  Catalan number, Flagged Schur function, Demazure character, Key polynomial, Pattern avoiding permutation, Symmetric group parabolic quotient

\vspace{1pc}\noindent\textbf{MSC Codes.}  {05A05, 05E05, 05E10, 17B10, 14M15}

\begin{figure}[h!]
\begin{center}
\def\arraystretch{1.2}
\begin{tabular}{clc}
\underline{\S\S} & \underline{Content} & \underline{page}\\
2; 3 & Definitions: general;  $R$-tuples & 8 \\
4; 5 & $R$-tuple preliminaries for row bounds & 13 \\
6; 7 & $R$-tuple preliminaries for $R$-permutations & 18 \\
8; 9 & Tableau definitions and preliminary results & 24 \\
10; 11 & Convexity for Demazure tableaux & 28 \\
12 & Row bound tableau sets & 33 \\
13; 14 & Coincidences of tableau sets and polynomials & 35 \\
15; 16 & Earlier work; Polynomial distinctness & 40 \\
17; 18 & Gessel-Viennot determinant; Enumeration & 43
\end{tabular}
\end{center}
\end{figure}

\vspace{-3pc}

\begin{spacing}{1.35}

\section{Introduction}

This paper can be read by anyone interested in tableaux.  Except for a few references to its tableau precursors \cite{Wi2} and \cite{PW1} (and a few motivational remarks), it is self-contained.  Fix $n \geq 1$ and set $[n-1] := \{1, 2, ... , n-1\}$.  Choose a subset $R \subseteq [n-1]$ and set $r := |R|$.  The section on $R$-parabolic Catalan numbers, the last section, has been written so that much of it can be understood independently when read in conjunction with this introduction.  The ``rightmost clump deleting'' chains of sets defined early in Section 6 (when $R = [n-1])$ are the recent addition Exercise 2.202 to Stanley's list \cite{St3} of interpretations of the Catalan numbers $C_n$.  Experimental combinatorialists may be interested in Problem \ref{prob14.5}.  Algebraic geometers may be interested in Problem \ref{prob16.1}.

Fix a partition $\lambda$ of $N \geq 1$ into no more than $n$ parts such that the lengths of the columns in its shape $\lambda$ that are less than $n$ form the set $R$.  Let $\mathcal{T}_\lambda$ be the set of semistandard tableaux on the shape $\lambda$ whose values come from $[n]$.  Flagged Schur functions (flag Schur polynomials) have been defined to be sums of the content weight monomial over certain subsets of $\mathcal{T}_\lambda$, and Demazure characters for $GL(n)$ (key polynomials) can also be viewed in this way.  Beginning in 2011, our original motivation for this project was to better understand results obtained by Reiner and Shimozono \cite{RS} and then by Postnikov and Stanley \cite{PS} concerning coincidences between these two families of polynomials in $x_1, x_2, ... , x_n$.  Demazure characters arose in 1974 when Demazure introduced certain $B$-modules while studying singularities of Schubert varieties in flag manifolds $G/P$.  Flag Schur polynomials arose in 1982 when Lascoux and Sch\"{u}tzenberger were studying Schubert polynomials for the flag manifold $GL(n)/B$.

The subset $R$ can be used to specify $r+1$ suit multiplicities for a deck of $n$ cards that is shuffled.  Given a shuffle, its ``inverse'' is the ordered partition of $[n]$ into $r+1$ blocks which list the positions occupied by the cards in the corresponding suits.  These ordered partitions are essentially our ``$R$-permutations''.  Setting $J := [n-1] \backslash R$, these objects depict the minimum length coset representatives in $W^J$ for the quotient $S_n / W_J$ of the symmetric group by the parabolic subgroup $W_J$.  In 2012 we generalized the notion of 312-pattern avoidance for permutations to that of ``$R$-312-avoidance'' for $R$-permutations.  More recently we defined the $R$-parabolic Catalan number $C_n^R$ to be the number of $R$-312-avoiding $R$-permutations.  We then learned that while this project had been underway, Godbole, Goyt, Herdan, and Pudwell had independently introduced \cite{GGHP} a more general notion of pattern avoidance for such ordered partitions, and that Chen, Dai, and Zhou had obtained further enumerative results \cite{CDZ} concerning them.  Giving what could be the first appearance of this count ``in nature'', we show that the number of flag Schur polynomials that can be formed on the shape $\lambda$ is $C_n^R$.  When $R = [n-1]$, the $R$-permutations are merely permutations and hence $C_n^{[n-1]} = C_n$.  For a shape $\lambda$ to be compatible with $R = [n-1]$, it must be strict (i.e. not have any repeated row lengths).

The content weight monomial $x^{\Theta(T)}$ of a tableau $T \in \mathcal{T}_\lambda$ is formed from the census $\Theta(T)$ of the values $1, 2, ... , n$ appearing in $T$.  Given a flag $1 \leq \varphi_1 \leq \varphi_2 \leq ... \leq \varphi_n \leq n$, the flag Schur polynomial $s_\lambda(\varphi;x)$ has been defined to be the sum of $x^{\Theta(T)}$ over the $T \in \mathcal{T}_\lambda$ whose values in its $i^{th}$ row do not exceed $\varphi_i$.  Since we also require the ``upper'' condition $\varphi \geq i$ to ensure nonvanishing, the number of indexing sequences is $C_n$.  We denote this set of tableau $\mathcal{S}_\lambda(\varphi)$.  As $R$ varies over subsets of $[n-1]$, the Demazure characters $d_\lambda(\pi;x)$ for $GL(n)$ (Demazure polynomials) are indexed by pairs $(\lambda, \pi)$ such that $\pi$ is an $R$-permutation and $\lambda$ is ``compatible'' with $R$.  These polynomials can be recursively specified with the divided difference formula cited in \cite{PW1}.  Taking advantage of the improvements made in \cite{Wi2} and \cite{PW1} upon a description of Lascoux and Sch\"{u}tzenberger, here we define $d_\lambda(\pi;x)$ to be a sum of $x^{\Theta(T)}$ over a certain subset $\mathcal{D}_\lambda(\pi) \subseteq \mathcal{T}_\lambda$.  Our terminology wording choices of `flag Schur polynomial' and `Demazure polynomial' are explained in Section 14 when these polynomials are defined.

To count flag Schur polynomials, it must be decided when to regard two of them as being ``the same''.  If by this it is meant that they are equal as polynomials (our first notion of ``sameness''), then the $C_n$ counting assertion in \cite{PS} on p. 158 may not seem to be correct:  For $n = 3$ and $\lambda = (1, 1, 0)$, note that $s_\lambda( (3,3,3);x) = x_1x_2 + x_1x_3 + x_2x_3 = s_\lambda( (2, 3, 3);x)$.  So the desired count of $C_3 = 5$ is unattainable.  Hence it must have been the case that they regarded these two flag Schur polynomials to be distinct since they are indexed by distinct $n$-tuples of row bounds.  In other words two such polynomials were to be regarded as being the same only when their indexing $n$-tuples were the same $n$-tuple;  this is a second notion of ``sameness''.

Most of the results in this paper are much more straightforward (or even trivial) when $R = [n-1]$, which corresponds to requiring $\lambda$ to be strict.  Then the distinct (by the first notion) flag Schur polynomials are indeed counted by $C_n$.  Permutations $\pi$ index the Schubert varieties $X(\pi)$ of the full flag manifold $GL(n)/B$, where $B$ is the Borel subgroup.  Most of the phenomena in which we are interested arise only when $R \subset [n-1]$, for which a compatible shape $\lambda$ must have at least one repeated row length.  Here the $R$-permutations $\pi$ index the Schubert varieties $X(\pi)$ of the partial flag manifold $GL(n)/P$, where $P$ is the parabolic subgroup specified by $R$ via $W_J \subseteq S_n$.

We consider a third notion of ``sameness'' for polynomials.  Suppose a family of polynomials is defined to consist of the sums of $x^{\Theta(T)}$ over various sets of semistandard tableau of various constant shapes.  If the polynomials $p(x)$ and $q(x)$ arise in this manner from sets $\mathcal{P}$ and $\mathcal{Q}$ of tableau of constant shape, then we say that $p(x)$ and $q(x)$ are ``identical as generating functions'' (and write $p(x) \equiv q(x)$) exactly when it can be shown that $\mathcal{P} = \mathcal{Q}$.  In contrast to the first notion of sameness, here the shape for the set $\mathcal{P}$ must \`{a} priori be the shape for the set $\mathcal{Q}$.

To generalize flag Schur polynomials, we introduce two sets $U_R(n) \supseteq UGC_R(n)$ of $n$-tuples that both contain the set $UF_R(n)$ of upper flags $\varphi$ described above.  The subscript `$R$' indicates that the locations of the ``dividers'' in these $n$-tuples are to be ``carried along'';  hence the elements of these three sets are certain kinds of ``$R$-tuples''.  In addition to the tableau sets $\mathcal{S}_\lambda(\varphi)$ for $\varphi \in UF_R(n)$ we also consider the tableau sets $\mathcal{S}_\lambda(\beta)$ for $\beta \in U_R(n)$ and $\mathcal{S}_\lambda(\eta)$ for $\eta \in UGC_R(n)$ that consist of the tableaux satisfying the row bounds $\beta$ or $\eta$.  Again summing $x^{\Theta(T)}$, the corresponding ``row bound sums'' and ``gapless core Schur polynomials'' are denoted $s_\lambda(\beta;x)$ and $s_\lambda(\eta;x)$.  The $s_\lambda(\beta;x)$ are quite general, since $U_R(n)$ is defined by imposing on $R$-tuples of row bounds only the upper requirement $\beta_i \geq i$ that is needed to ensure nonvanishing.  We develop precise indexing schemes for these three classes of row bound sums.  For two of these classes, these indexes enable us to give the count of $C_n^\lambda$ for flag (and gapless core) Schur polynomials that are distinct according to both polynomial equality and generating function identicality.

Reiner and Shimozono and then Postnikov and Stanley described polynomial coincidences of the form $s_\lambda(\varphi;x) = d_\lambda(\pi;x)$ for $\varphi \in UF_R(n)$ and 312-avoiding permutations $\pi$.  We extend their results by also considering the $s_\lambda(\beta;x)$ and the $s_\lambda(\eta;x)$ introduced above.  We sharpen their results by precisely specifying the $s_\lambda(\varphi;x)$, the $s_\lambda(\eta;x)$, and the $d_\lambda(\pi;x)$ that participate in these coincidences.  We deepen their results by showing that a coincidence such as $s_\lambda(\varphi;x) = d_\lambda(\pi;x)$ is actually manifested at the tableau level by $\mathcal{S}_\lambda(\varphi) = \mathcal{D}_\lambda(\pi)$, in other words $s_\lambda(\varphi;x) \equiv d_\lambda(\pi;x)$.  Two of our four main results, Theorems \ref{theorem721} and \ref{theorem737.2}, present our statements concerning coincidences.  The row bound sets $\mathcal{S}_\lambda(\beta)$ and sums $s_\lambda(\beta;x)$ that participate in such coincidences are those for which $\beta$ is a gapless core $R$-tuple, that is when $\beta := \eta \in UGC_R(n) \supseteq UF_R(n)$.  The Demazure sets $\mathcal{D}_\lambda(\pi)$ and polynomials $d_\lambda(\pi;x)$ that participate are those that are indexed by the $R$-312-avoiding $R$-permutations $\pi$.

Our two other main results, Theorems \ref{theorem520} and \ref{theorem420}, are perhaps our deepest results.  The set $\mathcal{T}_\lambda$ of tableaux is partially ordered by value-wise comparison, and it can be viewed as a subset of $\mathbb{Z}^N$.  Theorem \ref{theorem520} states that if the set $\mathcal{D}_\lambda(\pi)$ of Demazure tableaux is a principal ideal in $\mathcal{T}_\lambda$, or more generally if $\mathcal{D}_\lambda(\pi)$ is a convex polytope in $\mathbb{Z}^N$, then $\pi$ must be $R$-312-avoiding.  Theorem \ref{theorem420} states that if $\pi$ is $R$-312-avoiding, then the set $\mathcal{D}_\lambda(\pi)$ is a principal ideal in $\mathcal{T}_\lambda$ and hence is conversely a convex polytope in $\mathbb{Z}^N$.  These two theorems play central roles in proving Theorems \ref{theorem721} and \ref{theorem737.2}.  None of these theorems could be proved without being able to get one's hands on Demazure tableaux.  The scanning method developed in this second author's thesis \cite{Wi1} \cite{Wi2} for computing the right key of Lascoux and Sch\"{u}tzenberger is used in the proofs of Theorems \ref{theorem520} and \ref{theorem420};  the proof of the latter result also uses the more direct description of $\mathcal{D}_\lambda(\pi)$ we developed in \cite{PW1}.  Postnikov and Stanley noted on p. 162 of \cite{PS} that the Gelfand pattern conversions of the tableaux in $\mathcal{S}_\lambda(\varphi) = \mathcal{D}_\lambda(\pi)$ form a convex polytope in $\mathbb{Z}^M$, where $M$ is the length of the 312-avoiding permutation $\pi$.

Let us return to considering the Schubert varieties $X(\pi) \subseteq GL(n)/B$ and $X(\pi) \subseteq GL(n)/P$, where $P$ is the parabolic subgroup specified by $R$.  For each $\lambda$ that is compatible with $R$, the tableaux in $\mathcal{D}_\lambda(\pi)$ for a given $R$-permutation $\pi$ index a basis for a vector space that describes a projective embedding of $X(\pi)$.  The fact that the bases for all such $\lambda$ enjoy the tableau convexity property when $\pi$ is $R$-312-avoiding hints that the Schubert variety $X(\pi)$ might enjoy some nice geometric properties.  In fact, Postnikov and Stanley noted on p. 134 of \cite{PS} that the 312-avoiding permutations could be seen to be the Kempf elements of $S_n$ considered by Lakshmibai.  In \cite{GL} and earlier papers she showed that the varieties $X(\pi) \subseteq GL(n)/B$ indexed by the Kempf elements $\pi$ did possess special geometric properties.

When we first considered the row bound sums $s_\lambda(\beta;x)$, it seemed needlessly restrictive to require $\beta_1 \leq \beta_2 \leq ... \leq \beta_n$ for the row bound sequence.  But as we proceeded we found it difficult to say much about the $s_\lambda(\beta;x)$ when we required only $\beta_i \geq i$ and $\beta_i \leq n$ for the $R$-tuples forming $U_R(n)$.  This led us to define the third, intermediate, set $U_R(n) \supseteq UGC_R(n) \supseteq UF_R(n)$ mentioned above.  For $\beta, \beta' \in U_R(n)$, we define $\beta \approx_\lambda \beta'$ when $\mathcal{S}_\lambda(\beta) = \mathcal{S}_\lambda(\beta')$.  After we describe this equivalence and its equivalence classes in Sections 12 and 5, in Proposition \ref{prop623.4} we precisely index the tableau sets that underly our three kinds of row bound sums.  We refer to the $R$-tuple indexes we have chosen for the gapless core bound tableau sets $\mathcal{S}_\lambda(\eta)$ as ``gapless $R$-tuples'' and we gather them into a set denoted $UG_R(n)$.  These are the minimal row bounds that can be used to describe both the $\mathcal{S}_\lambda(\eta)$ and the flag bound tableau sets $\mathcal{S}_\lambda(\varphi)$; these $R$-tuples appear to have fundamental importance.

Our characterization of $\approx_\lambda$ in Proposition \ref{prop623.2} and Lemma \ref{lemma608.2}(i) describes when one can expect $s_\lambda(\beta;x)$ and $s_\lambda(\beta';x)$ to ``obviously'' be equal because their underlying tableau sets are the same.  If $s_\lambda(\beta;x) = s_\lambda(\beta';x)$ while $\mathcal{S}_\lambda(\beta) \neq \mathcal{S}_\lambda(\beta')$, we say that these two row bound sums are ``accidentally'' equal.  Corollary \ref{newcor737} rules out an accidental equality between a gapless core Schur polynomial $s_\lambda(\eta;x)$ and any row bound sum $s_\lambda(\beta;x)$ for $\beta \in U_\lambda(n)$.  The proof uses our characterization of coincidences between row bound sums and Demazure polynomials (which depends upon Theorem \ref{theorem520}) to refer to the known distinctness of the Demazure polynomials.  Table 16.1 summarizes our results that say when the polynomials we are studying coincide in either sense and when (using precise indexing) they are distinct;  counts for the equivalence classes of these polynomials are also given.  Problem \ref{prob14.5} asks if there exist accidental equalities among the row bound sums that are not gapless core Schur polynomials.  Extending from flags to gapless core $R$-tuples allows us to use $R$-tuples with smaller entries to serve as row bounds for the same set of tableaux.  However, for every $\eta \in UGC_R(n)$ there exists an equivalent $\varphi \in UF_R(n)$.  Therefore every gapless core Schur polynomial has already arisen as a flag Schur polynomial.  But knowledge of $UGC_R(n)$ and $UG_R(n)$ provides a clearer picture and more efficient row bounds.

Counting polynomials has revealed a new coincidence.  We use ${n \choose R}$ to denote the multinomial coefficient that counts $R$-permutations.  The total number of Demazure polynomials based upon the shape $\lambda$ is ${n \choose R}$.  The number of these that arise as flag Schur polynomials is the parabolic Catalan number $C_n^R$.  The so-counted $R$-312-avoiding Demazure polynomials match up with the flag Schur polynomials.  Hence Theorems \ref{theorem721} and \ref{theorem737.2} provide a complete explanation for this first counting coincidence, between the nicest Demazure polynomials and the nicest row bound sums.  However, up to generating function identicality, the number of our most general class of row bound sums $s_\lambda(\beta)$ also happens to be ${n \choose R}$.  Theorem \ref{theorem737.2} says that those that are not flag Schur polynomials cannot arise as Demazure polynomials.  For a fixed compatible shape $\lambda$, in Problem \ref{prob16.1} we ask why the number of row bound sums that are not Demazure polynomials and the number of Demazure polynomials that are not flag Schur polynomials should both be ${n \choose R} - C_n^R$.

When Stanley expressed some row bound sums $s_\lambda(\beta;x)$ with a Gessel-Viennot determinant in Theorem 2.7.1 of \cite{St1} and Theorem 7.5.1 of \cite{St2}, he noted that taking $\beta$ to be a flag would satisfy a requirement that had been stipulated by Gessel and Viennot for employing their method.  This implicitly raised the problem of characterizing all $\beta \in U_R(n)$ for which the Gessel-Viennot method can be applied to produce a determinant expression for a row bound sum $s_\lambda(\beta;x)$.  Using concepts that had already been developed for this paper, in \cite{PW2} we characterize such $\beta$.  This is previewed in Section 17.  Half of this characterization consists of the requirement $\beta \in UGC_R(n)$.  In fact, using the smallest equivalent bound sequences from $UG_R(n)$ (rather than those from $UF_R(n)$) produces determinants whose evaluations use the fewest possible number of monomials.

Relating our Theorems \ref{theorem721} and \ref{theorem737.2} to Theorems 23 and 25 of \cite{RS} and to Theorem 14.1 of \cite{PS} takes significant effort in Section 15.  Fortunately the tools we develop in earlier sections suffice:  In Section 7 we use the maps of $R$-tuples that were defined and developed in Sections 3, 4, and 6 for other purposes to describe the relationship of the notion of $R$-312-avoiding $R$-permutation to that of 312-avoiding permutation.  Propositions \ref{prop824.2} and \ref{prop824.4} are then used in Section 15 to prove the equivalence of Theorem 14.1 of \cite{PS} with a weaker form of part of our Theorem \ref{theorem737.1}.  We are able to repair one direction of Theorem 25 of \cite{RS} and to extend the other direction to handle more cases.

Other tools are developed in Section 4-6, 9, and 12.  At times we re-express arbitrary $R$-permutations as $R$-chains of subsets of $[n]$ and as key tableaux of a compatible shape $\lambda$.  More specifically, it is useful to re-encode the information contained in an $R$-312-avoiding $R$-permutation into other forms.  Parts (i), (ii), (iii), and (v) of Theorem \ref{theorem18.1} list nine other sets of simple combinatorial structures that are also enumerated by $C_n^R$:  Given an $R$-312-avoiding $R$-permutation $\pi$, the unique corresponding upper gapless $R$-tuple $\gamma \in UG_R(n)$ provides the most efficient description of the set $\mathcal{D}_\lambda(\pi)$ in the form $\mathcal{S}_\lambda(\gamma)$.  Two particular kinds of flags $\varphi$, the floors and the ceilings, can be used to provide alternate precise labelling indexes for the flag bound tableau sets $\mathcal{S}_\lambda(\varphi) = \mathcal{S}_\lambda(\gamma)$ that arise here.  The notions of $R$-rightmost clump deleting chain for general $R$ and of gapless $\lambda$-key give two more ways to encode the information in such a $\pi$.  Propositions \ref{prop608.10} and \ref{prop320.2} and Theorem \ref{theorem340} describe bijections among the sets of these objects.  These bijections include the $R$-core map $\Delta_R$, which can more generally be applied to upper $R$-tuples, and the rank $R$-tuple map $\Psi_R$, which can more generally be applied to $R$-permutations.  These two maps play central roles throughout this paper.  It is striking that the gapless $R$-tuples arise in two independent fashions:  Not only are they the images of flags $\varphi$ under the $R$-core map $\Delta_R$, they are also the images of the $R$-312-avoiding $R$-permutations $\pi$ under the rank $R$-tuple map $\Psi_R$.  The equality $\Delta_R(\varphi) = \gamma = \Psi_R(\pi)$ with $\gamma \in UG_R(n)$ is the central aspect of the connection between flag Schur polynomials and $R$-312-avoiding Demazure polynomials.  Given $\beta \in U_R(n)$, a maximization process in \cite{RS} produced a tableau that we denote $Q_\lambda(\beta)$.  We introduce another maximization process in Section 9 to produce a tableau denoted $M_\lambda(\beta)$.  Proposition \ref{prop623.8} relates $M_\lambda(\beta)$ to $Q_\lambda(\beta)$.  For a gapless $R$-tuple $\gamma \in UG_R(n)$, Theorem \ref{theorem340} says that $M_\lambda(\gamma)$ is a $\lambda$-key $Y_\lambda(\pi)$ for an $R$-312-avoiding $R$-permutation $\pi$.  These two results provide the foundation for the bridge from flag bound tableau sets to the $R$-312-avoiding Demazure tableau sets.

Please be aware of the two notation conventions noted at the end of this paragraph!  The primary independent variable for each section is either a subset $R \subseteq [n-1]$ or a partition $\lambda$ with at most $n$ parts.  Many sections are accordingly said to be in the ``$R$-world'' or in the ``$\lambda$-world''.   The $R$-world is concerned with $R$-tuples and the $\lambda$-world is concerned with tableaux of shape $\lambda$.  If the independent variable is $\lambda$, then we soon find the set $R_\lambda$ of column lengths in its shape that are less than $n$ and take $R := R_\lambda$ when referring to $R$-world concepts and results.  At the end of Section 3 we say that the `$R$' subscripts and prefixes will be omitted when $R = [n-1]$.  Near the end of Section 8, we say that we will usually replace `$R_\lambda$' in subscripts and in prefixes with `$\lambda$'.

In addition to the $R$ versus $\lambda$ dichotomy, another overarching dichotomy in this paper is between the ``left hand side'' entities and results concerned with flag Schur polynomials and their row bound sum generalizations and the ``right hand side'' entities and results concerned with the Demazure polynomials.  Visualize a river that flows from north to south.  The northern portion of each bank lies in the preliminary $R$-world and the southern portion lies in the $\lambda$-world.  After presenting the definitions in Section 3 on an island in the $R$-world, in the northern portion of the left bank in Sections 4 and 5 we prepare to later index row bound sums.  We jump to the northern portion of the right bank and prepare to later index Demazure polynomials in Sections 6 and 7.  On an island in the $\lambda$-world, Section 8 presents the definitions concerning shapes and tableaux.  Back on the right bank, Section 9 transitions from the $R$-world down the river to the $\lambda$-world.  This prepares us to obtain in Section 10 and 11 our results on the convexity of the Demazure tableau sets.  After we jump back to the left bank and land in the $\lambda$-world, Section 12 prepares to build a bridge to the right bank.  The bridge primarily consists of Section 13 and 14, which contain our results on coincidences among, and distinctness for, the row bound sums and Demazure polynomials.  Also on the bridge, Section 15 compares our results to those of \cite{RS} and \cite{PS} and Section 16 summarizes our distinctness results.  Section 17 previews our further results in \cite{PW2} and Section 18 contains enumeration remarks.

To summarize:  How ``special'' are flag Schur polynomials compared to general row bound sums?  When naming the members of the collections $\{s_\lambda(\beta;x)\}_{\beta \in U_\lambda(n)}$ and $\{s_\lambda(\eta;x)\}_{\eta \in UGC_\lambda(n)}$ of newly defined polynomials that extend the collection $\{s_\lambda(\varphi;x)\}_{\varphi \in UF_\lambda(n)}$ of flag Schur polynomials, we decided to not honor the general row bound sums $s_\lambda(\beta;x)$ with the adjective `Schur'.  Recall that each flag Schur polynomial is a gapless core Schur polynomial.  We show that each gapless core Schur polynomial $s_\lambda(\eta;x)$ arises as a Demazure polynomial, we rule out accidental equalities between gapless core Schur polynomials, we can count gapless core Schur polynomials up to polynomial equality, and we show that each gapless core Schur polynomial can be expressed with a determinant.  We cannot show any of these things for the general row bound sums $s_\lambda(\beta;x)$.  Since by Proposition \ref{prop623.1} every gapless core Schur polynomial $s_\lambda(\eta;x)$ arises as a flag Schur polynomial $s_\lambda(\varphi;x)$ for some upper flag $\varphi$, after that proposition is obtained one could think of the gapless core Schur polynomials as being more-flexibly indexed versions of the flag Schur polynomials.  We view the larger indexing set $UGC_\lambda(n)$ of gapless core $\lambda$-tuples as being the most appropriate indexing set; in particular, as is noted in Corollary \ref{cor17.3} the gapless $\lambda$-tuples are the most efficient inputs for the determinant expression.

\section{General definitions}

In posets we use interval notation to denote principal ideals and convex sets.  For example, in $\mathbb{Z}$ one has $(i, k] = \{i+1, i+2, ... , k\}$.  Given an element $x$ of a poset $P$, we denote the principal ideal $\{ y \in P : y \leq x \}$ by $[x]$.  When $P = \{1 < 2 < 3 < ... \}$, we write $[1,k]$ as $[k]$.  If $Q$ is a set of integers with $q$ elements, for $d \in [q]$ let $rank^d(Q)$ be the $d^{th}$ largest element of $Q$.  We write $\max(Q) := rank^1(Q)$ and $\min(Q) := rank^q(Q)$.  A set $\mathcal{D} \subseteq \mathbb{Z}^N$ for some $N \geq 1$ is a \textit{convex polytope} if it is the solution set for a finite system of linear inequalities.

Fix $n \geq 1$ throughout the paper.  Except for $\zeta$, lower case Greek letters indicate $n$-tuples of non-negative integers; their entries are denoted with the same letter.  An $nn$-\textit{tuple} $\nu$ consists of $n$ \emph{entries} $\nu_i \in [n]$ that are indexed by \emph{indices} $i \in [1,n]$, which together form $n$ \emph{pairs} $(i, \nu_i)$.  Let $P(n)$ denote the poset of $nn$-tuples ordered by entrywise comparison.  It is a distributive lattice with meet and join given by entrywise min and max.  Fix an $nn$-tuple $\nu$.  A \emph{subsequence} of $\nu$ is a sequence of the form $(\nu_i, \nu_{i+1}, ... , \nu_j)$ for some $i, j \in [n]$.  The \emph{support} of this subsequence of $\nu$ is the interval $[i,j]$.  The \emph{cohort} of this subsequence of $\nu$ is the multiset $\{ \nu_k : k \in [i,j] \}$.  A \emph{staircase of $\nu$ within a subinterval $[i,j]$} for some $i, j \in [n]$ is a maximal subsequence of $(\nu_i, \nu_{i+1}, ... , \nu_j)$ whose entries increase by 1.  A \emph{plateau} in $\nu$ is a maximal constant nonempty subsequence of $\nu$;  it is \emph{trivial} if it has length 1.

An $nn$-tuple $\phi$ is a \textit{flag} if $\phi_1 \leq \ldots \leq \phi_n$.  The set of flags is a sublattice of $P(n)$;  it is essentially the lattice denoted $L(n,n)$ by Stanley.  An \emph{upper tuple} is an $nn$-tuple $\upsilon$ such that $\upsilon_i \geq i$ for $i \in [n]$.  The upper flags are the sequences of the $y$-coordinates for the above-diagonal Catalan lattice paths from $(0, 0)$ to $(n, n)$.  A \emph{permutation} is an $nn$-tuple that has distinct entries.  Let $S_n$ denote the set of permutations.  A permutation $\pi$ is $312$-\textit{avoiding} if there do not exist indices $1 \leq a < b < c \leq n$ such that $\pi_a > \pi_b < \pi_c$ and $\pi_a > \pi_c$.  Let $S_n^{312}$ denote the set of 312-avoiding permutations.  By Exercises 6.19(h) and 6.19(ff) of \cite{St2} (or Exercises 116 and 24 of \cite{St3}), these permutations and the upper flags are counted by the Catalan number $C_n := \frac{1}{n+1}{2n \choose n}$.

Tableau and shape definitions are in Section 8; polynomials definitions are in Section 14.

\section{Carrels, cohorts, $\mathbf{\emph{R}}$-tuples, maps of $\mathbf{\emph{R}}$-tuples}

Fix $R \subseteq [n-1]$ through the end of Section 7.  Denote the elements of $R$ by $q_1 < \ldots < q_r$ for some $r \geq 0$.  Set $q_0 := 0$ and $q_{r+1} := n$.  We use the $q_h$ for $h \in [r+1]$ to specify the locations of $r+1$ ``dividers'' within $nn$-tuples:  Let $\nu$ be an $nn$-tuple.  On the graph of $\nu$ in the first quadrant draw vertical lines at $x = q_h + \epsilon$ for $h \in [r+1]$ and some small $\epsilon > 0$.  These $r+1$ lines indicate the right ends of the $r+1$ \emph{carrels} $(q_{h-1}, q_h]$ \emph{of $\nu$} for $h \in [r+1]$.  An \emph{$R$-tuple} is an $nn$-tuple that has been equipped with these $r+1$ dividers.  Fix an $R$-tuple $\nu$;  we portray it by $(\nu_1, ... , \nu_{q_1} ; \nu_{q_1+1}, ... , \nu_{q_2}; ... ; \nu_{q_r+1}, ... , \nu_n)$.  Let $U_R(n)$ denote the sublattice of $P(n)$ consisting of upper $R$-tuples.  Let $UF_R(n)$ denote the sublattice of $U_R(n)$ consisting of upper flags.  Fix $h \in [r+1]$.  The $h^{th}$ carrel has $p_h := q_h - q_{h-1}$ indices.  The $h^{th}$ \emph{cohort} of $\nu$ is the multiset of entries of $\nu$ on the $h^{th}$ carrel.

An \emph{$R$-increasing tuple} is an $R$-tuple $\alpha$ such that  $\alpha_{q_{h-1}+1} < ... < \alpha_{q_h}$ for $h \in [r+1]$.  Let $UI_R(n)$ denote the sublattice of $U_R(n)$ consisting of $R$-increasing upper tuples.  It can be seen that $|UI_R(n)| = \prod_{h=1}^{r+1} {{n-q_{h-1}} \choose {p_h}} = n! / \prod_{h=1}^{r+1} p_h! =: {n \choose {p_1 \hspace{.2pc} \dots \hspace{.2pc} p_{r+1}}} =: {n \choose R}$.  An $R$-\textit{permutation} is a permutation that is $R$-increasing when viewed as an $R$-tuple.  Let $S_n^R$ denote the set of $R$-permutations.  Note that $| S_n^R| = {n \choose R}$.  We refer to the cases $R = \emptyset$ and $R = [n-1]$ as the \emph{trivial} and \emph{full cases} respectively.  Here $| S_n^\emptyset | = 1$ and $| S_n^{[n-1]} | = n!$ respectively.  Given a permutation $\sigma \in S_n$, its \emph{$R$-projection} $\bar{\sigma} \in S^R_n$ is the $R$-increasing tuple obtained by sorting its entries in each cohort into increasing order within their carrel.  An $R$-permutation $\pi$ is $R$-$312$-\textit{containing} if there exists $h \in [r-1]$ and indices $1 \leq a \leq q_h < b \leq q_{h+1} < c \leq n$ such that $\pi_a > \pi_b < \pi_c$ and $\pi_a > \pi_c$.  An $R$-permutation is $R$-$312$-\textit{avoiding} if it is not $R$-$312$-containing.  Let $S_n^{R\text{-}312}$ denote the set of $R$-312-avoiding permutations.  We define the \emph{$R$-parabolic Catalan number} $C_n^R$ by $C_n^R := |S_n^{R\text{-}312}|$.  Consult Table 3.1 for examples of, and counterexamples for, our various kinds of $R$-tuples.  Boldface entries indicate failures.

\begin{figure}[h!]
\centering
\begin{tabular}{lccc}
\underline{Type of $R$-tuple} & \underline{Set} & \underline{Example} & \underline{Counterexample} \\ \\
Upper $R$-increasing tuple & $\alpha \in UI_R(n)$ & $(2,6,7;4,5,7,8,9;9)$ & $(3,5,\textbf{5};6,\textbf{4},7,8,9;9)$ \\ \\
$R$-312-avoiding permutation & $\pi \in S_n^{R\text{-}312}$  & $(2,3,6;1,4,5,8,9;7)$ & $(2,4,\textbf{6};1,\textbf{3},7,8,9;\textbf{5})$ \\ \\
Gapless $R$-tuple & $\gamma \in UG_R(n)$ & $(2,4,6;4,5,6,7,9;9)$ & $(2,4,6;\textbf{4},\textbf{6},7,8,9;9)$ \\ \\
%$R$-shell tuple & $\rho \in U_R(n)$ & \\ \\ % & $(2,4,6;9,9,9,7,9;9)$ & $(2,4,6;9,9,\textbf{6},7,9;9)$ \\
$R$-floor flag & $\tau \in UFlr_R(n)$  & $(2,4,5;5,5,6,8,9;9)$ & $(2,4,5;5,5,\textbf{8},\textbf{8},9;9)$ \\ \\
$R$-ceiling flag & $\xi \in UCeil_R(n)$ & $(1,4,4;5,5,9,9,9;9)$ & $(1,4,4;5,5,\textbf{7},\textbf{8},9;9)$ \\ \\
Gapless core $R$-tuple & $\eta \in UGC_R(n)$ & $(4,5,5; 4,8,7,8,8;9)$ & $(4,5,5;\textbf{4},8,7,8,\textbf{9};9)$ \\ \\
\end{tabular}\caption*{Table 3.1.  (Counter-)Examples of R-tuples for $n = 9$ and $R = \{3,8\}$.}
\end{figure}

A \emph{gapless $R$-tuple} is an $R$-increasing upper tuple $\gamma$ such that whenever there exists $h \in [r]$ with $\gamma_{q_h} > \gamma_{q_h+1}$, then $\gamma_{q_h} - \gamma_{q_h+1} + 1 =: s \leq p_{h+1}$ and the first $s$ entries of the $(h+1)^{st}$ carrel $(q_h, q_{h+1} ]$ are $\gamma_{q_h}-s+1, \gamma_{q_h}-s+2, ... , \gamma_{q_h}$.  Let $UG_R(n) \subseteq UI_R(n)$ denote the set of gapless $R$-tuples.  Note that a gapless $\gamma$ has $\gamma_{q_1} \leq \gamma_{q_2} \leq ... \leq \gamma_{q_r} \leq \gamma_{q_{r+1}}$.  So in the full $R = [n-1]$ case, each gapless $R$-tuple is a flag.  Hence $UG_{[n-1]}(n) = UF_{[n-1]}(n)$.

An $R$-\textit{chain} $B$ is a sequence of sets $\emptyset =: B_0 \subset B_1 \subset \ldots \subset B_r \subset B_{r+1} := [n]$ such that $|B_h| = q_h$ for $h \in [r]$.  A bijection from $R$-permutations $\pi$ to $R$-chains $B$ is given by $B_h := \{\pi_1, \pi_2, \ldots, \pi_{q_h}\}$ for $h \in [r]$.  We indicate it by $\pi \leftrightarrow B$.  Fix an $R$-permutation $\pi$ and let $B$ be the corresponding $R$-chain.  For $h \in [r+1]$, the set $B_h$ is the union of the first $h$ cohorts of $\pi$.  Note that $R$-chains $B$ (and hence $R$-permutations $\pi$) are equivalent to the ${n \choose R}$ objects that could be called ``ordered $R$-partitions of $[n]$''; these arise as the sequences $(B_1 \backslash B_0, B_2\backslash B_1, \ldots, B_{r+1}\backslash B_r)$ of $r+1$ disjoint nonempty subsets of sizes $p_1, p_2, \ldots, p_{r+1}$.

Now create an $R$-tuple $\Psi_R(\pi) =: \psi$ as follows:  For $h \in [r+1]$ specify the entries in its $h^{th}$ carrel by $\psi_i := \text{rank}^{q_h-i+1}(B_h)$ for $i \in (q_{h-1},q_h]$.  As well as being $R$-increasing, it can be seen that $\psi$ is upper:  So $\psi \in UI_R(n)$.  We call $\psi$ the \emph{rank $R$-tuple of $\pi$}.  See Table 3.2.  For a model, imagine there are $n$ discus throwers grouped into $r+1$ heats of $p_h$ throwers for $h \in [r+1]$.  Each thrower gets one throw, the throw distances are elements of $[n]$, and there are no ties.  After the $h^{th}$ heat has been completed, the $p_h$ longest throws overall are announced in ascending order.

\begin{figure}[h!]
\centering
\begin{tabular}{lccc}
\underline{Name} & \underline{From/To} & \underline{Input} & \underline{Image} \\ \\
Rank $R$-tuple & $\Psi_R:  S_n^R \rightarrow  UI_R(n)$ & $(2,4,6;1,5,7,8,9;3)$   & $(2,4,6;5,6,7,8,9;9)$ \\ \\
Undoes $\Psi_R|_{S_n^{R\text{-}312}}$ & $\Pi_R:  UG_R(n) \rightarrow S_n^{R\text{-}312}$ & $(2,4,6;4,5,6,7,9;9)$  & $(2,4,6;1,3,5,7,9;8)$ \\ \\
$R$-core & $\Delta_R: U_R(n) \rightarrow UI_R(n)$ & $(7,9,6;5,5,9,8,9;9)$ & $(4,5,6;4,5,7,8,9;9)$ \\ \\
$R$-floor & $\Phi_R: UG_R(n) \rightarrow UFlr_R(n)$ & $(3,4,6;4,5,6,8,9;9)$  & $(3,4,6;6,6,6,8,9;9)$ \\ \\
$R$-ceiling & $\Xi_R: UG_R(n) \rightarrow UCeil_R(n)$ & $(3,4,5;4,5,6,8,9;9)$  & $(5,5,5;6,6,6,9,9;9)$ \\ \\
\end{tabular}\caption*{Table 3.2.  Examples for maps of $R$-tuples for $n = 9$ and $R = \{3, 8 \}$.}
\end{figure}

In Proposition \ref{prop320.2}(ii) it will be seen that the restriction of $\Psi_R$ to $S_n^{R\text{-}312}$ is a bijection to $UG_R(n)$ whose inverse is the following map $\Pi_R$.  Let $\gamma \in UG_R(n)$.  Define an $R$-tuple $\Pi_R(\gamma) =: \pi$ by:  Initialize $\pi_i := \gamma_i$ for $i \in (0,q_1]$.  Let $h \in [r]$.  If $\gamma_{q_h} > \gamma_{q_h+1}$, set $s:= \gamma_{q_h} - \gamma_{q_h+1} + 1$.  Otherwise set $s := 0$.  For $i$ in the right side $(q_h + s, q_{h+1}]$ of the $(h+1)^{st}$ carrel, set $\pi_i := \gamma_i$.  For $i$ in the left side $(q_h, q_h + s]$, set $d := q_h + s - i + 1$ and $\pi_i := rank^d( \hspace{1mm} [\gamma_{q_h}] \hspace{1mm} \backslash \hspace{1mm} \{ \pi_1, ... , \pi_{q_h} \} \hspace{1mm} )$.  (Since $\gamma$ is a gapless $R$-tuple, when $s \geq 1$ we have $\gamma_{q_h + s} = \gamma_{q_h}$.  Since `gapless' includes the upper property, here we have $\gamma_{q_h +s} \geq q_h + s$.  Hence $| \hspace{1mm} [\gamma_{q_h}] \hspace{1mm} \backslash \hspace{1mm} \{ \pi_1, ... , \pi_{q_h} \} \hspace{1mm} | \geq s$, and so there are enough elements available to define these left side $\pi_i$. )  Since $\gamma_{q_h} \leq \gamma_{q_{h+1}}$, it can inductively be seen that $\max\{ \pi_1, ... , \pi_{q_h} \} = \gamma_{q_h}$.

Let $\upsilon \in U_R(n)$.  The information that we need from $\upsilon$ will often be distilled into a skeletal substructure with respect to $R$:  Fix $h \in [r+1]$.  Working within the $h^{th}$ carrel $(q_{h-1}, q_h]$ from the right we recursively find for $u = 1, 2, ...$ :  At $u = 1$ the \emph{rightmost critical pair of $\upsilon$} in the $h^{th}$ carrel is $(q_h, \upsilon_{q_h})$.  Set $x_1 := q_h$.  Recursively attempt to increase $u$ by 1:  If it exists, the \emph{next critical pair to the left} is $(x_u, \upsilon_{x_u})$, where $q_{h-1} < x_u < x_{u-1}$ is maximal such that $\upsilon_{x_{u-1}} - \upsilon_{x_u} > x_{u-1} - x_u$.  Otherwise, let $f_h \geq 1$ be the last value of $u$ attained.  The \emph{set of critical pairs of $\upsilon$ for the $h^{th}$ carrel} is $\{ (x_u, \upsilon_{x_u}) : u \in [f_h] \} =: \mathcal{C}_h$.  Equivalently, here $f_h$ is maximal such that there exists indices $x_1, x_2, ... , x_{f_h}$ such that $q_{h-1} < x_{f_h} < ... < x_1 = q_h$ and $\upsilon_{x_{u-1}} - \upsilon_{x_u} > x_{u-1} - x_u$ for $u \in (1, f_h]$.  The \emph{$R$-critical list for $\upsilon$} is the sequence $(\mathcal{C}_1, ... , \mathcal{C}_{r+1}) =: \mathcal{C}$ of its $r+1$ sets of critical pairs.  The $R$-critical list for the gapless $R$-tuple $\gamma$ of Table 3.1 is $( \{ (1,2), (2,4), (3,6) \}; \{ (7,7), (8,9) \}; \{ (9,9) \} )$.

Without having any $\upsilon$ specified, for $h \in [r+1]$ we define a set $\{ (x_u, y_{x_u}) : u \in [f_h] \} =: \mathcal{C}_h$ of pairs for some $f_h \in [p_h]$ to be a \emph{set of critical pairs} for the $h^{th}$ carrel if:  $x_u \leq y_{x_u}, q_{h-1} < x_{f_h} < ... < x_1 = q_h$, and $y_{x_{u-1}} - y_{x_u} > x_{u-1} - x_u$ for $u \in (1, f_h]$.  A sequence of $r+1$ sets of critical pairs for all of the carrels is an \emph{$R$-critical list}.  The $R$-critical list of a given $\upsilon \in U_R(n)$ is an $R$-critical list.  If $(x, y_x)$ is a critical pair, we call $x$ a \emph{critical index} and $y_x$ a \emph{critical entry}.  We say that an $R$-critical list is a \emph{flag $R$-critical list} if whenever $h \in [r]$ we have $y_{q_h} \leq y_k$, where $k := x_{f_{h+1}}$.  This condition can be restated as requiring that the sequence of all of its critical entries be weakly increasing.  If  $\upsilon \in UF_R(n)$, then its $R$-critical list is a flag $R$-critical list.

We illustrate some recent definitions.  First consider an $R$-increasing upper tuple $\alpha \in UI_R(n)$:  Each carrel subsequence of $\alpha$ is a concatenation of the staircases within the carrel in which the largest entries are the critical entries for the carrel.  Now consider the definition of a gapless $R$-tuple, which begins by considering a $\gamma \in UI_R(n)$:  This definition is equivalent to requiring for all $h \in [r]$ that if $\gamma_{q_h} > \gamma_{q_{h}+1}$, then the leftmost staircase within the $(h+1)^{st}$ carrel must contain an entry $\gamma_{q_h}$.

Here are four kinds of $nn$-tuples that will be seen in Proposition \ref{prop604.6} to arise from extending (flag) $R$-critical lists in various unique ways:

\begin{defn}Let $R \subseteq [n-1]$.

\noindent (i)  We say that $\rho \in U_R(n)$ is an \emph{$R$-shell tuple} if $\rho_i = n$ for every non-critical index $i$ of $\rho$.

\noindent (ii)  We say that $\kappa \in U_R(n)$ is an \emph{$R$-canopy tuple} if it is an $R$-shell tuple whose critical list is a flag critical list.

\noindent (iii)  We say that $\tau \in UF_R(n)$ is an \emph{$R$-floor flag} if the leftmost pair of each non-trivial plateau in $\tau$ has the form $(q_h, \tau_{q_h})$ for some $h \in [r]$.

\noindent (iv)  We say that $\xi \in UF_R(n)$ is an \emph{$R$-ceiling flag} if it is a concatenation of plateaus whose rightmost pairs are the $R$-critical pairs of $\xi$.  \end{defn}

\noindent It will be seen in Corollary \ref{cor604.8} that $R$-increasing upper tuples and $R$-shell tuples bijectively correspond to $R$-critical lists.  Hence the number of $R$-critical lists and of $R$-shell tuples is also ${n  \choose R}$.  Let $UFlr_R(n)$ and $UCeil_R(n)$ respectively denote the sets of $R$-floor flags and of $R$-ceiling flags.

There are various ways in which the skeletal structure specified by a (flag) $R$-critical list will be extended in Proposition \ref{prop604.6} to form an $R$-tuple without changing the $R$-critical list;  this will be done by specifying the entries at the non-critical indices in certain fashions.  Here we describe the most fundamental way of doing this.  We form the $R$-critical list of an upper $R$-tuple and then fill it out in a minimal increasing fashion without changing the $R$-critical list.  Let $\upsilon \in U_R(n)$.  Create an $R$-tuple $\Delta_R(\upsilon) =: \delta$ as follows:  Let $x$ be a critical index for $\upsilon$.  If $x$ is the leftmost critical index set $x^\prime := 0$;  otherwise let $x^\prime$ be the largest critical index that is less than $x$.  For every critical pair $(x, \upsilon_x)$ for $\upsilon$, set $\delta_x := \upsilon_x$.  For $x^\prime < i < x$, set $\delta_i := \upsilon_x - (x-i)$.  This forms a staircase toward the left from each critical index.  Clearly $\delta \in UI_R(n)$.  We call $\Delta_R$ the \emph{R-core map} from $U_R(n)$ to $UI_R(n)$.  At the end of Section 5 it will be noted that the restrictions of $\Delta_R$ to $UFlr_R(n)$ and $UCeil_R(n)$ are bijections to $UG_R(n)$.  Their inverse maps $\Phi_R$ and $\Xi_R$ are introduced there.  A \emph{gapless core $R$-tuple} is an upper $R$-tuple $\eta$ whose $R$-core $\Delta_R(\eta)$ is a gapless $R$-tuple.  Let $UGC_R(n)$ denote the set of gapless core $R$-tuples.  In Section 4 we will see that $UF_R(n) \subseteq UGC_R(n) \subseteq UG_R(n)$.  So $UF_{[n-1]}(n) = UGC_{[n-1]}(n) = UG_{[n-1]}(n)$.

When we restrict our attention to the full $R = [n-1]$ case, we will suppress all prefixes and subscripts of `$R$'.  Above we would have written $UF(n) = UGC(n) = UG(n)$.  It can be seen that $UFlr(n) = UCeil(n) = UF(n)$, and that this is also the set of $[n-1]$-canopy tuples.  The  number of $nn$-tuples in each of these sets is $C_n$.

\section{Cores, shells, gapless tuples, canopies, floors, ceilings}

In this section we use the critical list substructure to relate six kinds of $R$-tuples that can be used as indexes for row bound tableau sets in Section 12.  Over Section 3, this section, and Section 5 we are defining three versions of some of these notions, which have a word such as `floor' in common in their names.  When delineation of these three similar concepts is needed, one should consult the summary paragraph at the end of Section 5.

\begin{fact}\label{fact604.2}Let $\upsilon \in U_R(n)$.  Its $R$-core $\Delta_R(\upsilon) =: \delta$ is an $R$-increasing upper tuple:  $\delta \in UI_R(n)$.  Here $\delta \leq \upsilon$ in $U_R(n)$ and $\delta$ has the same critical list as $\upsilon$.  So $\upsilon^\prime \in U_R(n)$ has the same critical list as $\upsilon$ if and only if $\Delta_R(\upsilon^\prime) = \Delta_R(\upsilon)$.  If $\upsilon \in UI_R(n)$, then $\Delta_R(\upsilon) = \upsilon$.  \end{fact}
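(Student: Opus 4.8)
The plan is to unwind the definition of the $R$-core map $\Delta_R$ and verify the five assertions one carrel $(q_{h-1}, q_h]$ at a time.  Recall that inside that carrel $\delta := \Delta_R(\upsilon)$ is obtained by placing the critical entries $\upsilon_{x_u}$ at the critical indices $q_{h-1} < x_{f_h} < \dots < x_1 = q_h$ of $\upsilon$ and then, for each critical index $x$ and the previous critical index $x'$ of $\upsilon$ (with $x' := q_{h-1}$ when $x = x_{f_h}$, or $x' := 0$ when additionally $h = 1$), filling in the leftward staircase $\delta_i := \upsilon_x - (x-i)$ for $x' < i < x$; note the owning critical index $x$ of a non-critical index $i$ always lies in the same carrel as $i$.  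Two elementary features of such a staircase drive everything: along it $\delta_i - i = \upsilon_x - x$ is constant, and along it $\delta_x - \delta_i = x-i$ exactly.  The defining \emph{maximality} of the critical indices (together with the termination condition of the recursion) supplies the one inequality needed from the other direction: for a non-critical $i$ with $x' < i < x$, the index $i$ was passed over, so $\upsilon_x - \upsilon_i \le x - i$.

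First I would settle $\delta \in UI_R(n)$ and $\delta \le \upsilon$.  Uppermost: from $\delta_i - i = \upsilon_x - x \ge 0$ (using $\upsilon$ upper) and $\delta_x = \upsilon_x \ge x$ at critical indices, $\delta$ is an upper tuple.  For $\delta \le \upsilon$: equality at critical indices, and at a non-critical $i$ with $x' < i < x$ the maximality bound gives $\delta_i = \upsilon_x - (x-i) \le \upsilon_i$.  For the $R$-increasing property, $\delta$ increases by exactly $1$ along each staircase, so the only thing to check is a strict jump across a critical index $x_u$ ($u \in (1,f_h]$): there $\delta_{x_u} = \upsilon_{x_u}$ and $\delta_{x_u+1} = \upsilon_{x_{u-1}} - (x_{u-1} - x_u - 1)$, whence $\delta_{x_u+1} - \delta_{x_u} = (\upsilon_{x_{u-1}} - \upsilon_{x_u}) - (x_{u-1}-x_u) + 1 \ge 2$ because the critical-pair inequality $\upsilon_{x_{u-1}} - \upsilon_{x_u} > x_{u-1} - x_u$ is strict; at the leftmost critical index of a carrel there is nothing to its left within the carrel, so no further check is needed.

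Next, and this is the heart of the matter, I would show $\delta$ has the same $R$-critical list as $\upsilon$.  Run the recursive critical-index procedure on $\delta$ within carrel $h$.  It begins at $x_1 = q_h$, and I would argue by induction that once it has reproduced $x_1, \dots, x_{u-1}$ it next selects $x_u$: for $x_u < j < x_{u-1}$ the index $j$ lies on the staircase pinned at $x_{u-1}$, so $\delta_{x_{u-1}} - \delta_j = x_{u-1} - j$ and $j$ is ineligible, while for $j = x_u$ we get $\delta_{x_{u-1}} - \delta_{x_u} = \upsilon_{x_{u-1}} - \upsilon_{x_u} > x_{u-1} - x_u$, so $x_u$ is eligible and, being the largest eligible index, is chosen.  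The same staircase identity shows that after $x_{f_h}$ is reached no index of $(q_{h-1}, x_{f_h})$ is eligible, so the recursion halts exactly where it did for $\upsilon$; since moreover $\delta_{x_u} = \upsilon_{x_u}$, the sets of critical pairs agree in each carrel, hence $\delta$ and $\upsilon$ have the same $R$-critical list.

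Finally the remaining claims follow quickly.  The construction of $\Delta_R$ reads off only the critical indices and critical entries of its input and then applies the deterministic staircase rule, so if $\upsilon'$ and $\upsilon$ have the same $R$-critical list then $\Delta_R(\upsilon') = \Delta_R(\upsilon)$; conversely, applying the invariance just proved to both sides, $\Delta_R(\upsilon') = \Delta_R(\upsilon)$ forces the $R$-critical list of $\upsilon'$, which equals that of $\Delta_R(\upsilon')= \Delta_R(\upsilon)$, to equal that of $\upsilon$.  For $\upsilon \in UI_R(n)$: within a carrel, strict increase gives $\upsilon_x - \upsilon_i \ge x-i$ for $x' < i < x$, while maximality gives the reverse inequality, so $\upsilon_i = \upsilon_x - (x-i) = \delta_i$; combined with $\delta_x = \upsilon_x$ at critical indices this gives $\Delta_R(\upsilon) = \upsilon$.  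I expect the bookkeeping in the ``same critical list'' step to be the main obstacle --- specifically, keeping the recursion's choice of owning critical index inside the correct carrel across carrel boundaries, and using the exact equality $\delta_x - \delta_i = x-i$ along staircases to rule out any spurious critical index of $\delta$ while the strictness in the critical-pair inequality for $\upsilon$ guarantees that each genuine one survives.
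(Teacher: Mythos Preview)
Your proposal is correct and is exactly the kind of routine unwinding of definitions that the paper has in mind: the paper states this result as a \emph{Fact} without proof (noting only ``Clearly $\delta \in UI_R(n)$'' when $\Delta_R$ is introduced), so there is no alternative argument to compare against. Your careful bookkeeping --- using the maximality of the critical indices to get $\upsilon_x - \upsilon_i \le x-i$ for passed-over $i$, the exact staircase identity $\delta_x - \delta_i = x-i$ to rule out spurious critical indices of $\delta$, and the strict critical-pair inequality to guarantee the genuine ones survive --- is precisely what is needed and handles the carrel boundaries correctly.
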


\noindent The process used to define the $R$-core map can also be used to bijectively produce the $R$-tuples in $UI_R(n)$ from the set of all $R$-critical lists:  To see surjectivity, note that the staircases within the carrels of a given $\alpha \in UI_R(n)$ can be formed toward the left from the critical pairs of $\alpha$.

We will be defining more maps from sets of upper $R$-tuples to sets of upper $R$-tuples.  We will always require that the $R$-critical list of an upper $R$-tuple be preserved.  At times we will need to have the range contained in $UF_R(n)$.  In those cases, to produce an upper $R$-tuple whose $R$-critical list is a flag $R$-critical list, we must exclude from the domain the $\upsilon \in U_R(n)$ that do not have flag $R$-critical lists.  Sometimes we will already want the domain to be $UF_R(n)$;  this will suffice.  Part (iii) of the next statement characterizes the upper $R$-tuples with flag $R$-critical lists.  Here Part (i) restates part of the fact above to provide contrast for Part (ii).  Part (iv) notes that the relationship of `gapless' to `increasing upper' is analogous to the relationship of `canopy' to `shell'.

\begin{prop}\label{prop604.4}Let $\upsilon \in U_R(n), \eta \in UGC_R(n), \phi \in UF_R(n)$, and $\alpha \in UI_R(n)$.

\noindent (i)  The $R$-core $\Delta_R(\upsilon)$ of $\upsilon$ is an $R$-increasing upper tuple.

\noindent (ii)  The $R$-cores $\Delta_R(\eta)$ and $\Delta_R(\phi)$ of $\eta$ and $\phi$ are gapless $R$-tuples.  We have $UG_R(n) \subseteq UGC_R(n)$ and $UF_R(n) \subseteq UGC_R(n)$.

\noindent (iii)  The $R$-critical list of $\upsilon$ is a flag $R$-critical list if and only if $\upsilon \in UGC_R(n)$.

\noindent (iv)  The $R$-critical list of $\alpha$ is a flag $R$-critical list if and only if $\alpha \in UG_R(n)$.  \end{prop}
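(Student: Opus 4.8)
The plan is to prove the four parts in the order (i), (iv), (iii), (ii). Part (i) needs no new work: it is exactly the assertion of Fact~\ref{fact604.2} that $\Delta_R(\upsilon) \in UI_R(n)$. Part (iv) is the combinatorial heart of the proposition, and parts (iii) and (ii) then follow formally from it together with Fact~\ref{fact604.2} and the definition of a gapless core $R$-tuple.

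For part (iv) I fix $\alpha \in UI_R(n)$. I would first invoke the structural description already recorded after the definition of $R$-critical lists: each carrel subsequence of $\alpha$ is a concatenation of staircases, the critical indices of the $h^{th}$ carrel are precisely the right endpoints (tops) of those staircases, and the critical entry at such an index is the top value of that staircase. In particular the critical entry of $\alpha$ at $q_h$ is $\alpha_{q_h}$, and, writing $t_{h+1}$ for the top value of the leftmost staircase of the $(h+1)^{st}$ carrel, the critical entry of $\alpha$ at the leftmost critical index $x_{f_{h+1}}$ of that carrel equals $t_{h+1}$. Since $\alpha$ is strictly increasing within each carrel, its critical entries strictly increase within each carrel, so the whole sequence of critical entries of $\alpha$ is weakly increasing precisely when $\alpha_{q_h} \le t_{h+1}$ for every $h \in [r]$; by the restatement of the flag-critical-list condition in Section~3, this says exactly that $\alpha$ has a flag $R$-critical list. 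It then remains to check that the gaplessness of $\alpha$ amounts to the same inequalities, and here I would use the equivalent form of ``gapless'' given just after that definition: $\alpha \in UG_R(n)$ iff for each $h$ with $\alpha_{q_h} > \alpha_{q_h+1}$ the leftmost staircase of the $(h+1)^{st}$ carrel contains an entry equal to $\alpha_{q_h}$. That staircase runs through the consecutive values $\alpha_{q_h+1}, \alpha_{q_h+1}+1, \ldots, t_{h+1}$, so it contains $\alpha_{q_h}$ exactly when $\alpha_{q_h+1} \le \alpha_{q_h} \le t_{h+1}$, which under the hypothesis $\alpha_{q_h} > \alpha_{q_h+1}$ reduces to $\alpha_{q_h} \le t_{h+1}$; and when $\alpha_{q_h} \le \alpha_{q_h+1}$ we have $\alpha_{q_h} \le \alpha_{q_h+1} \le t_{h+1}$ for free. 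Hence $\alpha \in UG_R(n)$ iff $\alpha_{q_h} \le t_{h+1}$ for all $h \in [r]$, which is the flag-$R$-critical-list condition, proving (iv).

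Part (iii) is then immediate: for $\upsilon \in U_R(n)$, Fact~\ref{fact604.2} gives $\Delta_R(\upsilon) \in UI_R(n)$ with the same $R$-critical list as $\upsilon$, so $\upsilon$ has a flag $R$-critical list iff $\Delta_R(\upsilon)$ does, iff $\Delta_R(\upsilon) \in UG_R(n)$ by (iv), iff $\upsilon \in UGC_R(n)$ by the definition of gapless core $R$-tuple. For part (ii): $\Delta_R(\eta) \in UG_R(n)$ is precisely the definition of $\eta \in UGC_R(n)$; since every $\phi \in UF_R(n)$ has a flag $R$-critical list (noted in Section~3), part (iii) gives $\phi \in UGC_R(n)$, hence $\Delta_R(\phi) \in UG_R(n)$ and $UF_R(n) \subseteq UGC_R(n)$; finally, for $\gamma \in UG_R(n) \subseteq UI_R(n)$, Fact~\ref{fact604.2} gives $\Delta_R(\gamma) = \gamma \in UG_R(n)$, so $\gamma \in UGC_R(n)$ and $UG_R(n) \subseteq UGC_R(n)$.

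The main obstacle is the bookkeeping inside part (iv): translating the ``leftmost staircase contains $\alpha_{q_h}$'' formulation of gaplessness and the ``critical entries weakly increasing'' formulation of a flag critical list into the single family of inequalities $\alpha_{q_h} \le t_{h+1}$, and in particular observing that the degenerate case $\alpha_{q_h} \le \alpha_{q_h+1}$ — where gaplessness imposes nothing — is nonetheless covered because the flag condition there holds automatically. Everything else is a direct appeal to Fact~\ref{fact604.2} and the definitions.
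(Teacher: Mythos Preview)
Your proof is correct and follows essentially the same combinatorial argument as the paper. The core computation in both is identical: for an $R$-increasing tuple one shows that gaplessness is equivalent to the inequalities $\alpha_{q_h} \le$ (leftmost critical entry of the $(h+1)^{st}$ carrel), which in turn is exactly the flag critical list condition. The only difference is the order of deduction: the paper proves (iii) first by working with $\delta := \Delta_R(\upsilon) \in UI_R(n)$ and then obtains (iv) from (iii) via Fact~\ref{fact604.2}, whereas you prove (iv) directly for $\alpha \in UI_R(n)$ and then obtain (iii) from (iv) via Fact~\ref{fact604.2}. Since $\delta$ already lies in $UI_R(n)$, the two arguments are really the same computation read in opposite directions, and your handling of (ii) via the Section~3 observation that flags have flag critical lists is equivalent to the paper's direct verification $\upsilon_{q_h} \le \upsilon_k$ for $\upsilon = \phi$.
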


\begin{proof}For (ii), recall that $\Delta_R(\eta) \in UG_R(n)$ by definition.  Since $UG_R(n) \subseteq UI_R(n)$, we have $\Delta_R(\gamma) = \gamma$ for $\gamma \in UG_R(n)$.  Hence $UG_R(n) \subseteq UGC_R(n)$.  To show $\Delta_R(\phi) \in UG_R(n)$ and to begin the proof of (iii), let $\upsilon \in U_R(n)$.  Set $\delta := \Delta_R(\upsilon)$.  Fix $h \in [r]$.  Let $k$ be the leftmost critical index of $\upsilon$ in $(q_h, q_{h+1}]$.  Here $\delta_{q_h} = \upsilon_{q_h}$ and $\delta_k = \upsilon_k$.  The index $q_h +1$ is included in the leftmost staircase of $\upsilon$ within $(q_h, q_{h+1}]$.  Here $\delta_{q_h+1} \leq \delta_k$.  Suppose the hypothesis $\delta_{q_h} > \delta_{q_h+1}$ of the definition of `gapless' is satisfied.  Here the entry $\delta_{q_h}$ occurs in the cohort of the leftmost staircase, which is on $[\delta_{q_h+1},\delta_k]$, if and only if $\delta_{q_h} \leq \delta_k$.  To finish (ii), note that this is satisfied since $\upsilon$ is a flag, because $\delta_{q_h} = \upsilon_{q_h} \leq \upsilon_k = \delta_k$.  Part (iii) holds since $\delta_{q_h} \leq \delta_k$ is the same as the flag $R$-critical list defining condition of $\upsilon_{q_h} \leq \upsilon_k$.  Part (iv) follows from Fact \ref{fact604.2} and (iii).  \end{proof}

Most of our kinds of $R$-tuples correspond bijectively to $R$-critical lists or to flag $R$-critical lists.  The following six $R$-tuples $\alpha, \rho, \gamma, \kappa, \tau,$ and $\xi$ will be considered in the proposition below.  Let $\mathcal{C}$ be an $R$-critical list.  For each critical pair $(x, y_x)$ in $\mathcal{C}$, if $x$ is the leftmost critical index set $x^\prime := 0$;  otherwise let $x^\prime$ be the largest critical index that is less than $x$.  Set $\xi_x := \tau_x := \kappa_x := \gamma_x := \rho_x := \alpha_x := y_x$.  Then for $x^\prime < i < x$:  Set $\alpha_i := \alpha_x - (x-i)$.  Set $\rho_i := n$.  Now suppose that $\mathcal{C}$ is a flag $R$-critical list.  Set $\gamma_i := \gamma_x - (x-i)$.  Set $\kappa_i := n$.  If $x$ is the leftmost critical index in the $(h+1)^{st}$ carrel for some $h \in [r]$, then $x^\prime = q_h$ and we set $\tau_i := \max \{ \tau_{q_h}, \tau_x - (x-i) \}$ for $i \in (q_h, x)$.  Otherwise set $\tau_i := \tau_x - (x-i)$ for $i \in (x^\prime, x)$.  Set $\xi_i := \xi_x$.

\begin{prop}\label{prop604.6}Let $\mathcal{C}$ be an $R$-critical list.

\noindent (i)  The $R$-tuples $\alpha$ and $\rho$ above are respectively the unique $R$-increasing upper tuple and the unique $R$-shell tuple whose $R$-critical lists are $\mathcal{C}$.

\noindent (ii)  If $\mathcal{C}$ is a flag $R$-critical list, the $R$-tuples $\gamma, \kappa, \tau,$ and $\xi$ above are respectively the unique gapless $R$-tuple, the unique $R$-canopy tuple, the unique $R$-floor flag, and the unique $R$-ceiling flag whose $R$-critical lists are $\mathcal{C}$.  \end{prop}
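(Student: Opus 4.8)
The plan is to treat the six $R$-tuples in three groups, exploiting that the recipes for $\gamma$ and $\kappa$ literally coincide with those for $\alpha$ and $\rho$: comparing the construction preceding the statement, at a non-critical index $i$ one has $\gamma_i = \gamma_x - (x-i) = \alpha_i$ and $\kappa_i = n = \rho_i$, while at a critical index $\gamma_x = \kappa_x = y_x = \alpha_x = \rho_x$, so as $nn$-tuples $\gamma = \alpha$ and $\kappa = \rho$. Granting this, part (ii) for $\gamma$ and $\kappa$ follows from part (i) once it is known that $\alpha \in UI_R(n)$ has $R$-critical list $\mathcal{C}$: since $\mathcal{C}$ is a flag $R$-critical list, Proposition \ref{prop604.4}(iv) gives $\alpha = \gamma \in UG_R(n)$; and $\rho = \kappa$, being an $R$-shell tuple whose $R$-critical list is flag, is an $R$-canopy tuple by definition. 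Uniqueness descends too: any gapless $R$-tuple (resp.\ $R$-canopy tuple) with critical list $\mathcal{C}$ is in particular an $R$-increasing upper tuple (resp.\ an $R$-shell tuple) with critical list $\mathcal{C}$, hence equals $\alpha$ (resp.\ $\rho$) once part (i) is proved.

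For part (i), I would argue existence by direct inspection and uniqueness via Fact \ref{fact604.2}. In the $h^{th}$ carrel the recipe lays down, from right to left, the staircases ending at the critical indices $q_h = x_1 > x_2 > \cdots > x_{f_h} > q_{h-1}$; strict increase within a carrel and the upper property $\alpha_i \geq i$ both follow from the two defining inequalities of a set of critical pairs, $x_u \leq y_{x_u}$ and $y_{x_{u-1}} - y_{x_u} > x_{u-1} - x_u$. To see that the $R$-critical list of $\alpha$ is $\mathcal{C}$, run the rightmost-critical-pair recursion in each carrel: the rightmost critical pair is $(q_h, y_{q_h})$; inside the staircase ending at $x_{u-1}$ one has $\alpha_{x_{u-1}} - \alpha_j = x_{u-1} - j$ for $x_u < j < x_{u-1}$ so nothing is detected there, whereas at $j = x_u$ the critical inequality makes $x_u$ the next critical index; past $x_{f_h}$ the staircase gives equality again and the recursion halts. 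Uniqueness is immediate: if $\alpha' \in UI_R(n)$ has critical list $\mathcal{C}$ then $\alpha'$ and $\alpha$ have the same critical list, so $\alpha' = \Delta_R(\alpha') = \Delta_R(\alpha) = \alpha$ by Fact \ref{fact604.2}. The $R$-shell tuple $\rho$ is easier: it is upper since its critical entries satisfy $y_x \geq x$ and all other entries equal $n$; the recursion recovers $\mathcal{C}$ because at a non-critical $j$ one has $\rho_{x_{u-1}} - \rho_j = y_{x_{u-1}} - n \leq 0 < x_{u-1} - j$ while at a designated critical index the critical inequality again forces detection; and uniqueness is trivial, an $R$-shell tuple being completely determined by its critical list.

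It remains to handle the flag-type tuples. For $\xi$: its sequence of critical entries is weakly increasing (strictly within each carrel by the critical inequality, and across each carrel boundary $q_h$ by the flag-critical-list condition $y_{q_h} \leq y_k$ with $k := x_{f_{h+1}}$), and $\xi_i = y_x \geq x \geq i$, so $\xi \in UF_R(n)$; it is visibly a concatenation of plateaus whose rightmost pairs are the critical pairs, hence an $R$-ceiling flag, and the recursion recovers $\mathcal{C}$ since inside a plateau the gain is $0$. Uniqueness is immediate, as an $R$-ceiling flag is determined by its critical list (its $j^{th}$ plateau ends at the $j^{th}$ critical index with value the corresponding critical entry). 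For $\tau$ one checks it agrees with $\gamma$ except on the leftmost staircase of each carrel after the first, where it is floored at $\tau_{q_h} = y_{q_h}$; using $y_{q_h} \leq y_k$ one verifies $\tau$ is an upper flag, that its only non-trivial plateaus begin at carrel boundaries $q_h$ (so $\tau$ is an $R$-floor flag), and that the recursion still returns $\mathcal{C}$ (in the floored region $\tau_j \geq \gamma_j$ with the staircase gain unchanged, so no spurious critical index appears).

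The main obstacle is the uniqueness of $\tau$, since it is the only one of the six that is not a plain staircase-or-constant build between critical indices. Given an $R$-floor flag $\tau'$ with critical list $\mathcal{C}$, I would argue as follows. Between consecutive critical indices $x_u < x_{u-1}$ of one carrel (and on the leftmost block of the first carrel) there is no carrel boundary, so the floor-flag condition forbids a non-trivial plateau there, forcing $\tau'$ to be strictly increasing; hence $\tau'_j \leq \tau'_{x_{u-1}} - (x_{u-1}-j) = y_{x_{u-1}} - (x_{u-1}-j)$, while ``no critical index strictly between $x_u$ and $x_{u-1}$'' gives the matching lower bound $\tau'_j \geq y_{x_{u-1}} - (x_{u-1}-j)$, so $\tau' = \gamma$ there. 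On the leftmost block $(q_h, k]$ of a later carrel the floor-flag condition allows a single plateau pinned to $q_h$, say constant on $[q_h, m]$; the recursion bound $\tau'_m \geq y_k - (k-m)$ forces $m \leq k - (y_k - y_{q_h})$, while strict increase on $[m,k]$ (steps $\geq 1$, total $y_k - y_{q_h}$) forces $m \geq k - (y_k - y_{q_h})$, pinning $m$ and then forcing every later step to be exactly $1$ — which is precisely $\tau_j = \max\{y_{q_h}, y_k - (k-j)\}$. Thus $\tau' = \tau$ throughout.
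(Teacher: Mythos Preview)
Your proof is correct and follows essentially the same approach as the paper's: verify that each of the six constructions lands in the required class with $R$-critical list $\mathcal{C}$, then argue uniqueness class by class, reducing $\gamma$ and $\kappa$ to $\alpha$ and $\rho$ via $\gamma=\alpha$, $\kappa=\rho$ and Proposition~\ref{prop604.4}(iv). You are more explicit than the paper in several places---notably in invoking Fact~\ref{fact604.2} for the uniqueness of $\alpha$ and in spelling out the matching upper and lower bounds that pin down $\tau'$ on the inner blocks and on the leftmost block of each carrel, where the paper simply writes that ``it can be seen'' that the critical entries at $q_h$ and $x$ force $\tau'_i=\max\{\tau'_{q_h},\tau'_x-(x-i)\}$.
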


\begin{proof}It is clear that the $R$-critical list of each of these six tuples is the given $R$-critical list.  We confirm that the six definitions are satisfied:  Since the $R$-tuples $\alpha$ and $\gamma$ are produced as in the definition of the $R$-core map $\Delta_R$, we see that $\alpha, \gamma \in UI_R(n)$.  Since the $R$-critical list given for $\gamma$ is a flag $R$-critical list, Proposition \ref{prop604.4}(iv) implies $\gamma \in UG_R(n)$.  Clearly $\rho$ is an $R$-shell tuple.  Since $\kappa = \rho$, the flag $R$-critical list hypothesis implies that $\kappa$ is an $R$-canopy tuple.  If $\tau$ has a non-trivial plateau it must occur when $\tau_i$ is set to $\tau_{q_h}$ for some $h \in [r]$ and some consecutive indices $i$ at the beginning of the $(h+1)^{st}$ carrel.  If this $\tau_{q_h}$ is greater than $\tau_{q_h-1}$ then the definition of $R$-floor flag is satisfied.  Otherwise $\tau_{q_h} = \tau_{q_h-1}$, which implies that all entries in the $h^{th}$ carrel have the value $\tau_{q_{h-1}}$.  This plateau will necessarily terminate at the rightmost entry in some earlier carrel, since the entries in the first carrel are strictly increasing.  Clearly $\xi$ is an $R$-ceiling flag.

For the uniqueness of $\alpha$, recall that it was noted earlier that this construction is bijective from $R$-critical lists to $UI_R(n)$.  Restrict this bijection to the flag $R$-critical lists to get uniqueness for $\gamma$.  It is clear from the definitions of $R$-shell tuple, $R$-canopy tuple, and $R$-ceiling flag that for each of these notions any two $R$-tuples with the same $R$-critical list must also have the same non-critical entries.  Let $\tau^\prime$ be any $R$-floor flag with flag $R$-critical list $\mathcal{C}$.  Let $h \in [r]$.  Let $x$ be the leftmost critical index in $(q_h, q_{h+1}]$.  Then for $i \in (q_h, x)$ it can be seen that the critical entries at $q_h$ and $x$ force $\tau_i^\prime = \max \{ \tau_{q_h}^\prime, \tau_x^\prime - (x-i) \}$.  On $(x, q_{h+1}]$ and $(0, q_1]$ the flag $\tau^\prime$ must be increasing.  So on $(x, q_{h+1})$ and $(0, q_1)$ the entries of $\tau^\prime$ are uniquely determined by the critical pairs via staircase decomposition for $\alpha$.  \end{proof}

\noindent Here we say that $\alpha$ and $\rho$ are respectively the \emph{$R$-increasing upper tuple} and the \emph{$R$-shell tuple} \emph{for the $R$-critical list $\mathcal{C}$}.  We also say that $\gamma, \kappa, \tau,$ and $\xi$ are respectively the \emph{gapless $R$-tuple}, the \emph{$R$-canopy tuple}, the \emph{$R$-floor flag}, and the \emph{$R$-ceiling flag} \emph{for the flag $R$-critical list $\mathcal{C}$}.

\begin{cor}\label{cor604.8}The six constructions above specify bijections from the set of $R$-critical lists (flag $R$-critical lists) to the sets of $R$-increasing upper tuples and $R$-shell tuples (gapless $R$-tuples, $R$-canopy tuples, $R$-floor flags, and $R$-ceiling flags).  \end{cor}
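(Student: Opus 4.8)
The plan is to deduce the corollary almost entirely from Proposition \ref{prop604.6}, by viewing each of the six constructions $\mathcal{C} \mapsto \alpha$, $\mathcal{C} \mapsto \rho$, $\mathcal{C} \mapsto \gamma$, $\mathcal{C} \mapsto \kappa$, $\mathcal{C} \mapsto \tau$, $\mathcal{C} \mapsto \xi$ as a map from the appropriate set of critical lists (all $R$-critical lists for the first two, flag $R$-critical lists for the last four) to the corresponding set of $R$-tuples, and then checking well-definedness, injectivity, and surjectivity in turn.

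First I would record that each map lands in its claimed codomain and that its output carries $\mathcal{C}$ back as data: Proposition \ref{prop604.6}(i) says $\alpha$ is an $R$-increasing upper tuple and $\rho$ is an $R$-shell tuple, each having $R$-critical list $\mathcal{C}$; part (ii) says the same for $\gamma,\kappa,\tau,\xi$ among gapless $R$-tuples, $R$-canopy tuples, $R$-floor flags, and $R$-ceiling flags, provided $\mathcal{C}$ is a flag $R$-critical list. So all six maps are well defined, and taking the $R$-critical list of the output returns the input $\mathcal{C}$. This left inverse immediately yields injectivity of all six maps.

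For surjectivity I would start from a tuple $T$ of one of the six types and let $\mathcal{C}(T)$ denote its $R$-critical list. In the first two cases $\mathcal{C}(T)$ is an $R$-critical list because $T \in U_R(n)$, as noted when $R$-critical lists were defined. In the last four cases I need $\mathcal{C}(T)$ to be a \emph{flag} $R$-critical list so that it lies in the relevant domain: for a gapless $R$-tuple this is Proposition \ref{prop604.4}(iv); an $R$-canopy tuple has a flag $R$-critical list by its very definition; and an $R$-floor flag or $R$-ceiling flag lies in $UF_R(n)$, whose members have flag $R$-critical lists by the remark recorded just before the definition of these four $R$-tuple types. Now the uniqueness clauses of Proposition \ref{prop604.6} assert that the construction of the relevant type applied to $\mathcal{C}(T)$ produces the \emph{unique} $R$-tuple of that type whose $R$-critical list is $\mathcal{C}(T)$; since $T$ is such an $R$-tuple, the construction must send $\mathcal{C}(T)$ to $T$. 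Hence $T$ lies in the image, completing the proof that each of the six maps is a bijection.

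The only content beyond bookkeeping is having Proposition \ref{prop604.6} in hand, in particular its uniqueness assertions, together with the small compatibility checks that the $R$-critical list of each of the four special flag-type $R$-tuples is itself a flag $R$-critical list. I do not expect a genuine obstacle here: each of those checks is either immediate from a definition or already recorded in Proposition \ref{prop604.4} and the running remarks about $R$-critical lists, and Fact \ref{fact604.2} supplies the remaining bookkeeping about how $\Delta_R$ preserves critical lists.
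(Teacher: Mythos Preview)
Your proof is correct and follows essentially the same approach as the paper's: injectivity because the constructions preserve the (flag) $R$-critical list, and surjectivity by taking the $R$-critical list of a target tuple and invoking the uniqueness in Proposition~\ref{prop604.6}. You are simply more explicit than the paper in verifying that the critical list of a target tuple of one of the four ``flag'' types is itself a flag $R$-critical list, citing Proposition~\ref{prop604.4}(iv), the definition of $R$-canopy tuple, and the remark that members of $UF_R(n)$ have flag $R$-critical lists; the paper leaves these checks implicit.
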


\begin{proof}These maps are injective since they preserve the (flag) $R$-critical lists.  To show surjectivity, first find the $R$-critical list of the target $R$-tuple. \end{proof}

In passing we note:

\begin{fact}The subposet $UG_R(n)$ of $U_R(n)$ is a meet sublattice.  Let $\upsilon, \upsilon' \in U_R(n)$.  If $\upsilon \leq \upsilon'$ then $\Delta_R(\upsilon) \leq \Delta_R(\upsilon')$.  This implies that the $R$-core map on $U_R(n)$ preserves meet.  Hence the subposet $UGC_R(n)$ of $U_R(n)$ is a meet sublattice. \end{fact}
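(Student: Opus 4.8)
The plan is to reduce the monotonicity of $\Delta_R$, its meet-preservation, and the $UGC_R(n)$ assertion to one structural fact about the $R$-core map, and to settle the meet-closure of $UG_R(n)$ by a direct check. First I would record that $\Delta_R(\upsilon)$ is the \emph{largest} element of $UI_R(n)$ lying weakly below $\upsilon$. Indeed, from the construction of $\Delta_R$ in Section~3, for each $h\in[r+1]$ and each $i\in(q_{h-1},q_h]$ we have $\Delta_R(\upsilon)_i=\upsilon_x-(x-i)$ for some index $x$ with $i\le x\le q_h$; hence if $\alpha\in UI_R(n)$ satisfies $\alpha\le\upsilon$, then strict increase of $\alpha$ within the $h$th carrel gives $\alpha_i\le\alpha_x-(x-i)\le\upsilon_x-(x-i)=\Delta_R(\upsilon)_i$, i.e.\ $\alpha\le\Delta_R(\upsilon)$. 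Combined with Fact~\ref{fact604.2} (which supplies $\Delta_R(\upsilon)\in UI_R(n)$ and $\Delta_R(\upsilon)\le\upsilon$), this proves the characterization. Monotonicity is then immediate: if $\upsilon\le\upsilon'$, then $\Delta_R(\upsilon)\in UI_R(n)$ with $\Delta_R(\upsilon)\le\upsilon\le\upsilon'$, so $\Delta_R(\upsilon)\le\Delta_R(\upsilon')$. Meet-preservation follows as well: monotonicity gives $\Delta_R(\upsilon\wedge\upsilon')\le\Delta_R(\upsilon)\wedge\Delta_R(\upsilon')$, while conversely $\Delta_R(\upsilon)\wedge\Delta_R(\upsilon')$ lies in $UI_R(n)$ (a sublattice of $U_R(n)$) and is $\le\upsilon\wedge\upsilon'$, so it is $\le\Delta_R(\upsilon\wedge\upsilon')$ by the characterization.

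For the meet-closure of $UG_R(n)$, let $\gamma,\gamma'\in UG_R(n)$ and put $\mu:=\gamma\wedge\gamma'$; since $UI_R(n)$ is a sublattice, $\mu\in UI_R(n)$, and only the gapless condition remains. The key observation is that for a tuple in $UI_R(n)$, the gapless requirement at an index $h\in[r]$ with $\mu_{q_h}>\mu_{q_h+1}$ is \emph{equivalent} to the two conditions $q_h+s\le q_{h+1}$ and $\mu_{q_h+s}=\mu_{q_h}$, where $s:=\mu_{q_h}-\mu_{q_h+1}+1$ (given these, the staircase condition on the intermediate entries follows from strict increase within the $(h+1)$st carrel). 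So fix such an $h$; without loss of generality $\mu_{q_h}=\gamma_{q_h}\le\gamma'_{q_h}$. If $\mu_{q_h+1}=\gamma_{q_h+1}$, then the gapless hypothesis holds for $\gamma$ at $h$ with this same $s$, so gaplessness of $\gamma$ gives $q_h+s\le q_{h+1}$ and $\gamma_{q_h+s}=\gamma_{q_h}=\mu_{q_h}$, whence $\mu_{q_h+s}\le\mu_{q_h}$; since $\mu\in UI_R(n)$ also $\mu_{q_h+s}\ge\mu_{q_h+1}+(s-1)=\mu_{q_h}$, so $\mu_{q_h+s}=\mu_{q_h}$. If instead $\mu_{q_h+1}<\gamma_{q_h+1}$, so $\mu_{q_h+1}=\gamma'_{q_h+1}$, then $\gamma'_{q_h}\ge\gamma_{q_h}>\gamma'_{q_h+1}$, hence the gapless hypothesis holds for $\gamma'$ at $h$ with value $s':=\gamma'_{q_h}-\gamma'_{q_h+1}+1\ge s$; gaplessness of $\gamma'$ gives $q_h+s'\le q_{h+1}$ (so $q_h+s\le q_{h+1}$) and $\gamma'_{q_h+s'}=\gamma'_{q_h}$, and strict increase of $\gamma'$ within the carrel then forces $\gamma'_{q_h+s}\le\gamma'_{q_h}-(s'-s)=\gamma'_{q_h+1}+(s-1)=\mu_{q_h}$, whence $\mu_{q_h+s}\le\mu_{q_h}$; again $\mu\in UI_R(n)$ supplies the reverse inequality, so $\mu_{q_h+s}=\mu_{q_h}$. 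Thus $\mu$ is gapless, and $UG_R(n)$ is a meet sublattice of $U_R(n)$.

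Finally, for $UGC_R(n)$: given $\eta,\eta'\in UGC_R(n)$, we have $\eta\wedge\eta'\in U_R(n)$, and the meet-preservation just proved gives $\Delta_R(\eta\wedge\eta')=\Delta_R(\eta)\wedge\Delta_R(\eta')$; by definition of $UGC_R(n)$ both $\Delta_R(\eta)$ and $\Delta_R(\eta')$ lie in $UG_R(n)$, so by the previous paragraph $\Delta_R(\eta\wedge\eta')\in UG_R(n)$, which is precisely the statement $\eta\wedge\eta'\in UGC_R(n)$. The only step that genuinely requires work is the case analysis showing $\mu=\gamma\wedge\gamma'$ is gapless: the subtlety is that the ``witnessing'' staircase at a divider may be inherited from $\gamma$ in one case and from $\gamma'$ in the other, so in each case one must combine that tuple's gaplessness with the $R$-increasingness of $\mu$ to recover the required entries.
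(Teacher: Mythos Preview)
Your proof is correct. The paper states this Fact without proof (introducing it only with ``In passing we note:''), so there is no argument to compare against directly; the statement's phrasing does sketch the intended logical flow --- first $UG_R(n)$ is meet-closed, then monotonicity of $\Delta_R$, then meet-preservation, then $UGC_R(n)$ is meet-closed --- and your proof fills in exactly these steps.

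One remark on the logical structure: the paper's wording ``This implies that the $R$-core map on $U_R(n)$ preserves meet'' could be read as claiming that monotonicity \emph{alone} yields meet-preservation, which of course is false in general. Your argument correctly identifies the extra ingredient: the characterization of $\Delta_R(\upsilon)$ as the maximum element of $UI_R(n)$ lying below $\upsilon$ (this is essentially the content of the first part of the proof of Lemma~\ref{lemma608.2}(i)), combined with the fact that $UI_R(n)$ is itself a sublattice. From that characterization both monotonicity and meet-preservation drop out immediately, so your organization is cleaner than a bare reading of the Fact would suggest. The only part requiring genuine work is the direct verification that $\gamma\wedge\gamma'$ is gapless, and your two-case analysis (according to which of $\gamma,\gamma'$ realizes the minimum at index $q_h+1$) handles this correctly; in Case~2 you could even note that the gapless staircase of $\gamma'$ gives $\gamma'_{q_h+s}=\mu_{q_h}$ exactly rather than just $\le$, but the inequality you wrote suffices.
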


\noindent Both $UG_R(n)$ and $UGC_R(n)$ fail to be join sublattices.

\section{Equivalence classes in $\mathbf{\emph{U}}_\mathbf{\emph{R}}\mathbf{\emph{(n)}}$ and $\mathbf{\emph{UF}}_\mathbf{\emph{R}}\mathbf{\emph{(n)}}$, inverses}

Here we present results needed to study the sets of tableaux of shape $\lambda$ with given row bounds in Section 12.  There we reduce that study to the study of the following sets of $R$-increasing tuples, after we determine $R := R_\lambda \subseteq [n-1]$ from $\lambda$:  For $\beta \in U_R(n)$, set $\{ \beta \}_R := \{ \epsilon \in UI_R(n):  \epsilon \leq \beta \}$.  This is not \'{a} priori a principal ideal in $UI_R(n)$, since it is possible that $\beta \notin UI_R(n)$.  But we will see that for any $\beta$ there exists $\alpha \in UI_R(n)$ such that $\{ \beta \}_R$ is the principal ideal $[\alpha]$ in $UI_R(n)$.

Define an equivalence relation $\sim_R$ on $U_R(n)$ as follows:  Let $\upsilon, \upsilon^\prime \in U_R(n)$.  We define $\upsilon \sim_R \upsilon^\prime$ if $\{ \upsilon \}_R = \{ \upsilon^\prime \}_R$.  Sometimes we restrict $\sim_R$ from $U_R(n)$ to $UGC_R(n)$, or further to $UF_R(n)$.  We denote the equivalence classes of $\sim_R$ in these three sets respectively by $\langle \upsilon \rangle_{\sim_R}$, $\langle \eta \rangle_{\sim_R}^G$, and $\langle \phi \rangle_{\sim_R}^F$.  We indicate intervals in $UGC_R(n)$ and $UF_R(n)$ respectively with $[ \cdot , \cdot ]^G$ and $[ \cdot, \cdot ]^F$.

\begin{lem}\label{lemma608.2}Let $\upsilon \in U_R(n), \eta \in UGC_R(n)$, and $\phi \in UF_R(n)$.

\noindent (i)  Here $\{ \upsilon \}_R = [\Delta_R(\upsilon)] \subseteq UI_R(n)$.  So $\upsilon' \sim_R \upsilon$ for some $\upsilon' \in U_R(n)$ if and only if $\Delta_R(\upsilon') = \Delta_R(\upsilon)$ if and only if $\upsilon'$ has the same $R$-critical list as $\upsilon$.

\noindent (ii)  The equivalence classes $\langle \upsilon \rangle_{\sim_R}$, $\langle \eta \rangle_{\sim_R}^G$, and $\langle \phi \rangle_{\sim_R}^F$ are closed respectively in $U_R(n), UGC_R(n)$, and $UF_R(n)$ under the meet and the join operations for $U_R(n)$.  \end{lem}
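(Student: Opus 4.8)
The plan is to get (i) from two basic properties of the $R$-core map --- that it is order-preserving and that it fixes $UI_R(n)$ pointwise --- and then to reduce (ii) to a stability statement for $R$-critical lists under the entrywise $\min$ and $\max$ operations.

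For (i) I would argue as follows. By Fact~\ref{fact604.2}, $\Delta_R(\upsilon)\in UI_R(n)$ and $\Delta_R(\upsilon)\le\upsilon$, so every element of the principal ideal $[\Delta_R(\upsilon)]=\{\epsilon\in UI_R(n):\epsilon\le\Delta_R(\upsilon)\}$ is $\le\upsilon$ and hence lies in $\{\upsilon\}_R$. Conversely, if $\epsilon\in UI_R(n)$ with $\epsilon\le\upsilon$, then applying the order-preserving map $\Delta_R$ (the last Fact of Section~4) and using $\Delta_R(\epsilon)=\epsilon$ (Fact~\ref{fact604.2}) gives $\epsilon=\Delta_R(\epsilon)\le\Delta_R(\upsilon)$, so $\epsilon\in[\Delta_R(\upsilon)]$; thus $\{\upsilon\}_R=[\Delta_R(\upsilon)]$. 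The chain of equivalences then follows: $\upsilon'\sim_R\upsilon$ says $[\Delta_R(\upsilon')]=[\Delta_R(\upsilon)]$ as principal ideals of $UI_R(n)$, and since each principal ideal contains its generator this forces $\Delta_R(\upsilon')=\Delta_R(\upsilon)$, which by Fact~\ref{fact604.2} is equivalent to $\upsilon'$ having the same $R$-critical list as $\upsilon$.

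For (ii), by (i) an equivalence class of $\sim_R$ is exactly the set of upper $R$-tuples with a fixed $R$-critical list $\mathcal{C}$, so it is enough to show that if $\upsilon',\upsilon''\in U_R(n)$ share the $R$-critical list $\mathcal{C}$, then both $\upsilon'\wedge\upsilon''$ and $\upsilon'\vee\upsilon''$ have $R$-critical list $\mathcal{C}$. Granting this, the three assertions follow at once: in $UF_R(n)$, the entrywise meet and join of upper flags are again upper flags (as $UF_R(n)$ is a sublattice of $P(n)$) lying in the same class; in $UGC_R(n)$, the list $\mathcal{C}$ is a flag $R$-critical list by Proposition~\ref{prop604.4}(iii), so $\upsilon'\wedge\upsilon''$ and $\upsilon'\vee\upsilon''$, having $R$-critical list $\mathcal{C}$, lie in $UGC_R(n)$ again by Proposition~\ref{prop604.4}(iii). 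To prove the stability claim, fix a carrel $(q_{h-1},q_h]$, let $q_h=x_1>x_2>\cdots>x_{f_h}>q_{h-1}$ be the critical indices of $\mathcal{C}$ in it, and write $y_{x_u}=\upsilon'_{x_u}=\upsilon''_{x_u}$ for the critical entries; denote by $\upsilon$ either one of $\upsilon'\wedge\upsilon''$, $\upsilon'\vee\upsilon''$, so $\upsilon_{x_u}=y_{x_u}$ in both cases. Running the critical-index recursion for $\upsilon$ and inducting on $u$: for any $j$ with $x_u<j<x_{u-1}$, the maximality of $x_u$ for $\upsilon'$ and for $\upsilon''$ gives $\upsilon'_j-j\ge y_{x_{u-1}}-x_{u-1}$ and $\upsilon''_j-j\ge y_{x_{u-1}}-x_{u-1}$, hence $\upsilon_j-j\ge y_{x_{u-1}}-x_{u-1}$ for both the entrywise minimum and maximum, so such $j$ fails $\upsilon_{x_{u-1}}-\upsilon_j>x_{u-1}-j$; while at $j=x_u$ we have $\upsilon_{x_u}-x_u=y_{x_u}-x_u<y_{x_{u-1}}-x_{u-1}$ because $(x_{u-1},y_{x_{u-1}})$ and $(x_u,y_{x_u})$ are consecutive critical pairs, so the inequality holds and $x_u$ is the next index chosen. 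The same estimates show the recursion for $\upsilon$ halts at $x_{f_h}$. Hence $\upsilon$ has $R$-critical list $\mathcal{C}$.

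I expect the join half of (ii) to be the main obstacle. The meet half also has a soft proof, since the last Fact of Section~4 gives $\Delta_R(\upsilon'\wedge\upsilon'')=\Delta_R(\upsilon')\wedge\Delta_R(\upsilon'')=\Delta_R(\upsilon)$ directly; but $\Delta_R$ does not preserve joins and $UGC_R(n)$ is not a join-sublattice of $U_R(n)$, so no such shortcut exists for joins. What makes the join argument work is precisely part (i): since class membership is governed only by the $R$-critical list, and since two tuples sharing a list already have ``entry minus index'' at least the running threshold at every non-critical index between consecutive critical indices, taking entrywise maxima cannot disturb the recursion --- and this also forces equality of the terminal index $x_{f_h}$.
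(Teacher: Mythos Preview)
Your argument is correct. For part (ii) you follow the same idea as the paper but supply the details it omits: the paper simply asserts that ``the critical lists of the join and the meet of two elements of $U_R(n)$ that share a critical list are that mutual critical list,'' whereas you verify this by running the recursion and checking the inequality $\upsilon_j-j\ge y_{x_{u-1}}-x_{u-1}$ at every non-critical $j$ between consecutive critical indices. Your treatment of the $UGC_R(n)$ case via Proposition~\ref{prop604.4}(iii) is likewise more explicit than the paper's ``it can be seen.''

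For part (i) your route genuinely differs. The paper proves $\{\upsilon\}_R\subseteq[\Delta_R(\upsilon)]$ by a direct index-by-index computation: at a critical index $x$ one has $\Delta_R(\upsilon)_x=\upsilon_x\ge\alpha_x$, and at a non-critical index $i$ with next critical index $x$ the $R$-increasing condition on $\alpha$ gives $\alpha_i\le\alpha_x-(x-i)\le\upsilon_x-(x-i)=\Delta_R(\upsilon)_i$. You bypass this by invoking the monotonicity of $\Delta_R$ (the unnumbered Fact at the end of Section~4) together with $\Delta_R|_{UI_R(n)}=\mathrm{id}$, obtaining $\epsilon=\Delta_R(\epsilon)\le\Delta_R(\upsilon)$ in one stroke. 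This is cleaner, but since the paper states that Fact without proof, one should check there is no circularity. There is not: within each carrel $(q_{h-1},q_h]$ the definition of the critical recursion unwinds to the closed form $\Delta_R(\upsilon)_i=i+\min_{i\le j\le q_h}(\upsilon_j-j)$, from which monotonicity is immediate and independent of Lemma~\ref{lemma608.2}. What your approach buys is brevity and a conceptual explanation (the core is the largest $R$-increasing tuple below $\upsilon$); what the paper's direct computation buys is self-containment at this point in the exposition.
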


\begin{proof}First we show that $\{ \upsilon \}_{\sim_R} \subseteq [\Delta_R(\upsilon)]$, which is the most interesting step for (i).  Set $\delta := \Delta_R(\upsilon)$ and let $\alpha \in UI_R(n)$ be such that $\alpha \leq \upsilon$.  Let $x$ be a critical index of $\upsilon$.  So $\delta_x = \upsilon_x$.  Here $\alpha \leq \upsilon$ implies $\alpha_x \leq \delta_x$.  Now let $i$ be a non-critical index of $\upsilon$ and let $x$ be the smallest critical index of $\upsilon$ that is larger than $i$.  Here $\delta_i = \upsilon_x - (x-i)$.  Since $\alpha \in UI_R(n)$ we have $\alpha_i \leq \alpha_x - (x-i)$.  So $\alpha \leq \upsilon$ implies $\alpha_i \leq \delta_i$.  For (ii), note that the critical lists of the join and the meet of two elements of $U_R(n)$ that share a critical list are that mutual critical list.  If the two such elements were in $UGC_R(n)$, it can be seen that their meet and join are in $UGC_R(n)$.  Recall that $U_R(n)$ and $UF_R(n)$ are lattices. \end{proof}

\noindent So by Part (i) we can view these three equivalence classes as consisting of $R$-tuples that share (flag) $R$-critical lists.  And by Part (ii), each of these equivalence classes has a unique minimal and a unique maximal element under the entrywise partial orders.  We denote the minimums of $\langle \upsilon \rangle_{\sim_R} \subseteq U_R(n)$ and of $\langle \eta \rangle_{\sim_R}^G \subseteq UGC_R(n)$ by $\utilde{\upsilon}$ and \d{$\eta$} respectively.  We call the maximums of $\langle \upsilon \rangle_{\sim_R} \subseteq U_R(n)$ and of $\langle \eta \rangle_{\sim_R}^G \subseteq UGC_R(n)$ the \emph{$R$-shell of $\upsilon$} and the \emph{$R$-canopy of $\eta$} and denote them by $\tilde{\upsilon}$ and $\dot{\eta}$ respectively.  For the class $\langle \phi \rangle_{\sim_R}^F \subseteq UF_R(n)$, we call and denote these respectively the \emph{$R$-floor \b{$\phi$} of $\phi$} and the \emph{$R$-ceiling $\bar{\phi}$ of $\phi$}.

These definitions give the containments $\langle \upsilon \rangle_{\sim_R} \subseteq [$$\utilde{\upsilon}$,$\tilde{\upsilon}], \langle \eta \rangle_{\sim_R}^G \subseteq [$\d{$\eta$}$,\dot{\eta}]^G$, and $\langle \phi \rangle_{\sim_R}^F \subseteq [$\b{$\phi$},$\bar{\phi}]^F$ for our next result:

\begin{prop}\label{prop608.4}Let $\upsilon \in U_R(n), \eta \in UGC_R(n)$, and $\phi \in UF_R(n)$.

\noindent (i)  Here $\utilde{\upsilon}$ $ = \Delta_R(\upsilon)$, the $R$-core of $\upsilon$.  In $U_R(n)$ we have $\langle \upsilon \rangle_{\sim_R} = [$$\utilde{\upsilon}$, $\tilde{\upsilon}]$.  The $R$-core $\utilde{\upsilon}$ of $\upsilon$ (respectively $R$-shell $\tilde{\upsilon}$ of $\upsilon$) is the $R$-increasing upper tuple (respectively $R$-shell tuple) for the $R$-critical list of $\upsilon$.

\noindent (ii)  We have $[$$\utilde{\upsilon}$, $\tilde{\upsilon}] \subseteq UGC_R(n)$ or $[$$\utilde{\upsilon}$, $\tilde{\upsilon}] \subseteq U_R(n) \backslash UGC_R(n)$, depending on whether $\upsilon \in UGC_R(n)$ or not.  We also have \d{$\eta$} = $\utilde{\eta}$ and $\dot{\eta} = \tilde{\eta}$.  And $\langle \eta \rangle_{\sim_R}^G =  [$\d{$\eta$}, $\dot{\eta}]^G = [\utilde{\eta}, \tilde{\eta}] = \langle \eta \rangle_{\sim_R}$:  The equivalence classes $UGC_R(n) \supseteq \langle \eta \rangle_{\sim_R}^G$ and $\langle \eta \rangle_{\sim_R} \subseteq U_R(n)$ are the same subset of $U_R(n)$, which is an interval in both contexts.  The $R$-core \d{$\eta$} of $\eta$ (respectively $R$-canopy $\dot{\eta}$ of $\eta$) is the gapless $R$-tuple (respectively $R$-canopy tuple) for the flag $R$-critical list of $\eta$.

\noindent (iii)  In $UF_R(n)$ we have $\langle \phi \rangle_{\sim_R}^F =  [$\b{$\phi$}, $\bar{\phi}]^F$.  The $R$-floor \b{$\phi$} of $\phi$ (respectively $R$-ceiling $\bar{\phi}$ of $\phi$) is the $R$-floor flag (respectively $R$-ceiling flag) for the flag $R$-critical list of $\phi$.  We have $[$\b{$\phi$}, $\bar{\phi}]^F \subseteq $ \\ $[$\d{$\phi$}, $\dot{\phi} ] = \langle \phi \rangle_{\sim_R} \subseteq UGC_R(n)$.  \end{prop}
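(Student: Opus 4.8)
The plan is to prove the three parts of Proposition \ref{prop608.4} by exploiting the already-established identification (Lemma \ref{lemma608.2}(i)) of $\sim_R$-classes with collections of $R$-tuples sharing a common $R$-critical list, together with the bijective dictionary between (flag) $R$-critical lists and the six distinguished $R$-tuples provided by Proposition \ref{prop604.6} and Corollary \ref{cor604.8}. First I would dispatch Part (i). By Lemma \ref{lemma608.2}(i) we have $\{\upsilon\}_R = [\Delta_R(\upsilon)]$ in $UI_R(n)$, and every $\upsilon' \sim_R \upsilon$ has the same $R$-critical list as $\upsilon$. Since $\utilde{\upsilon}$ is by definition the minimum of $\langle\upsilon\rangle_{\sim_R}$, I need only observe that $\Delta_R(\upsilon)$ itself lies in $\langle\upsilon\rangle_{\sim_R}$ (it has the same $R$-critical list by Fact \ref{fact604.2}) and that it sits below every member of the class; the latter follows because any $\upsilon'$ in the class satisfies $\Delta_R(\upsilon') = \Delta_R(\upsilon)$, and by Fact \ref{fact604.2} $\Delta_R(\upsilon') \leq \upsilon'$. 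Hence $\utilde{\upsilon} = \Delta_R(\upsilon)$. The claim $\langle\upsilon\rangle_{\sim_R} = [\utilde{\upsilon},\tilde{\upsilon}]$ then needs the reverse inclusion of the containment noted before the proposition: I must check that every $R$-tuple $\upsilon''$ with $\utilde{\upsilon} \leq \upsilon'' \leq \tilde{\upsilon}$ has the same $R$-critical list as $\upsilon$. This is the crux: since $\utilde{\upsilon}$ and $\tilde{\upsilon}$ share the critical list $\mathcal{C}$, the critical indices and critical entries are pinned at both ends, and a monotonicity/squeezing argument on the recursive definition of critical pairs shows that no new critical pair can appear and none can disappear for $\upsilon''$. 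The identifications of $\utilde{\upsilon}$ and $\tilde{\upsilon}$ with the $R$-increasing upper tuple and the $R$-shell tuple for $\mathcal{C}$ are then immediate from Proposition \ref{prop604.6}(i) (uniqueness).

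For Part (ii) I would first treat the dichotomy $[\utilde{\upsilon},\tilde{\upsilon}] \subseteq UGC_R(n)$ or $[\utilde{\upsilon},\tilde{\upsilon}] \cap UGC_R(n) = \emptyset$: by Proposition \ref{prop604.4}(iii), membership in $UGC_R(n)$ is equivalent to having a flag $R$-critical list, and all members of the interval share the single critical list $\mathcal{C}$ of $\upsilon$, so the property holds either for all of them or for none. Next, for $\eta \in UGC_R(n)$, the class $\langle\eta\rangle_{\sim_R}^G$ in $UGC_R(n)$ and the class $\langle\eta\rangle_{\sim_R}$ in $U_R(n)$ both equal $\{\upsilon' : \Delta_R(\upsilon') = \Delta_R(\eta)\}$ — but the first equals that set intersected with $UGC_R(n)$, which by the dichotomy just proved is the whole set $[\utilde{\eta},\tilde{\eta}]$. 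Hence the two classes coincide, giving \d{$\eta$} $= \utilde{\eta}$ and $\dot{\eta} = \tilde{\eta}$ and the chain of equalities $\langle\eta\rangle_{\sim_R}^G = [\text{\d{$\eta$}},\dot{\eta}]^G = [\utilde{\eta},\tilde{\eta}] = \langle\eta\rangle_{\sim_R}$. That $\utilde{\eta}$ is gapless and $\dot{\eta}$ is the $R$-canopy tuple for the flag critical list of $\eta$ follows from Part (i)'s identifications combined with Proposition \ref{prop604.6}(ii): the $R$-increasing upper tuple for a flag critical list is exactly the gapless $R$-tuple (Proposition \ref{prop604.4}(iv)), and the $R$-shell tuple for a flag critical list is the $R$-canopy tuple.

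Part (iii) is the most delicate. The identity $\langle\phi\rangle_{\sim_R}^F = [\text{\b{$\phi$}},\bar{\phi}]^F$ requires knowing that the $\sim_R$-class of $\phi$ inside $UF_R(n)$ is an interval in $UF_R(n)$; I would argue that it consists of exactly those flags whose $R$-critical list is the flag critical list $\mathcal{C}$ of $\phi$, then invoke Proposition \ref{prop604.6}(ii) to identify its minimum with the $R$-floor flag $\tau$ and its maximum with the $R$-ceiling flag $\xi$ for $\mathcal{C}$ — here I must verify $\tau \leq \phi' \leq \xi$ for every flag $\phi'$ with critical list $\mathcal{C}$, and conversely that every flag in this sandwich has critical list $\mathcal{C}$. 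The lower bound $\tau \leq \phi'$ uses that a flag cannot dip below the max-of-staircase-and-previous-plateau value forced by the critical entries; the upper bound $\phi' \leq \xi$ uses that the flag, being weakly increasing, cannot exceed the next critical entry to the right (which $\xi$ attains as a plateau). The final containment $[\text{\b{$\phi$}},\bar{\phi}]^F \subseteq [\text{\d{$\phi$}},\dot{\phi}] = \langle\phi\rangle_{\sim_R} \subseteq UGC_R(n)$ follows because $UF_R(n) \subseteq UGC_R(n)$ (Proposition \ref{prop604.4}(ii)) so a flag critical list forces membership in $UGC_R(n)$, and then Part (ii) applied to any $\eta \in [\text{\b{$\phi$}},\bar{\phi}]^F$ gives equality of the two classes and the interval form.

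The main obstacle I anticipate is the "squeezing" lemma implicitly needed throughout: showing that any $R$-tuple entrywise between two $R$-tuples with a common $R$-critical list $\mathcal{C}$ itself has $R$-critical list $\mathcal{C}$. The recursive left-to-right construction of critical pairs depends on the entries in a non-monotone way (it compares differences $\upsilon_{x_{u-1}} - \upsilon_{x_u}$ against index gaps), so one cannot simply cite lattice monotonicity; I would need to carefully argue that at each critical index the entry is forced to equal the critical entry (because $\utilde{\upsilon}$ pins it from below via the staircase and $\tilde{\upsilon}$ pins it from above via the shell value $n$ only at non-critical indices — so actually the critical-entry pinning comes from $\utilde{\upsilon}_x = \tilde{\upsilon}_x = y_x$), and then that no non-critical index can become critical because the staircase lower bound from $\utilde{\upsilon}$ prevents the required drop. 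For the flag case in Part (iii) the analogous squeezing is cleaner because monotonicity of flags does most of the work, but the floor's max-operation still requires a short case check.
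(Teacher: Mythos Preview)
Your overall strategy is correct and largely parallels the paper's: identify $\sim_R$-classes with sets of $R$-tuples sharing a critical list (Lemma \ref{lemma608.2}(i)), use Proposition \ref{prop604.4}(iii) for the $UGC_R$ dichotomy, and invoke the uniqueness in Proposition \ref{prop604.6} to identify the extreme elements with the six distinguished $R$-tuples. Your treatment of Parts (i) and (ii) matches the paper closely, and for the identification of $\tau$ as the minimum flag the paper argues, as you do, by showing no non-critical entry can be lowered without leaving $UF_R(n)$ or changing the critical list.

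The one substantive difference is your ``squeezing lemma.'' You flag as the main obstacle the claim that any $R$-tuple entrywise between two $R$-tuples with common critical list $\mathcal{C}$ also has critical list $\mathcal{C}$, and you propose to prove it by a direct analysis of the recursive definition of critical pairs. The paper bypasses this entirely with a one-line monotonicity argument: from the definition $\{\beta\}_R = \{\epsilon \in UI_R(n) : \epsilon \leq \beta\}$ it is immediate that $\beta \leq \beta'$ implies $\{\beta\}_R \subseteq \{\beta'\}_R$. Hence if $\utilde{\upsilon} \leq \upsilon'' \leq \tilde{\upsilon}$ then $\{\utilde{\upsilon}\}_R \subseteq \{\upsilon''\}_R \subseteq \{\tilde{\upsilon}\}_R$; since $\utilde{\upsilon} \sim_R \tilde{\upsilon}$ forces equality of the outer sets, the middle set equals them too, giving $\upsilon'' \sim_R \upsilon$. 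The same argument handles the flag interval in Part (iii). So the ``non-monotone'' worry you raise about the critical-pair recursion is a red herring: one never needs to touch that recursion, because the equivalence relation is \emph{defined} via the manifestly monotone map $\beta \mapsto \{\beta\}_R$, not via critical lists directly. Your direct route would work, but it is considerably more labor for no gain.
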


\noindent So for $\upsilon \in U_R(n)$ the equivalence classes $\langle \upsilon \rangle_{\sim_R}$ are intervals $[ \utilde{\upsilon}, \tilde{\upsilon}]$ that lie entirely in $U_R(n) \backslash UGC_R(n)$ or entirely in $UGC_R(n)$, in which case they coincide with the equivalence classes $\langle \eta \rangle_{\sim_R}^G = [$\d{$\eta$}, $\dot{\eta}]^G$ for $\eta \in UGC_R(n)$ originally defined by restricting $\sim_R$ to $UGC_R(n)$.  However, although for $\phi \in UF_R(n)$ the equivalence class $\langle \phi \rangle_{\sim_R}^F$ is an interval $[$\b{$\phi$}, $\bar{\phi}]^F$ when working within $UF_R(n)$, it can be viewed as consisting of some of the elements of the interval $[$\d{$\phi$}, $\dot{\phi} ]^G$ of $UGC_R(n)$ (or of $U_R(n)$) that is formed by viewing $\phi$ as an element of $UGC_R(n)$.

\begin{proof}The assertions in (i) and (ii) pertaining to $\utilde{\upsilon}$ alone are apparent from Fact \ref{fact604.2}, Proposition \ref{prop604.6}, Lemma \ref{lemma608.2}, and Proposition \ref{prop604.4}.  We know $\langle \eta \rangle_{\sim_R} = [\utilde{\eta},\tilde{\eta}]$.  The first statement in (ii) gives $[\utilde{\eta},\tilde{\eta}] \subseteq UGC_R(n)$.  So $\langle \eta \rangle_{\sim_R} \subseteq UGC_R(n)$.  Hence $\langle \eta \rangle_{\sim_R}^G = \langle \eta \rangle_{\sim_R}$.  Thus \d{$\eta$}$=\utilde{\eta}$ and $\dot{\eta} = \tilde{\eta}$.  To begin working on the five ``critical list for'' claims beyond that for $\Delta_R(\upsilon)$, apply the constructions given in the paragraph preceeding Proposition \ref{prop604.6} to the $R$-critical list of $\upsilon$ and the flag $R$-critical lists of $\eta$ and $\phi$.  By Proposition \ref{prop604.6} this produces the unique $R$-shell tuple $\rho$, the unique gapless $R$-tuple $\gamma$, the unique $R$-canopy tuple $\kappa$,  the unique $R$-floor flag $\tau$, and the unique $R$-ceiling flag $\xi$ for these (flag) $R$-critical lists.  We want to show that $\tilde{\upsilon} = \rho$, \d{$\eta$} $=\gamma, \dot{\eta} = \kappa$, \b{$\phi$} $=\tau$, and $\bar{\phi} = \xi$; the statements about the $R$-core map $\Delta_R$ will then follow from these equalities since all of these pairs of $R$-tuples would have the same (flag) $R$-critical lists.  Here we indicate how to confirm only \b{$\phi$} $= \tau$; the other four confirmations are easier.  By construction $\tau$ had the same critical list as $\phi$, so $\tau \in \langle \phi \rangle_{\sim R}$.  Recall the prescription for the non-critical entries of $\tau$.  It can be seen that decreasing any of these non-critical entries would produce an $R$-tuple that is not a flag or that has a different $R$-critical list.  Thus $\tau$ is the minimum element \b{$\phi$} of $\langle \phi \rangle_{\sim R} \subseteq UF_R(n)$.

Next we show the non-trivial containment only for (iii):  Let $\epsilon \in [$\b{$\phi$},$\bar{\phi}]^F \subseteq UF_R(n)$.  Here \b{$\phi$} $\leq \epsilon \leq \bar{\phi}$ in $UF_R(n)$ implies $\{ $\b{$\phi$}$ \}_R \subseteq \{ \epsilon \}_R \subseteq \{ \bar{\phi} \}_R$ in $UI_R(n)$.  But \b{$\phi$} $\sim_R \phi \sim_R \bar{\phi}$ implies $\{ $\b{$\phi$}$ \}_R = \{ \bar{\phi} \}_R$.  Thus $\{ \epsilon \}_R = \{ \phi \}_R$, and so $\epsilon \sim_R \phi$. \end{proof}

\begin{cor}\label{cor608.6}The equivalence classes of $\sim_R$ can be indexed as follows:

\noindent (i)  In $U_R(n)$, they are precisely indexed by the $R$-increasing upper tuples or the $R$-shell tuples (or by the $R$-critical lists).

\noindent (ii)  In $UGC_R(n)$, they are precisely indexed by the gapless $R$-tuples or the $R$-canopy tuples (or by the flag $R$-critical lists, the $R$-floor flags, or the $R$-ceiling flags).

\noindent (iii)  In $UF_R(n)$, they are precisely indexed by the $R$-floor flags or the $R$-ceiling flags (or by the flag $R$-critical lists, the gapless $R$-tuples, or the $R$-canopy tuples). \end{cor}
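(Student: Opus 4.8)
The plan is to deduce all three indexing statements from the structural results already assembled in Proposition \ref{prop608.4} together with the bijection Corollary \ref{cor604.8}. The unifying observation is that by Lemma \ref{lemma608.2}(i), two $R$-tuples in $U_R(n)$ are $\sim_R$-equivalent exactly when they have the same $R$-critical list; hence the set of equivalence classes of $\sim_R$ on $U_R(n)$ is in natural bijection with the set of $R$-critical lists that actually occur as the $R$-critical list of some $\upsilon \in U_R(n)$. But every $R$-critical list occurs this way: given an abstract $R$-critical list $\mathcal{C}$, the $R$-increasing upper tuple $\alpha$ for $\mathcal{C}$ constructed before Proposition \ref{prop604.6} lies in $UI_R(n) \subseteq U_R(n)$ and has $R$-critical list $\mathcal{C}$ by Proposition \ref{prop604.6}(i). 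So the classes of $\sim_R$ on $U_R(n)$ biject with all $R$-critical lists, proving the parenthetical claim in (i); composing with the bijections of Corollary \ref{cor604.8} from $R$-critical lists to $R$-increasing upper tuples and to $R$-shell tuples yields the two named indexings in (i).

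For (ii), I would first note that by Proposition \ref{prop608.4}(ii) the restriction of $\sim_R$ to $UGC_R(n)$ has exactly the same equivalence classes as $\sim_R$ has on the sub-collection of those $\upsilon$ lying in $UGC_R(n)$, and by Proposition \ref{prop604.4}(iii) a tuple $\upsilon \in U_R(n)$ lies in $UGC_R(n)$ if and only if its $R$-critical list is a flag $R$-critical list. Therefore the classes of $\sim_R$ on $UGC_R(n)$ biject with the flag $R$-critical lists; and again every flag $R$-critical list $\mathcal{C}$ is realized, e.g.\ by its gapless $R$-tuple $\gamma \in UG_R(n) \subseteq UGC_R(n)$ from Proposition \ref{prop604.6}(ii). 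Composing this bijection with the four bijections of Corollary \ref{cor604.8} from flag $R$-critical lists to gapless $R$-tuples, $R$-canopy tuples, $R$-floor flags, and $R$-ceiling flags gives all the listed indexings in (ii).

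For (iii), the classes of $\sim_R$ on $UF_R(n)$ are indexed by those flag $R$-critical lists arising as the $R$-critical list of some upper flag $\phi \in UF_R(n)$; but the final sentence of Section 3 records that an upper flag has a flag $R$-critical list, and conversely the $R$-floor flag (or $R$-ceiling flag) for a given flag $R$-critical list realizes it within $UF_R(n)$. Hence the classes on $UF_R(n)$ also biject with all flag $R$-critical lists, and Corollary \ref{cor604.8} again supplies the named indexings. The only mild subtlety — and the one place I would be careful — is keeping straight that although $\langle \phi \rangle_{\sim_R}^F$ is a \emph{proper} subset of $\langle \phi \rangle_{\sim_R}^G$ as sets of tuples (Proposition \ref{prop608.4}(iii)), the two restrictions of $\sim_R$ nonetheless have the \emph{same index set}, namely the flag $R$-critical lists, because each flag $R$-critical list is the $R$-critical list of exactly one element of $UF_R(n)$ and of at least one element of $UGC_R(n)$; this is exactly what makes the $R$-floor and $R$-ceiling flags valid labels in both (ii) and (iii). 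No genuinely hard step is expected; the work is entirely bookkeeping, assembling the realizability statements with Corollary \ref{cor604.8}.
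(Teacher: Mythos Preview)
Your argument is correct and follows the same route the paper intends: the corollary is stated without proof precisely because it is meant to fall out of Proposition~\ref{prop608.4} (which identifies the minimum and maximum of each class as the named types) together with Corollary~\ref{cor604.8} (which puts those types in bijection with (flag) $R$-critical lists). Your version makes the mediation through $R$-critical lists a bit more explicit via Lemma~\ref{lemma608.2}(i), but this is the same content.

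Two small slips in your closing paragraph, neither of which affects the argument: you write that each flag $R$-critical list is the $R$-critical list of \emph{exactly one} element of $UF_R(n)$, but of course it is the $R$-critical list of every element of the corresponding class $\langle\phi\rangle_{\sim_R}^F$; you presumably meant ``exactly one equivalence class'' or ``at least one element''. Also, $\langle\phi\rangle_{\sim_R}^F$ need not be a \emph{proper} subset of $\langle\phi\rangle_{\sim_R}^G$; Proposition~\ref{prop608.4}(iii) only gives containment.
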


If the gapless $R$-tuple label for an equivalence class in $UF_R(n)$ is not a flag, we may want to convert it to the unique $R$-floor (or $R$-ceiling) flag that belongs to the same class.  Let $\gamma \in UG_R(n)$.  Find the $R$-critical list of $\gamma$; by Proposition \ref{prop604.4}(iv) it is a flag $R$-critical list.  As in Section 4, compute the $R$-floor flag $\tau$ and the $R$-ceiling $\xi$ for this flag $R$-critical list.  Define the \emph{$R$-floor map} $\Phi_R : UG_R(n) \longrightarrow UFlr(n)$ and \emph{$R$-ceiling map} $\Xi_R: UG_R(n) \longrightarrow UCeil_R(n)$ by $\Phi_R(\gamma) := \tau$ and $\Xi_R(\gamma) := \xi$.  By Proposition \ref{prop604.6}(ii) and Corollary \ref{cor604.8} these maps are well defined bijections; it can be seen that each has inverse $\Delta_R$.  Part (iii) of the following proposition previews Proposition \ref{prop320.2}(ii).

\begin{prop}\label{prop608.10}The following maps are bijections:

\noindent (i)  $\Phi_R:  UG_R(n) \longrightarrow UFlr_R(n)$ has inverse $\Delta_R$.

\noindent (ii)  $\Xi_R:  UG_R(n) \longrightarrow UCeil_R(n)$ has inverse
$\Delta_R$.

\noindent (iii)  $\Pi_R:  UG_R(n) \longrightarrow S_n^{R\text{-}312}$ has inverse $\Psi_R$.  \end{prop}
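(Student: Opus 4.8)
The plan is to prove the three bijection claims by exhibiting explicit two-sided inverses, exploiting that in every case both maps preserve the relevant (flag) $R$-critical list. For (i) and (ii), I would first recall that $\Phi_R$ and $\Xi_R$ were defined precisely as the constructions of Proposition \ref{prop604.6}(ii): starting from $\gamma \in UG_R(n)$, one forms its $R$-critical list $\mathcal{C}$, which by Proposition \ref{prop604.4}(iv) is a flag $R$-critical list, and then $\Phi_R(\gamma) =: \tau$ and $\Xi_R(\gamma) =: \xi$ are the unique $R$-floor flag and $R$-ceiling flag for $\mathcal{C}$. By Corollary \ref{cor604.8} these constructions are bijections from flag $R$-critical lists onto $UFlr_R(n)$ and onto $UCeil_R(n)$ respectively, and the same corollary gives a bijection from flag $R$-critical lists onto $UG_R(n)$ (the gapless $R$-tuple construction). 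Composing, $\Phi_R$ and $\Xi_R$ are bijections. To identify $\Delta_R$ as the inverse in each case, I would observe that $\tau$ (resp. $\xi$) has the same $R$-critical list $\mathcal{C}$ as $\gamma$ by construction, so by Fact \ref{fact604.2} we get $\Delta_R(\tau) = \Delta_R(\gamma)$ and $\Delta_R(\xi) = \Delta_R(\gamma)$; since $\gamma \in UG_R(n) \subseteq UI_R(n)$, Fact \ref{fact604.2} also gives $\Delta_R(\gamma) = \gamma$. Hence $\Delta_R \circ \Phi_R = \mathrm{id}$ and $\Delta_R \circ \Xi_R = \mathrm{id}$ on $UG_R(n)$; a cardinality or surjectivity argument (or running the composite the other way using that $\Delta_R$ restricted to $UFlr_R(n)$, resp. $UCeil_R(n)$, lands in $UG_R(n)$ with the same critical list) then yields the full two-sided inverse.

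For (iii), the strategy is to show $\Psi_R \circ \Pi_R = \mathrm{id}_{UG_R(n)}$ and $\Pi_R \circ \Psi_R|_{S_n^{R\text{-}312}} = \mathrm{id}$. I would start from $\gamma \in UG_R(n)$, set $\pi := \Pi_R(\gamma)$, and first verify $\pi$ is genuinely an $R$-312-avoiding $R$-permutation: the remark following the definition of $\Pi_R$ already establishes that the left-side entries $\pi_i$ can be chosen (enough elements remain available because $\gamma_{q_h+s} = \gamma_{q_h} \geq q_h + s$ when $s \geq 1$) and that inductively $\max\{\pi_1, \dots, \pi_{q_h}\} = \gamma_{q_h}$; one checks from the construction that within each carrel the entries are increasing and that the "gapless" block structure of $\gamma$ forces the 312-avoidance across consecutive carrels. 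Then, letting $B$ be the $R$-chain of $\pi$, I would compute $\Psi_R(\pi)$ carrel by carrel: on the right side $(q_h+s, q_{h+1}]$ of the $(h+1)$st carrel the entries of $\pi$ literally equal those of $\gamma$ and are the top entries of $B_{h+1}$ in the appropriate ranks because $\max\{\pi_1,\dots,\pi_{q_h}\} = \gamma_{q_h} \le \gamma_{q_h+1}$; on the left side $(q_h, q_h+s]$ the entries $\pi_i$ were chosen as the missing ranks of $[\gamma_{q_h}]$, so that after re-sorting, the top $q_{h+1}$ elements of $B_{h+1}$ arranged by rank reproduce exactly $\gamma_{q_h}-s+1, \dots, \gamma_{q_h}$ followed by the right side — i.e. $\Psi_R(\pi) = \gamma$. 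For the other direction one starts from $\pi \in S_n^{R\text{-}312}$, sets $\gamma := \Psi_R(\pi) \in UG_R(n)$ (the membership $\gamma \in UG_R(n)$ for $R$-312-avoiding $\pi$ being exactly the content of the forthcoming Proposition \ref{prop320.2}(ii), which I may invoke), and checks that $\Pi_R(\gamma)$ recovers $\pi$ using that the rank map records precisely the information needed to reconstruct each cohort of $\pi$ from the previous union $B_h$.

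The main obstacle I anticipate is the bookkeeping in part (iii): carefully tracking, across one "step" from carrel $h$ to carrel $h{+}1$, how the rank-$R$-tuple entries $\psi_i = \mathrm{rank}^{q_h - i + 1}(B_h)$ relate to the gapless-tuple block $\gamma_{q_h}-s+1,\dots,\gamma_{q_h}$ and the copied right-side entries, and confirming that the $\mathrm{rank}^d$ choices in the definition of $\Pi_R$ invert this cleanly without off-by-one errors. In particular I expect to lean on the two italicized facts already recorded after the definition of $\Pi_R$ — that $\gamma_{q_h+s} = \gamma_{q_h}$ when $s \ge 1$, and that $\max\{\pi_1,\dots,\pi_{q_h}\} = \gamma_{q_h}$ — since these are exactly what make the two constructions mutually inverse. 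Parts (i) and (ii) should be essentially immediate once the Proposition \ref{prop604.6}/Corollary \ref{cor604.8} bijection machinery is cited, with the only real content being the identification of $\Delta_R$ (rather than some other left inverse) as the inverse, which follows from critical-list preservation plus $\Delta_R|_{UI_R(n)} = \mathrm{id}$.
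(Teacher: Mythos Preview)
Your proposal is correct and matches the paper's approach essentially exactly. The paper does not give a separate proof for Proposition~\ref{prop608.10}: parts (i) and (ii) are justified in the sentence immediately preceding the statement via Proposition~\ref{prop604.6}(ii) and Corollary~\ref{cor604.8} together with the observation that $\Delta_R$ is the inverse (your Fact~\ref{fact604.2} argument), and part (iii) is simply declared to ``preview'' Proposition~\ref{prop320.2}(ii), whose proof is the carrel-by-carrel verification you sketch. One small slip: in your right-side argument for $(q_h+s, q_{h+1}]$ you wrote $\gamma_{q_h} \le \gamma_{q_h+1}$, but this fails when $s \ge 1$; the correct inequality is $\gamma_{q_h} < \gamma_{q_h+s+1}$ (since $\gamma_{q_h+s} = \gamma_{q_h}$ and $\gamma$ is increasing on the carrel), which is what you actually need.
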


To summarize:  In Section 3 the six notions of $R$-increasing upper tuple, $R$-shell tuple, gapless $R$-tuple, $R$-canopy tuple, $R$-floor flag, and $R$-ceiling flag were defined with conditions on the entries of an $R$-tuple.  While introducing the word `for' into these terms, in Section 4 one such $R$-tuple was associated to each (flag) $R$-critical list.  While introducing the word `of' into four of these terms, in this section these kinds of $R$-tuples arose as the extreme elements of equivalence classes.  This began with the classes in $U_R(n)$.  Here these extreme elements were respectively $R$-increasing upper and $R$-shell tuples.  When these classes were restricted to the subset $UGC_R(n)$ of upper $R$-tuples with gapless cores, these extreme elements were respectively gapless $R$-tuples and $R$-canopy tuples.  When these classes were restricted further to the subset $UF_R(n)$ of upper flags, these extreme elements were respectively $R$-floor and $R$-ceiling flags.

\section{Rightmost clump deleting chains}

In this section and the next section we process the $R$-permutations $\pi$ that will index the Demazure tableau sets $\mathcal{D}_\lambda(\pi)$.

Given a set of integers, a \emph{clump} of it is a maximal subset of consecutive integers.  After decomposing a set into its clumps, we index the clumps in the increasing order of their elements.  For example, the set $\{ 2,3,5,6,7,10,13,14 \}$ is the union $L_1 \hspace{.5mm} \cup  \hspace{.5mm} L_2  \hspace{.5mm} \cup \hspace{.5mm}  L_3  \hspace{.5mm} \cup  \hspace{.5mm} L_4$,  where  $L_1 := \{ 2,3 \},  L_2 := \{ 5,6,7 \}, L_3 := \{ 10 \},  L_4 := \{ 13,14 \}$.

For the first part of this section we temporarily work in the context of the full $R = [n-1]$ case.  A chain  $B$  is \textit{rightmost clump deleting} if for $h \in [n-1]$ the element deleted from each $B_{h+1}$ to produce $B_h$ is chosen from the rightmost clump of $B_{h+1}$.  More formally:   It is rightmost clump deleting if for  $h \in [n-1]$  one has $B_{h} = B_{h+1} \backslash \{ b \}$  only when  $[b, m] \subseteq B_{h+1}$,  where  $m := max (B_{h+1})$.  The five rightmost clump deleting chains for  $n = 3$  are shown here:

\begin{figure}[h!]
\hspace{1.5mm}
\setlength\tabcolsep{.1cm}
\begin{tabular}{ccccc}
1& &2& &\cancel{3}\\
 &1& &\cancel{2}& \\
 & &\cancel{1}& & \\
\end{tabular}\hspace{14.5mm}
\begin{tabular}{ccccc}
1& &2& &\cancel{3}\\
 &\cancel{1}& &2& \\
 & &\cancel{2}& & \\
\end{tabular}\hspace{14.5mm}
\begin{tabular}{ccccc}
1& &\cancel{2}& &3\\
 &1& &\cancel{3}& \\
 & &\cancel{1}& & \\
\end{tabular}\hspace{14.5mm}
\begin{tabular}{ccccc}
\cancel{1}& &2& &3\\
 &2& &\cancel{3}& \\
 & &\cancel{2}& & \\
\end{tabular}\hspace{14.5mm}
\begin{tabular}{ccccc}
\cancel{1}& &2& &3\\
 &\cancel{2}& &3& \\
 & &\cancel{3}& & \\
\end{tabular}
\end{figure}

\noindent To form the corresponding $\pi$, record the deleted elements from bottom to top.

After Part (0) restates the definition of this concept, we present four reformulations of it:

\begin{fact}\label{fact320.1}Let $B$ be a chain.  Set $\{ b_{h+1} \} := B_{h+1} \backslash B_h$ for $h \in [n-1]$.  Set $m_h := \max (B_h)$ for $h \in [n]$.  The following conditions are equivalent to this chain being rightmost clump deleting:

\noindent(0)  For $h \in [n-1]$, one has $[b_{h+1}, m_{h+1}] \subseteq B_{h+1}$.

\noindent(i)  For $h \in [n-1]$, one has $[b_{h+1}, m_h] \subseteq B_{h+1}$.

\noindent(ii)  For $h \in [n-1]$, one has $(b_{h+1}, m_h) \subset B_h$.

\noindent(iii)  For $h \in [n-1]$:  If $b_{h+1} < m_h$, then $b_{h+1} = \max([m_h] \backslash B_h)$.

\noindent(iii$^\prime$)  For $h \in [n-1]$, one has $b_{h+1} = \max([m_{h+1}] \backslash B_h)$. \end{fact}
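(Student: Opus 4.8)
The plan is to treat the phrase ``rightmost clump deleting'' through its formal definition, to recognize that condition (0) is merely that definition transcribed, and then to prove each of (i), (ii), (iii), (iii$^\prime$) equivalent to (0) by an elementary case analysis. I would first record the ambient facts that drive everything. Since we are in the full $R = [n-1]$ case, $|B_h| = h$ and $b_{h+1}$ is the unique element of $B_{h+1} \backslash B_h$; in particular $b_{h+1} \in B_{h+1}$ while $b_{h+1} \notin B_h$, and $m_h = \max(B_h) \in B_h \subseteq B_{h+1}$, so $b_{h+1} \neq m_h$. Because $B_{h+1} = B_h \cup \{b_{h+1}\}$ we have $m_{h+1} = \max(m_h, b_{h+1})$, so exactly one of the following holds: either $b_{h+1} < m_h$, in which case $m_{h+1} = m_h$, or $b_{h+1} > m_h$, in which case $m_{h+1} = b_{h+1}$. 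This dichotomy is the engine of every step below.

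Next I would dispatch the link to the definition itself. The formal definition requires, for each $h \in [n-1]$, that the deleted element $b$ (which is exactly $b_{h+1}$) satisfy $[b, m] \subseteq B_{h+1}$ with $m := \max(B_{h+1}) = m_{h+1}$; this is verbatim the clause $[b_{h+1}, m_{h+1}] \subseteq B_{h+1}$ of (0). Hence $B$ is rightmost clump deleting if and only if (0) holds, with no computation needed. It then remains to show (0) $\Leftrightarrow$ (i) $\Leftrightarrow$ (ii) $\Leftrightarrow$ (iii) $\Leftrightarrow$ (iii$^\prime$). Since each of the five conditions is a conjunction over $h \in [n-1]$ of a single clause indexed by $h$, it suffices to prove the five clauses equivalent for each fixed $h$; the global equivalences follow by conjoining over $h$.

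For fixed $h$ I would run the chain of clause-level equivalences using the dichotomy. For (0) $\Leftrightarrow$ (i): when $b_{h+1} > m_h$ both clauses are trivially true (the clause of (0) reads $\{b_{h+1}\} \subseteq B_{h+1}$ since $m_{h+1} = b_{h+1}$, and the interval $[b_{h+1}, m_h]$ of (i) is empty), while when $b_{h+1} < m_h$ the equality $m_{h+1} = m_h$ makes the two clauses literally identical. For (i) $\Leftrightarrow$ (ii): the endpoints $b_{h+1}$ and $m_h$ lie in $B_{h+1}$ automatically, so the clause of (i) is equivalent to $(b_{h+1}, m_h) \subseteq B_{h+1}$; and since $b_{h+1} \notin (b_{h+1}, m_h)$, deleting $b_{h+1}$ cannot affect membership of these interior points, so $(b_{h+1}, m_h) \subseteq B_{h+1}$ iff $(b_{h+1}, m_h) \subseteq B_h$, which is (ii). For (ii) $\Leftrightarrow$ (iii): when $b_{h+1} > m_h$ both clauses are vacuous or trivially true; when $b_{h+1} < m_h$, note $b_{h+1} \in [m_h] \backslash B_h$, so $b_{h+1} = \max([m_h] \backslash B_h)$ exactly when no $j$ with $b_{h+1} < j \leq m_h$ lies outside $B_h$, i.e. $(b_{h+1}, m_h] \subseteq B_h$, which (as $m_h \in B_h$) is precisely $(b_{h+1}, m_h) \subseteq B_h$, the clause of (ii). For (iii) $\Leftrightarrow$ (iii$^\prime$): when $b_{h+1} < m_h$ we have $m_{h+1} = m_h$, so the clause of (iii$^\prime$) is exactly the conclusion of (iii) under its (then satisfied) hypothesis; when $b_{h+1} > m_h$ we have $m_{h+1} = b_{h+1}$ and $b_{h+1} \in [m_{h+1}] \backslash B_h$ is the top element of $[m_{h+1}]$, so $b_{h+1} = \max([m_{h+1}] \backslash B_h)$ holds automatically, matching the vacuous truth of the (iii) clause.

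I do not expect a genuine obstacle: the whole argument is bookkeeping, and the inclusion of (0) as a literal restatement of the definition means the connection to ``rightmost clump deleting'' costs nothing. The one place demanding care is the consistent handling of the two boundary conventions --- the half-open interval $(b_{h+1}, m_h)$ versus the closed $[b_{h+1}, m_h]$, and the truncation by $[m_{h+1}]$ versus $[m_h]$ --- together with the vacuous or trivial cases arising when $b_{h+1} > m_h$. Keeping the identity $m_{h+1} = \max(m_h, b_{h+1})$ and the fact $b_{h+1} \neq m_h$ in front at every step is what makes each case collapse cleanly.
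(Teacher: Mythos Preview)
Your proof is correct and complete. The paper states this as a ``Fact'' without proof, merely noting that Part~(0) restates the definition and that (i)--(iii$'$) are reformulations; your careful case analysis on whether $b_{h+1} < m_h$ or $b_{h+1} > m_h$, driven by the identity $m_{h+1} = \max(m_h, b_{h+1})$, is exactly the sort of elementary verification the paper is tacitly leaving to the reader.
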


The following characterization is related to Part (ii) of the preceeding fact via the correspondence $\pi \longleftrightarrow B$:

\begin{fact}\label{fact320.2}A permutation $\pi$ is 312-avoiding if and only if for every $h \in [n-1]$ we have \\ $(\pi_{h+1}, \max\{\pi_1, ... , \pi_{h}\}) \subset \{ \pi_1, ... , \pi_{h} \}$. \end{fact}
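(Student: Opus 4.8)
The plan is to prove the equivalence \emph{locally}: for each fixed $h \in [n-1]$ I will show that the displayed containment fails at $h$ precisely when $\pi$ exhibits a $312$-pattern whose middle index is $h+1$, and then take the conjunction over all $h$. I adopt the convention that $(\pi_{h+1}, M_h)$, with $M_h := \max\{\pi_1,\ldots,\pi_h\}$, denotes the set of integers strictly between its two endpoints; this is empty when $\pi_{h+1}\ge M_h$, so the containment at $h$ is vacuously true in that case and only the case $\pi_{h+1} < M_h$ needs attention. Note also that every $312$-pattern $a<b<c$ has $b-1 \in [n-1]$, so ranging $h$ over $[n-1]$ captures every possible middle index.

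First I would show that a $312$-pattern forces a failure of the containment at some $h$. Suppose $a<b<c$ with $\pi_a > \pi_b < \pi_c$ and $\pi_a > \pi_c$, and set $h := b-1 \in [n-1]$. Since $a \le h$ we have $M_h \ge \pi_a$, hence $\pi_{h+1} = \pi_b < \pi_c < \pi_a \le M_h$, so $\pi_c$ lies in the open interval $(\pi_{h+1}, M_h)$. But $c > b > h$ and $\pi$ has distinct entries, so $\pi_c \notin \{\pi_1,\ldots,\pi_h\}$. Thus the containment fails at $h$.

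Conversely, I would show a failure produces a $312$-pattern. Fix $h$ with $\pi_{h+1} < M_h$ and suppose some integer $v$ with $\pi_{h+1} < v < M_h$ is not among $\pi_1,\ldots,\pi_h$. Since $1 \le \pi_{h+1} < v < M_h \le n$, we have $v \in [n]$, so $v = \pi_c$ for a unique index $c$; because $v \notin \{\pi_1,\ldots,\pi_h\}$ and $v \ne \pi_{h+1}$, we get $c \ge h+2$. Choosing $a \in [h]$ with $\pi_a = M_h$, the triple $a < h+1 < c$ satisfies $\pi_a = M_h > v = \pi_c > \pi_{h+1}$, i.e. $\pi_a > \pi_{h+1} < \pi_c$ with $\pi_a > \pi_c$, a $312$-pattern. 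Combining the two directions: the displayed condition holds for all $h \in [n-1]$ if and only if no $312$-pattern occurs, i.e. if and only if $\pi$ is $312$-avoiding.

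I do not expect a genuine obstacle here; the proof is pure bookkeeping. The only points demanding a little care are the empty-interval convention (so that the condition is automatically satisfied whenever $\pi_{h+1} \ge M_h$) and verifying that the three indices produced are strictly increasing and lie in $[1,n]$, which uses $h \in [n-1]$ together with $\pi_{h+1} \ge 1$ and $M_h \le n$. I would close with the remark that, under the bijection $\pi \leftrightarrow B$ with $B_h = \{\pi_1,\ldots,\pi_h\}$, the displayed condition is exactly the translation of Fact~\ref{fact320.1}(ii), so this statement simultaneously records that a permutation is $312$-avoiding if and only if its associated chain $B$ is rightmost clump deleting.
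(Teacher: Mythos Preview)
Your proof is correct. The paper does not actually supply a proof of this fact: it is stated as a ``Fact'' with the single remark that it is related to Fact~\ref{fact320.1}(ii) via the correspondence $\pi \leftrightarrow B$. Your direct argument---localizing to a fixed $h$ and showing that failure of the containment at $h$ is equivalent to the existence of a $312$-pattern with middle index $h+1$---fills in exactly what the paper leaves implicit, and your closing remark about the translation to Fact~\ref{fact320.1}(ii) is precisely the connection the paper points to.
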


Since the following result will be generalized by Proposition \ref{prop320.2}, we do not prove it here.

\begin{prop}\label{prop320.1}For the full $R = [n-1]$ case we have:

\noindent (i)  The restriction of the global bijection $\pi \mapsto B$ from $S_n$ to $S_n^{312}$ is a bijection to the set of rightmost clump deleting chains.  Hence there are $C_n$ rightmost clump deleting chains.

\noindent (ii)  The restriction of the rank tuple map $\Psi$ from $S_n$ to $S_n^{312}$ is a bijection to $UF(n)$ whose inverse is $\Pi$.  \end{prop}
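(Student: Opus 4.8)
The plan is to treat the two parts separately; both are the $R=[n-1]$ specialization of Proposition \ref{prop320.2}(i),(ii) --- rightmost clump deleting chains are the $[n-1]$-rightmost clump deleting chains, $UG_{[n-1]}(n)=UF(n)$, and $S_n^{[n-1]\text{-}312}=S_n^{312}$ --- so one option is simply to invoke that result once it has been proved. I would nonetheless also write out the short direct argument that the two Facts just stated make available.

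For part (i), I would combine Fact \ref{fact320.1} with Fact \ref{fact320.2} through the global bijection $\pi\leftrightarrow B$, under which $B_h=\{\pi_1,\dots,\pi_h\}$. Then the element $b_{h+1}$ adjoined in passing from $B_h$ to $B_{h+1}$ is $\pi_{h+1}$, and $m_h:=\max(B_h)=\max\{\pi_1,\dots,\pi_h\}$, so the rightmost clump deleting condition in the reformulation of Fact \ref{fact320.1}(ii), namely $(b_{h+1},m_h)\subset B_h$ for all $h\in[n-1]$, is word-for-word the $312$-avoidance condition of Fact \ref{fact320.2}. Hence $\pi\mapsto B$ carries $S_n^{312}$ exactly onto the set of rightmost clump deleting chains; being the restriction of a bijection it is injective, so it is a bijection onto that set, and the count $C_n$ follows from $|S_n^{312}|=C_n$ as recorded in Section 2.

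For part (ii), I would first unwind $\Psi$ in the full case: each carrel $(h-1,h]$ is the singleton $\{h\}$, so writing $\psi:=\Psi(\pi)$ one gets $\psi_h=\text{rank}^1(B_h)=\max\{\pi_1,\dots,\pi_h\}$. This sequence is weakly increasing and has $h$-th entry at least $h$ (the maximum of $h$ distinct positive integers is at least $h$), so $\Psi$ maps $S_n$, and in particular $S_n^{312}$, into $UF(n)$. Next I would verify that $\Pi$ is a well-defined right inverse of $\Psi|_{S_n^{312}}$: for $\gamma\in UF(n)$ the recursion defining $\Pi$ never runs out of available values (this is the parenthetical estimate given where $\Pi_R$ was introduced), so $\pi:=\Pi(\gamma)$ is an $nn$-tuple with distinct entries, hence a permutation; the running-maximum invariant $\max\{\pi_1,\dots,\pi_h\}=\gamma_h$ built into the recursion gives $\Psi(\Pi(\gamma))=\gamma$; and since each non-maximum entry $\pi_{h+1}$ is set equal to the largest not-yet-used value below the running maximum $\gamma_h$, no unused value below $\gamma_h$ exceeds $\pi_{h+1}$, which is precisely the hypothesis of Fact \ref{fact320.2}, so $\pi\in S_n^{312}$. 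Finally, $\Psi|_{S_n^{312}}$ is a map onto $UF(n)$ with a right inverse between finite sets of the same cardinality $|UF(n)|=C_n=|S_n^{312}|$, hence a bijection with inverse $\Pi$.

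I expect the main obstacle to be the last step of part (ii): showing that the $nn$-tuple produced by the rank-based reconstruction $\Pi$ really is $312$-avoiding, rather than merely an upper tuple or merely a permutation. This requires keeping careful track of which values below the running maximum remain unused as the recursion advances through the carrels, and matching that bookkeeping against the criterion $(\pi_{h+1},\max\{\pi_1,\dots,\pi_h\})\subset\{\pi_1,\dots,\pi_h\}$ of Fact \ref{fact320.2}. Part (i), by contrast, is essentially a one-line dictionary translation once both Facts are in hand.
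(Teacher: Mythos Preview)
Your proposal is correct. The paper does not prove this proposition directly: it explicitly says ``Since the following result will be generalized by Proposition~\ref{prop320.2}, we do not prove it here,'' which is exactly your first option of invoking the general $R$-version.

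Your direct argument is also sound and essentially mirrors the proof of Proposition~\ref{prop320.2} specialized to $R=[n-1]$, with one pleasant shortcut: in part~(ii) you verify only $\Psi\circ\Pi=\mathrm{id}$ and then invoke $|S_n^{312}|=C_n=|UF(n)|$ to conclude bijectivity, whereas the paper's general proof checks both $\Psi_R\circ\Pi_R=\mathrm{id}$ and $\Pi_R\circ\Psi_R=\mathrm{id}$ (the cardinality of $UG_R(n)$ not being known in advance). One small wording quibble: you write that ``each non-maximum entry $\pi_{h+1}$ is set equal to the largest not-yet-used value below the running maximum $\gamma_h$,'' but by definition $\pi_{h+1}$ is the largest unused value in $[\gamma_{h+1}]$, not $[\gamma_h]$. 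Your conclusion is unaffected, since $\gamma_{h+1}\ge\gamma_h$ forces every value in $(\pi_{h+1},\gamma_h)$ to lie in $[\gamma_{h+1}]$ and hence to already be used; but the sentence as written slightly misstates the recursion.
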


\noindent When $R = [n-1]$, the map $\Pi_R =: \Pi : UF(n) \longrightarrow S_n^{312}$ has a simple description.  It was introduced in \cite{PS} for Theorem 14.1.  Given an upper flag $\phi$, recursively construct $\Pi(\phi) =: \pi$ as follows:  Start with $\pi_1 := \phi_1$.  For $i \in [n-1]$, choose $\pi_{i+1}$ to be the maximum element of $[\phi_{i+1}] \backslash \{ \pi_1, ... , \pi_{i} \}$.

We now return to our fixed $R \subseteq [n-1]$.  Let $B$ be an $R$-chain.  More generally, we say $B$ is \textit{$R$-rightmost clump deleting} if this condition holds for each $h \in [r]$:  Let $B_{h+1} =: L_1 \cup L_2 \cup ... \cup L_f$ decompose $B_{h+1}$ into clumps for some $f \geq 1$.  We require $L_e \cup L_{e+1} \cup ... \cup L_f \supseteq B_{h+1} \backslash B_{h} \supseteq L_{e+1} \cup ... \cup L_f$ for some $e \in [f]$.  This condition requires the set $B_{h+1} \backslash B_h$ of new elements that augment the set $B_h$ of old elements to consist of entirely new clumps $L_{e+1}, L_{e+2}, ... , L_f$, plus some further new elements that combine with some old elements to form the next clump $L_e$ in $B_{h+1}$.  Here are some reformulations of the notion of $R$-rightmost clump deleting:

\begin{fact}\label{fact320.3}Let $B$ be an $R$-chain.  For $h \in [r]$, set $b_{h+1} := \min (B_{h+1} \backslash B_{h} )$ and $m_h := \max (B_h)$.  This $R$-chain is $R$-rightmost clump deleting if and only if each of the following holds:

\noindent (i)  For $h \in [r]$, one has $[b_{h+1}, m_{h}] \subseteq B_{h+1}$.

\noindent (ii)  For $h \in [r]$, one has $(b_{h+1}, m_{h}) \subset B_{h+1}$.

\noindent (iii)  For $h \in [r]$, let $s$ be the number of elements of $B_{h+1} \backslash B_{h}$ that are less than $m_{h}$.  These must be the $s$ largest elements of $[m_{h}] \backslash B_{h}$.  \end{fact}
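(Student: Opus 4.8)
The plan is to fix $h \in [r]$ throughout and to work with the pair $B_h \subset B_{h+1}$, the clump decomposition $B_{h+1} = L_1 \cup \cdots \cup L_f$, and the two distinguished integers $m_h := \max(B_h)$ and $b_{h+1} := \min(B_{h+1} \backslash B_h)$; this is the natural $R$-analogue of the reasoning behind Fact \ref{fact320.1}. The equivalence of (i) and (ii) is immediate: since $b_{h+1} \in B_{h+1} \backslash B_h \subseteq B_{h+1}$ and $m_h \in B_h \subseteq B_{h+1}$, both endpoints of the interval already lie in $B_{h+1}$, so $[b_{h+1}, m_h] \subseteq B_{h+1}$ iff $(b_{h+1}, m_h) \subset B_{h+1}$.

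For (i) $\Leftrightarrow$ (iii), first note that $b_{h+1} \neq m_h$ because $m_h \in B_h$, so either $b_{h+1} > m_h$ --- in which case the interval $[b_{h+1}, m_h]$ is empty and there are no new elements below $m_h$, making both (i) and (iii) vacuously true --- or $b_{h+1} < m_h$, in which case $b_{h+1}$ is the least of the $s$ new elements below $m_h$. In the latter case I would observe that $[m_h] \backslash B_h$ is the disjoint union of the set $S$ of those $s$ new elements and the set $[m_h] \backslash B_{h+1}$. Thus the requirement in (iii) that $S$ consist of the $s$ largest elements of $[m_h] \backslash B_h$ says precisely that every element of $[m_h] \backslash B_{h+1}$ is smaller than $\min S = b_{h+1}$; contrapositively, every $y$ with $b_{h+1} \leq y \leq m_h$ lies in $B_{h+1}$, which is (i).

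The substantive step is the equivalence of $R$-rightmost clump deleting with (i). For the forward direction, suppose $e$ witnesses the clump condition, so $L_1, \ldots, L_{e-1} \subseteq B_h$ while $L_{e+1}, \ldots, L_f$ are disjoint from $B_h$. Then $B_h \subseteq L_1 \cup \cdots \cup L_e$, so $m_h \leq \max L_e$, and the new elements of $B_{h+1}$ all lie in $L_e \cup \cdots \cup L_f$, so $b_{h+1} \geq \min L_e$. If some $y \in [b_{h+1}, m_h]$ failed to lie in $B_{h+1}$, it would sit in a gap $\max L_j < y < \min L_{j+1}$ between consecutive clumps; from $y \leq m_h \leq \max L_e$ one gets $j \leq e-1$, whence $y < \min L_{j+1} \leq \min L_e \leq b_{h+1}$, a contradiction. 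Conversely, assuming (i), let $e$ be the largest index with $L_1 \cup \cdots \cup L_{e-1} \subseteq B_h$; this is well defined and $e \leq f$ because $B_h \subsetneq B_{h+1}$, and by maximality there is $y_0 \in L_e \backslash B_h$, a new element with $b_{h+1} \leq y_0$. If some clump $L_j$ with $j > e$ met $B_h$ in an element $z$, then $z \leq m_h$ while $y_0 \leq \max L_e < z$, so $\{y_0, y_0+1, \ldots, z\} \subseteq [b_{h+1}, m_h] \subseteq B_{h+1}$; but this set contains $\max L_e + 1$, which is not in $B_{h+1}$ --- a contradiction. Hence $L_{e+1}, \ldots, L_f$ avoid $B_h$, and with the defining property of $e$ this exhibits $e$ as a witness.

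I expect the main obstacle to be the clump bookkeeping in the last paragraph: one must keep straight that no new element can occupy a clump strictly below the boundary clump $L_e$, remember to rule out the degenerate case $b_{h+1} > m_h$ (the empty interval), and use the fact that consecutive clumps of $B_{h+1}$ are separated by integers outside $B_{h+1}$ in order to convert a hypothetical bad element into a genuine contradiction. None of this is deep, but it is where a hasty argument would go wrong.
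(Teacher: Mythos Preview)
Your proof is correct. The paper states this result as a ``Fact'' without supplying a proof, treating the three reformulations as routine consequences of the definition (parallel to the unproved Fact~\ref{fact320.1} in the full case). Your argument fills in exactly the details one would expect: the endpoint observation for (i)$\Leftrightarrow$(ii), the partition $[m_h]\backslash B_h = S \sqcup ([m_h]\backslash B_{h+1})$ for (i)$\Leftrightarrow$(iii), and the clump-index bookkeeping for the equivalence with the defining condition. There is nothing to compare against, and no gap to report.
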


The following characterization is related to Part (ii) of the preceding fact via the correspondence $\pi \longleftrightarrow B$:

\begin{fact}\label{fact320.4}An $R$-permutation $\pi$ is $R$-312-avoiding if and only if for every $h \in [r]$ one has \\ $( \min\{\pi_{q_{h}+1}, ... , \pi_{q_{h+1}} \} , \max \{\pi_1, ... , \pi_{q_{h}}\} ) \subset \{ \pi_1, ... , \pi_{q_{h+1}} \}$.  \end{fact}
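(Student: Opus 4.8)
The plan is to reduce this statement to Fact \ref{fact320.3}(ii) via the correspondence $\pi \longleftrightarrow B$, where $B_h = \{\pi_1, \dots, \pi_{q_h}\}$, exactly as the analogous Fact \ref{fact320.2} was related to Fact \ref{fact320.1}(ii) in the full case. First I would unwind the translation dictionary: for $h \in [r]$, the set $\{\pi_1, \dots, \pi_{q_{h+1}}\}$ is $B_{h+1}$, the set $\{\pi_1, \dots, \pi_{q_h}\}$ is $B_h$, and the cohort $\{\pi_{q_h+1}, \dots, \pi_{q_{h+1}}\}$ is exactly $B_{h+1} \backslash B_h$. Hence $\max\{\pi_1, \dots, \pi_{q_h}\} = \max(B_h) = m_h$, and because $\pi$ is an $R$-permutation its entries within the $(h+1)^{st}$ carrel are increasing, so $\min\{\pi_{q_h+1}, \dots, \pi_{q_{h+1}}\} = \pi_{q_h+1} = \min(B_{h+1}\backslash B_h) = b_{h+1}$. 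Therefore the open-interval containment $(\min\{\pi_{q_h+1}, \dots, \pi_{q_{h+1}}\}, \max\{\pi_1,\dots,\pi_{q_h}\}) \subset \{\pi_1, \dots, \pi_{q_{h+1}}\}$ appearing in the statement is literally the condition $(b_{h+1}, m_h) \subset B_{h+1}$ of Fact \ref{fact320.3}(ii), for each $h \in [r]$.

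Given that dictionary, the equivalence will follow once I show that "$\pi$ is $R$-312-avoiding" corresponds under $\pi \longleftrightarrow B$ to "$B$ is $R$-rightmost clump deleting." For the forward direction I would argue the contrapositive: if for some $h \in [r]$ there is an integer $v$ with $b_{h+1} < v < m_h$ and $v \notin B_{h+1}$, then choose $a \leq q_h$ with $\pi_a = m_h$ (so $\pi_a > v$), choose an index $b \in (q_h, q_{h+1}]$ with $\pi_b = b_{h+1}$ (so $\pi_a > \pi_b$ and $\pi_b < v$), and note $v \notin B_{h+1} = \{\pi_1, \dots, \pi_{q_{h+1}}\}$ forces $v$ to occur at some index $c > q_{h+1}$; then $\pi_c = v$ satisfies $\pi_b < \pi_c < \pi_a$, giving an $R$-312 pattern with $a \leq q_h < b \leq q_{h+1} < c$. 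Conversely, if $\pi$ is $R$-312-containing with witnesses $a \leq q_{h} < b \leq q_{h+1} < c$ (after shifting the index $h$ in the definition so that $a$ lies in the first $h$ carrels and $b$ in carrel $h+1$; here I would be careful that the definition uses $h \in [r-1]$ with $a \leq q_h$, $b \in (q_h, q_{h+1}]$, $c > q_{h+1}$), then $\pi_b < \pi_c < \pi_a$ with $\pi_a \leq m_h$ and $\pi_b \geq b_{h+1}$, so $\pi_c$ is an integer strictly between $b_{h+1}$ and $m_h$ that lies outside $B_{h+1}$ (since $c > q_{h+1}$), violating $(b_{h+1}, m_h) \subset B_{h+1}$.

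The main obstacle I anticipate is purely bookkeeping rather than conceptual: matching the index ranges in the definition of $R$-312-containing (which quantifies $h \in [r-1]$ with the three indices straddling $q_h$ and $q_{h+1}$) against the per-$h$ condition in Fact \ref{fact320.3}, and confirming that "$a$ in the first $h$ carrels" may be replaced by "$a \leq q_h$" without loss — i.e., that one may always take the violating large entry to be the carrel-maximum $m_h$ and the violating small entry to be the cohort-minimum $b_{h+1}$. This monotonicity reduction works precisely because $\pi$ is $R$-increasing: enlarging $\pi_a$ to $m_h$ only strengthens the inequalities $\pi_a > \pi_b$ and $\pi_a > \pi_c$, and shrinking $\pi_b$ to $b_{h+1}$ only strengthens $\pi_a > \pi_b$ while keeping $\pi_b < \pi_c$ as long as $\pi_c$ still lies in the open interval, which is exactly what the interval condition controls. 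Once these index alignments are pinned down, invoking Fact \ref{fact320.3}(ii) closes the argument.
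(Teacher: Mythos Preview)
Your proposal is correct and matches the paper's intended approach: the paper introduces Fact~\ref{fact320.4} with the sentence ``The following characterization is related to Part (ii) of the preceding fact via the correspondence $\pi \longleftrightarrow B$,'' and leaves the verification implicit. Your direct two-way argument (failure of the interval condition produces a witness $\pi_c = v$ at index $c > q_{h+1}$; an $R$-312 pattern forces $\pi_c \in (b_{h+1}, m_h) \setminus B_{h+1}$ via $\pi_b \geq b_{h+1}$ and $\pi_a \leq m_h$) is exactly the unwinding the paper has in mind, and your observation that the $h = r$ case is vacuous (since $B_{r+1} = [n]$) correctly handles the index-range mismatch you flagged.

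One small comment on framing: you do not actually need to invoke Fact~\ref{fact320.3}(ii) as a black box, and in fact the paper's logic runs the other way --- it uses Fact~\ref{fact320.3}(ii) and Fact~\ref{fact320.4} together to deduce Proposition~\ref{prop320.2}(i). Your second paragraph already proves Fact~\ref{fact320.4} directly (in $B$-notation), so the appeal to Fact~\ref{fact320.3} in your plan is really just a notational translation rather than a logical dependency.
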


Is it possible to characterize the rank $R$-tuple $\Psi_R(\pi) =: \psi$ of an $R$-permutation $\pi$?  An \emph{$R$-flag} is an $R$-increasing upper tuple $\varepsilon$ such that $\varepsilon_{q_{h+1} +1 - u} \geq \varepsilon_{q_{h} +1 - u}$ for $h \in [r]$ and $u \in [\min\{ p_{h+1}, p_{h}\}]$.  It can be seen that $\psi$ is necessarily an $R$-flag.  But the three conditions required so far (upper, $R$-increasing, $R$-flag) are not sufficient:  When $n = 4$ and $R = \{ 1, 3 \}$, the $R$-flag $(3,2,4,4)$ cannot arise as the rank $R$-tuple of an $R$-permutation.  In contrast to the upper flag characterization in the full case, it might not be possible to develop a simply stated sufficient condition for an $R$-tuple to be the rank $R$-tuple  $\Psi_R(\pi)$ of a general $R$-permutation $\pi$.  But it can be seen that the rank $R$-tuple $\psi$ of an $R$-312-avoiding permutation $\pi$ is necessarily a gapless $R$-tuple, since a failure of `gapless' for $\psi$ leads to the containment of an $R$-312 pattern.  Building upon the observation that $UG(n) = UF(n)$ in the full case, this seems to indicate that the notion of ``gapless $R$-tuple'' is the correct generalization of the notion of ``flag'' from $[n-1]$-tuples to $R$-tuples.  (It can be seen directly that a gapless $R$-tuple is necessarily an $R$-flag.)

\begin{prop}\label{prop320.2}For general $R \subseteq [n-1]$ we have:

\noindent (i)  The restriction of the global bijection $\pi \mapsto B$ from $S_n$ to $S_n^{R\text{-}312}$ is a bijection to the set of $R$-rightmost clump deleting chains.

\noindent (ii)  The restriction of the rank $R$-tuple map $\Psi_R$ from $S_n$ to $S_n^{R\text{-}312}$ is a bijection to $UG_R(n)$ whose inverse is $\Pi_R$.  \end{prop}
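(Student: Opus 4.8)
The plan is to handle the two parts separately, but using a common tool: the correspondence $\pi \leftrightarrow B$ between $R$-permutations and $R$-chains, together with the reformulations collected in Facts~\ref{fact320.3} and~\ref{fact320.4}. For part (i), since $\pi \mapsto B$ is already known to be a global bijection from $S_n^R$ to the set of all $R$-chains, it suffices to show that $\pi$ is $R$-312-avoiding if and only if the chain $B$ is $R$-rightmost clump deleting. The cleanest route is to unwind both definitions through the ``middle man'' of Fact~\ref{fact320.3}(ii) and Fact~\ref{fact320.4}: the $R$-rightmost clump deleting condition says $(b_{h+1}, m_h) \subset B_{h+1}$ for each $h \in [r]$, where $b_{h+1} = \min(B_{h+1}\setminus B_h)$ and $m_h = \max(B_h)$; meanwhile, under $\pi \leftrightarrow B$ we have $B_h = \{\pi_1,\dots,\pi_{q_h}\}$, so $m_h = \max\{\pi_1,\dots,\pi_{q_h}\}$ and $b_{h+1} = \min\{\pi_{q_h+1},\dots,\pi_{q_{h+1}}\}$, and $B_{h+1} = \{\pi_1,\dots,\pi_{q_{h+1}}\}$. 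So Fact~\ref{fact320.3}(ii) and Fact~\ref{fact320.4} are literally the same statement, and part (i) reduces to proving those two facts — which presumably will have been established by the time this proposition is reached, or can be proved quickly by a direct pattern-chasing argument (an $R$-312 pattern with $a \leq q_h < b \leq q_{h+1} < c$ is exactly a witness to some integer strictly between $b_{h+1}$ and $m_h$ being absent from $B_{h+1}$).

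For part (ii), I would first prove that $\Psi_R$ restricted to $S_n^{R\text{-}312}$ actually lands in $UG_R(n)$. We already know (from the discussion after the statement of $\Psi_R$) that $\Psi_R(\pi) \in UI_R(n)$ for any $R$-permutation $\pi$; the remark preceding the proposition observes that a failure of the gapless condition for $\psi := \Psi_R(\pi)$ forces an $R$-312 pattern in $\pi$, and I would make this observation precise. Concretely, suppose $\psi_{q_h} > \psi_{q_h+1}$ with $s := \psi_{q_h} - \psi_{q_h+1} + 1$; I need to show $s \leq p_{h+1}$ and that the first $s$ entries of carrel $h+1$ are the consecutive run $\psi_{q_h}-s+1,\dots,\psi_{q_h}$. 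Translating via the ``discus thrower'' model / the rank description $\psi_i = \mathrm{rank}^{q_h-i+1}(B_h)$, the value $\psi_{q_h} = \max(B_h) = m_h$, and $\psi_{q_h+1} = \mathrm{rank}^{p_{h+1}}(B_{h+1})$ is the $p_{h+1}$-th largest element of $B_{h+1}$. The gapless conclusion is then exactly a restatement of Fact~\ref{fact320.3}(iii): the elements of $B_{h+1}\setminus B_h$ that fall below $m_h$ are the $s-1$... — here I'd track the off-by-one carefully — largest elements of $[m_h]\setminus B_h$, which forces the top of carrel $h+1$ to be $m_h$ itself and the entries just below it to decrease by exactly $1$ down to $\psi_{q_h+1}$. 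Contrapositively, if this run structure fails, Fact~\ref{fact320.3}(iii) fails, so by part (i) the chain is not $R$-rightmost clump deleting, so $\pi$ is not $R$-312-avoiding.

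Having shown $\Psi_R(S_n^{R\text{-}312}) \subseteq UG_R(n)$, I would then verify that $\Pi_R$ (defined in Section~3) maps $UG_R(n)$ into $S_n^{R\text{-}312}$ and that $\Pi_R \circ \Psi_R = \mathrm{id}$ on $S_n^{R\text{-}312}$ and $\Psi_R \circ \Pi_R = \mathrm{id}$ on $UG_R(n)$; either composition being the identity, combined with the already-known injectivity statements, gives the bijection. For the composition identities I would argue inductively on $h$, carrel by carrel: the parenthetical remark in the definition of $\Pi_R$ already establishes $\max\{\pi_1,\dots,\pi_{q_h}\} = \gamma_{q_h}$ by induction, and from there one checks that the reconstructed left-side entries of carrel $h+1$, defined via $\mathrm{rank}^d$ of $[\gamma_{q_h}]\setminus\{\pi_1,\dots,\pi_{q_h}\}$, reproduce exactly the ranks that $\Psi_R$ would read off from the augmented set $B_{h+1}$; the right-side entries are carried through unchanged and are automatically the ``new clumps'' above $m_h$. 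That $\Pi_R(\gamma)$ is $R$-312-avoiding follows because its chain satisfies Fact~\ref{fact320.3}(iii) by construction, then part (i) applies.

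I expect the main obstacle to be the bookkeeping in part (ii): getting the rank indices $\mathrm{rank}^{q_h - i + 1}$ versus $\mathrm{rank}^d$, the quantity $s = \gamma_{q_h} - \gamma_{q_h+1} + 1$, and the clump sizes all to line up without an off-by-one error, and confirming that the ``enough elements available'' parenthetical in the definition of $\Pi_R$ (which uses the upper property) is exactly what makes the reconstruction well-defined in every case. The pattern-avoidance equivalences in part (i) are conceptually routine once Facts~\ref{fact320.3} and~\ref{fact320.4} are in hand; the real content is matching $\Psi_R$ and $\Pi_R$ term-by-term against the clump structure of the chains.
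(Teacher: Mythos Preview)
Your proposal is correct and follows essentially the same approach as the paper's own proof: part (i) is dispatched by observing that Fact~\ref{fact320.3}(ii) and Fact~\ref{fact320.4} become literally the same statement under the correspondence $\pi \leftrightarrow B$, and part (ii) is handled by showing $\Psi_R(S_n^{R\text{-}312}) \subseteq UG_R(n)$ (citing the remark just before the proposition), verifying that $\Pi_R(\gamma)$ is an $R$-312-avoiding $R$-permutation via Fact~\ref{fact320.3}(iii) and part (i), and then checking both compositions $\Psi_R \circ \Pi_R$ and $\Pi_R \circ \Psi_R$ are the identity carrel by carrel using the inductive observation $\max\{\pi_1,\dots,\pi_{q_h}\} = \gamma_{q_h}$. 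The paper likewise leaves the rank-index bookkeeping you flag as the ``main obstacle'' to the reader with phrases like ``it can be seen that,'' so your level of detail matches theirs.
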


\begin{proof}Setting $b_h = \min\{\pi_{q_{h}+1}, ... , \pi_{q_{h+1}} \}$ and $m_{h} = \max \{\pi_1, ... , \pi_{q_{h}}\}$, use Fact \ref{fact320.4}, the $\pi \mapsto B$ bijection, and Fact \ref{fact320.3}(ii) to confirm (i).

As noted above, the restriction of $\Psi_R$ to $S_n^{R\text{-}312}$ gives a map to $UG_R(n)$.  Let $\gamma \in UG_R(n)$ and construct $\Pi_R(\gamma) =: \pi$.  Let $h \in [r]$.  Recall that $\max\{ \pi_1, ... , \pi_{q_h} \} = \gamma_{q_h}$.  Since $\gamma$ is $R$-increasing it can be seen that the $\pi_i$ are distinct.  So $\pi$ is an $R$-permutation.  Let $s \geq 0$ be the number of entries of $\{ \pi_{q_{h}+1} , ... , \pi_{q_{h+1}} \}$ that are less than $\gamma_{q_{h}}$.  These are the $s$ largest elements of $[\gamma_{q_{h}}] \backslash \{ \pi_1, ... , \pi_{q_{h}} \}$.  If in the hypothesis of Fact \ref{fact320.3} we take $B_h := \{\pi_1, ... , \pi_{q_h} \}$, we have $m_h = \gamma_{q_h}$.  So the chain $B$ corresponding to $\pi$ satisfies Fact \ref{fact320.3}(iii).  Since Fact \ref{fact320.3}(ii) is the same as the characterization of an $R$-312-avoiding permutation in Fact \ref{fact320.4}, we see that $\pi$ is $R$-312-avoiding.  It can be seen that $\Psi_R[\Pi_R(\gamma)] = \gamma$, and so $\Psi_R$ is surjective from $S_n^{R\text{-}312}$ to $UG_R(n)$.  For the injectivity of $\Psi_R$, now let $\pi$ denote an arbitrary $R$-312-avoiding permutation.  Form $\Psi_R(\pi)$, which is a gapless $R$-tuple.  Using Facts \ref{fact320.4} and \ref{fact320.3}, it can be seen that $\Pi_R[\Psi_R(\pi)] = \pi$.  Hence $\Psi_R$ is injective.  \end{proof}

\section{Projecting and lifting the notion of 312-avoiding}

In Propositions \ref{prop824.2} and \ref{prop824.4} we use the six maps $\Psi, \Pi, \Psi_R, \Pi_R, \Delta_R$, and $\Phi_R$ that we developed for other purposes to relate the notion of $R$-312-avoiding to that of 312-avoiding.  Some of the applications of these maps ``sort'' the entries of the $R$-tuples within their carrels.

If $\sigma \in S_n$ is 312-avoiding, it is easy to see that its $R$-projection $\bar{\sigma} \in S_n^R$ is $R$-312-avoiding.  Let $\pi \in S^R_n$ be $R$-312-avoiding.  Is it the $R$-projection $\bar{\sigma}$ of some 312-avoiding permutation $\sigma \in S_n$?  The following procedure for constructing an answer to this question can be naively developed, keeping in mind Fact \ref{fact320.3}(iii):  Form the $R$-rightmost clump deleting chain $B$ associated to $\pi$.  Set $\sigma_i := \pi_i$ on the first carrel $(0, q_1]$.  Let $h \in [r]$.  Let $s \geq 0$ be the number of elements of $B_{h+1} \backslash B_{h}$ that are less than $\max(B_{h}) =: m$.  List these elements in decreasing order to fill the left side $(q_h, q_h+s]$ of the $(h+1)^{st}$ carrel $(q_h, q_{h+1}]$ of $\sigma$.  Fill the right side $(q_h+s, q_{h+1}]$ of this carrel of $\sigma$ by listing the other $t := p_{h+1} - s$ elements of $B_{h+1} \backslash B_{h}$ in increasing order.  Part (ii) of the following result refers to the ``length'' of a permutation in the sense of Proposition 1.5.2 of \cite{BB}.

\begin{prop}\label{prop824.1}Suppose $\pi \in S^R_n$ is $R$-312-avoiding.

\noindent (i) The permutation $\sigma \in S_n$ constructed here is 312-avoiding and $\bar{\sigma} = \pi$.

\noindent (ii) This $\sigma$ is the unique minimum length 312-avoiding lift of $\pi$.  \end{prop}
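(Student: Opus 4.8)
For (i) the idea is to read everything off the construction. First note that the carrels of $\sigma$ are filled, left to right, by the sets $B_1=\{\pi_1,\dots,\pi_{q_1}\}$ and $B_{h+1}\setminus B_h$ ($h\in[r]$): the first carrel is $\pi$ itself, and carrel $h+1$ receives the $s$ small new elements together with the $t=p_{h+1}-s$ large new elements, whose union is $B_{h+1}\setminus B_h$. A one-line induction then gives $\{\sigma_1,\dots,\sigma_{q_h}\}=B_h$ for all $h$, so $\sigma$ is a permutation whose $h$-th cohort is that of $\pi$; since $\pi$ is $R$-increasing, sorting the cohorts of $\sigma$ returns $\pi$, i.e.\ $\bar\sigma=\pi$. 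For $312$-avoidance I would use Fact~\ref{fact320.2}: one must check, for each $j\ge 2$ at which $\sigma_j$ is not a left-to-right maximum, that the open interval between $\sigma_j$ and $\max\{\sigma_1,\dots,\sigma_{j-1}\}$ lies in $\{\sigma_1,\dots,\sigma_{j-1}\}$. The only such $j$ are the left-side positions $q_h+i$ with $1\le i\le s$ of the carrels $h+1$, because on the first carrel and on every right side the entries are strictly increasing (the $c$'s all exceed $m:=\max(B_h)$, which exceeds every left-side entry) and so are left-to-right maxima. At such a $j$ the running maximum is $m$ and $\sigma_j$ is the $i$-th largest element of $[m]\setminus B_h$ by Fact~\ref{fact320.3}(iii); hence every integer strictly between $\sigma_j$ and $m$ lies in $B_h$ or is one of the larger holes $a_1,\dots,a_{i-1}$, each of which already appears among $\sigma_1,\dots,\sigma_{j-1}$. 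This settles (i).

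For (ii) the approach is to decompose the length. If $\tau$ is any $312$-avoiding lift of $\pi$, then $\bar\tau=\pi$ forces $\tau$ to be obtained from $\pi$ by reordering entries within each carrel, so the number of inversions of $\tau$ is a constant $C_\pi$ (the inversions between distinct carrels, determined by the cohorts, hence by $\pi$) plus the sum over carrels of the inversions internal to each carrel. Moreover, by the Fact~\ref{fact320.2} criterion again, the requirement that $\tau$ be $312$-avoiding splits into $r+1$ independent local requirements, one per carrel: for carrel $h+1$ it asks that, starting from the set $B_h$ and adjoining $\tau_{q_h+1},\tau_{q_h+2},\dots$ one at a time, each adjoined entry lying below the current maximum come with the whole open interval up to that maximum already present — a condition involving only $B_h$ (hence only $\pi$) and the internal order of that carrel. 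So it suffices to show that, among the locally valid internal orders of carrel $h+1$, the one used by $\sigma$ — namely $a_1,\dots,a_s$ followed by $c_1,\dots,c_t$ — is the unique one of minimum inversion number, and that this minimum is $\binom{s}{2}$; the first carrel has no small new elements and its unique inversion-free locally valid order is the increasing one used by $\sigma$.

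The crux is the claim that in any locally valid internal order of carrel $h+1$ the small new elements occur in decreasing relative order $a_1,\dots,a_s$. Indeed, if in such an order some $a_{i'}$ appeared before $a_i$ with $i<i'$ (so $a_i>a_{i'}$), then at the step where $a_{i'}$ is adjoined the running set already contains $B_h$, so its maximum is at least $m>a_{i'}$, and local validity would force the open interval $(a_{i'},\max)$ — which contains $a_i$ since $a_{i'}<a_i<m$ — to be already present, contradicting that $a_i$ comes later. Granting the claim, the $\binom{s}{2}$ small–small pairs are all inversions, so every locally valid order has at least $\binom{s}{2}$ inversions; since every $c_j$ exceeds every $a_i$, equality additionally forces all the $c_j$ to follow all the $a_i$ and to occur in increasing order, which is exactly $\sigma$'s order. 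Summing over carrels gives $\ell(\sigma)\le\ell(\tau)$, with equality iff $\tau=\sigma$.

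I expect the decoupling step and this claim to be the delicate points: one must check carefully that while carrel $h+1$ is filled the running maximum stays equal to $m=\max(B_h)$ until the large new elements appear and then climbs monotonically through $c_1<\dots<c_t$, and that the local $312$-condition for carrel $h+1$ genuinely cannot be affected by the internal orders chosen in the other carrels. The induction and interval bookkeeping in (i) and the inversion count in (ii) are then routine.
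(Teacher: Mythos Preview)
Your argument is correct and follows essentially the same strategy as the paper's proof: for (ii) both you and the paper observe that cross-carrel inversions are fixed by the cohorts, that the $s$ ``small'' new elements in each carrel must appear in decreasing relative order in any 312-avoiding lift (else $m$ from an earlier carrel serves as the `3' for a 312-pattern among two of them), and that length minimality then forces the $t$ ``large'' new elements to be rightmost and increasing.

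The one mild difference is in (i): the paper verifies 312-avoidance of $\sigma$ by direct casework on which carrels contain the `3', `1', `2' entries of a putative pattern, whereas you invoke the interval criterion of Fact~\ref{fact320.2} and check it only at the left-side positions (the only non-left-to-right maxima). Your route is a bit more streamlined here, and your explicit decoupling of the Fact~\ref{fact320.2} condition into per-carrel local conditions in (ii) makes the minimality argument cleaner than the paper's somewhat terse presentation; but the underlying ideas are the same.
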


\begin{proof}The construction of $\sigma$ re-orders the cohorts of $\pi$ within their carrels, and so $\bar{\sigma} = \pi$.  Such re-orderings cannot create a violation of 312-avoiding that involves three cohorts.  Let $h \in [r]$ and consider the $(h+1)^{st}$ carrel.  The first $s$ entries here are decreasing, the last $t$ entries are increasing, and the first $s$ entries are smaller than the last $t$ entries.  So there is no 312-violation entirely within this cohort.  Consider a `3' entry in an earlier cohort being in a potential violation.  Since the last $t$ entries here are all greater than that entry, a violation with the `12' entries being here would have to involve two of the first $s$ entries.  But these are decreasing.  The `31' entries cannot occur on $(0, q_1]$.  Consider having the `31' entries in this $(h+1)^{st}$ cohort.  To decrease, both would have to come from the first $s$ entries.  The `3' entry would be less than $m$.  But then the fact rules out having the `2' entry occur in a later cohort.

Let $\sigma^\prime$ be any 312-avoiding lift of $\pi$.  Any other ordering of the entries on $(0,q_1]$ would make $\sigma^\prime$ longer than $\sigma$.  If the $t$ largest entries of the $(h+1)^{st}$ cohort did not appear in the rightmost positions or if they were not listed in ascending order, then $\sigma^\prime$ would be longer than $\sigma$.  The $s$ smallest entries here are all smaller than $m$.  If these entries do not appear in descending order, then $m$ could serve as the `3' entry for a violation in which the `12' entries would be drawn from these first $s$ entries. \end{proof}

This lifting process can also be described using three existing maps.  To pass from the ``degenerate'' $R$-world to the full $R = [n-1]$ world of ordinary permutations, for the second equality below we use the map $\Pi$.  This produces a final output of a permutation from the given $R$-permutation input.  We will use the following result to derive a weaker version of our Theorem \ref{theorem737.1}(ii) from Theorem 14.1 of \cite{PS}:

\begin{prop}\label{prop824.2}Suppose $\pi \in S_n^R$ is $R$-312-avoiding.  Let $\sigma \in S_n$ be the minimum length 312-avoiding lift of $\pi$.  Then $\Delta_R[\Psi(\sigma)] = \Psi_R(\pi)$ and so $\sigma = \Pi[\Phi_R(\Psi_R(\pi))]$.  \end{prop}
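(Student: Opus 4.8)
The plan is to compute both sides of the first asserted identity explicitly, carrel by carrel. Write $B$ for the $R$-chain of $\pi$; since $\bar\sigma=\pi$, this is also the $R$-chain of $\sigma$. Put $M_h:=\max(B_h)$, and for $h\in[r]$ let $s_h$ be the number of elements of $B_{h+1}\setminus B_h$ that are less than $M_h$ and $t_h:=p_{h+1}-s_h$. Recall from the construction preceding Proposition~\ref{prop824.1} that $\sigma$ agrees with $\pi$ on the first carrel (where both are increasing), while on the $(h+1)^{st}$ carrel $\sigma$ lists the $s_h$ elements of $B_{h+1}\setminus B_h$ below $M_h$ in decreasing order and then the $t_h$ elements above $M_h$, say $e_1<\dots<e_{t_h}$, in increasing order. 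Once $\Delta_R[\Psi(\sigma)]=\Psi_R(\pi)$ is in hand, the second assertion will follow by bijection-chasing together with the observation that $\Psi(\sigma)$ is an $R$-floor flag.

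From this form of $\sigma$ one reads off $\Psi(\sigma)$: it agrees with $\pi$ on the first carrel, and on the $(h+1)^{st}$ carrel it equals $(M_h,\dots,M_h,e_1,\dots,e_{t_h})$ with $s_h$ copies of $M_h$, because $\Psi(\sigma)_{q_h}=M_h$, the left block stays below $M_h$, and the right-block entries are left-to-right maxima of $\sigma$. On the other side, $\Psi_R(\pi)$ restricted to the $h^{th}$ carrel is, by definition, the list of the $p_h$ largest elements of $B_h$ in increasing order; on the first carrel this is again $\pi$, and on the $(h+1)^{st}$ carrel the top $t_h$ of these elements are $e_1<\dots<e_{t_h}$ (the only elements of $B_{h+1}$ exceeding $M_h$), while the next $s_h$ are forced to be $M_h-s_h+1,\dots,M_h$. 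Indeed, Fact~\ref{fact320.3}(iii) identifies the $s_h$ new elements below $M_h$ as the $s_h$ largest elements of $[M_h]\setminus B_h$, and a short counting argument then gives $(M_h-s_h,M_h]\subseteq B_{h+1}$: any $j$ in this interval not already in $B_h$ lies in $[M_h]\setminus B_h$ and, if it were not among the $s_h$ largest elements of that set, there would be $s_h$ or more elements of $[M_h]\setminus B_h$ inside the $(M_h-j)$-element interval $(j,M_h]$, impossible since $M_h-j<s_h$. Hence $\Psi_R(\pi)$ equals $(M_h-s_h+1,\dots,M_h,e_1,\dots,e_{t_h})$ on the $(h+1)^{st}$ carrel.

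The sequences $(M_h,\dots,M_h,e_1,\dots,e_{t_h})$ and $(M_h-s_h+1,\dots,M_h,e_1,\dots,e_{t_h})$ differ, but they carry the same $R$-critical list: they agree on their last $t_h$ entries and both carry the value $M_h$ at index $q_h+s_h$, and running the greedy right-to-left recursion that defines the critical pairs shows that the entries at indices $q_h+1,\dots,q_h+s_h-1$ — constantly $M_h$ in one case, the staircase $M_h-s_h+1,\dots,M_h-1$ in the other — are non-critical in both and affect nothing; the degenerate cases $s_h=0$ and $t_h=0$ are immediate. Since $\Psi(\sigma)$ and $\Psi_R(\pi)$ also agree on the first carrel, they have the same $R$-critical list, so Fact~\ref{fact604.2} yields $\Delta_R[\Psi(\sigma)]=\Delta_R[\Psi_R(\pi)]$; and because $\Psi_R(\pi)\in UG_R(n)\subseteq UI_R(n)$, the same fact gives $\Delta_R[\Psi_R(\pi)]=\Psi_R(\pi)$. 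This proves the first assertion.

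For the second assertion, note that $\Psi(\sigma)$ is an upper flag by Proposition~\ref{prop320.1}(ii), and each of its non-trivial plateaus begins at a left-to-right maximum $\sigma_j$ of $\sigma$ with $\sigma_{j+1}<\sigma_j$. From the explicit form of $\sigma$ the only descents $\sigma_i>\sigma_{i+1}$ occur either strictly inside one of the decreasing left blocks — where $\sigma_i<M_{h'}\le\max\{\sigma_1,\dots,\sigma_{q_{h'}}\}$, so $\sigma_i$ is not a left-to-right maximum — or at a carrel boundary $i=q_{h'}$ with $h'\in[r]$; hence every non-trivial plateau of $\Psi(\sigma)$ begins at such a $q_{h'}$ and $\Psi(\sigma)\in UFlr_R(n)$. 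Since $\Phi_R$ and (the restriction of) $\Delta_R$ are mutually inverse bijections between $UG_R(n)$ and $UFlr_R(n)$ by Proposition~\ref{prop608.10}(i), we get $\Phi_R[\Delta_R[\Psi(\sigma)]]=\Psi(\sigma)$; combining this with the first assertion gives $\Phi_R[\Psi_R(\pi)]=\Psi(\sigma)$, and applying $\Pi$, which inverts $\Psi$ on $S_n^{312}$ (Proposition~\ref{prop320.1}(ii)) and where $\sigma\in S_n^{312}$ by Proposition~\ref{prop824.1}(i), we conclude $\Pi[\Phi_R(\Psi_R(\pi))]=\Pi[\Psi(\sigma)]=\sigma$. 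I expect the main obstacle to be the middle bookkeeping: pinning down the exact carrelwise forms of $\Psi(\sigma)$ and $\Psi_R(\pi)$ — in particular the deduction $(M_h-s_h,M_h]\subseteq B_{h+1}$ — and checking that the two superficially different sequences above share an $R$-critical list.
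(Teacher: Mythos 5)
Your proposal is correct and follows essentially the same route as the paper's proof: both arguments pin down the explicit carrelwise forms of $\Psi(\sigma)$ and $\Psi_R(\pi)$ (the constant block $M_h,\dots,M_h$ versus the staircase $M_h-s_h+1,\dots,M_h$, followed by the common increasing tail) and then identify $\Delta_R[\Psi(\sigma)]$ with $\Psi_R(\pi)$, finishing via the floor-flag observation and the inverse bijections $\Phi_R,\Delta_R$ and $\Pi,\Psi$. The only differences are minor: you conclude the first equality by matching $R$-critical lists and invoking Fact~\ref{fact604.2}, where the paper applies the $\Delta_R$ recipe directly to $\Psi(\sigma)$, and you spell out the verifications (the containment $(M_h-s_h,M_h]\subseteq B_{h+1}$ and the check that $\Psi(\sigma)\in UFlr_R(n)$) that the paper leaves as assertions.
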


\begin{proof}Let $\phi$ denote the upper flag $\Psi(\sigma)$ and let $\gamma$ denote the gapless $R$-tuple $\Psi_R(\pi)$.  Let $h \in [r]$ and consider the $(h+1)^{st}$ carrel $(q_h, q_{h+1}]$.  On the right side $(q_h+s, q_{h+1}]$ of $(q_h, q_{h+1}]$ we defined $\sigma_i := \pi_i$.  The first entry $\pi_{q_h+s+1} =: \sigma_{q_h+s+1}$ here was larger than all earlier entries of $\pi$, and so $\sigma_{q_h+s+1}$ is also larger than all earlier entries of $\sigma$.  Hence applying $\Psi$ (respectively $\Psi_R$) does nothing on $(q_h+s, q_{h+1}]$ to $\sigma$ (respectively $\pi$) since its entries there are increasing.  So $\phi_i = \gamma_i$ for $i \in (q_h+s, q_{h+1}]$.  As $\Psi_R$ ranks the $p_h$ largest elements of $B_{h+1}$ onto $(q_h, q_{h+1}]$ from the right, when it arrives at the index $q_h+s$ the next largest element of $B_{h+1}$ available is $m$.  From the fact it can be seen that the $s-1$ next largest elements available are $m-1, m-2, ... , m-s+1$.  Hence $\gamma_i = m-(q_h+s-i)$ for $i \in (q_h, q_h+s]$.  Clearly $\phi_i = m$ for $i \in (q_h, q_h+s]$.  Now regard the $[n-1]$ tuple $\phi$ as an $R$-tuple and set $\delta := \Delta_R(\phi)$.  We find the $\delta_i$ on $(q_h, q_{h+1}]$ from the right as we apply $\Delta_R$ to $\phi$:  Nothing happens on $(q_h+s-1,q_{h+1}]$ since $\phi$ is increasing there, starting with $\phi_{q_h+s} = m$.  So $\delta_i = \phi_i = \gamma_i$ for $i \in (q_h+s, q_{h+1}]$.  Since one also has $\phi_i = m$ on $(q_h, q_h+s-1]$, we get $\delta_i = m - (q_h+s-i)$ for $i \in (q_h, q_h+s]$.  So $\delta_i = \gamma_i$ on $(q_h, q_h +s]$.  Take $s := 0$ above to see that $\delta_i = \gamma_i$ on $(0,q_1]$.  For the second equality, note that $\phi$ is an $R$-floor flag and that $\Phi_R[\Psi_R(\pi)]$ is an upper flag.  Apply (i) and then Proposition \ref{prop608.10}(iii).  \end{proof}

We further consider an $R$-312-avoiding permutation $\pi$ and its associated $R$-rightmost clump deleting chain, keeping in mind the picture provided by Fact \ref{fact320.3}(iii).  We want to describe all 312-avoiding lifts $\sigma^\prime$ of $\pi$.  Let $h \in [r]$.  As in Section 6, let $B_{h+1} =: L_1 \cup L_2 \cup ... \cup L_f$ decompose $B_{h+1}$ into clumps for some $f \geq 1$.  Restating the clump deleting condition in Section 6, we take $e \in [f]$ to be maximal such that $L_e \cap B_h \neq \emptyset$ and $B_{h+1} \backslash B_h \supseteq L_{e+1} \cup ... \cup L_f$.  The $s$ elements of $B_{h+1} \backslash B_h$ that are smaller than $m$ are in the clump $L_e$.  It is possible that some elements $m+1, m+2, ...$ from $B_{h+1} \backslash B_h$ are also in $L_e$.  Set $m^\prime := \max(L_e)$ and $s^\prime := | L_e \backslash B_h |$.  Since $\pi$ is $R$-increasing, when $s^\prime > s$ we have $\pi_{q_h + s + 1} = m+1, ... , \pi_{q_h+s^\prime} = m^\prime$ with $m^\prime - m = s^\prime - s$.  So then $\pi$ contains this staircase within the subinterval $(q_h+s, q_h + s^\prime]$ of $(q_h, q_{h+1}]$.  In any case we refer to the cohort $L_e \backslash B_h$ on $(q_h, q_h + s^\prime]$ as the (possibly empty) \emph{subclump} $L_e^\prime$ of $L_e$.

\begin{fact}With respect to the entities introduced above for $h \in [r]$:  Corresponding to the clumps $L_{e+1}, ... , L_f$ of $B_{h+1}$ there are respective staircases of $\pi$ within $(q_h, q_{h+1}]$.  When $s^\prime > s$ there is also a staircase of $\pi$ within $(q_h+s, q_h+s^\prime]$.  The supports of these staircases ``pave'' $(q_h+s,q_{h+1}]$.  An analogous statement with no subclump holds for $\pi$ on the first carrel $(0,q_1]$.  \end{fact}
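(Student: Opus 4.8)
The plan is to write down the $(h+1)$-st cohort of $\pi$ explicitly and read the staircases off it; essentially everything reduces to one application of Fact \ref{fact320.3}(iii), which is available because $\pi$ being $R$-312-avoiding forces its chain $B$ to be $R$-rightmost clump deleting by Proposition \ref{prop320.2}(i).

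First I would describe the entry set $B_{h+1}\backslash B_h$ of $\pi$ on $(q_h,q_{h+1}]$. The clump-deleting inclusions $L_{e+1}\cup\cdots\cup L_f \subseteq B_{h+1}\backslash B_h \subseteq L_e\cup L_{e+1}\cup\cdots\cup L_f$ give the disjoint decomposition $B_{h+1}\backslash B_h = L_e^\prime \cup (L_{e+1}\cup\cdots\cup L_f)$, where $L_e^\prime = L_e\backslash B_h$ as in the text. By Fact \ref{fact320.3}(iii) the elements of $L_e^\prime$ lying below $m$ are exactly the $s$ largest elements of $[m]\backslash B_h$, and since $L_e$ is a clump containing $m$ with $\max L_e = m^\prime$ while $m=\max B_h$, the elements of $L_e^\prime$ lying above $m$ form the consecutive run $m+1,m+2,\dots,m^\prime$ (empty precisely when $s^\prime=s$). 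Because $\pi$ is $R$-increasing it lists $B_{h+1}\backslash B_h$ on $(q_h,q_{h+1}]$ in increasing order, so, using $\max L_e^\prime = m^\prime < \min L_{e+1} < \cdots < \min L_f$, the entries appear in the blocks: the $s$ scattered elements below $m$ on $(q_h,q_h+s]$; then $m+1,\dots,m^\prime$ on $(q_h+s,q_h+s^\prime]$; then $L_{e+1},\dots,L_f$ in turn, whose supports pave $(q_h+s^\prime,q_{h+1}]$.

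Next I would identify the staircases. Each $L_j$ with $e<j\le f$ is a set of consecutive integers listed consecutively and increasingly by $\pi$, so its positions carry a staircase within $(q_h,q_{h+1}]$, and likewise for the run $m+1,\dots,m^\prime$ when $s^\prime>s$. The one point requiring a short check is maximality of each such staircase within the carrel: an entry of $\pi$ immediately outside one of these blocks but still in $(q_h,q_{h+1}]$ is another cohort entry, hence belongs to $B_{h+1}$, whereas the integer needed to prolong the staircase is $\min L_j-1$, $\max L_j+1$, or $m^\prime+1$, each a non-element of $B_{h+1}$ since the $L_j$ are clumps and $m=\max B_h$; moreover the passage from position $q_h+s$ to $q_h+s+1$ is a break because $\pi_{q_h+s}=\max([m]\backslash B_h)<m<m+1=\pi_{q_h+s+1}$. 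These maximal staircases have disjoint supports whose union is $(q_h+s,q_h+s^\prime]\cup(q_h+s^\prime,q_{h+1}]=(q_h+s,q_{h+1}]$, giving the asserted paving. For the first carrel one has $B_1\backslash B_0=B_1$, so there is no $m$ and no subclump and $s=s^\prime=0$; the identical reasoning, listing $B_1=L_1\cup\cdots\cup L_f$ increasingly, shows the staircases coming from $L_1,\dots,L_f$ pave $(0,q_1]$. I anticipate no genuine obstacle: the content is the disjoint-union bookkeeping for $B_{h+1}\backslash B_h$ (resting on Fact \ref{fact320.3}(iii) and the clump-deleting inclusions) and the maximality verification just described.
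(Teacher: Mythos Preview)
Your proof is correct and follows the approach the paper intends. The paper itself states this as a Fact without a formal proof: the key observation about the staircase on $(q_h+s,q_h+s']$ is already spelled out in the paragraph immediately preceding the Fact, and the remaining claims about the clumps $L_{e+1},\dots,L_f$ are left as evident from the $R$-increasing listing of $B_{h+1}\backslash B_h$. Your write-up simply makes explicit the disjoint-union bookkeeping and the maximality checks that the paper takes for granted; the only minor looseness is in the phrase ``since the $L_j$ are clumps and $m=\max B_h$'' for why $m'+1\notin B_{h+1}$, where the relevant reason is just that $m'=\max L_e$ with $L_e$ a clump of $B_{h+1}$.
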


\begin{prop}\label{prop824.3}Suppose $\pi \in S_n^R$ is $R$-312-avoiding.  Let $\sigma^\prime$ be a 312-avoiding lift of $\pi$.  In terms of the entities above, this lift $\sigma^\prime$ may be obtained from the minimum length 312-avoiding lift $\sigma$ of $\pi$ as follows:  Let $h \in [r]$.  For each of the clumps $L_{e+1}, ... , L_f$ of $B_{h+1}$, its entries in $\sigma$ may be locally rearranged on its support in any 312-avoiding fashion when forming $\sigma^\prime$.  The entries for the subclump $L_e^\prime$ may be locally rearranged on $(q_h, q_h + s^\prime]$ in any 312-avoiding fashion provided that its entries less than $m$ remain in decreasing order.  The entries for each of the clumps of $B_1$ may be locally rearranged as for $L_{e+1}, ... , L_f$.  Conversely, any such rearrangement of the entries of $\sigma$ produces a 312-avoiding lift of $\pi$.  \end{prop}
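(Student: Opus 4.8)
The plan is to characterize exactly which local rearrangements of the minimum length lift $\sigma$ can possibly preserve both properties ($312$-avoidance and lifting $\pi$), and then verify the two directions separately. First I would observe that since $\bar{\sigma'} = \pi = \bar{\sigma}$, the lift $\sigma'$ must agree with $\sigma$ cohort-by-cohort as multisets; so within each carrel $(q_h, q_{h+1}]$, the entries of $\sigma'$ are a permutation of the entries of $\sigma$. By the preceding Fact, the support of this carrel in $\sigma$ is paved by the staircases corresponding to the clumps $L_{e+1}, \dots, L_f$ together with the (possibly empty) staircase on $(q_h, q_h+s']$ carrying $L_e'$. The key structural claim I would establish is that in any $312$-avoiding $\sigma'$ lifting $\pi$, these staircase supports remain ``blocks'': i.e. no entry from one paving interval may be interchanged with an entry from another, because the staircase values are interleaved in a way that forces a $312$-pattern (or a $213$/other forbidden configuration ruling out the block structure) as soon as two blocks are mixed. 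This is essentially the same reasoning as in the proof of Proposition \ref{prop824.1}(ii): moving a larger-block entry left of a smaller-block entry, or vice versa, creates a $312$ violation using an appropriate earlier ``3'' or using $m$.

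Next I would handle the forward direction (any $312$-avoiding lift arises by such rearrangements). Given $\sigma'$, by the block-preservation claim each paving interval of $(q_h, q_{h+1}]$ carries in $\sigma'$ exactly the same set of values it carries in $\sigma$. For the clumps $L_{e+1}, \dots, L_f$ there is no further constraint beyond $312$-avoidance restricted to that interval, and local $312$-avoidance is necessary since $\sigma'$ is globally $312$-avoiding; it is also sufficient to rule out violations internal to that interval. For the subclump interval $(q_h, q_h+s']$, I would argue that the entries less than $m$ must appear in decreasing order: if two of them were in increasing order at positions $a < b$, then $m = \max(B_h)$ occurring at some earlier position in an earlier carrel would give a $312$-pattern $(m, \sigma'_a, \sigma'_b)$ with $m > \sigma'_b > \sigma'_a$ --- wait, one needs $\sigma'_a < \sigma'_b$ and $m > \sigma'_b$, which holds --- actually the pattern is $m$ as the ``3'', $\sigma'_a$ as the ``1'', $\sigma'_b$ as the ``2'', giving $312$. (I should double-check the inequality $\sigma'_a > \sigma'_b$ vs.\ $\sigma'_a < \sigma'_b$ direction against the definition $\pi_a > \pi_b < \pi_c$, $\pi_a > \pi_c$; here the ``3'' is $m$, the ``1'' is the smaller of the two subclump entries, the ``2'' is the larger.) Conversely, if the sub-$m$ entries stay decreasing, no such violation can be manufactured, and combined with local $312$-avoidance on the rest this yields a valid lift. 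The first carrel $(0, q_1]$ has no old elements, hence no $m$ constraint and no subclump, so it behaves like the $L_{e+1}, \dots, L_f$ case.

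For the converse direction (any such rearrangement yields a $312$-avoiding lift), I would reuse the carrel-by-carrel case analysis from the proof of Proposition \ref{prop824.1}, which already shows that a potential $312$-violation spanning three cohorts is impossible after re-sorting within cohorts, and that within the $(h+1)^{st}$ carrel the only danger involves the sub-$m$ entries serving as the ``12'' with $m$ (or a larger earlier entry) as the ``3'' --- and that danger is exactly what the ``decreasing order'' proviso excludes. The clumps $L_{e+1}, \dots, L_f$ consist entirely of values exceeding $m$ (indeed exceeding all earlier entries of $\pi$, being new clumps above the old maximum's clump), so no earlier entry can serve as a ``3'' above them; a $312$-pattern using them would have to be internal to their support, which is precluded by the hypothesis that the rearrangement is $312$-avoiding on each support. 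I expect the main obstacle to be the block-preservation claim: pinning down precisely why values from distinct paving staircases cannot be interleaved in a $312$-avoiding way requires a careful look at the staircase endpoints and the relative order of consecutive clumps $L_j, L_{j+1}$ (there is a ``gap'' $\max L_j + 1 < \min L_{j+1}$, and the old-element maximum $m$ or the gap values sit in useful positions to act as a ``3''), and getting all the inequality directions right against the $312 = (\text{high}, \text{low}, \text{medium})$ convention is the delicate bookkeeping. Once that claim is in hand, both directions reduce to the local analysis already rehearsed for Proposition \ref{prop824.1}.
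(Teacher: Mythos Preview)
Your overall architecture matches the paper's proof: reduce to cohort-by-cohort rearrangements, establish a block-preservation claim (each paving subinterval must carry exactly its clump's values), deduce the decreasing-order constraint on the sub-$m$ entries of $L_e'$ using $m$ as the `3', and then check the converse by reusing the case analysis from Proposition~\ref{prop824.1}.

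However, your block-preservation argument has the key mechanism backwards. You propose that the gap values between consecutive clumps $L_j$ and $L_{j+1}$ (with $j \geq e$) ``sit in useful positions to act as a `3'.'' They do not: since these gap values lie outside $B_{h+1}$, they appear in \emph{later} carrels of $\pi$, and hence serve as the `2'. Concretely, if an entry $a \in L_{j+1}$ is placed to the left of an entry $b \in L_j$ within carrel $h+1$, then $a > b$, and any gap value $g$ with $\max L_j < g < \min L_{j+1}$ occurs at a later position, giving the pattern $(a,b,g)$ with $a$ the `3', $b$ the `1', $g$ the `2'. Your suggestion of using $m$ or an earlier entry as the `3' cannot work for the clumps $L_{e+1},\dots,L_f$, since (as you yourself note) all their values exceed $m$ and all earlier entries. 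The paper also observes that when $h+1 = r+1$ there is only one clump in $B_{r+1} = [n]$, so no intermingling is possible; you should note this edge case.

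Your converse argument also has a gap. You assert that for the clumps $L_{e+1},\dots,L_f$ ``a $312$-pattern using them would have to be internal to their support,'' but this needs justification: a priori the `3' and `1' could sit in one clump's support with the `2' coming from a later carrel. The paper's point is that any `31' pair produced by a permitted rearrangement lies in a single clump, so every integer between the `3' and `1' values is also in that clump and hence already in $B_{h+1}$; therefore no such value can appear in a later carrel to serve as the `2'. With these two corrections your plan goes through and coincides with the paper's argument.
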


\begin{proof}The $p_1!p_2!\cdots p_{r+1}!$ lifts of $\pi$ can be obtained from $\sigma$ by forming all rearrangements of its $r+1$ cohorts within their carrels.  Let $\sigma^\prime$ be a 312-avoiding lift of $\pi$.  Let $h \in [r]$.  We continue to refer to the entities above, noting that the analysis of the chain and the clumps for $\pi$ can be used when working with $\sigma$.  According to these clumps, we split the $(h+1)^{st}$ cohort of $\sigma$ into subcohorts, whose supports $(q_h, q_h+s^\prime]$, $(q_h+s^\prime, \cdot ], ... , ( \cdot ,q_{h+1}]$ ``paved'' the $(h+1)^{st}$ carrel $(q_h, q_{h+1}]$.  The entries in each of these subcohorts are smaller than the entries in later subcohorts.  Since these subcohorts correspond to clumps (or a subclump) of the set $B_{h+1}$ for $\pi$, there exist ``gap'' entries of $\pi$ in its later carrels when $h+1 < r+1$.  Now we attempt to create a 312-avoiding permutation $\sigma^\prime$ from $\sigma$:  Intermingling entries among these subcohorts would produce a 312-violation in which the `2' entry would be one of these gap entries.  (Such intermingling is not possible when $h+1 = r+1$ because then there is just one non-empty subcohort for this last carrel.)  So each subcohort must stay on its original support.  If the decreasing order in which the $s$ entries of $\sigma$ that are less than $m$ appeared on $(q_h, q_h + s]$ is changed among themselves as they are rearranged on $(q_h, q_h + s^\prime]$, then a 312-violation would arise in which the entry $m$ from an earlier carrel would be the `3'.  Creating a local 312-violation within one of our subintervals obviously would create a 312-violation for $\sigma^\prime$ as a whole.  These considerations also apply to the first carrel $(0, q_1]$ if one takes $s^\prime := 0$.  We have ruled out all of the rearrangements not permitted by the statement.  Conversely, suppose $\sigma^\prime$ is produced from the 312-avoiding $\sigma$ with one of the permitted rearrangements.  As noted in the proof of Proposition \ref{prop824.1}(i) for $\sigma$, a 312-violation cannot involve three carrels.  Having only the `3' entry come from an earlier carrel can be ruled out as before.  Can a violation with the `31' entries coming from the carrel at hand arise for $\sigma^\prime$?  Since the entries in any new `31' pair created by a permitted rearrangement must come from the same clump, there will not exist a later entry that can serve as the `2'.  And 312-violations within one subinterval are not permitted.  \end{proof}

We will use the following result to derive Theorem 14.1 of \cite{PS} from our Theorem \ref{theorem737.1}(ii):

\begin{prop}\label{prop824.4}Suppose $\pi \in S_n^R$ is $R$-312-avoiding.  Let $\sigma^\prime \in S_n$ be a 312-avoiding lift of $\pi$.  Then $\Delta_R[\Psi(\sigma^\prime)] = \Psi_R(\pi)$ and so $\pi = \Pi_R[\Delta_R[\Psi(\sigma^\prime)]]$.  \end{prop}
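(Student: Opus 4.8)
The plan is to reduce the claim to the minimum length lift and then invoke Proposition~\ref{prop824.2}.  Let $\sigma \in S_n$ be the unique minimum length $312$-avoiding lift of $\pi$ furnished by Proposition~\ref{prop824.1}; then Proposition~\ref{prop824.2} already gives $\Delta_R[\Psi(\sigma)] = \Psi_R(\pi)$.  So the whole task is to show $\Delta_R[\Psi(\sigma')] = \Delta_R[\Psi(\sigma)]$, and by Fact~\ref{fact604.2} this is the same as showing that the two $nn$-tuples $\Psi(\sigma')$ and $\Psi(\sigma)$ have the same $R$-critical list.  (Recall that $\Psi = \Psi_{[n-1]}$ sends $\tau \in S_n$ to its running-maximum tuple $(\max\{\tau_1,\dots,\tau_i\})_i$, which is an upper flag.)  Once the $R$-critical lists are shown to be equal, the displayed ``and so'' follows immediately: $\pi$ is $R$-$312$-avoiding, so $\Psi_R(\pi) \in UG_R(n)$ by Proposition~\ref{prop320.2}(ii), and Proposition~\ref{prop608.10}(iii) makes $\Pi_R$ the inverse of $\Psi_R : S_n^{R\text{-}312} \to UG_R(n)$, whence $\Pi_R[\Delta_R[\Psi(\sigma')]] = \Pi_R[\Psi_R(\pi)] = \pi$.

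For the comparison of $R$-critical lists I would feed in the description of $\sigma'$ supplied by Proposition~\ref{prop824.3}: $\sigma'$ arises from $\sigma$ by rearrangements that, within each carrel, are confined to the separate supports of the clumps $L_{e+1},\dots,L_f$ of $B_{h+1}$ (and of the clumps of $B_1$) and of the subclump $L_e'$.  Two structural facts then do the work.  First, as noted in the proof of Proposition~\ref{prop824.3}, within a carrel every entry of an earlier subcohort is smaller than every entry of a later one, so neither the permitted rearrangements nor the $312$-avoidance requirement (which is what forbids mixing subcohorts) changes the running maximum at the right endpoint of any subcohort support; hence $\Psi(\sigma')$ and $\Psi(\sigma)$ agree at all those right endpoints --- in particular at every carrel endpoint $q_h$ --- which will force the critical \emph{entries} to agree once the critical \emph{indices} do.  Second, I would prove the local statement that on the support $(a,a']$ of a single subcohort the function $i \mapsto \Psi(\sigma')_i - i$ is minimized at the right endpoint $a'$: for a clump this follows from the elementary bound that the maximum of $p$ distinct elements of a set of $k$ consecutive integers is at least its minimum plus $p-1$, applied to the running maximum of $\sigma'$ restricted to that clump (all of whose entries exceed $\Psi(\sigma')_a$); for the subclump support it follows from the same bound applied to the entries of $L_e'$ exceeding $m := \max(B_h)$ together with the fact that $L_e'$ has at most $s$ entries $\le m$.

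With these two facts in hand, the critical-index scan within a carrel, run leftward from the carrel's right endpoint, lands exactly on the right endpoints of the subcohorts and on nothing else: the minimality statement kills every interior index, and the gap of size at least $2$ between consecutive clumps of $B_{h+1}$ guarantees that each subcohort right endpoint does satisfy the defining jump inequality relative to the next one to its right.  This holds verbatim for $\sigma$ and for $\sigma'$, and since the critical entries at these indices also coincide, $\Psi(\sigma')$ and $\Psi(\sigma)$ have the same $R$-critical list carrel by carrel (the first carrel being the degenerate case $s' := 0$), which is what was needed.

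The step I expect to be the main obstacle is the subclump case of the local minimality statement, because $L_e'$ need not be a block of consecutive integers: there the clean counting bound applies only to the part of $L_e'$ lying above $m$, and one has to bookkeep the number of entries of $L_e'$ seen so far against the bound $s$ on how many lie at or below $m$.  The rest --- the reduction via Proposition~\ref{prop824.2}, the endpoint invariance of the running maxima, and the clump-gap inequality --- should be routine given the groundwork of Sections 4 through 8.
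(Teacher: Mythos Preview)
Your proposal is correct and follows essentially the same route as the paper's proof: both reduce to showing $\Delta_R[\Psi(\sigma')] = \Delta_R[\Psi(\sigma)]$ via Proposition~\ref{prop824.2}, both invoke the subcohort decomposition of Proposition~\ref{prop824.3}, and both rest on the same running-maximum lower bounds on each subcohort support.  The only difference is packaging: the paper argues directly that ``$\Delta_R$ applied to these subintervals one at a time reproduces those staircases of $\sigma$,'' leaving the critical-index bookkeeping implicit, whereas you route the comparison through Fact~\ref{fact604.2} and explicitly identify the $R$-critical indices as the subcohort right endpoints.  Your more careful treatment of the subclump case --- bounding the number of entries of $L_e'$ below $m$ by $s$ and then applying the consecutive-integer bound to the remainder --- is exactly the content the paper compresses into ``this argument still works on the right portion $(q_h+s, q_h+s']$''; so what you flag as the main obstacle is real but is handled by the sketch you give.
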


\begin{proof}We return to the proofs of Propositions \ref{prop824.2} and \ref{prop824.3}, now knowing by Proposition \ref{prop824.3} how $\sigma^\prime$ can be formed from the minimum length 312-avoiding lift $\sigma$.  Since $\Delta_R[\Psi(\sigma)] = \Psi_R(\pi)$, by Proposition \ref{prop824.2} we only need to show $\Delta_R[\Psi(\sigma^\prime)] = \Delta_R[\Psi(\sigma)]$.  Let $h \in [r]$.  We prepare to compute $\Delta_R[\Psi(\sigma^\prime)]$ within the $(h+1)^{st}$ carrel $(q_h, q_{h+1}]$ by splitting $(q_h, q_{h+1}]$ into the subintervals created for Proposition \ref{prop824.3}.  We then work from the right one subinterval at a time.  For now ignore the subinterval $(q_h, q_h+s^\prime]$ for the subclump $L_e^\prime$.  On the other subintervals, the entries of $\sigma$ formed staircases.  On each of these subintervals the local rearranging for the new entries of $\sigma^\prime$ followed by the application of $\Psi$ produces entries that are index-wise no smaller than the original staircase entries of $\sigma$.  Therefore it can be seen that the subsequent application of $\Delta_R$ to these subintervals one at a time reproduces those staircases of $\sigma$.  Since the application of $\Delta_R \circ \Psi$ did nothing to these entries of $\sigma$ in the proof of Proposition \ref{prop824.2}, on these subintervals we have obtained the desired equality.  This argument also works on the subintervals of $(0,q_1]$.  Returning to the $(h+1)^{st}$ carrel, this argument still works on the right portion $(q_h+s, q_h+s^\prime]$ of the leftmost subinterval $(q_h, q_h+s^\prime]$.  If $s^\prime \geq 1$, it produces an entry of $m+1$ for $\Delta_R[\Psi(\sigma^\prime)]$ at the index $q_h +s+1$.  On the left portion $(q_h, q_h+s]$ of this subinterval, note that following the application of $\Psi$ every entry will be no less than $m$.  So the subsequent application of $\Delta_R$ on this left portion will reproduce the staircase on $(q_h, q_h+s]$ that $\Delta_R[\Psi(\sigma)]$ had in the proof of Proposition \ref{prop824.2}.  Use Proposition \ref{prop320.2}(ii) to produce the second equality.  \end{proof}

\section{Shapes, tableaux, connections to Lie theory}

A \emph{partition} is an $n$-tuple $\lambda \in \mathbb{Z}^n$ such that $\lambda_1 \geq \ldots \geq \lambda_n \geq 0$.  Let $\Lambda_n^+$ denote the set of partitions.  Fix such a $\lambda$ for the rest of the paper.  We say it is \textit{strict} if $\lambda_1 > \ldots > \lambda_n$.  The \textit{shape} of $\lambda$, also denoted $\lambda$, consists of $n$ left justified rows with $\lambda_1, \ldots, \lambda_n$ boxes.  We denote its column lengths by $\zeta_1 \geq \ldots \geq \zeta_{\lambda_1}$.  The column length $n$ is called the \emph{trivial} column length.  Since the columns are more important than the rows, the boxes of $\lambda$ are transpose-indexed by pairs $(j,i)$ such that $1 \leq j \leq \lambda_1$ and $1 \leq i \leq \zeta_j$.  Sometimes for boundary purposes we refer to a $0^{th}$ \emph{latent column} of boxes, which is a prepended $0^{th}$ column of trivial length.  If $\lambda = 0$, its shape is the \textit{empty shape} $\emptyset$.  Define $R_\lambda \subseteq [n-1]$ to be the set of distinct non-trivial column lengths of $\lambda$.  Note that $\lambda$ is strict if and only if $R_\lambda = [n-1]$, i.e. $R$ is full.  Set $|\lambda| := \lambda_1 + \ldots + \lambda_n$.

A \textit{(semistandard) tableau of shape $\lambda$} is a filling of $\lambda$ with values from $[n]$ that strictly increase from north to south and weakly increase from west to east.  Let $\mathcal{T}_\lambda$ denote the set of tableaux of shape $\lambda$.  Under entrywise comparison $\leq$, this set $\mathcal{T}_\lambda$ becomes a poset that is the distributive lattice $L(\lambda, n)$ introduced by Stanley.  The principal ideals in $\mathcal{T}_\lambda$ are clearly convex polytopes in $\mathbb{Z}^{|\lambda|}$.  Fix $T \in \mathcal{T}_\lambda$.  For $j \in [\lambda_1]$, we denote the one column ``subtableau'' on the boxes in the $j^{th}$ column by $T_j$.  Here for $i \in [\zeta_j]$ the tableau value in the $i^{th}$ row is denoted $T_j(i)$.  The set of values in $T_j$ is denoted $B(T_j)$.  Columns $T_j$ of trivial length must be \emph{inert}, that is $B(T_j) = [n]$.  The $0^{th}$ \textit{latent column} $T_0$ is an inert column that is sometimes implicitly prepended to the tableau $T$ at hand:  We ask readers to refer to its values as needed to fulfill definitions or to finish constructions.  We say $T$ is a $\lambda$-\textit{key} if $B(T_l) \supseteq B(T_j)$ for $1 \leq l \leq j \leq \lambda_1$.  To define the \emph{content $\Theta(T) := \theta$ of $T$}, for $i \in [n]$ take $\theta_i$ to be the number of values in $T$ equal to $i$.  The empty shape has one tableau on it, the \textit{null tableau}.  Fix a set $Q \subseteq [n]$ with $|Q| =: q \geq 0$.  The \textit{column} $Y(Q)$ is the tableau on the shape for the partition $(1^q, 0^{n-q})$ whose values form the set $Q$.  Then for $d \in [q]$, the value in the $(q+1-d)^{th}$ row of $Y(Q)$ is $rank^d(Q)$.

Fix a partition $\lambda \in \Lambda_n^+$ and determine the set $R_\lambda$.  For us, the most important values in a tableau of shape $\lambda$ occur at the ends of its rows.  Using the latent column when needed, these $n$ values from $[n]$ are gathered into an $R_\lambda$-tuple as follows:  We group the boxes at the ends of the rows of $\lambda$ into ``cliffs''.  Note that for $h \in [r+1]$ one has $\lambda_i = \lambda_{i^\prime}$ for $i, i^\prime \in (q_{h-1}, q_{h} ]$.  For $h \in [r+1]$ the coordinates of the $p_h$ boxes in the $h^{th}$ \emph{cliff} form the set $\{ (\lambda_i, i) : i \in (q_{h-1}, q_{h} ] \}$.  Let $T \in \mathcal{T}_\lambda$.  The \textit{$\lambda$-row end list} $\Omega_\lambda(T) =: \omega$ of $T$ is the $R_\lambda$-tuple defined by $\omega_i := T_{\lambda_i}(i)$ for $i \in [n]$.  Here for $h \in [r+1]$ the $h^{th}$ cohort of $\omega$ is the set of the values of $T$ that increase down the boxes of the $h^{th}$ cliff.  So $\omega \in UI_{R_\lambda}(n)$.

Let $\pi$ be an $R_\lambda$-permutation and form the corresponding $R_\lambda$-chain $B$.  The $\lambda$-\textit{key} $Y_\lambda(\pi)$ \textit{of} $\pi$ is the tableau of shape $\lambda$ formed by juxtaposing from left to right $\lambda_n$ inert columns and $\lambda_{q_h}-\lambda_{q_{h+1}}$ copies of $Y(B_h)$ for $r \geq h \geq 1$.  The map $\pi \mapsto Y_\lambda(\pi) =: Y$ is a bijection from $R_\lambda$-permutations to $\lambda$-keys that is denoted $\pi \leftrightarrow Y$.  The bijection from $R_\lambda$-chains to $\lambda_{R_\lambda}$-keys is denoted $B \leftrightarrow Y$.  It is easy to see that the $\lambda$-row end list $\Omega_\lambda[Y_\lambda(\pi)]$ of the $\lambda$-key of $\pi$ is the rank $R_\lambda$-tuple $\Psi_{R_\lambda}(\pi) =: \psi$ of $\pi$:  Here $\psi_i = Y_{\lambda_i}(i)$ for $i \in [n]$.

Let $\alpha \in UI_{R_\lambda}(n)$.  Define $\mathcal{Z}_\lambda(\alpha)$ to be the subset of tableaux $T \in \mathcal{T}_\lambda$ such that $\Omega_\lambda(T) = \alpha$.  To see that $\mathcal{Z}_\lambda(\alpha) \neq \emptyset$, for $i \in [n]$ take $T_j(i) := i$ for $j \in [1, \lambda_i)$ and $T_{\lambda_i}(i) := \alpha_i$.  This subset is closed under the join operation for the lattice $\mathcal{T}_\lambda$.  We define the \emph{$\lambda$-row end max tableau $M_\lambda(\alpha)$ for $\alpha$} to be the unique maximal element of $\mathcal{Z}_\lambda(\alpha)$.  The definition of $Q_\lambda(\beta)$, a close relative to $M_\lambda(\alpha)$, can be found in Section 12.

When we are considering tableaux of shape $\lambda$, much of the data used will be in the form of $R_\lambda$-tuples.  Many of the notions used will be definitions from Section 3 that are being applied with $R := R_\lambda$.  The structure of each proof will depend only upon $R_\lambda$ and not upon any other aspect of $\lambda$:  If $\lambda^\prime, \lambda^{\prime\prime} \in \Lambda_n^+$ are such that $R_{\lambda^\prime} = R_{\lambda^{\prime\prime}}$, then the development for $\lambda^{\prime\prime}$ will in essence be the same as for $\lambda^\prime$.  To emphasize the original independent entity $\lambda$ and to reduce clutter, from now on rather than writing `$R$' or `$R_\lambda$' we will replace `$R$' by `$\lambda$' in subscripts and in prefixes.  Above we would have written $\omega \in UI_\lambda(n)$ instead of having written $\omega \in UI_{R_\lambda}(n)$ (and instead of having written $\omega \in UI_R(n)$ after setting $R := R_\lambda$).  When $\lambda$ is a strict partition, we omit the `$\lambda$-' prefixes and the subscripts.

To connect to Lie theory, fix $R \subseteq [n-1]$ and set $J := [n-1] \backslash R$.  The $R$-permutations are the one-rowed forms of the inverses of the minimum length representatives collected in $W^J$ for the cosets in $W /W_J$, where $W$ is the Weyl group of type $A_{n-1}$ and $W_J$ is its parabolic subgroup $\langle s_i: i \in J \rangle$.  Fix a partition $\lambda$.  It is strict exactly when the weight it depicts for $GL(n)$ is strongly dominant.  If we take the set $R$ above to be $R_\lambda$, then the restriction of the partial order $\leq$ on $\mathcal{T}_\lambda$ to the $\lambda$-keys depicts the Bruhat order on that $W^J$.  Consult the second and third paragraphs of Section 14 for the Demazure and flag Schur polynomials.  Further details appear in Sections 2 and 3 and the appendix of \cite{PW1}.

\section{312-Avoiding (gapless) keys, row end max tableaux}

Here we re-express the $R$-permutations with tableaux.

Let $\alpha \in UI_\lambda(n)$.  The values of the $\lambda$-row end max tableau $M_{\lambda}(\alpha) =: M$ can be determined as follows:  For $h \in [r]$ and $j \in (\lambda_{q_{h+1}}, \lambda_{q_h}]$, first set $M_j(i) = \alpha_i$ for $i \in (q_{h-1}, q_h]$.  When $h > 1$, from east to west among columns and south to north within a column, also set $M_j(i) := \min\{ M_j(i+1)-1, \\ M_{j+1}(i) \}$ for $i \in (0, q_{h-1}]$.  Finally, set $M_j(i) := i$ for $j \in (0, \lambda_n]$ and $i \in (0,n]$.  (When $\zeta_j = \zeta_{j+1}$, this process yields $M_j = M_{j+1}$.)

\begin{lem}\label{lemma340.1}Let $\gamma$ be a gapless $\lambda$-tuple.  The $\lambda$-row end max tableau $M_{\lambda}(\gamma) =: M$ is a key.  For $h \in [r]$ and $j := \lambda_{q_{h+1}}$, the $s \geq 0$ elements in $B(M_{j}) \backslash B(M_{j+1})$ that are less than $M_{j+1}(q_{h}) = \gamma_{q_{h}}$ are the $s$ largest elements of $[\gamma_{q_{h}}] \backslash B(M_{j+1})$. \end{lem}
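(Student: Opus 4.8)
The plan is to work through the column-by-column construction of $M := M_\lambda(\gamma)$ given just before the lemma, and to verify two things: (a) that the resulting filling is a $\lambda$-key, i.e. that $B(M_l) \supseteq B(M_j)$ for $l \le j$, and (b) the precise statement about the $s$ new elements appearing when we pass from column $j+1 := \lambda_{q_h}$-side to column $j := \lambda_{q_{h+1}}$. The key structural input is that $\gamma$ is gapless: whenever $\gamma_{q_h} > \gamma_{q_h+1}$, setting $s := \gamma_{q_h} - \gamma_{q_h+1} + 1$, we have $s \le p_{h+1}$ and the first $s$ entries of the $(h+1)^{\text{st}}$ carrel are exactly $\gamma_{q_h}-s+1, \dots, \gamma_{q_h}$. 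I would also use the observation recorded in the excerpt that a gapless $\gamma$ satisfies $\gamma_{q_1} \le \gamma_{q_2} \le \cdots \le \gamma_{q_{r+1}}$, and that it is $R$-increasing and upper.

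**Step 1: Columns are genuine (strictly increasing) and the within-cliff values are the $\gamma$'s.** First I would check that each column $M_j$ is a legitimate SSYT column: strictly increasing top to bottom, values in $[n]$. For $j \in (\lambda_{q_{h+1}}, \lambda_{q_h}]$, the bottom block of entries on rows $(q_{h-1}, q_h]$ is set to $\alpha_i = \gamma_i$ for $i$ in that range, which is strictly increasing because $\gamma$ is $R$-increasing; the upper entries on rows $(0, q_{h-1}]$ are defined by $M_j(i) := \min\{M_j(i+1)-1,\, M_{j+1}(i)\}$, and the $M_j(i+1)-1$ term forces strict increase downward, while an easy induction (from east to west, using that $M_{j+1}$ is already a valid column by the inductive hypothesis on $j$) shows $M_j(i) \ge i$ and $M_j(i) \le M_{j+1}(i)$ — so weak increase left-to-right along rows also holds, giving $M \in \mathcal{T}_\lambda$. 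This also records the crucial monotonicity $M_j \le M_{j+1}$ entrywise.

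**Step 2: Key property via the ``$\min$'' recursion.** To show $M$ is a $\lambda$-key it suffices to show $B(M_j) \subseteq B(M_{j+1})$ for every $j \in [\lambda_1 - 1]$, since containment then propagates. When $\zeta_j = \zeta_{j+1}$ the construction gives $M_j = M_{j+1}$ and there is nothing to prove; so the only interesting case is $j = \lambda_{q_{h+1}}$, where column $j+1$ has entries on rows $(q_{h-1}, q_h]$ equal to $\gamma_i$ and column $j$ has its ``own'' bottom block on rows $(q_h, q_{h+1}]$ equal to $\gamma_i$ there, while rows $(0,q_h]$ of column $j$ are filled by the $\min$-recursion from column $j+1$. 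I would argue: for $i$ in the $\min$-recursion range, $M_j(i) \in \{M_j(i+1)-1\} \cup \{M_{j+1}(i)\}$, and by downward induction each such value lies in $B(M_{j+1})$ — either it equals $M_{j+1}(i)$ directly, or it equals $M_j(i+1) - 1$ where $M_j(i+1)$ is already known to be in $B(M_{j+1})$ and one checks the predecessor is too, using that $B(M_{j+1})$ restricted below $\gamma_{q_h}$ is a down-closed ``staircase'' segment (this is where the gapless hypothesis enters: the first $s$ entries of the $(h+1)^{\text{st}}$ carrel form the consecutive block ending at $\gamma_{q_h}$, so the relevant portion of $B(M_{j+1})$ near the top is an interval). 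The remaining new entries of $M_j$ — those on rows $(q_h, q_{h+1}]$ equal to $\gamma_i$ with $\gamma_i > \gamma_{q_h}$ — do not threaten the containment we need in the *other* direction; for the key property we only need $B(M_j) \subseteq B(M_{j+1})$ would fail here, so I must be careful: actually the correct containment for a key is $B(M_l) \supseteq B(M_j)$ for $l \le j$, i.e. *left* columns have the larger value-sets. So I want $B(M_{j+1}) \supseteq B(M_j)$, and the entries $\gamma_{q_h+1}, \dots, \gamma_{q_{h+1}}$ of $M_j$ that exceed $\gamma_{q_h}$ must already appear in $M_{j+1}$. This is exactly guaranteed because $M_{j+1}$ is inert on rows beyond $q_h$ only if $\lambda_{q_h}$-th column has full length there — no: I would instead invoke that column $j+1$ lives over rows $(q_{h-1},q_h]$ as its ``active'' block but actually extends down to row $\zeta_{j+1} = q_h$, hence has no rows below $q_h$, so $B(M_j)$ and $B(M_{j+1})$ compare as: $M_{j+1}$ has values filling rows $1..q_h$, $M_j$ has values filling rows $1..q_{h+1}$. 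The containment $B(M_j) \supseteq B(M_{j+1})$ is the one that holds, and it follows because the $\min$-recursion makes $M_j(i) \le M_{j+1}(i)$ while $M_j$ has strictly more rows — a standard argument shows the value-set grows. I would spell this out carefully; this is the technical heart.

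**Step 3: The quantitative claim.** Finally, with $j = \lambda_{q_{h+1}}$ and $j+1$ as above, the elements of $B(M_j) \setminus B(M_{j+1})$ are precisely the values $M_j(i)$ for $i \in (q_h, q_{h+1}]$ that are $\le \gamma_{q_h}$ together with possibly some values produced by the $\min$-recursion that ``fill in'' below $\gamma_{q_h}$. By the construction $M_j(i) = \gamma_i$ for $i \in (q_h, q_{h+1}]$; those with $\gamma_i > \gamma_{q_h}$ are already in $B(M_{j+1})$ (shown in Step 2), and those with $\gamma_i < \gamma_{q_h}$ — there are exactly $s := \gamma_{q_h} - \gamma_{q_h+1}+1$ of them if $\gamma_{q_h} > \gamma_{q_h+1}$, and they are $\gamma_{q_h}-s+1, \dots, \gamma_{q_h}$ by the gapless definition — wait, that block *ends* at $\gamma_{q_h}$ which equals $M_{j+1}(q_h)$, so re-index: the genuinely new ones less than $M_{j+1}(q_h)$ are $\gamma_{q_h}-s+1, \dots, \gamma_{q_h}-1$ plus the value $\gamma_{q_h}$ already present — I will sort the off-by-one carefully. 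The point to extract: these $s$ new values less than $\gamma_{q_h} = M_{j+1}(q_h)$ are exactly the $s$ largest elements of $[\gamma_{q_h}] \setminus B(M_{j+1})$, because $B(M_{j+1})$ already contains $\gamma_{q_h}$ and is ``saturated'' above these values within $[\gamma_{q_h}]$ — i.e. $[\gamma_{q_h}] \setminus B(M_{j+1})$ has its top $s$ elements precisely equal to the consecutive block the gapless condition hands us. I would close by noting the $\min$-recursion on rows $(0,q_h]$ introduces no further new values (each such $M_j(i)$ is either $M_{j+1}(i) \in B(M_{j+1})$ or a predecessor that, by the staircase/consecutive structure, is also already in $B(M_{j+1})$).

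**Main obstacle.** I expect the real difficulty is Step 2 — pinning down exactly which containment ($B(M_j) \supseteq B(M_{j+1})$ vs. the reverse) is being claimed for a $\lambda$-key and then proving it rigorously through the $\min$-recursion, keeping the latent-column/boundary conventions straight and making the gapless hypothesis do precisely the work of guaranteeing the relevant portion of $B(M_{j+1})$ is an interval (a consecutive ``staircase''). The off-by-one bookkeeping in Step 3 (whether the consecutive block is $\gamma_{q_h}-s+1,\dots,\gamma_{q_h}$ or shifted, and correspondingly whether we count $s$ or $s-1$ genuinely new elements below $M_{j+1}(q_h)$) is fiddly but routine once Step 2 is in hand.
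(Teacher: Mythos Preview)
Your overall plan (compare $B(M_{j+1})$ and $B(M_j)$ at each column-length transition) matches the paper's, but Steps 2 and 3 contain real gaps.

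\textbf{Step 2.} You oscillate on the direction of containment and finally land on the correct one, $B(M_{j+1}) \subseteq B(M_j)$, but then dismiss it with ``a standard argument shows the value-set grows.'' That is not true in general: from $M_j(i) \le M_{j+1}(i)$ on the shared rows plus $M_j$ having more rows, one cannot conclude $B(M_{j+1}) \subseteq B(M_j)$. (Take $M_{j+1} = (1,3)$, $M_j = (1,2,3)$ --- this works, but $M_{j+1} = (3,5)$, $M_j = (1,2,3)$ also satisfies your hypotheses and fails.) The paper's proof supplies the missing structure: in the nontrivial case $\gamma_{q_h+1} \le \gamma_{q_h}$, the gapless condition forces the values of $M_j$ on an explicit row range $[k+1,i]$ (with $M_j(i) = \gamma_{q_h}$, and $k$ the last row where the $\min$-recursion picks $M_{j+1}(k)$) to form a \emph{consecutive interval} $[M_j(k+1),\gamma_{q_h}]$. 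Since $M_j(k+1) \le M_{j+1}(k+1)$ and $M_{j+1}(q_h) = \gamma_{q_h}$, all of $M_{j+1}$'s values on rows $(k,q_h]$ land inside this interval; on rows $(0,k]$ one has $M_j = M_{j+1}$ outright. This is the content you are missing.

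\textbf{Step 3.} Your closing claim that ``the $\min$-recursion on rows $(0,q_h]$ introduces no further new values'' is false. Take $n=4$, carrels $\{1,2\}$ and $\{3\}$, $\gamma = (1,3;3)$: then $M_{j+1} = (1,3)$ and $M_j = (1,2,3)$, and the single new value $2 < \gamma_{q_h}=3$ comes entirely from the $\min$-recursion at row $q_h=2$, not from the carrel-$(h+1)$ entries. So the new elements less than $\gamma_{q_h}$ are \emph{not} simply the consecutive block $\gamma_{q_h+1},\dots,\gamma_{q_h}-1$ from the gapless definition. The paper instead identifies them as the complement in the interval $[M_j(k+1),\gamma_{q_h}]$ of $\{M_{j+1}(x):x\in(k,q_h]\}$; since this interval exhausts the top of $[\gamma_{q_h}]\setminus B(M_{j+1})$, these $s = i - q_h$ values are exactly the $s$ largest elements of $[\gamma_{q_h}]\setminus B(M_{j+1})$.
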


\begin{proof} Let $h \in [r]$ and set $j := \lambda_{q_{h+1}}$.  We claim $B(M_{j+1}) \subseteq B(M_j)$.  If $M_j(q_{h} + 1) = \gamma_{q_{h}+1}  >  \gamma_{q_h} = M_{j+1}(q_{h})$, then $M_j(i) = M_{j+1}(i)$ for $i \in (0, q_{h}]$ and the claim holds.  Otherwise $\gamma_{q_{h} + 1} \leq \gamma_{q_{h}}$.  The gapless condition on $\gamma$ implies that if we start at $(j, q_{h}+1)$ and move south, the successive values in $M_j$ increment by 1 until some lower box has the value $\gamma_{q_{h}}$.  Let $i \in (q_{h}, q_{h+1}]$ be the index such that $M_j(i) = \gamma_{q_{h}}$.  Now moving north from $(j,i)$, the values in $M_j$ decrement by 1 either all of the way to the top of the column, or until there is a row index $k \in (0, q_{h})$ such that $M_{j+1}(k) < M_j(k+1)-1$.  In the former case set $k := 0$.  If $k > 0$ we have $M_j(x) = M_{j+1}(x)$ for $x \in (0,k]$.  Now use  $M_j(k+1) \leq M_{j+1}( k+1)$ to see that the values $M_{j + 1}(k+1), M_{j+1}( k+2), ... , M_{j+1}( q_{h})$ each appear in the interval of values $[ M_j(k+1), M_j(i) ]$.  Thus $B(M_{j+1}) \subseteq B(M_j)$.  Using the parenthetical remark above, we see that $M$ is a key.  There are $q_{h+1} - i$ elements in $B(M_j) \backslash B(M_{j+1})$ that are larger than $M_{j+1}( q_{h}) = \gamma_{q_{h}}$.  So $s := (q_{h+1} - q_{h}) - (q_{h+1} - i) \geq 0$ is the number of values in $B(M_j) \backslash B(M_{j+1})$ that are less than $\gamma_{q_{h}}$.  These $s$ values are the complement in $[ M_j(k+1), M_j(i) ]$ of the set $\{ \hspace{1mm} M_{j+1}(x) : x \in [k+1, q_{h}] \hspace{1mm} \}$, where $M_j(i) = M_{j+1}(q_{h}) = \gamma_{q_{h}}$.  \end{proof}

A $\lambda$-key $Y$ is \textit{gapless} if the condition below is satisfied for $h \in [r-1]$:  Let $b$ be the smallest value in a column of length $q_{h+1}$ that does not appear in a column of length $q_{h}$.  For $j \in (\lambda_{q_{h +2}}, \lambda_{q_{h+1}}]$, let $i \in (0, q_{h+1}]$ be the shared row index for the occurrences of $b = Y_j(i)$.  Let $m$ be the bottom (largest) value in the columns of length $q_{h}$.  If $b > m$ there are no requirements.  Otherwise:  For $j \in (\lambda_{q_{h +2}}, \lambda_{q_{h+1}}]$, let $k \in (i, q_{h+1}]$ be the shared row index for the occurrences of $m = Y_j(k)$.  For $j \in (\lambda_{q_{h + 2}}, \lambda_{q_{h+1}}]$ one must have $Y_j(i+1) = b+1, Y_j(i+2) = b+2, ... , Y_j(k-1) = m-1$ holding between $Y_j(i) = b$ and $Y_j(k) = m$.  (Hence necessarily $m - b = k - i$.)

The bijections $\pi \mapsto B$ and $\Psi_R$ of Proposition \ref{prop320.2} are respectively implicitly present here, from $\mathcal{A}_R$ to $\mathcal{B}_R$ and from $\mathcal{A}_R$ to $\mathcal{C}_R$:

\begin{thm}\label{theorem340}Let $\lambda \in \Lambda_n^+$ and set $R := R_\lambda$.  Consider the following three pairs of sets:

\noindent (a)   The set  $\mathcal{A}_R$  of $R$-312-avoiding permutations and the set $\mathcal{P}_\lambda$  of their $\lambda$-keys.

\noindent (b)   The set  $\mathcal{B}_R$  of $R$-rightmost clump deleting chains and the set  $\mathcal{Q}_\lambda$ of gapless $\lambda$-keys.

\noindent (c)   The set  $\mathcal{C}_R$  of gapless $R$-tuples and the set  $\mathcal{R}_\lambda$  of their $\lambda$-row end max tableaux.

\noindent (i) The process of tableau portrayal is a bijection from $\mathcal{B}_R$ to $\mathcal{Q}_\lambda$ and the process of constructing the $\lambda$-row end max tableau is a bijection from $\mathcal{C}_R$ to $\mathcal{R}_\lambda$.

\noindent (ii) We have $\mathcal{P}_\lambda = \mathcal{Q}_\lambda$.  The restriction of the global bijection $\pi \mapsto B$ to $\mathcal{A}_R$ induces a map from $\mathcal{P}_\lambda$ to $\mathcal{Q}_\lambda$ that is the identity.   So an $R$-permutation is  $R$-312-avoiding if and only if its $\lambda$-key is gapless.

\noindent (iii) If an $R$-permutation is $R$-312-avoiding, then the $\lambda$-row end max tableau of its rank $R$-tuple is its $\lambda$-key.  We have $\mathcal{P}_\lambda = \mathcal{R}_\lambda$.  The map $\Psi_R$ from $\mathcal{A}_R$ to $\mathcal{C}_R$ induces a map from $\mathcal{P}_\lambda$ to $\mathcal{R}_\lambda$ that is the identity.  \end{thm}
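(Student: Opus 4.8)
The plan is to take the three pairs of sets in order, reducing each claim to material already available from Sections~3, 6, 8, and 9. Part~(i) should come out as a pair of definition-comparisons. For the first bijection, recall that $B\mapsto Y_\lambda(B)$ is already a bijection from $R$-chains to $\lambda$-keys (Section~8), so I only need to check that $B$ is $R$-rightmost clump deleting exactly when $Y_\lambda(B)$ is gapless. Writing $B_h$ for the common value set of the length-$q_h$ columns of $Y_\lambda(B)$, the value $b$ in the definition of ``gapless $\lambda$-key'' is $\min(B_{h+1}\setminus B_h)$ and $m$ is $\max(B_h)$; since a length-$q_{h+1}$ column is just the sorted column on $B_{h+1}$, the gapless requirement at level $h$ says exactly $[b,m]\subseteq B_{h+1}$, i.e.\ Fact~\ref{fact320.3}(i) at $h$. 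For $h=r$ this is automatic (columns of trivial length $n$ are inert), so restricting the gapless condition to $h\in[r-1]$ loses nothing, and the equivalence with $R$-rightmost clump deleting follows from Fact~\ref{fact320.3}. For the second bijection, $\mathcal R_\lambda$ is by definition the image of $\mathcal C_R$ under $\gamma\mapsto M_\lambda(\gamma)$, so surjectivity is free, and injectivity follows from $\Omega_\lambda(M_\lambda(\gamma))=\gamma$ for $\gamma\in UI_\lambda(n)\supseteq UG_\lambda(n)$.

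For part~(ii), the point is that for $\pi\in\mathcal A_R$ with associated $R$-chain $B$, the tableaux $Y_\lambda(\pi)$ and $Y_\lambda(B)$ are literally the same. Combining Proposition~\ref{prop320.2}(i) (the restriction of $\pi\mapsto B$ to $\mathcal A_R$ is a bijection onto $\mathcal B_R$) with part~(i) gives $\mathcal P_\lambda=\{Y_\lambda(\pi):\pi\in\mathcal A_R\}=\{Y_\lambda(B):B\in\mathcal B_R\}=\mathcal Q_\lambda$, with the induced self-map the identity. The ``if and only if'' then follows because $\pi\mapsto Y_\lambda(\pi)$ is injective: a gapless $\lambda$-key lies in $\mathcal Q_\lambda=\mathcal P_\lambda$, hence equals $Y_\lambda(\pi')$ for some $R$-312-avoiding $\pi'$, forcing $\pi=\pi'$.

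The substance is part~(iii), which I will reduce to the single identity $M_\lambda(\Psi_R(\pi))=Y_\lambda(\pi)$ for $\pi\in\mathcal A_R$. Set $\psi:=\Psi_R(\pi)$, which is gapless by Proposition~\ref{prop320.2}(ii); then Lemma~\ref{lemma340.1} applies and $M:=M_\lambda(\psi)$ is a $\lambda$-key, with associated $R$-chain $B'$, so each length-$q_h$ column of $M$ has value set $B'_h$ and bottom entry $\max(B'_h)$. Since $M\in\mathcal Z_\lambda(\psi)$, its $\lambda$-row end list is $\psi$, and reading $\psi$ off the last column of each length shows that $\Psi_R$ of the $R$-permutation $\pi'$ corresponding to $B'$ equals $\psi$. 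The crucial step is that $B'$ is $R$-rightmost clump deleting: applying Lemma~\ref{lemma340.1} at $j:=\lambda_{q_{h+1}}$, whose east neighbor $j+1$ is the first length-$q_h$ column, identifies $B(M_j)$ and $B(M_{j+1})$ with $B'_{h+1}$ and $B'_h$ and rewrites the Lemma's conclusion as precisely Fact~\ref{fact320.3}(iii) for $B'$ at level $h$. Hence $\pi'\in\mathcal A_R$ by Proposition~\ref{prop320.2}(i), and $\Psi_R(\pi')=\psi=\Psi_R(\pi)$ together with the injectivity of $\Psi_R$ on $S_n^{R\text{-}312}$ (Proposition~\ref{prop320.2}(ii)) gives $\pi'=\pi$, so $B'=B$ and $M=Y_\lambda(\pi)$. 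The remaining assertions of~(iii) are then formal: $\mathcal R_\lambda=\{M_\lambda(\Psi_R(\pi)):\pi\in\mathcal A_R\}=\{Y_\lambda(\pi):\pi\in\mathcal A_R\}=\mathcal P_\lambda$ using surjectivity of $\Psi_R : \mathcal A_R\to\mathcal C_R$, and the induced map carries $Y_\lambda(\pi)$ to $M_\lambda(\Psi_R(\pi))=Y_\lambda(\pi)$, hence is the identity.

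I expect the one genuinely delicate point to be the index bookkeeping in part~(iii): confirming that columns of equal length in the $\lambda$-key $M$ share a value set (automatic from the $\lambda$-key condition, since nested sets of equal size coincide), that $\lambda_{q_{h+1}}$ and $\lambda_{q_{h+1}}+1$ really are the last length-$q_{h+1}$ column and the first length-$q_h$ column of the shape, and that Lemma~\ref{lemma340.1}'s description of the elements of $B(M_j)\setminus B(M_{j+1})$ below $\max(B'_h)$ matches Fact~\ref{fact320.3}(iii) verbatim. Once that dictionary between tableau columns and chain entries is set up, parts~(i) and~(iii) are essentially mechanical, and part~(ii) is a short formal argument built on~(i).
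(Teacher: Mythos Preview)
Your argument is correct, and for parts~(i) and~(ii) it coincides with the paper's proof. For part~(iii), however, you take a genuinely different route. The paper proves $M_\lambda(\Psi_R(\pi))=Y_\lambda(\pi)$ directly by an induction on the carrel index $h$: assuming $B(Y_{\lambda_{q_h}})=B(M_{\lambda_{q_h}})$, it separately matches the new values in the $(h{+}1)^{st}$ carrel that exceed $\gamma_{q_h}$ (read off from $\Psi_R$ and the description of $M$) and those below $\gamma_{q_h}$ (via Fact~\ref{fact320.3}(iii) for $B$ and Lemma~\ref{lemma340.1} for $\gamma$), concluding $B(Y_{\lambda_{q_{h+1}}})=B(M_{\lambda_{q_{h+1}}})$. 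You instead observe that Lemma~\ref{lemma340.1} already makes $M$ a $\lambda$-key $Y_\lambda(\pi')$, translate the lemma's second statement into Fact~\ref{fact320.3}(iii) for the associated chain $B'$, deduce $\pi'\in S_n^{R\text{-}312}$, and then invoke injectivity of $\Psi_R$ on $S_n^{R\text{-}312}$ together with $\Psi_R(\pi')=\Omega_\lambda(M)=\psi=\Psi_R(\pi)$ to force $\pi'=\pi$. Your approach is cleaner in that it replaces an explicit column-by-column computation with a single appeal to the bijection machinery of Proposition~\ref{prop320.2}; the paper's approach has the minor advantage of being self-contained at the tableau level without circling back through $\Psi_R$. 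Both rely on the same two ingredients, Lemma~\ref{lemma340.1} and Fact~\ref{fact320.3}(iii), so neither is deeper than the other.
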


\noindent In the full case when $\lambda$ is strict and $R = [n-1]$, the converse of the first statement of Part (iii) holds:  If the row end max tableau of the rank tuple of a permutation is the key of the permutation, then the permutation is 312-avoiding.  A counterexample to this converse for general $\lambda$ appears in Section 15.  The bijection from $\mathcal{C}_R$ to $\mathcal{R}_\lambda$ and the equality $\mathcal{Q}_\lambda = \mathcal{R}_\lambda$ imply that an $R$-tuple is $R$-gapless if and only if it arises as the $\lambda$-row end list of a gapless $\lambda$-key.

\begin{proof}

\noindent For the first part of (i), use the $B \mapsto Y$ bijection to relate Fact \ref{fact320.3}(i) to the definition of gapless $\lambda$-key.  The map in the second part is surjective by definition and is also obviously injective.  Use the construction of the bijection $\pi \mapsto B$ from $\mathcal{A}_R$ to $\mathcal{B}_R$ and the first part of (i) to confirm (ii).

Let $\pi \in S_n^{R\text{-}312}$.  Create the $R$-chain $B$ corresponding to $\pi$ and then its $\lambda$-key $Y := Y_\lambda(\pi)$.  Set $\gamma := \Psi_R(\pi)$ and then $M := M_\lambda(\gamma)$.  Clearly $B(Y_{\lambda_v}) = B_1 = \{ \gamma_1, ... , \gamma_v \} = B(M_{\lambda_v})$ for $v := q_1$.  Proceed by induction on $h \in [r]$:  For $v := q_h$ assume $B(Y_{\lambda_v}) = B(M_{\lambda_v})$.  Note that $Y_{\lambda_v}(v) = \max[B(Y_{\lambda_v})]$ and $\gamma_v = M_{\lambda_v}(v)$.  Proposition \ref{prop320.2}(ii) says that $\gamma$ is $R$-gapless;  this implies $\max[B(Y_{\lambda_v})] = \gamma_v$.  Set $v^\prime := q_{h+1}$.  Let $s_B$ be the number of values in $B(Y_{\lambda_{v^\prime}}) \backslash B(Y_{\lambda_v})$ that are less than $\gamma_v$.  Since $\gamma_v \in B(Y_{\lambda_v})$, the number of values in $B(Y_{\lambda_{v^\prime}}) \backslash B(Y_{\lambda_v})$ that exceed $\gamma_v$ is $p_{h+1} - s_B$.  These values are the entries in $\{ \pi_{v+1} , ... , \pi_{v^\prime} \}$ that exceed $\gamma_v$.  So from $\gamma := \Psi_R(\pi)$ and the description of $M_\lambda(\gamma)$ it can be seen that these values are exactly the values in $B(M_{\lambda_{v^\prime}}) \backslash B(M_{\lambda_v})$ that exceed $\gamma_v$.  Since $M$ is a key by Lemma \ref{lemma340.1} and $\gamma_v \in B(M_{\lambda_v})$, the number $s_M$ of values in $B(M_{\lambda_{v^\prime}}) \backslash B(M_{\lambda_v})$ that are less than $\gamma_v$ is $p_{h+1} - (p_{h+1}-s_B) = s_B =: s$.  From Proposition \ref{prop320.2}(i) we know that $B$ is $R$-rightmost clump deleting.  By Fact \ref{fact320.3}(iii) applied to $B$ and Lemma \ref{lemma340.1} applied to $\gamma$, we see that for both $Y$ and for $M$ the ``new'' values that are less than $\gamma_v$ are the $s$ largest elements of $[\gamma_v] \backslash B(Y_{\lambda_v}) = [\gamma_v] \backslash B(M_{\lambda_v})$.  Hence $Y_{\lambda_{v^\prime}} = M_{\lambda_{v^\prime}}$.  Since we only need to consider the rightmost columns of each length when showing that two $\lambda$-keys are equal, we have $Y = M$.  The rest of (iii) is evident.  \end{proof}

\begin{cor}When $\lambda$ is strict, there are $C_n$ gapless $\lambda$-keys. \end{cor}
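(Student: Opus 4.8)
The plan is to reduce the count to the already-known enumeration of $312$-avoiding permutations by $C_n$, using the bijections assembled in Theorem~\ref{theorem340}. First I would observe that since $\lambda$ is strict we have $R := R_\lambda = [n-1]$ (as recorded in Section~8: $\lambda$ is strict if and only if $R_\lambda = [n-1]$). Consequently the $R$-permutations are ordinary permutations, the $R$-$312$-avoiding permutations are exactly the $312$-avoiding permutations $S_n^{312}$, and $C_n^R = C_n^{[n-1]} = |S_n^{312}| = C_n$ by the Catalan count recalled in Section~2.

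Next I would invoke the bijection $\pi \mapsto Y_\lambda(\pi)$ from $R_\lambda$-permutations to $\lambda$-keys stated in Section~8. Restricting it to the set $\mathcal{A}_R = S_n^{312}$ of $312$-avoiding permutations yields a bijection onto $\mathcal{P}_\lambda$, the set of their $\lambda$-keys, by the very definition of $\mathcal{P}_\lambda$ in Theorem~\ref{theorem340}(a); hence $|\mathcal{P}_\lambda| = |S_n^{312}| = C_n$. Finally, Theorem~\ref{theorem340}(ii) asserts $\mathcal{P}_\lambda = \mathcal{Q}_\lambda$, the set of gapless $\lambda$-keys, so $|\mathcal{Q}_\lambda| = C_n$, which is the claim. (Alternatively the argument could be run through Theorem~\ref{theorem340}(i): tableau portrayal is a bijection from the set $\mathcal{B}_R$ of $R$-rightmost clump deleting chains onto $\mathcal{Q}_\lambda$, and Proposition~\ref{prop320.1}(i) counts $C_n$ such chains when $R = [n-1]$.)

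There is essentially no obstacle here; the mathematical content is entirely contained in Theorem~\ref{theorem340}, and the only point requiring care is the bookkeeping of what each object degenerates to when $\lambda$ is strict — in particular verifying that $R_\lambda = [n-1]$ forces the $R$-world notions to collapse to their classical counterparts, so that the relevant count is genuinely the ordinary Catalan number rather than some parabolic $C_n^R$.
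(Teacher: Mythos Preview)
Your proposal is correct and matches the paper's intended derivation: the corollary is stated without proof precisely because it follows immediately from Theorem~\ref{theorem340}(ii) together with the observation that strict $\lambda$ forces $R_\lambda = [n-1]$, so that $\mathcal{Q}_\lambda = \mathcal{P}_\lambda$ is in bijection with $S_n^{312}$, which has $C_n$ elements. Your alternative route through Theorem~\ref{theorem340}(i) and Proposition~\ref{prop320.1}(i) is equally valid and equally close in spirit.
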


\section{Sufficient condition for Demazure convexity}

Fix a $\lambda$-permutation $\pi$.  We define the set $\mathcal{D}_\lambda(\pi)$ of Demazure tableaux.  Then we show that if $\pi$ is $\lambda$-312-avoiding one has $\mathcal{D}_\lambda(\pi) = [Y_\lambda(\pi)]$.

First we need to specify how to find the \emph{scanning tableau} $S(T)$ for a given $T \in \mathcal{T}_\lambda$.   See page 394 of \cite{Wi2} for an example of this method.  Given a sequence $x_1, x_2, ...$, its \emph{earliest weakly increasing subsequence (EWIS)} is $x_{i_1}, x_{i_2}, ...$, where $i_1 = 1$ and for $u > 1$ the index $i_u$ is the smallest index satisfying $x_{i_u} \geq x_{i_{u-1}}$.  Let $T \in \mathcal{T}_\lambda$.  Draw the shape $\lambda$ and fill its boxes as follows to produce $S(T)$:  Form the sequence of the bottom values of the columns of $T$ from left to right.  Find the EWIS of this sequence, and mark each box that contributes its value to this EWIS.  The sequence of locations of the marked boxes for a given EWIS is its \emph{scanning path}.  Place the final value of this EWIS in the lowest available location in the leftmost available column of $S(T)$.  This procedure can be repeated as if the marked boxes are no longer part of $T$, since it can be seen that the unmarked locations form the shape of some $n$-partition.  Ignoring the marked boxes, repeat this procedure to fill in the next value of $S(T)$.  Once all of the scanning paths originating in the first column have been found, every location in $T$ has been marked and the first column of $S(T)$ has been created.  For $j> 1$, to fill in the $j^{th}$ column of $S(T)$:  Ignore the leftmost $(j-1)$ columns of $T$, remove all of the earlier marks from the other columns, and repeat the above procedure.  The scanning path originating at a location $(l,k) \in \lambda$ is denoted $\mathcal{P}(T;l,k)$.  It was shown in \cite{Wi2} that $S(T)$ is the ``right key'' of Lascoux and Sch\"{u}tzenberger for $T$, which was denoted $R(T)$ there.

As in \cite{PW1}, we now use the $\lambda$-key $Y_\lambda(\pi)$ of $\pi$ to define the set of \emph{Demazure tableaux}:  $\mathcal{D}_\lambda(\pi) :=$ \\ $\{ T \in \mathcal{T}_\lambda : S(T) \leq Y_\lambda(\pi) \}$.  We list some basic facts concerning keys, scanning tableaux, and sets of Demazure tableaux.  Part (i) is elementary.  Parts (ii) and (iii) either appear in \cite{Wi2}, \cite{PW1}, and/or \cite{Wi3}, or can be deduced from results therein using $S(T) = R(T)$.  The remaining parts follow in succession from Part (iii).

\begin{fact}\label{fact420}Let $T \in \mathcal{T}_\lambda$ and let $Y \in \mathcal{T}_\lambda$ be a key.

\noindent (i)  If $\Theta(Y) = \Theta(U)$ for some $U \in \mathcal{T}_\lambda$, then $U = Y$.

\noindent (ii)  $S(T)$ is a key.

\noindent (iii)  $T \leq S(T)$ and $S(Y) = Y$.

\noindent (iv)  $Y_\lambda(\pi) \in \mathcal{D}_\lambda(\pi)$ and $\mathcal{D}_\lambda(\pi) \subseteq [Y_\lambda(\pi)]$.

\noindent (v)  The unique maximal element of $\mathcal{D}_\lambda(\pi)$ is $Y_\lambda(\pi)$.

\noindent (vi)  The Demazure sets $\mathcal{D}_\lambda(\sigma)$ of tableaux are nonempty subsets of $\mathcal{T}_\lambda$ that are precisely indexed by the $\sigma \in S_n^\lambda$.  \end{fact}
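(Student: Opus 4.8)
The plan is to establish the six parts in the stated order, leaning on the relation $S(T) = R(T)$ between the scanning tableau and the right key of Lascoux and Schützenberger, together with the known properties of right keys from \cite{Wi2}, \cite{PW1}, \cite{Wi3}. For Part (i): since $Y$ is a key, its content $\Theta(Y)$ records, for each value $v \in [n]$, how many columns of $Y$ contain $v$; because the sets of column values of a key form a chain $B(Y_{\lambda_1}) \subseteq \cdots \subseteq B(Y_1) = [n]$, the content determines which values lie in each column, hence determines $Y$ completely. Any $U \in \mathcal{T}_\lambda$ with $\Theta(U) = \Theta(Y)$ must, column by column from the right, contain exactly those values (semistandardness forces the column contents to nest), so $U = Y$; this is the elementary combinatorial observation promised. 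Parts (ii) and (iii) are quoted: $S(T)$ is a key because the right key $R(T)$ is, $T \leq S(T)$ is the defining extremal property of the right key among tableaux Knuth-equivalent with the prescribed frank word data, and $S(Y) = Y$ holds because a key is its own right key (the scanning paths in a key are the vertical column-reading paths).

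For Part (iv), recall $\mathcal{D}_\lambda(\pi) = \{T \in \mathcal{T}_\lambda : S(T) \leq Y_\lambda(\pi)\}$. Applying Part (iii) to the key $Y := Y_\lambda(\pi)$ gives $S(Y_\lambda(\pi)) = Y_\lambda(\pi) \leq Y_\lambda(\pi)$, so $Y_\lambda(\pi) \in \mathcal{D}_\lambda(\pi)$; and for any $T \in \mathcal{D}_\lambda(\pi)$ the inequality $T \leq S(T) \leq Y_\lambda(\pi)$ from Part (iii) shows $T \in [Y_\lambda(\pi)]$, i.e. $\mathcal{D}_\lambda(\pi) \subseteq [Y_\lambda(\pi)]$. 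Part (v) is immediate: $Y_\lambda(\pi)$ lies in $\mathcal{D}_\lambda(\pi)$ by (iv) and dominates every element of $\mathcal{D}_\lambda(\pi)$ by the containment in $[Y_\lambda(\pi)]$ just proved, so it is the unique maximal element. For Part (vi), nonemptiness is contained in (iv). Precise indexing by $S_n^\lambda$ follows from the bijection $\pi \mapsto Y_\lambda(\pi)$ between $\lambda$-permutations and $\lambda$-keys (stated in Section 8): if $\mathcal{D}_\lambda(\pi) = \mathcal{D}_\lambda(\sigma)$ then their unique maximal elements coincide by (v), so $Y_\lambda(\pi) = Y_\lambda(\sigma)$, whence $\pi = \sigma$; conversely distinct $\pi$ give distinct maximal elements, hence distinct sets.

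The only genuinely non-routine ingredient is the quoted identification $S(T) = R(T)$ and the attendant right-key properties in Parts (ii)--(iii); everything after that is bookkeeping with the poset $\mathcal{T}_\lambda$ and the key $Y_\lambda(\pi)$. I expect the main obstacle, were one to write this out fully, to be Part (i): one must be careful that ``$\Theta(Y) = \Theta(U)$ forces equality'' genuinely uses that $Y$ (not $U$) is a key, since a non-key tableau is certainly not determined by its content, and to phrase the column-by-column reconstruction argument cleanly using only semistandardness of $U$ and the nesting of column-value sets of the key $Y$. The remaining parts are formal consequences and should be dispatched in a sentence or two each.
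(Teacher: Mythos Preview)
Your approach matches the paper's: it too declares (i) elementary, cites \cite{Wi2}, \cite{PW1}, \cite{Wi3} for (ii)--(iii) via $S(T)=R(T)$, and derives (iv)--(vi) in succession from (iii) exactly as you do. One caution on your sketch of (i): semistandardness alone does not force the column contents of a general $U$ to nest; what makes the column-by-column-from-the-right argument work is that $\Theta(Y)$ is a rearrangement of $\lambda$ (since $|\{v:\theta_v\geq j\}|=|B(Y_j)|=\zeta_j$), so inductively once $B(U_{j'})=\{v:\theta_v\geq j'\}$ for all $j'>j$, each $v$ with $\theta_v\geq j$ has no room to miss column $j$, forcing $B(U_j)=\{v:\theta_v\geq j\}=B(Y_j)$.
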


For $U \in \mathcal{T}_\lambda$, define $m(U)$ to be the maximum value in $U$.  (Define $m(U) := 1$ if $U$ is the null tableau.)  Let $(l,k) \in \lambda$.  As in Section 4 of \cite{PW1}, define $U^{(l,k)}$ to be the tableau formed from $T$ by finding and removing the scanning paths that begin at $(l,\zeta_l)$ through $(l, k+1)$, and then removing the $1^{st}$ through $l^{th}$ columns of $T$.  (If $l = \lambda_1$, then $U^{(l,k)}$ is the null tableau for any $k \in [\zeta_{\lambda_1}]$.)  Set $S := S(T)$.  Lemma 4.1 of \cite{PW1} states that $S_l(k) = \text{max} \{ T_l(k), m(U^{(l,k)}) \}$.

To reduce clutter in the proofs we write $Y_\lambda(\pi) =: Y$.

\begin{prop}\label{prop420.1}Let $\pi \in S^\lambda_n$ and $T \in \mathcal{T}_\lambda$ be such that $T \leq Y_\lambda(\pi)$.  If there exists $(l,k) \in \lambda$ such that $Y_l(k) < m(U^{(l,k)})$, then $\pi$ is $\lambda$-312-containing.  \end{prop}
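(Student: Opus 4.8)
The plan is to prove the contrapositive is false in the intended direction: assuming $T \leq Y$ and the existence of a box $(l,k)$ with $Y_l(k) < m(U^{(l,k)})$, we must exhibit a $\lambda$-312-pattern in $\pi$. The key link is Lemma 4.1 of \cite{PW1}, which says $S_l(k) = \max\{T_l(k), m(U^{(l,k)})\}$; combined with the hypothesis $Y_l(k) < m(U^{(l,k)})$ this immediately forces $S_l(k) \geq m(U^{(l,k)}) > Y_l(k)$, i.e. $S(T)_l(k) > Y_l(k)$ at the box $(l,k)$. So the scanning tableau $S(T)$ is \emph{not} $\leq Y$ at this box, even though $T \leq Y$. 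The task is then to translate this ``scanning path overshoot'' into a concrete 312-pattern among the carrels determined by $R = R_\lambda$.

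First I would set $m := m(U^{(l,k)})$ and locate where this maximum value $m$ physically sits: it is the bottom value of some column $T_j$ with $j > l$, reached by a scanning path $\mathcal{P}(T; j, \zeta_j)$ (or one of the paths removed in forming $U^{(l,k)}$), and that path passes through row $k$ in some column $\geq l$. The value $m$ is large — larger than $Y_l(k)$ — and it appears strictly to the right of column $l$ in a row $\leq k$ along the scanning path. Meanwhile $Y_l(k)$, being an entry of the $\lambda$-key $Y$ in column $l$ row $k$, equals $\operatorname{rank}^{k+1-\cdots}$ of the set $B((Y)_l)$; since $Y$ is a key, the set $B((Y)_l)$ is nested inside $B((Y)_j)$ for all $j \geq l$. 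The discrepancy $S_l(k) > Y_l(k)$ means that along the scanning path the value $m$ got ``pushed'' leftward past column boundaries in a way that a key would not permit, and the column boundaries that matter are exactly the cliff boundaries at $\lambda_{q_h}$, i.e. the carrel divisions of $R_\lambda$. I would use the structure of $U^{(l,k)}$ (as in Section 4 of \cite{PW1}) together with Fact~\ref{fact420}(iii), $T \leq S(T)$, to pin down row indices $a \leq q_h < b \leq q_{h+1} < c$ (for an appropriate $h$) witnessing that in $\pi$ — equivalently in the $\lambda$-key $Y$ via the bijection $\pi \leftrightarrow Y_\lambda(\pi)$ — one has a large entry $\pi_a$ (coming from $Y_l(k)$ being too small, forcing some column of $Y$ to \emph{lack} a value it should contain), a smaller entry $\pi_b$ in the next carrel (the ``$1$'' of the pattern, the value $m$ that had to be accommodated), and a further entry $\pi_c$ with $\pi_a > \pi_c$ and $\pi_b < \pi_c$ (coming from the continuation of the scanning path into a still-later cliff). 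Concretely I expect to invoke the reformulation in Fact~\ref{fact320.4}: $\pi$ is $R$-312-\emph{containing} exactly when for some $h$ the interval $(\min\{\pi_{q_h+1},\dots,\pi_{q_{h+1}}\}, \max\{\pi_1,\dots,\pi_{q_h}\})$ fails to be contained in $\{\pi_1,\dots,\pi_{q_{h+1}}\}$, and show that the overshoot at $(l,k)$ produces exactly such a failure: the value $m$ lies strictly between $\min$ of some later cohort and $\max$ of an earlier one, yet is absent from the union of cohorts up through that point because $Y_l(k) < m$ says $m \notin B((Y)_l)$.

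The main obstacle will be the bookkeeping that converts the scanning-path geometry into the carrel indices $a, b, c$ and verifies all three inequalities of the 312-pattern simultaneously; in particular, identifying which cliff (which $h$) the box $(l,k)$ and the endpoint of the overshooting scanning path belong to, and checking that $l$ and $j$ straddle a cliff boundary $\lambda_{q_h}$ so that rows $k$ (in the $h$-th cohort) and the row of $m$ (in the $(h+1)$-st cohort) give the ``$3$'' and ``$1$'', with the path's continuation giving the ``$2$''. I would organize this by first handling the case where $m$ is attained at the very bottom of column $j$ (the cleanest), using Lemma 4.1 of \cite{PW1} and the explicit description of $Y_\lambda(\pi)$ in terms of the columns $Y(B_h)$, and then noting that the general case (where $m$ comes from one of the removed scanning paths $\mathcal{P}(T; l, \zeta_l), \dots, \mathcal{P}(T; l, k+1)$ feeding into $U^{(l,k)}$) reduces to it by iterating the same argument one scanning path at a time, exactly as the recursive structure of $U^{(l,k)}$ suggests. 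Throughout, Fact~\ref{fact420}(ii)–(iii) (that $S(T)$ is a key with $S(S(T)) = S(T)$ and $T \le S(T)$) and the key-nesting property $B((Y)_l) \supseteq B((Y)_{l+1}) \supseteq \cdots$ are the workhorses.
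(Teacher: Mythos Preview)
Your plan has a genuine gap at the key step. You write that ``$Y_l(k) < m$ says $m \notin B((Y)_l)$,'' but this does not follow: since $Y_l$ is strictly increasing, $Y_l(k) < m$ allows $m$ to appear at some row $f > k$ of $Y_l$. In fact when $\pi$ is $\lambda$-312-avoiding the paper shows that $m = T_j(i)$ \emph{must} lie in $B(Y_l)$. More broadly, the value $m = m(U^{(l,k)})$ is a value of $T$, not a priori of $Y$ or of $\pi$; Fact~\ref{fact320.4} speaks only about the values $\pi_1, \dots, \pi_n$ (equivalently the column sets of $Y$), so you cannot feed $m$ into that criterion without first locating $m$ inside the chain $B$ --- which is exactly the step you have not carried out and which is not ``merely bookkeeping.''

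The paper's route differs from yours in two ways, and both do real work. First, it argues by contradiction: assume $\pi$ is $\lambda$-312-avoiding, so that by Theorem~\ref{theorem340}(ii) the key $Y$ is \emph{gapless}; a short case analysis using the gapless condition then forces $[Y_l(k), Y_j(i)] \subseteq B(Y_l)$, hence $T_j(i) = Y_l(f)$ for some $f > k$. Second, before doing any of this the paper replaces $(l,k)$ by the \emph{first} offending box in the scan order (columns right-to-left, each column bottom-to-top), which guarantees $S_l(f) \leq Y_l(f)$ for every $f > k$; then $S_l(f) > S_l(k) \geq T_j(i) = Y_l(f)$ is the contradiction. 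Your outline has neither the gapless-key translation nor any extremal choice of $(l,k)$, and without them the direct construction of witnesses $a,b,c$ that you defer does not obviously go through.
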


\begin{proof}Reading the columns from right to left and then each column from bottom to top, let $(l,k)$ be the first location in $\lambda$ such that $m(U^{(l,k)}) > Y_l(k)$.  In the rightmost column we have $m(U^{(\lambda_1,i)}) = 1$ for all $i \in [\zeta_{\lambda_1}]$.  Thus $m(U^{(\lambda_1,i)}) \leq Y_{\lambda_1}(i)$ for all $i \in [\zeta_{\lambda_1}]$.  So we must have $l \in [1, \lambda_1)$.

There exists $j > l$ and $i \leq k$ such that $m(U^{(l,k)}) = T_j(i)$.  Since $T \leq Y$, so far we have $Y_l(k) < T_j(i) \leq Y_j(i)$.  Note that since $Y$ is a key we have $k < \zeta_l$.  Then for $k < f \leq \zeta_l$ we have $m(U^{(l,f)}) \leq Y_l(f)$.  So $T \leq Y$ implies that $S_l(f) \leq Y_l(f)$ for $k < f \leq \zeta_l$.

Assume for the sake of contradiction that $\pi$ is $\lambda$-312-avoiding.  Theorem \ref{theorem340}(ii) says that its $\lambda$-key $Y$ is gapless.  If the value $Y_l(k)$ does not appear in $Y_j$, then the columns that contain $Y_l(k)$ must also contain $[Y_l(k), Y_j(i)]$:  Otherwise, the rightmost column that contains $Y_l(k)$ has index $\lambda_{q_{h+1}}$ for some $h \in [r-1]$ and there exists some $u \in [Y_l(k), Y_j(i)]$ such that $u \notin Y_{\lambda_{q_{h+1}}}$.  Then $Y$ would not satisfy the definition of gapless $\lambda$-key, since for this $h+1$ in that definition one has $b \leq u$ and $u \leq m$.  If the value $Y_l(k)$ does appear in $Y_j$, it appears to the north of $Y_j(i)$ there.  Then $i \leq k$ implies that some value $Y_l(f) < Y_j(i)$ with $f < k$ does not appear in $Y_j$.  As above, the columns that contain the value $Y_l(f) < Y_l(k)$ must also contain $[Y_l(f), Y_j(i)]$.  In either case $Y_l$ must contain $[Y_l(k), Y_j(i)]$.  This includes $T_j(i)$.

Now let $f > k$ be such that $Y_l(f) = T_j(i)$.  Then we have $S_l(f) > S_l(k) = \text{max} \{T_l(k),m(U^{(l,k)}) \}$ $\geq T_j(i) = Y_l(f)$.  This is our desired contradiction.  \end{proof}

As in Section 5 of \cite{PW1}: When $m(U^{(l,k)}) > Y_l(k)$, define the set $A_\lambda(T,\pi;l,k) := \emptyset$.  Otherwise, define $A_\lambda(T,\pi;l,k) := [ k , \text{min} \{ Y_l(k), T_l(k+1) -1, T_{l+1}(k) \} ] $.  (Refer to fictitious bounding values $T_l(\zeta_l + 1) := n+1$ and $T_{\lambda_l + 1}(l) := n$.)

\begin{thm}\label{theorem420}Let $\lambda$ be a partition and $\pi$ be a $\lambda$-permutation.  If $\pi$ is $\lambda$-312-avoiding, then $\mathcal{D}_\lambda(\pi) = [Y_\lambda(\pi)]$.  \end{thm}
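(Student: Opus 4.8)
The plan is to prove the two inclusions $\mathcal{D}_\lambda(\pi) \subseteq [Y_\lambda(\pi)]$ and $[Y_\lambda(\pi)] \subseteq \mathcal{D}_\lambda(\pi)$ separately. The first inclusion is already recorded as Fact \ref{fact420}(iv), so the entire content of the theorem is the reverse inclusion: every $T \in \mathcal{T}_\lambda$ with $T \leq Y := Y_\lambda(\pi)$ must satisfy $S(T) \leq Y$, i.e. $T \in \mathcal{D}_\lambda(\pi)$. So fix such a $T$ and set $S := S(T)$; the goal is to show $S_l(k) \leq Y_l(k)$ for every $(l,k) \in \lambda$.

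The key tool is Lemma 4.1 of \cite{PW1} as quoted in the excerpt: $S_l(k) = \max\{T_l(k), m(U^{(l,k)})\}$. Since $T \leq Y$ gives $T_l(k) \leq Y_l(k)$ automatically, it suffices to show $m(U^{(l,k)}) \leq Y_l(k)$ for all $(l,k) \in \lambda$. But this is precisely the contrapositive of Proposition \ref{prop420.1}: that proposition says that if $\pi$ is $\lambda$-permutation, $T \leq Y_\lambda(\pi)$, and there exists $(l,k)$ with $Y_l(k) < m(U^{(l,k)})$, then $\pi$ is $\lambda$-312-containing. Since we are assuming $\pi$ is $\lambda$-312-avoiding, no such $(l,k)$ exists, hence $m(U^{(l,k)}) \leq Y_l(k)$ for all $(l,k)\in\lambda$. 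Combining, $S_l(k) = \max\{T_l(k), m(U^{(l,k)})\} \leq Y_l(k)$ for all $(l,k)$, so $S(T) \leq Y_\lambda(\pi)$ and therefore $T \in \mathcal{D}_\lambda(\pi)$. Together with Fact \ref{fact420}(iv) this yields $\mathcal{D}_\lambda(\pi) = [Y_\lambda(\pi)]$.

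The main work has therefore been frontloaded into Proposition \ref{prop420.1} and the machinery it rests on — the scanning-tableau description of the right key from \cite{Wi2}, the $U^{(l,k)}$ construction and Lemma 4.1 from \cite{PW1}, and crucially Theorem \ref{theorem340}(ii), which converts "$\pi$ is $\lambda$-312-avoiding" into "$Y_\lambda(\pi)$ is a gapless $\lambda$-key." The only real subtlety in assembling the theorem from these pieces is making sure the quantifier structure is handled correctly: one wants $m(U^{(l,k)}) \le Y_l(k)$ simultaneously for \emph{all} boxes $(l,k)$, and Proposition \ref{prop420.1} delivers exactly that as a single existential statement negated. I would also double-check the base/boundary cases (the rightmost column, where $m(U^{(\lambda_1,i)}) = 1$, and the fictitious bounding values) are consistent with the conventions set up before the statement, but these are routine. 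No induction is needed here; the heavy lifting is entirely inside the cited proposition and the gaplessness translation.
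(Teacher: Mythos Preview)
Your proof is correct and follows essentially the same approach as the paper: both use Fact \ref{fact420}(iv) for the easy inclusion and the contrapositive of Proposition \ref{prop420.1} for the substantive one. The only difference is in the final wrap-up: the paper routes the conclusion through the sets $A_\lambda(T,\pi;l,k)$ and invokes Theorem 5.1 of \cite{PW1} (which characterizes membership in $\mathcal{D}_\lambda(\pi)$ via those sets), whereas you go directly through Lemma 4.1 of \cite{PW1} to bound $S_l(k)$ --- your route is slightly more direct and avoids the extra citation, but the substance is identical.
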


\begin{proof}Let $T \leq Y$ and $(l,k) \in \lambda$.  The contrapositive of the proposition gives $A_\lambda(T,\pi;l,k) =$ \\ $[ k , \text{min} \{ Y_l(k), T_l(k+1) - 1, T_{l+1}(k) \} ]$.  Since $T \leq Y$, we see that $T_l(k) \in A_\lambda(T,\pi;l,k)$ for all $(l,k) \in \lambda$.  Theorem 5.1 of \cite{PW1} says that $T \in \mathcal{D}_\lambda(\pi)$.  \end{proof}

Since principal ideals in $\mathcal{T}_\lambda$ are convex polytopes in $\mathbb{Z}^{|\lambda|}$, we immediately obtain:

\begin{cor}\label{cor420}If $\pi$ is $\lambda$-312-avoiding, then $\mathcal{D}_\lambda(\pi)$ is a convex polytope in $\mathbb{Z}^{|\lambda|}$.  \end{cor}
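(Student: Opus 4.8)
The plan is to combine Theorem~\ref{theorem420} with the observation recorded in Section 8 that principal ideals in $\mathcal{T}_\lambda$ are convex polytopes in $\mathbb{Z}^{|\lambda|}$. First I would invoke Theorem~\ref{theorem420}: since $\pi$ is $\lambda$-312-avoiding, $\mathcal{D}_\lambda(\pi) = [Y_\lambda(\pi)]$, the principal ideal of the lattice $\mathcal{T}_\lambda$ generated by the $\lambda$-key $Y_\lambda(\pi)$. It then only remains to unwind why such an ideal qualifies as a convex polytope in the sense of Section 2.

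For this, identify a tableau $U \in \mathcal{T}_\lambda$ with its vector of entries $(U_j(i))$ indexed by the $|\lambda|$ boxes $(j,i)$ of $\lambda$, so that $\mathcal{T}_\lambda \subseteq \mathbb{Z}^{|\lambda|}$. Being a semistandard tableau of shape $\lambda$ is cut out by the finitely many linear inequalities $1 \le U_j(i)$, $U_j(i) \le n$, the column-strict conditions $U_j(i) + 1 \le U_j(i+1)$ for $i \in [1,\zeta_j)$, and the row-weak conditions $U_j(i) \le U_{j+1}(i)$; hence $\mathcal{T}_\lambda$ is already the solution set of a finite linear system. Adjoining the further $|\lambda|$ inequalities $U_j(i) \le \bigl(Y_\lambda(\pi)\bigr)_j(i)$, which express $U \le Y_\lambda(\pi)$ entrywise, gives exactly the solution set $[Y_\lambda(\pi)] = \mathcal{D}_\lambda(\pi)$. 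Thus $\mathcal{D}_\lambda(\pi)$ is the solution set of a finite system of linear inequalities, i.e. a convex polytope in $\mathbb{Z}^{|\lambda|}$.

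I do not expect any real obstacle here: the mathematical content has already been delivered by Theorem~\ref{theorem420}, and the polytope description of a principal ideal in the distributive lattice $L(\lambda,n) = \mathcal{T}_\lambda$ is routine and was flagged in Section 8. The only thing warranting a passing remark is that the latent/fictitious boundary values used elsewhere play no role, since the ideal is described directly by entrywise comparison against the fixed tableau $Y_\lambda(\pi)$.
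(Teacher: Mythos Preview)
Your proof is correct and follows the paper's own approach exactly: the paper simply notes that principal ideals in $\mathcal{T}_\lambda$ are convex polytopes in $\mathbb{Z}^{|\lambda|}$ (as stated in Section~8) and then invokes Theorem~\ref{theorem420}. Your additional unwinding of why $[Y_\lambda(\pi)]$ is cut out by finitely many linear inequalities is accurate and simply makes explicit what the paper leaves as a one-line remark.
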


\section{Necessary condition for Demazure convexity}

Continue to fix a $\lambda$-permutation $\pi$.  We show that $\pi$ must be $\lambda$-312-avoiding for the set of Demazure tableaux $\mathcal{D}_\lambda(\pi)$ to be a convex polytope in $\mathbb{Z}^{|\lambda|}$.  We do so by showing that if $\pi$ is $\lambda$-312-containing, then $\mathcal{D}_\lambda(\pi)$ does not contain a particular semistandard tableau that lies on the line segment defined by two particular keys that are in $\mathcal{D}_\lambda(\pi)$.

\begin{thm}\label{theorem520}Let $\lambda$ be a partition and let $\pi$ be a $\lambda$-permutation.  If $\mathcal{D}_\lambda(\pi)$ is the principal ideal $[Y_\lambda(\pi)]$ in $\mathcal{T}_\lambda$, then $\pi$ is $\lambda$-312-avoiding.  More generally:  If $\mathcal{D}_\lambda(\pi)$ is convex in $\mathbb{Z}^{|\lambda|}$, then $\pi$ is $\lambda$-312-avoiding.  \end{thm}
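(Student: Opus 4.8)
The plan is to establish the contrapositive: if $\pi$ is $\lambda$-312-containing, then $\mathcal{D}_\lambda(\pi)$ is not a convex subset of $\mathbb{Z}^{|\lambda|}$. Since principal ideals of $\mathcal{T}_\lambda$ are convex polytopes, this also covers the first assertion. To prove non-convexity it suffices to produce two Demazure tableaux $T^{(1)},T^{(2)}\in\mathcal{D}_\lambda(\pi)$ whose midpoint $T:=\tfrac12(T^{(1)}+T^{(2)})$ is a lattice point of $\mathbb{Z}^{|\lambda|}$ and is a semistandard tableau of shape $\lambda$, but satisfies $S(T)\not\le Y$, where $Y:=Y_\lambda(\pi)$; then $T\in\mathcal{T}_\lambda\setminus\mathcal{D}_\lambda(\pi)$ lies on the segment joining two points of $\mathcal{D}_\lambda(\pi)$. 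Because $Y$ is the maximum of $\mathcal{D}_\lambda(\pi)$ (Fact \ref{fact420}(iv),(v)) and keys are fixed by $S$ (Fact \ref{fact420}(iii)), I will take $T^{(1)}:=Y$ and construct $T^{(2)}$ to be a key with $T^{(2)}\le Y$; both endpoints then lie in $\mathcal{D}_\lambda(\pi)$ automatically.

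The perturbation producing $T^{(2)}$ is read off from the failure of gaplessness. By Theorem \ref{theorem340}(ii), $Y$ is not a gapless $\lambda$-key, so there is an index $h\in[r-1]$, a column $j$ of $Y$ of length $q_{h+1}$, the common bottom value $m$ of the columns of length $q_h$, and the smallest value $b$ occurring in a length-$q_{h+1}$ column but not in a length-$q_h$ column, with $b\le m$, such that the portion of $Y_j$ strictly between its occurrences of $b$ and of $m$ omits some value $v$ with $b<v<m$. The key inequality $B(Y_l)\subseteq B(Y_j)$ for the columns $l$ of length $q_h$ forces $m\in B(Y_l)$ while $v\notin B(Y_l)$; thus each such column carries the value $v$ ``across its diagonal'', in the sense that its foot value $m$ exceeds the absent value $v$. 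I will choose $v$ among the omitted values so that inserting $v$ at the foot of a length-$q_h$ column --- i.e.\ replacing $m$ by $v$ there --- respects column-strictness; the availability of such a $v$ (or of a substitute perturbation when it is not available) has to be checked.

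I then set $T^{(2)}$ to be $Y$ with the foot value of every length-$q_h$ column lowered from $m$ to $2v-m$, so that $T:=\tfrac12(Y+T^{(2)})$ is integral and agrees with $Y$ except that the foot value of each length-$q_h$ column is now $v$. One verifies that $T^{(2)}$ is again a key with $T^{(2)}\le Y$, hence lies in $\mathcal{D}_\lambda(\pi)$, and that $T$ is semistandard with $T\le Y$. To see $S(T)\not\le Y$ I invoke the scanning-path formula $S_l(k)=\max\{T_l(k),m(U^{(l,k)})\}$ of \cite{PW1} (recalled just before Proposition \ref{prop420.1}): the length-$q_{h+1}$ column $j$ of $T$ still carries $m$, whereas the value $v\notin B(Y_l)$ now sits at the foot of the shorter column $l$ of length $q_h$; one checks that a scanning path beginning in column $j$ is therefore forced down into column $l$ at a box whose $Y$-value is smaller than $v$, so that $S(T)$ has entry at least $v$ at that box while $Y$ has entry less than $v$ there. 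Hence $S(T)\not\le Y$, $T\notin\mathcal{D}_\lambda(\pi)$, and $\mathcal{D}_\lambda(\pi)$ is not convex. In effect this runs the argument of Proposition \ref{prop420.1} backwards on an explicit tableau.

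The main obstacle is the content of the third paragraph, made uniform over all the ways gaplessness can fail: exhibiting a legal perturbation and proving the overshoot. If $2v-m<1$, or if an intermediate element of some $B(Y_l)$ obstructs lowering the foot value to $2v-m$, one must instead lower a single well-chosen column, or perturb the length-$q_{h+1}$ column $j$ itself in the neighborhood of the skip (possibly perturbing two entries at once so that the midpoint changes both), and then redo the scanning-path analysis. In every case the delicate point is the same: tracing scanning paths through the ``gap region'' of $T$ with the right-key description of \cite{Wi2} together with the $U^{(l,k)}$ machinery of \cite{PW1}, and pinning down precisely which box of $Y$ is exceeded. Organizing that case analysis cleanly --- and confirming at each step that the tableaux built are semistandard and that the two endpoints really lie in $\mathcal{D}_\lambda(\pi)$ --- is where the real work lies.
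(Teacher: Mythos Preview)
Your overall architecture matches the paper's: argue the contrapositive by exhibiting two keys in $\mathcal{D}_\lambda(\pi)$ and a semistandard tableau on the segment between them whose scanning tableau exceeds $Y$. But your specific construction has genuine problems that the acknowledgment in your final paragraph does not repair.

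First, your $T^{(2)}$ is generally \emph{not} a key. You replace the foot value $m$ of every length-$q_h$ column by $2v-m$, leaving the longer columns untouched. For $T^{(2)}$ to be a key you would need $2v-m\in B(Y_j)$ for every column $j$ of length $q_{h+1}$, and nothing guarantees this. Worse, $T^{(2)}$ need not even be semistandard: the row-$q_h$ entry of the adjacent length-$q_{h+1}$ column is $Y_j(q_h)$, and you need $Y_j(q_h)\le 2v-m$, which can easily fail (and $2v-m$ can be nonpositive). Once $T^{(2)}$ is not a key, the shortcut ``$S(T^{(2)})=T^{(2)}\le Y$'' collapses and you have no reason to believe $T^{(2)}\in\mathcal{D}_\lambda(\pi)$. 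Second, your scanning argument is only a hope: you assert that some path from column $j$ lands in column $l$ at a box where $Y$ is below $v$, but you neither identify the box nor control which path reaches it; the actual overshoot must be pinned to a specific location.

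The paper avoids all of this by working directly from the $312$-pattern rather than from the failure of gaplessness. It first makes \emph{optimal} choices of the indices $a,b,c$ (minimizing $h$, then maximizing $\pi_b$, then minimizing $\pi_a$, etc.), which carve out three ``prohibited rectangles'' in the graph of $\pi$. It then builds two auxiliary $\lambda$-permutations $\bar\chi$ and $\bar\omega$ by transposing entries of $\pi$, so that $X:=Y_\lambda(\bar\chi)$ and $W:=Y_\lambda(\bar\omega)$ are keys \emph{by construction}, with $W<X\le Y$. The midpoint tableau $T$ differs from $X$ only on a small region $\mu$ where $\pi_a$ is replaced by $\pi_c$; semistandardness of $T$ and the inequality $S_j(i)>\pi_d=Y_j(i)$ are then verified at one explicit box using the prohibited-rectangle information. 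Neither endpoint is $Y$ itself. If you want to salvage your approach, the safest fix is to imitate this: produce your two endpoints as $\lambda$-keys of modified $\lambda$-permutations (so the key and semistandard properties are automatic) rather than by ad hoc value surgery on $Y$.
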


\begin{proof}For the contrapositive, assume that $\pi$ is $\lambda$-312-containing.  Here $|R_\lambda| =: r \geq 2$.  There exists $1 \leq g < h \leq r$ and some $a \leq q_g < b \leq q_h < c$ such that $\pi_b < \pi_c < \pi_a$.  Among such patterns, we specify one that is optimal for our purposes.  Figure 11.1 charts the following choices for $\pi = (4,8;9;2,3;1,5;6,7)$ in the first quadrant.  Choose $h$ to be minimal.  So $b \in (q_{h-1}, q_h]$.  Then choose $b$ so that $\pi_b$ is maximal.  Then choose $a$ so that $\pi_a$ is minimal.  Then choose $g$ to be minimal.  So $a \in (q_{g-1} , q_g]$.  Then choose any $c$ so that $\pi_c$ completes the $\lambda$-312-containing condition.

These choices have led to the following two prohibitions; see the rectangular regions in Figure 11.1:

\noindent (i) By the minimality of $h$ and the maximality of $\pi_b$, there does not exist $e \in (q_g, q_h]$ such that $\pi_b < \pi_e < \pi_c$.

\noindent (ii) By the minimality of $\pi_a$, there does not exist $e \in [q_{h-1}]$ such that $\pi_c < \pi_e < \pi_a$.

\noindent If there exists $e \in [q_g]$ such that $\pi_b < \pi_e < \pi_c$, choose $d \in [q_g]$ such that $\pi_d$ is maximal with respect to this condition; otherwise set $d = b$.  So $\pi_b \leq \pi_d$ with $d \leq b$.  We have also ruled out:

\noindent (iii) By the maximality of $\pi_d$, there does not exist $e \in [q_g]$ such that $\pi_d < \pi_e < \pi_c$.

\vspace{1.5pc}\centerline{\includegraphics[scale=1]{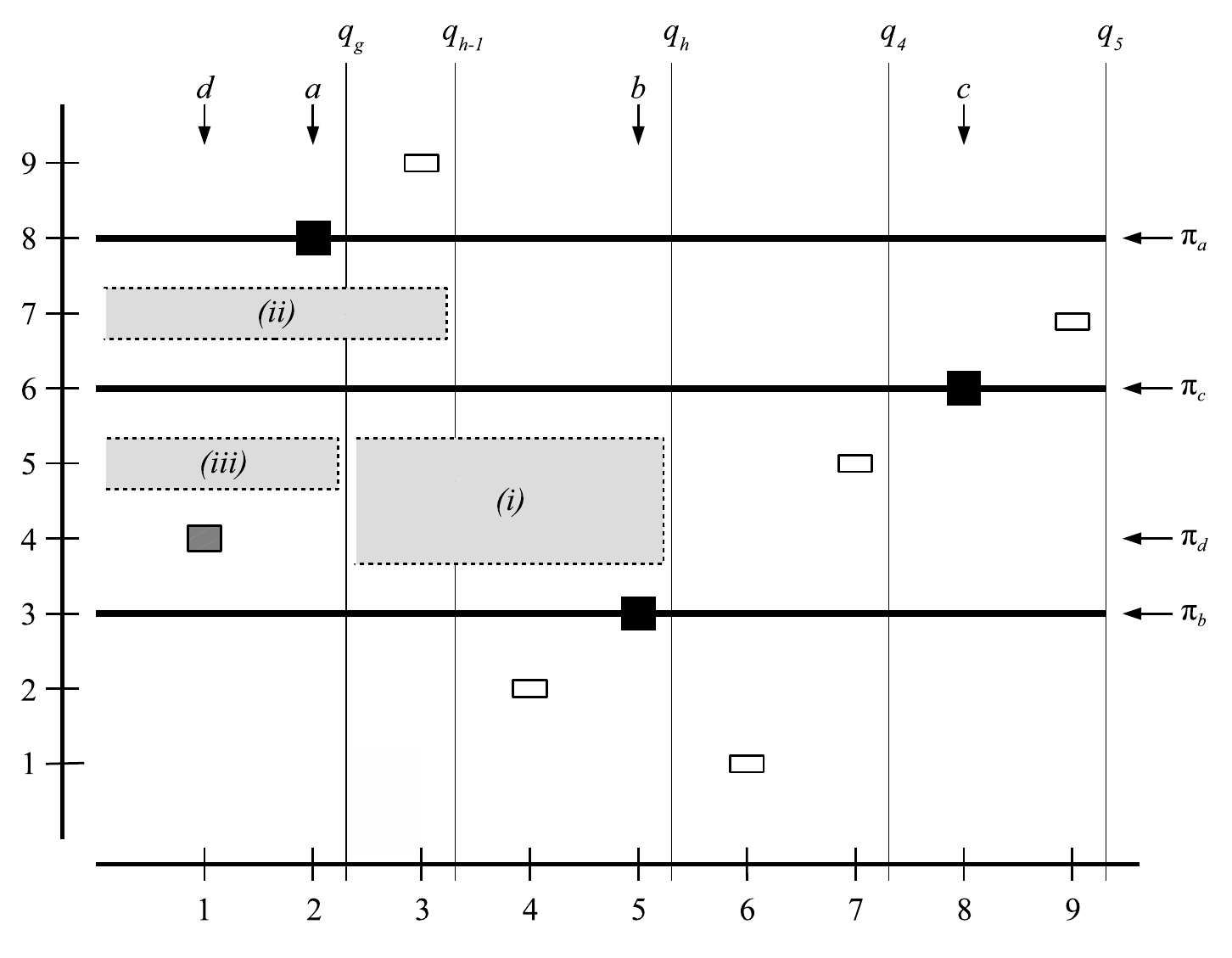}}

\vspace{0pc}\centerline{Figure 11.1.  Prohibited regions (i), (ii), and (iii) for $\pi = (4,8;9;2,3;1,5;6,7)$.}

\vspace{2pc}Set $Y := Y_\lambda(\pi)$.  Now let $\chi$ be the permutation resulting from swapping the entry $\pi_b$ with the entry $\pi_d$ in $\pi$; so $\chi_b := \pi_d, \chi_d := \pi_b$, and $\chi_e := \pi_e$ when $e \notin \{ b, d \}$.  (If $d = b$, then $\chi = \pi$ with $\chi_b = \pi_b = \chi_d = \pi_d$.)  Let $\bar{\chi}$ be the $\lambda$-permutation produced from $\chi$ by sorting each cohort into increasing order.  Set $X := Y_\lambda(\bar{\chi})$.  Let $j$ denote the column index of the rightmost column with length $q_h$; so the value $\chi_b = \pi_d$ appears precisely in the $1^{st}$ through $j^{th}$ columns of $X$.  Let $f \leq h$ be such that $d \in (q_{f-1}, q_f]$, and let $k \geq j$ denote the column index of the rightmost column with length $q_f$.  The swap producing $\chi$ from $\pi$ replaces $\pi_d = \chi_b$ in the $(j+1)^{st}$ through $k^{th}$ columns of $Y$ with $\chi_d = \pi_b$ to produce $X$.  (The values in these columns may need to be re-sorted to meet the semistandard criteria.)  So $\chi_d \leq \pi_d$ implies $X \leq Y$ via a column-wise argument.

Forming the union of the prohibited rectangles for (i), (ii), and (iii), we see that there does not exist $e \in [q_{h-1}]$ such that $\pi_d = \chi_b < \pi_e < \pi_a$.  Thus we obtain:

\noindent (iv) For $l > j$, the $l^{th}$ column of $X$ does not contain any values from $[\chi_b, \pi_a)$.

\noindent Let $(j,i)$ denote the location of the $\chi_b$ in the $j^{th}$ column of $X$ (and hence $Y$).  So $Y_j(i) = \pi_d$.  By (iv) and the semistandard conditions, we have $X_{j+1}(u) = \pi_a$ for some $u \leq i$.  By (i) and (iii) we can see that $X_j(i+1) > \pi_c$.

Let $m$ denote the column index of the rightmost column of $\lambda$ with length $q_g$.  This is the rightmost column of $X$ that contains $\pi_a$.  Let $\mu \subseteq \lambda$ be the set of locations of the $\pi_a$'s in the $(j+1)^{st}$ through $m^{th}$ columns of $X$; note that $(j+1, u) \in \mu$.  Let $\omega$ be the permutation obtained by swapping $\chi_a = \pi_a$ with $\chi_b = \pi_d$ in $\chi$;  so $\omega_a := \chi_b = \pi_d$, $\omega_b := \chi_a = \pi_a$, $\omega_d := \chi_d = \pi_b$, and $\omega_e := \pi_e$ when $e \notin \{ d, a, b \}$.  Let $\bar{\omega}$ be the $\lambda$-permutation produced from $\omega$ by sorting each cohort into increasing order.  Set $W := Y_\lambda(\bar{\omega})$.  By (iv), obtaining $\omega$ from $\chi$ is equivalent to replacing the $\pi_a$ at each location of $\mu$ in $X$ with $\chi_b$ (and leaving the rest of $X$ unchanged) to obtain $W$.  So $\chi_b < \pi_a$ implies $W < X$.

\vspace{1.5pc}\centerline{\includegraphics[scale=1]{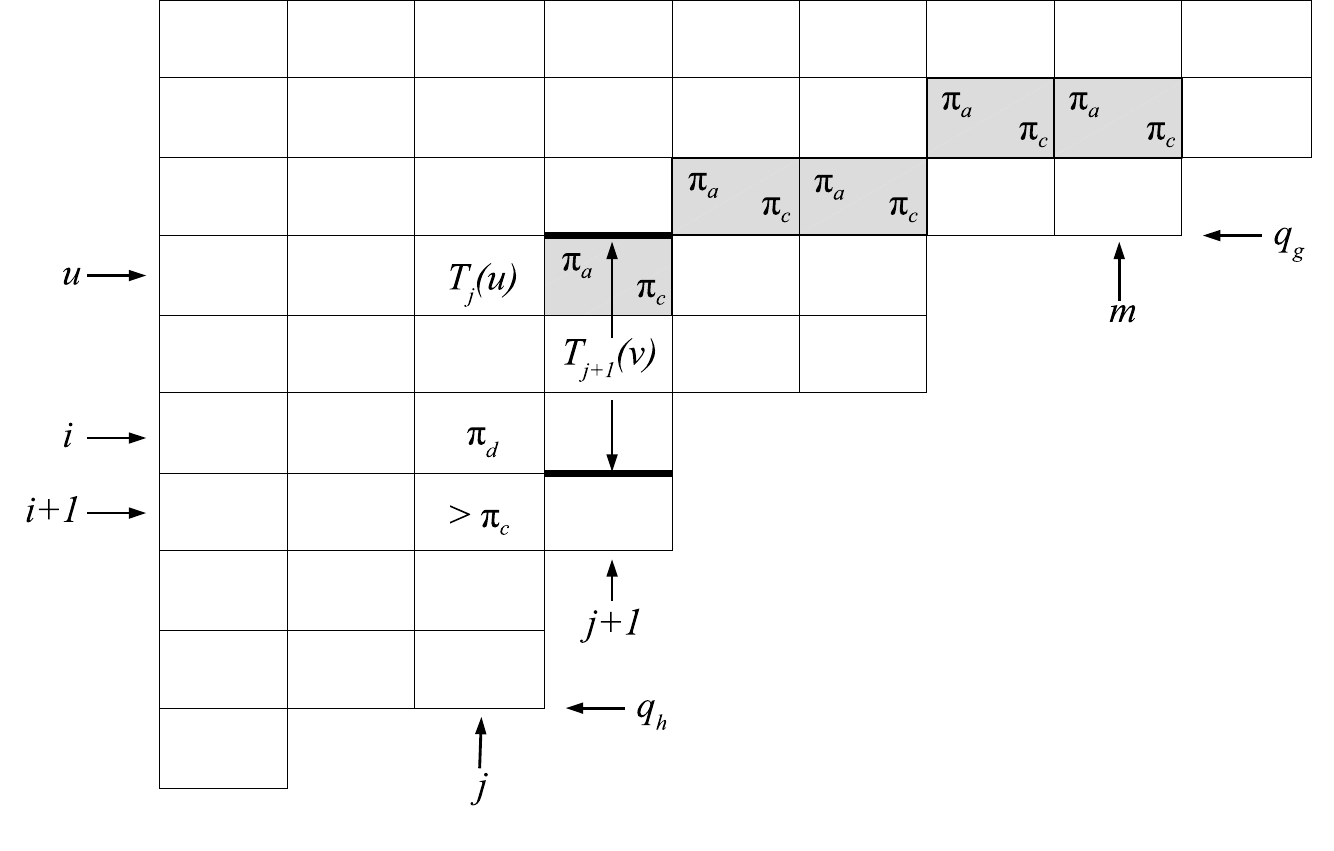}}

\vspace{-.5pc}\centerline{Figure 11.2.  Values of $X$ (respectively $T$) are in upper left (lower right) corners.}

\vspace{2pc}Let $T$ be the result of replacing the $\pi_a$ at each location of $\mu$ in $X$ with $\pi_c$ (and leaving the rest unchanged).  So $T < X \leq Y$.  See the conceptual Figure 11.2 for $X$ and $T$; the shaded boxes form $\mu$.  In particular $T_{j+1}(u) = \pi_c$.  This $T$ is not necessarily a key; we need to confirm that it is semistandard.  For every $(q,p) \notin \mu$ we have $W_q(p) = T_q(p) = X_q(p)$.  By (iv), there are no values in $X$ in any column to the right of the $j^{th}$ column from $[\pi_c, \pi_a)$.  The region $\mu$ is contained in these columns.  Hence we only need to check semistandardness when moving from the $j^{th}$ column to $\mu$ in the $(j+1)^{st}$ column.  Here $u \leq i$ implies $T_j(u) \leq T_j(i) = \pi_d < \pi_c = T_{j+1}(u)$.  So $T \in \mathcal{T}_\lambda$.

Now we consider the scanning tableau $S(T) =: S$ of $T$:  Since $(j, i+1) \notin \mu$, we have $T_j(i+1) = X_j(i+1)$.  Since $X_j(i+1) > \pi_c = T_{j+1}(u)$, the location $(j+1,u)$ is not in a scanning path $\mathcal{P}(T;j,i^\prime)$ for any $i^\prime > i$.  Since $T_j(i) = \chi_b = \pi_d < \pi_c$, the location $(j+1,v)$ is in $\mathcal{P}(T;j,i)$ for some $v \in [u,i]$.  By the semistandard column condition one has $T_{j+1}(v) \geq T_{j+1}(u) = \pi_c$.  Thus $S_j(i) \geq \pi_c > \chi_b = \pi_d = Y_j(i)$.  Hence $S(T) \nleq Y$, and so $T \notin \mathcal{D}_\lambda(\pi)$.  Since $T \in [Y]$, we have $\mathcal{D}_\lambda(\pi) \neq [Y]$.

In $\mathbb{R}^{|\lambda|}$, consider the line segment $U(t) = W + t(X-W)$, where $0 \leq t \leq 1$.  Here $U(0) = W$ and $U(1) = X$.  The value of $t$ only affects the values at the locations in $\mu$.  Let $x := \frac{\pi_c - \chi_b}{\pi_a - \chi_b}$.  Since $\chi_b < \pi_c < \pi_a$, we have $0 < x < 1$.  The values in $\mu$ in $U(x)$ are $\chi_b + \frac{\pi_c - \chi_b}{\pi_a-\chi_b}(\pi_a-\chi_b) = \pi_c$.  Hence $U(x) = T$.  Since $X$ and $W$ are keys, we have $S(X) = X$ and $S(W) = W$.  Then $W < X \leq Y$ implies $W \in \mathcal{D}_\lambda(\pi)$ and $X \in \mathcal{D}_\lambda(\pi)$.  Thus $U(0), U(1) \in \mathcal{D}_\lambda(\pi)$ but $U(x) \notin \mathcal{D}_\lambda(\pi)$.  If a set $\mathcal{E}$ is a convex polytope in $\mathbb{Z}^N$ and $U(t)$ is a line segment with $U(0), U(1) \in \mathcal{E}$, then $U(t) \in \mathcal{E}$ for any $0 < t < 1$ such that $U(t) \in \mathbb{Z}^N$.  Since $0 < x < 1$ and $U(x) = T \in \mathbb{Z}^{|\lambda|}$ with $U(x) \notin \mathcal{D}_\lambda(\pi)$, we see that $\mathcal{D}_\lambda(\pi)$ is not a convex polytope in $\mathbb{Z}^{|\lambda|}$.  \end{proof}

When one first encounters the notion of a Demazure polynomial $d_\lambda(\pi;x)$, given Facts \ref{fact420}(iv)(v) it is natural to hope that $d_\lambda(\pi;x)$ is simply the sum of $x^{\Theta(T)}$ over all $T \in [Y_\lambda(\pi)]$.  Combining Theorems \ref{theorem420} and \ref{theorem520}, we can now say:

\begin{cor}\label{cor520}Let $\pi \in S_n^\lambda$. The set $\mathcal{D}_\lambda(\pi)$ of Demazure tableaux of shape $\lambda$ is a convex polytope in $\mathbb{Z}^{|\lambda|}$ if and only if $\pi$ is $\lambda$-312-avoiding if and only if $\mathcal{D}_\lambda(\pi) = [Y_\lambda(\pi)]$.  \end{cor}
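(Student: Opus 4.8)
The plan is to deduce the corollary from the two preceding theorems by closing a three-term cycle of implications among the conditions (A) ``$\mathcal{D}_\lambda(\pi)$ is a convex polytope in $\mathbb{Z}^{|\lambda|}$'', (B) ``$\pi$ is $\lambda$-312-avoiding'', and (C) ``$\mathcal{D}_\lambda(\pi) = [Y_\lambda(\pi)]$''. All of the substantive work has already been discharged upstream: Theorem \ref{theorem420} gives (B) $\Rightarrow$ (C), and the ``more generally'' clause of Theorem \ref{theorem520} gives (A) $\Rightarrow$ (B). The only remaining ingredient is the elementary observation, recorded in Section 8, that every principal ideal in the distributive lattice $\mathcal{T}_\lambda = L(\lambda,n)$ is a convex polytope in $\mathbb{Z}^{|\lambda|}$ — it is the solution set of the finitely many linear inequalities expressing semistandardness together with the entrywise bound $T \leq Y_\lambda(\pi)$ — which gives (C) $\Rightarrow$ (A). (The composite (B) $\Rightarrow$ (C) $\Rightarrow$ (A) is already packaged as Corollary \ref{cor420}.)

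Concretely, I would state the cycle (C) $\Rightarrow$ (A) $\Rightarrow$ (B) $\Rightarrow$ (C) with the three citations above, and then read off the two ``if and only if'' clauses in the phrasing of the corollary: (A) $\Rightarrow$ (B) is immediate, and (B) $\Rightarrow$ (C) $\Rightarrow$ (A) supplies the converse, so (A) $\Leftrightarrow$ (B); likewise (B) $\Rightarrow$ (C) is immediate, and (C) $\Rightarrow$ (A) $\Rightarrow$ (B) supplies the converse, so (B) $\Leftrightarrow$ (C). It is worth remarking that the cycle yields a little more than the literal statement: whenever $\mathcal{D}_\lambda(\pi)$ is merely convex in $\mathbb{Z}^{|\lambda|}$ it is automatically the principal ideal $[Y_\lambda(\pi)]$.

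Since the hard content lives in Theorems \ref{theorem420} and \ref{theorem520}, I expect no genuine obstacle in the corollary itself. The only points requiring care are bookkeeping — arranging the implication cycle so that both equivalences in the statement are covered — and consistency of hypotheses: the convexity in (A) must be taken over $\mathbb{Z}^{|\lambda|}$, exactly as in Theorem \ref{theorem520}, rather than over $\mathbb{R}^{|\lambda|}$.
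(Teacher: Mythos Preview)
Your proposal is correct and is essentially the paper's own approach: the corollary is stated immediately after the sentence ``Combining Theorems \ref{theorem420} and \ref{theorem520}, we can now say,'' with no further proof, and your cycle (C) $\Rightarrow$ (A) $\Rightarrow$ (B) $\Rightarrow$ (C) is exactly how one unpacks that combination using the Section~8 remark that principal ideals of $\mathcal{T}_\lambda$ are convex polytopes.
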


\noindent When $\lambda$ is the strict partition $(n,n-1, ..., 2,1)$, this convexity result appeared as Theorem 3.9.1 in \cite{Wi1}.

\section{Sets of tableaux specified by row bounds}

We use some of the $R$-tuples studied in Section 4 and 5 to develop precise indexing schemes for row bound tableau sets.

Determine the subset $R_\lambda \subseteq [n-1]$ for our fixed partition $\lambda$.  We must temporarily suspend our notation shortcuts regarding `$R_\lambda$'.  Let $\beta$ be an $R_\lambda$-tuple.  We define the \emph{row bound set of tableaux} to be $\mathcal{S}_\lambda(\beta) := \{ T \in \mathcal{T}_\lambda : T_j(i) \leq \beta_i \text{ for } j \in [0, \lambda_1] \text{ and } i \in [\zeta_j] \}$.  In Section 8 for $\alpha \in UI_\lambda(n)$ we set $\mathcal{Z}_\lambda(\alpha) := \{ T \in \mathcal{T}_\lambda : T_{\lambda_i}(i) = \alpha_i \text{ for } i \in [n] \}$.  Set $\delta := \Delta_{R_\lambda}(\beta)$ and note that $\delta \leq \beta$ is upper if and only if $\beta$ is upper.  The set $\mathcal{S}_\lambda(\beta)$ is empty if and only if $\beta$ fails to be upper:  This condition is clearly necessary for $\mathcal{S}_\lambda(\beta) \neq \emptyset$;  for sufficiency since $\delta \in UI_{R_\lambda}(n)$ we can re-use the $T \in \mathcal{T}_\lambda$ given in Section 8 to see that $\emptyset \neq \mathcal{Z}_\lambda(\delta) \subseteq \mathcal{S}_\lambda(\beta)$.  Henceforth we assume that $\beta$ is upper:  $\beta \in U_{R_\lambda}(n)$.  To interface with the literature for flagged Schur functions, we give special attention to the \emph{flag bound sets} $\mathcal{S}_\lambda(\varphi)$ for upper flags $\varphi \in UF_{R_\lambda}(n)$.  We also want to name the row bound sets $\mathcal{S}_\lambda(\eta)$ for $\eta \in UGC_{R_\lambda}(n)$; we call these the \emph{gapless core bound sets}.

We can focus on the row ends of the tableaux at hand because $\mathcal{S}_\lambda(\beta) = \{ T \in \mathcal{T}_\lambda : T_{\lambda_i}(i) \leq \beta_i \text{ for } i \in [n] \}$.  Let $\alpha \in U_{R_\lambda}(n)$.  In Section 8 we noted that $\mathcal{Z}_\lambda(\alpha) \neq \emptyset$ if and only if $\alpha \in UI_{R_\lambda}(n)$.  These $\mathcal{Z}_\lambda(\alpha)$ are disjoint for distinct $\alpha \in UI_{R_\lambda}(n)$.  Clearly $\mathcal{S}_\lambda(\beta) = \bigcup \mathcal{Z}_\lambda(\alpha)$, taking the union over the $\alpha$ in the subset $\{ \beta \}_{R_\lambda} \subseteq UI_{R_\lambda}(n)$ defined in Section 5.  This observation and Lemma \ref{lemma608.2} allow us to study the three kinds of row bound sets $\mathcal{S}_\lambda(\beta)$ by considering the principal ideals $[\Delta_{R_\lambda}(\beta)] = \{ \beta \}_{R_\lambda}$ of $UI_{R_\lambda}(n)$ for $\beta \in U_{R_\lambda}(n)$ or $\beta \in UGC_{R_\lambda}(n)$ or $\beta \in UF_{R_\lambda}(n)$.  The results of Sections 4 and 5 can be used to show:

\begin{prop}\label{prop623.1}Let $\beta \in U_{R_\lambda}(n)$.  The row bound set $\mathcal{S}_\lambda(\beta)$ is a flag bound tableau set if and only if $\beta \in UGC_{R_\lambda}(n)$.  For the ``if'' statement use $\mathcal{S}_\lambda(\beta) = \mathcal{S}_\lambda(\varphi)$ for $\varphi := \Phi_{R_\lambda}[\Delta_{R_\lambda}(\beta)]$. \end{prop}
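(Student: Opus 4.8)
The plan is to reduce the whole statement to bookkeeping about the $R_\lambda$-core map $\Delta := \Delta_{R_\lambda}$ and the equivalence relation $\sim_{R_\lambda}$, using the disjoint-union description of row bound sets recorded just before the proposition. First I would note that $\mathcal{S}_\lambda(\beta) = \bigcup_{\alpha \in \{\beta\}_{R_\lambda}} \mathcal{Z}_\lambda(\alpha)$, that the sets $\mathcal{Z}_\lambda(\alpha)$ are nonempty and pairwise disjoint as $\alpha$ ranges over $UI_{R_\lambda}(n)$, and that $\{\beta\}_{R_\lambda} = [\Delta(\beta)]$ inside $UI_{R_\lambda}(n)$ by Lemma \ref{lemma608.2}(i). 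Since the union is over a subset of $UI_{R_\lambda}(n)$ and the pieces are disjoint and nonempty, equality of two such unions forces equality of the indexing ideals. Hence, for $\beta,\beta' \in U_{R_\lambda}(n)$, one has $\mathcal{S}_\lambda(\beta) = \mathcal{S}_\lambda(\beta')$ if and only if $[\Delta(\beta)] = [\Delta(\beta')]$, i.e. $\Delta(\beta) = \Delta(\beta')$, i.e. (by Fact \ref{fact604.2}) $\beta$ and $\beta'$ have the same $R_\lambda$-critical list. This ``$\mathcal{S}_\lambda$ sees only the $R_\lambda$-critical list'' principle drives both directions.

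For the ``if'' direction, assume $\beta \in UGC_{R_\lambda}(n)$, so by definition of gapless core $\delta := \Delta(\beta)$ is a gapless $R_\lambda$-tuple, $\delta \in UG_{R_\lambda}(n)$. Set $\varphi := \Phi_{R_\lambda}(\delta)$, which lies in $UFlr_{R_\lambda}(n) \subseteq UF_{R_\lambda}(n)$. By Proposition \ref{prop608.10}(i) the map $\Delta$ inverts $\Phi_{R_\lambda}$, so $\Delta(\varphi) = \delta = \Delta(\beta)$; the principle above then yields $\mathcal{S}_\lambda(\beta) = \mathcal{S}_\lambda(\varphi)$, exhibiting $\mathcal{S}_\lambda(\beta)$ as the flag bound tableau set of the upper flag $\varphi$.

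For the ``only if'' direction, suppose $\mathcal{S}_\lambda(\beta) = \mathcal{S}_\lambda(\varphi)$ for some $\varphi \in UF_{R_\lambda}(n)$. The principle above forces $\beta$ to share the $R_\lambda$-critical list of $\varphi$. But an upper flag always has a flag $R_\lambda$-critical list (recorded in Section 3), so the $R_\lambda$-critical list of $\beta$ is a flag $R_\lambda$-critical list, and Proposition \ref{prop604.4}(iii) then gives $\beta \in UGC_{R_\lambda}(n)$.

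I expect no real obstacle here: the argument is an assembly of already-established facts about $\Delta_{R_\lambda}$, $\Phi_{R_\lambda}$, and critical lists from Sections 4 and 5 together with the disjoint-union structure of $\mathcal{S}_\lambda$. The one step deserving a sentence of care is the ``$\mathcal{S}_\lambda$ depends only on $\{\beta\}_{R_\lambda}$, hence only on $\Delta(\beta)$'' reduction — specifically, spelling out that distinct $\alpha \in UI_{R_\lambda}(n)$ contribute disjoint nonempty blocks $\mathcal{Z}_\lambda(\alpha)$, so that coincidence of the two row bound sets really does pin down the index ideals — after which everything is immediate from the cited results.
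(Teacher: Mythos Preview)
Your proposal is correct and matches the paper's intended approach: the paper does not write out a proof, stating only that ``the results of Sections 4 and 5 can be used to show'' the proposition after recording the disjoint-union description $\mathcal{S}_\lambda(\beta) = \bigcup \mathcal{Z}_\lambda(\alpha)$ and invoking Lemma~\ref{lemma608.2}. Your assembly of Lemma~\ref{lemma608.2}(i), Fact~\ref{fact604.2}, Proposition~\ref{prop608.10}(i), the Section~3 remark that upper flags have flag $R$-critical lists, and Proposition~\ref{prop604.4}(iii) is exactly such a derivation.
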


\noindent At times for indexing reasons we will prefer the ``gapless core'' viewpoint.

When $\lambda$ is not strict, it is possible to have $\mathcal{S}_\lambda(\beta) = \mathcal{S}_\lambda(\beta^\prime)$ for distinct $\beta, \beta^\prime \in U_{R_\lambda}(n)$.  We want to study how much such labelling ambiguity is present for $\mathcal{S}_\lambda(.)$, and we want to develop unique labelling systems for the tableau sets $\mathcal{S}_\lambda(\beta), \mathcal{S}_\lambda(\eta)$, and $\mathcal{S}_\lambda(\varphi)$.  For $\beta, \beta^\prime \in U_{R_\lambda}(n)$, define $\beta \approx_{\lambda} \beta^\prime$ if $\mathcal{S}_\lambda(\beta) = \mathcal{S}_\lambda(\beta^\prime)$.  Sometimes we restrict $\approx_\lambda$ to $UGC_{R_\lambda}(n)$ or further to $UF_{R_\lambda}(n)$.  We denote the equivalence class of $\beta \in U_{R_\lambda}(n), \eta \in UGC_{R_\lambda}(n)$, and $\varphi \in UF_{R_\lambda}(n)$ by $\langle \beta \rangle_\lambda, \langle \eta \rangle_\lambda^G$, and $\langle \varphi \rangle_\lambda^F$.  By the $\mathcal{S}_\lambda(\beta) = \bigcup \mathcal{Z}_\lambda(\alpha)$ observation and the fact that the $\mathcal{Z}_\lambda(\alpha)$ are non-empty and disjoint, it can be seen that this relation $\approx_\lambda$ on $U_{R_\lambda}(n)$ or $UGC_{R_\lambda}(n)$ or $UF_{R_\lambda}(n)$ is the same as the relation $\sim_{R_\lambda}$ defined on these sets in Section 5:  Each relation can be expressed in terms of principal ideals of $UI_{R_\lambda}(n)$.

\begin{prop}\label{prop623.2}On the sets $U_{R_\lambda}(n), UGC_{R_\lambda}(n)$, and $UF_{R_\lambda}(n)$, the relation $\approx_\lambda$ coincides with the relation $\sim_{R_\lambda}$.  \end{prop}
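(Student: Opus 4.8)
The plan is to show that \emph{both} $\approx_\lambda$ and $\sim_{R_\lambda}$ are, on each of the three domains, exactly the relation ``$\beta$ and $\beta'$ have the same index set $\{\beta\}_{R_\lambda}=\{\beta'\}_{R_\lambda}$'', from which the coincidence is immediate. Since by definition in Section 5 one has $\beta\sim_{R_\lambda}\beta'$ if and only if $\{\beta\}_{R_\lambda}=\{\beta'\}_{R_\lambda}$, the entire task reduces to establishing, for $\beta,\beta'\in U_{R_\lambda}(n)$, the equivalence
\[
\mathcal{S}_\lambda(\beta)=\mathcal{S}_\lambda(\beta')\iff\{\beta\}_{R_\lambda}=\{\beta'\}_{R_\lambda}.
\]

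First I would record the two ingredients already in hand from Section 8 and from the paragraph preceding the proposition: (a) the disjoint decomposition $\mathcal{S}_\lambda(\beta)=\bigcup\,\mathcal{Z}_\lambda(\alpha)$, the union taken over $\alpha\in\{\beta\}_{R_\lambda}\subseteq UI_{R_\lambda}(n)$; and (b) for $\alpha\in UI_{R_\lambda}(n)$ the set $\mathcal{Z}_\lambda(\alpha)$ is nonempty, while $\mathcal{Z}_\lambda(\alpha)\cap\mathcal{Z}_\lambda(\alpha')=\emptyset$ for distinct $\alpha,\alpha'\in UI_{R_\lambda}(n)$, because $\mathcal{Z}_\lambda(\alpha)$ is cut out by the condition $\Omega_\lambda(T)=\alpha$ and $\Omega_\lambda$ is a well-defined function $\mathcal{T}_\lambda\to UI_{R_\lambda}(n)$.

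The ``$\Leftarrow$'' direction is then trivial: if $\{\beta\}_{R_\lambda}=\{\beta'\}_{R_\lambda}$, the two unions in (a) are literally the same union, so $\mathcal{S}_\lambda(\beta)=\mathcal{S}_\lambda(\beta')$. For ``$\Rightarrow$'', suppose $\mathcal{S}_\lambda(\beta)=\mathcal{S}_\lambda(\beta')$ and take any $\alpha\in\{\beta\}_{R_\lambda}$. By (b) pick $T\in\mathcal{Z}_\lambda(\alpha)$; then $T\in\mathcal{S}_\lambda(\beta)=\mathcal{S}_\lambda(\beta')$, so by (a) there is $\alpha'\in\{\beta'\}_{R_\lambda}$ with $T\in\mathcal{Z}_\lambda(\alpha')$. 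Since the $\mathcal{Z}_\lambda$-classes are disjoint and $T$ lies in both $\mathcal{Z}_\lambda(\alpha)$ and $\mathcal{Z}_\lambda(\alpha')$, we get $\alpha=\Omega_\lambda(T)=\alpha'$, hence $\alpha\in\{\beta'\}_{R_\lambda}$; this gives $\{\beta\}_{R_\lambda}\subseteq\{\beta'\}_{R_\lambda}$, and the reverse inclusion follows by symmetry. Finally, since the definitions of $\{\cdot\}_{R_\lambda}$, of $\mathcal{S}_\lambda(\cdot)$, and the decomposition (a) all apply verbatim when $\beta$ is restricted to lie in $UGC_{R_\lambda}(n)$ or in $UF_{R_\lambda}(n)$ (both subsets of $U_{R_\lambda}(n)$), the displayed equivalence, and hence the coincidence of $\approx_\lambda$ with $\sim_{R_\lambda}$, holds on each of the three sets.

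I expect no real obstacle: the only point requiring care is the logical hygiene of the ``$\Rightarrow$'' step, namely that equality of the tableau sets forces equality of the \emph{indexing} sets of $R_\lambda$-increasing tuples, and that is precisely what nonemptiness plus disjointness of the $\mathcal{Z}_\lambda(\alpha)$ delivers. (One could alternatively invoke Lemma \ref{lemma608.2}(i) to replace $\{\beta\}_{R_\lambda}$ throughout by the principal ideal $[\Delta_{R_\lambda}(\beta)]$ of $UI_{R_\lambda}(n)$, reducing the statement to $\mathcal{S}_\lambda(\beta)=\mathcal{S}_\lambda(\beta')\iff\Delta_{R_\lambda}(\beta)=\Delta_{R_\lambda}(\beta')$; but this is not needed here and is more naturally deferred to Proposition \ref{prop623.4}.)
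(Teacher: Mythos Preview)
Your proof is correct and follows exactly the approach the paper indicates in the paragraph immediately preceding the proposition: use the disjoint nonempty decomposition $\mathcal{S}_\lambda(\beta)=\bigcup_{\alpha\in\{\beta\}_{R_\lambda}}\mathcal{Z}_\lambda(\alpha)$ to see that $\mathcal{S}_\lambda(\beta)=\mathcal{S}_\lambda(\beta')$ iff $\{\beta\}_{R_\lambda}=\{\beta'\}_{R_\lambda}$, which is the definition of $\sim_{R_\lambda}$. The paper leaves the details implicit (``it can be seen that\ldots''), and you have simply written them out carefully.
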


\noindent Now that we know that these relations coincide, we can safely return to replacing `$R_\lambda$' with `$\lambda$' in subscripts and prefixes.  Definitions and results from Sections 4 and 5 will be used by always taking $R := R_\lambda$.  In particular, the three (five) unique labelling systems listed in Corollary \ref{cor608.6} for the equivalence classes of $\sim_{\lambda}$ can now be used for the equivalence classes of $\approx_\lambda$.  Henceforth we more simply write `$\sim$' for `$\approx_\lambda$'.

We state some applications of the work in Sections 4 and 5 to the current context:

\begin{prop}\label{prop623.4}Below we take $\beta, \beta^\prime \in U_\lambda(n)$ and $\eta, \eta' \in UGC_\lambda(n)$ and $\varphi, \varphi^\prime \in UF_\lambda(n)$:

\noindent (i)  The row bound sets $\mathcal{S}_\lambda(\beta)$ are precisely indexed by the $\lambda$-increasing upper tuples $\alpha \in UI_\lambda(n)$, which are the minimal representatives in $U_\lambda(n)$ for the equivalence classes $\langle \beta \rangle_\lambda$.  One has $\mathcal{S}_\lambda(\beta) = \mathcal{S}_\lambda(\beta^\prime)$ if and only if $\beta \sim \beta^\prime$ if and only if $\Delta_\lambda(\beta) = \Delta_\lambda(\beta^\prime)$.

\noindent (ii)  The gapless core bound sets $\mathcal{S}_\lambda(\eta)$ are precisely indexed by the gapless $\lambda$-tuples $\gamma \in UG_\lambda(n)$, which are the minimal representatives in $UGC_\lambda(n)$ for the equivalence classes $\langle \eta \rangle_\lambda^G = \langle \eta \rangle_\lambda$.  One has $\mathcal{S}_\lambda(\eta) = \mathcal{S}_\lambda(\eta^\prime)$ if and only if $\eta \sim \eta^\prime$ if and only if $\Delta_\lambda(\eta) = \Delta_\lambda(\eta^\prime)$.

\noindent (iii)  The flag bound sets $\mathcal{S}_\lambda(\varphi)$ are precisely indexed by the $\lambda$-floor flags $\tau \in UFlr_\lambda(n)$, which are the minimal representatives in $UF_\lambda(n)$ for the equivalence classes $\langle \varphi \rangle_\lambda^F$.  One has $\mathcal{S}_\lambda(\varphi) = \mathcal{S}_\lambda(\varphi^\prime)$ if and only if $\varphi \sim \varphi^\prime$ if and only if  $\Phi_\lambda[\Delta_\lambda(\varphi)] = \Phi_\lambda[\Delta_\lambda(\varphi^\prime)]$.  The flag bound sets $\mathcal{S}_\lambda(\varphi)$ can also be faithfully depicted as the sets $\mathcal{S}_\lambda(\gamma)$ for $\gamma \in UG_\lambda(n)$ by taking $\gamma := \Delta_\lambda(\varphi)$.  \end{prop}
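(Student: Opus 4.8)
The plan is to deduce all of Proposition \ref{prop623.4} from the equivalence-class machinery of Sections 4 and 5, transported to the tableau setting via the union decomposition $\mathcal{S}_\lambda(\beta) = \bigcup \mathcal{Z}_\lambda(\alpha)$ over $\alpha \in \{\beta\}_\lambda$ and via Proposition \ref{prop623.2}. First I would isolate the observation already recorded above: since the sets $\mathcal{Z}_\lambda(\alpha)$ for $\alpha \in UI_\lambda(n)$ are nonempty and pairwise disjoint, and $\mathcal{S}_\lambda(\beta)$ is their union over $\alpha \in \{\beta\}_\lambda$, the tableau set $\mathcal{S}_\lambda(\beta)$ and the subset $\{\beta\}_\lambda \subseteq UI_\lambda(n)$ mutually determine each other. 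Hence $\mathcal{S}_\lambda(\beta) = \mathcal{S}_\lambda(\beta')$ iff $\{\beta\}_\lambda = \{\beta'\}_\lambda$, which is $\beta \approx_\lambda \beta'$ by definition, which is $\beta \sim_{R_\lambda} \beta'$ by Proposition \ref{prop623.2}, which is $\Delta_\lambda(\beta) = \Delta_\lambda(\beta')$ by Lemma \ref{lemma608.2}(i) (using $\{\upsilon\}_\lambda = [\Delta_\lambda(\upsilon)]$, so that the two ideals agree iff their generators do). Restricting this same chain of equivalences to $UGC_\lambda(n)$ and then to $UF_\lambda(n)$ yields the ``if and only if'' clauses of (i), (ii), and (iii); for (iii) the final reformulation $\Phi_\lambda[\Delta_\lambda(\varphi)] = \Phi_\lambda[\Delta_\lambda(\varphi')]$ follows by composing with the bijection $\Phi_\lambda$ of Proposition \ref{prop608.10}(i).

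Next I would establish the ``precisely indexed'' assertions. For (i), the equivalences above show that $\mathcal{S}_\lambda(\beta) \mapsto \Delta_\lambda(\beta)$ is a well-defined injection from the set of row bound sets into $UI_\lambda(n)$, and it is onto because $\Delta_\lambda$ fixes $UI_\lambda(n)$ pointwise (Fact \ref{fact604.2}), so every $\alpha \in UI_\lambda(n)$ equals $\Delta_\lambda(\alpha)$ and labels $\mathcal{S}_\lambda(\alpha)$; that $\Delta_\lambda(\beta)$ is the \emph{minimal} element of $\langle \beta \rangle_\lambda$ in $U_\lambda(n)$ is Proposition \ref{prop608.4}(i). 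For (ii), I would run the same argument inside $UGC_\lambda(n)$: Corollary \ref{cor608.6}(ii) indexes the classes there by $UG_\lambda(n)$, Proposition \ref{prop608.4}(ii) identifies the minimum of $\langle \eta \rangle_\lambda^G$ with $\Delta_\lambda(\eta)$ and gives $\langle \eta \rangle_\lambda^G = \langle \eta \rangle_\lambda$, and Proposition \ref{prop604.4}(ii) says $\Delta_\lambda(\eta) \in UG_\lambda(n)$; surjectivity onto $UG_\lambda(n)$ again follows since $\Delta_\lambda$ fixes $UG_\lambda(n) \subseteq UI_\lambda(n)$. For (iii), the argument runs inside $UF_\lambda(n)$: Corollary \ref{cor608.6}(iii) indexes the classes by $UFlr_\lambda(n)$, and Proposition \ref{prop608.4}(iii) identifies the minimum of $\langle \varphi \rangle_\lambda^F$ with the $\lambda$-floor flag of $\varphi$; I would then check that this floor flag is precisely $\Phi_\lambda[\Delta_\lambda(\varphi)]$, since by the definition of $\Phi_\lambda$ it is the $\lambda$-floor flag built from the flag $\lambda$-critical list of $\Delta_\lambda(\varphi)$, and by Fact \ref{fact604.2} that critical list coincides with the critical list of $\varphi$.

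For the last sentence of (iii), I would set $\gamma := \Delta_\lambda(\varphi)$, observe $\gamma \in UG_\lambda(n)$ by Proposition \ref{prop604.4}(ii), and note $\Delta_\lambda(\gamma) = \gamma = \Delta_\lambda(\varphi)$, so part (i) gives $\mathcal{S}_\lambda(\gamma) = \mathcal{S}_\lambda(\varphi)$; the depiction is ``faithful'' because distinct flag bound sets have distinct $\Delta_\lambda$-values and hence are assigned distinct $\gamma \in UG_\lambda(n)$.

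I do not expect a genuine obstacle, since every step invokes a result already proved; the only real work is bookkeeping --- keeping straight which of the three ambient lattices $U_\lambda(n)$, $UGC_\lambda(n)$, $UF_\lambda(n)$ each equivalence class and each minimality claim is taken in, and making sure the reformulations of ``$\mathcal{S}_\lambda(\beta) = \mathcal{S}_\lambda(\beta')$'' are genuine equivalences rather than one-way implications (which is where the idempotence of $\Delta_\lambda$ on $UI_\lambda(n)$ and the bijectivity of $\Phi_\lambda$ are used). The one place I would slow down is confirming that the $\lambda$-floor flag of $\varphi$ equals $\Phi_\lambda[\Delta_\lambda(\varphi)]$, which is exactly the point where the ``computed from the critical list'' description of $\Phi_\lambda$ meets the critical-list-preserving property of $\Delta_\lambda$.
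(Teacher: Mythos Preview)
Your proposal is correct and follows exactly the route the paper intends: the paper does not even write out a proof for Proposition \ref{prop623.4}, instead presenting it as a direct application of the Section~4--5 machinery (Lemma \ref{lemma608.2}, Proposition \ref{prop608.4}, Corollary \ref{cor608.6}, Proposition \ref{prop608.10}) transported via Proposition \ref{prop623.2}. You have simply made explicit the bookkeeping that the paper leaves to the reader, and the places you flag as requiring care (idempotence of $\Delta_\lambda$ on $UI_\lambda(n)$, the identification of the floor flag with $\Phi_\lambda[\Delta_\lambda(\varphi)]$) are precisely the small checks that justify the paper's terse treatment.
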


Let $\beta \in U_\lambda(n)$.  Following Theorem 23 of \cite{RS}, we define the \emph{$\lambda$-row bound max tableau} $Q_\lambda(\beta)$ to be the least upper bound in $\mathcal{T}_\lambda$ of the tableaux in $\mathcal{S}_\lambda(\beta)$.  It can be seen that $Q_\lambda(\beta) \in  \mathcal{S}_\lambda(\beta)$.  Let $\alpha \in UI_\lambda(n)$.  Recall that the $\lambda$-row end max tableau $M_\lambda(\alpha)$ is the least upper bound in $\mathcal{T}_\lambda$ of the tableaux in $\mathcal{Z}_\lambda(\alpha)$.

\begin{prop}\label{prop623.8}Let $\beta, \beta^\prime \in U_\lambda(n)$ and set $\Delta_\lambda(\beta) =: \delta \in UI_\lambda(n)$.

\noindent (i)  Here $\mathcal{S}_\lambda(\beta) = [Q_\lambda(\beta)]$ and so $\mathcal{S}_\lambda(\beta) = \mathcal{S}_\lambda(\beta^\prime)$ if and only if $Q_\lambda(\beta) = Q_\lambda(\beta^\prime)$.

\noindent (ii)  Here $M_\lambda(\delta) = Q_\lambda(\beta)$ and so $\mathcal{S}_\lambda(\beta) = [M_\lambda(\delta)]$.  \end{prop}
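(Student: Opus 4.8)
The plan is to prove the two assertions of Proposition \ref{prop623.8} in tandem, first establishing (i) and then deriving (ii) from (i) together with the structural results of Sections 4, 5, and 8. For part (i), the key observation is that $\mathcal{S}_\lambda(\beta)$ is by definition the set of tableaux $T \in \mathcal{T}_\lambda$ satisfying the row-end inequalities $T_{\lambda_i}(i) \leq \beta_i$ for $i \in [n]$. Since $\mathcal{T}_\lambda$ is the distributive lattice $L(\lambda,n)$ under entrywise comparison, and these row-end inequalities are preserved under the join operation (the maximum of two tableaux satisfying $T_{\lambda_i}(i) \leq \beta_i$ again satisfies it entrywise at each row end), the set $\mathcal{S}_\lambda(\beta)$ is closed under joins. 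Being a finite join-closed subset of a distributive lattice, it has a unique maximal element, which is precisely the least upper bound $Q_\lambda(\beta)$ defined following \cite{RS}; moreover one checks directly that $Q_\lambda(\beta) \in \mathcal{S}_\lambda(\beta)$, so this least upper bound really does lie in the set. It then remains to see that $\mathcal{S}_\lambda(\beta)$ is not merely join-closed but is the full principal ideal $[Q_\lambda(\beta)]$: if $T \leq Q_\lambda(\beta)$ in $\mathcal{T}_\lambda$ then in particular $T_{\lambda_i}(i) \leq Q_\lambda(\beta)_{\lambda_i}(i) \leq \beta_i$ for all $i$, so $T \in \mathcal{S}_\lambda(\beta)$; combined with the reverse containment from Fact \ref{fact420}-style reasoning this gives $\mathcal{S}_\lambda(\beta) = [Q_\lambda(\beta)]$. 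The ``and so'' clause is then immediate: two principal ideals coincide iff their generators coincide.

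For part (ii), recall that $\delta := \Delta_\lambda(\beta) \in UI_\lambda(n)$ by Fact \ref{fact604.2}, and recall from the $\mathcal{S}_\lambda(\beta) = \bigcup \mathcal{Z}_\lambda(\alpha)$ observation (union over $\alpha \in \{\beta\}_\lambda = [\delta] \subseteq UI_\lambda(n)$, using Lemma \ref{lemma608.2}(i)) that $\delta$ is the top element of the indexing ideal. I would argue that $M_\lambda(\delta)$, which is by definition the least upper bound of $\mathcal{Z}_\lambda(\delta)$, is in fact an upper bound for the whole union $\bigcup_{\alpha \leq \delta} \mathcal{Z}_\lambda(\alpha) = \mathcal{S}_\lambda(\beta)$: any $T \in \mathcal{Z}_\lambda(\alpha)$ with $\alpha \leq \delta$ satisfies $T_{\lambda_i}(i) = \alpha_i \leq \delta_i$, so $T$ lies in $\mathcal{S}_\lambda(\delta) = [Q_\lambda(\delta)]$, and one checks $Q_\lambda(\delta) = M_\lambda(\delta)$ because $\delta \in UI_\lambda(n)$ forces $\{\delta\}_\lambda = [\delta]$ with $\delta$ itself the unique maximal indexing tuple, so $\mathcal{S}_\lambda(\delta)$ has the same maximal element as $\mathcal{Z}_\lambda(\delta)$. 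Since $\beta \sim \beta'$-type reasoning (Proposition \ref{prop623.4}(i), or directly Lemma \ref{lemma608.2}(i)) gives $\mathcal{S}_\lambda(\beta) = \mathcal{S}_\lambda(\delta)$, we conclude $Q_\lambda(\beta) = Q_\lambda(\delta) = M_\lambda(\delta)$, and hence $\mathcal{S}_\lambda(\beta) = [Q_\lambda(\beta)] = [M_\lambda(\delta)]$ by part (i).

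The step I expect to require the most care is verifying that $M_\lambda(\delta)$ really is an upper bound for all of $\mathcal{S}_\lambda(\beta)$ and not just for $\mathcal{Z}_\lambda(\delta)$ --- equivalently, confirming that passing from $\beta$ to its $\lambda$-core $\delta$ does not shrink the row bound set. This hinges on Lemma \ref{lemma608.2}(i), which identifies $\{\beta\}_\lambda$ with the principal ideal $[\Delta_\lambda(\beta)]$ in $UI_\lambda(n)$, so that every $\lambda$-increasing tuple $\leq \beta$ is already $\leq \delta$; this is exactly the content that makes the union-of-$\mathcal{Z}_\lambda$ decomposition collapse correctly. The remaining identifications ($Q_\lambda$ agreeing with the lattice-theoretic maximal element, $M_\lambda$ agreeing with $Q_\lambda$ on $R$-increasing inputs) are then bookkeeping with the definitions from Section 8 and the present section.
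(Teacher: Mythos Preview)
Your argument is correct and follows essentially the same route as the paper's proof. You give more detail for part (i), which the paper treats as evident, and for part (ii) you use the same decomposition $\mathcal{S}_\lambda(\beta) = \bigcup_{\alpha \leq \delta} \mathcal{Z}_\lambda(\alpha)$ via Lemma \ref{lemma608.2}(i) and the identification $\mathcal{S}_\lambda(\beta) = \mathcal{S}_\lambda(\delta)$. The one place where you are slightly less explicit than the paper is the assertion that ``$\mathcal{S}_\lambda(\delta)$ has the same maximal element as $\mathcal{Z}_\lambda(\delta)$'': the paper justifies this by noting that for any $U \in \mathcal{Z}_\lambda(\delta)$ and $T \in \mathcal{Z}_\lambda(\alpha)$ with $\alpha < \delta$ one has $T \ngtr U$ (since some row-end entry of $T$ is strictly smaller), forcing $Q_\lambda(\beta) \in \mathcal{Z}_\lambda(\delta)$; your phrase ``$\delta$ itself the unique maximal indexing tuple'' gestures at this but could be sharpened to that one-line observation.
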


\begin{proof}Only the first claim in (ii) is not already evident:  Recall that $\mathcal{S}_\lambda(\beta) = \bigcup \mathcal{Z}_\lambda(\alpha)$, union over $\alpha \in \{ \beta \}_\lambda \subseteq UI_\lambda(n)$.  By Proposition \ref{prop623.2} and Lemma \ref{lemma608.2}(i) we have $\{ \beta \}_\lambda = [\delta]$.  So $\mathcal{S}_\lambda(\beta) = \bigcup \mathcal{Z}_\lambda(\alpha) = \mathcal{S}_\lambda(\delta)$, union over $\alpha \leq \delta$ in $UI_\lambda(n)$.  Here $\delta \in UI_\lambda(n)$ implies $\mathcal{Z}_\lambda(\delta) \neq \emptyset$.  Let $U \in \mathcal{Z}_\lambda(\delta)$ and $T \in \mathcal{Z}_\lambda(\alpha)$ for some $\alpha \in UI_\lambda(n)$.  Here $\alpha < \delta$ would imply $T \ngtr U$.  Hence $Q_\lambda(\beta) \in \mathcal{Z}_\lambda(\delta)$.  So both $Q_\lambda(\beta)$ and $M_\lambda(\delta)$ are the maximum tableau of $\mathcal{Z}_\lambda(\delta)$.  \end{proof}

\section{Coincidences of row or flag bound sets with \\ Demazure tableau sets}

When can one set of tableaux arise both as a row bound set $\mathcal{S}_\lambda(\beta)$ for some upper $\lambda$-tuple $\beta$ and as a Demazure set $\mathcal{D}_\lambda(\pi)$ for some $\lambda$-permutation $\pi$?  Since we will seek coincidences between flag Schur polynomials $s_\lambda(\varphi; x)$ and Demazure polynomials $d_\lambda(\pi; x)$, we should also pose this question for the flag bound set $\mathcal{S}_\lambda(\varphi)$ for some upper flag $\varphi$.  Our deepest result, Theorem \ref{theorem520}, gave a necessary condition for a Demazure tableau set to be convex.  Initially we refer to it here for guiding motivation.  Then we use it to prove the hardest part, Part (iii) for necessity, of the theorem below.  Out next deepest result, Theorem \ref{theorem420}, implied a sufficient condition (Corollary \ref{cor420}) for a Demazure tableau set to be convex.  We use it here to prove Part (ii), for sufficiency, of the theorem below.

For motivation, note that any set $\mathcal{S}_\lambda(\beta)$ is convex in $\mathbb{Z}^{|\lambda|}$: Proposition \ref{prop623.8}(i) says that it is the principal ideal $[Q_\lambda(\beta)]$ of $\mathcal{T}_\lambda$, where $Q_\lambda(\beta)$ is the $\lambda$-row bound max tableau for $\beta$.  And Theorem \ref{theorem520} says that $\mathcal{D}_\lambda(\pi)$ is convex only if the $\lambda$-permutation $\pi$ is $\lambda$-312-avoiding.  So, to begin the proof of Part (ii) below, fix $\pi \in S_n^{\lambda\text{-}312}$.  Find the key $Y_\lambda(\pi)$ of $\pi$.  Theorem \ref{theorem420} says that $\mathcal{D}_\lambda(\pi) = [Y_\lambda(\pi)]$.  (Since $[Y_\lambda(\pi)]$ is convex, we now know that the sets $\mathcal{D}_\lambda(\pi)$ for such $\pi$ are exactly the convex candidates to arise in the form $\mathcal{S}_\lambda(\beta)$.)  Form the rank $\lambda$-tuple $\Psi_\lambda(\pi) =: \gamma$ of $\pi$.  By Proposition \ref{prop320.2}(ii) we know that $\gamma$ is a gapless $\lambda$-tuple:  $\gamma \in UG_\lambda(n)$.  Theorem \ref{theorem340}(iii) says that $Y_\lambda(\pi) = M_\lambda(\gamma)$, the $\lambda$-row end max tableau for $\gamma$.  Proposition \ref{prop623.8}(ii) gives $M_\lambda(\gamma) = Q_\lambda(\gamma)$, since $UG_\lambda(n) \subseteq UI_\lambda(n)$ by definition and $\Delta_\lambda(\gamma) = \gamma$ by Fact \ref{fact604.2}.  So $\mathcal{D}_\lambda(\pi) = [Q_\lambda(\gamma)]$.  Hence by Proposition \ref{prop623.8}(i) it arises as $\mathcal{S}_\lambda(\gamma) = [Q_\lambda(\gamma)]$.

Parts (i) and (ii) of the following theorem give sufficient conditions for a coincidence from two perspectives, Part (iii) gives necessary conditions for a coincidence, and Part (iv) presents a neutral precise indexing.  But the theorem statement begins with a less technical summary:

\begin{thm}\label{theorem721}Let $\lambda$ be a partition.  A row bound set $\mathcal{S}_\lambda(\beta)$ of tableaux for an upper $\lambda$-tuple $\beta$ arises as a Demazure set if and only if the $\lambda$-core $\Delta_\lambda(\beta)$ of $\beta$ is a gapless $\lambda$-tuple.  Therefore every flag bound set of tableaux arises as a Demazure set, and a row bound set arises as a Demazure set if and only if it arises as a flag bound set.  A Demazure set $\mathcal{D}_\lambda(\pi)$ of tableaux for a $\lambda$-permutation $\pi$ arises as a row bound set if and only if $\pi$ is $\lambda$-312-avoiding.  Specifically:

\noindent (i)  Let $\beta \in U_\lambda(n)$.  If $\beta \in UGC_\lambda(n)$, set $\pi := \Pi_\lambda[\Delta_\lambda(\beta)]$. Then $\mathcal{S}_\lambda(\beta) = \mathcal{D}_\lambda(\pi)$, and $\pi$ is the unique $\lambda$-permutation for which this is true. Here $\pi \in S_n^{\lambda\text{-}312}$.

\noindent (ii) Let $\pi \in S_n^\lambda$.  If $\pi \in S_n^{\lambda\text{-}312}$, set $\gamma := \Psi_\lambda(\pi)$. Then $\mathcal{D}_\lambda(\pi) = \mathcal{S}_\lambda(\gamma)$, and $\mathcal{D}_\lambda(\pi) = \mathcal{S}_\lambda(\beta)$ for some $\beta \in U_\lambda(n)$ implies $\Delta_\lambda(\beta) = \gamma$. Here $\gamma \in UG_\lambda(n)$ and so $\beta \in UGC_\lambda(n)$.

\noindent (iii)    Suppose some $\beta \in U_\lambda(n)$ and some $\pi \in S_n^\lambda$ exist such that $\mathcal{S}_\lambda(\beta) = \mathcal{D}_\lambda(\pi)$. Then one has $Q_\lambda(\beta) = Y_\lambda(\pi)$ and $\Delta_\lambda(\beta) = \Psi_\lambda(\pi)$.  Here $\beta \in UGC_\lambda(n)$ and $\pi \in S_n^{\lambda\text{-}312}$.

\noindent (iv) The collection of the sets $\mathcal{S}_\lambda(\varphi)$ for $\varphi \in UF_\lambda(n)$ is the same as the collection of sets $\mathcal{D}_\lambda(\pi)$ for $\pi \in S_n^{\lambda\text{-}312}$. These collections can be simultaneously precisely indexed by $\gamma \in UG_\lambda(n)$ as follows: Given such a $\gamma$, produce $\Phi_\lambda(\gamma) =: \varphi \in UFlr_\lambda(n)$ and $\Pi_\lambda(\gamma) =: \pi \in S_n^{\lambda\text{-}312}$.  \end{thm}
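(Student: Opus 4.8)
The plan is to assemble Theorem~\ref{theorem721} almost entirely from machinery that is already in place: the convexity dichotomy of Corollary~\ref{cor520}, the structural identifications of Section~9 (Theorem~\ref{theorem340}), the row-bound analysis of Section~12 (Propositions~\ref{prop623.1}, \ref{prop623.2}, \ref{prop623.4}, \ref{prop623.8}), and the bijections among $R$-tuples from Sections~4--7 (Facts~\ref{fact604.2}, Propositions~\ref{prop608.10}, \ref{prop320.2}). The only genuinely new content is Part~(iii), and even there the motivational paragraph immediately preceding the theorem has already executed the forward direction of Part~(ii); the work is to run that chain in reverse using Theorem~\ref{theorem520}.

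First I would prove Part~(ii). Fix $\pi \in S_n^{\lambda\text{-}312}$ and set $\gamma := \Psi_\lambda(\pi)$. By Proposition~\ref{prop320.2}(ii), $\gamma \in UG_\lambda(n) \subseteq UI_\lambda(n)$, so $\Delta_\lambda(\gamma) = \gamma$ by Fact~\ref{fact604.2}, whence $\gamma \in UGC_\lambda(n)$. Theorem~\ref{theorem420} gives $\mathcal{D}_\lambda(\pi) = [Y_\lambda(\pi)]$; Theorem~\ref{theorem340}(iii) gives $Y_\lambda(\pi) = M_\lambda(\gamma)$; and Proposition~\ref{prop623.8}(ii) gives $M_\lambda(\gamma) = Q_\lambda(\gamma)$ with $\mathcal{S}_\lambda(\gamma) = [M_\lambda(\gamma)]$. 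Chaining these equalities yields $\mathcal{D}_\lambda(\pi) = \mathcal{S}_\lambda(\gamma)$. For the uniqueness clause, suppose $\mathcal{D}_\lambda(\pi) = \mathcal{S}_\lambda(\beta)$ for some $\beta \in U_\lambda(n)$; then $[Y_\lambda(\pi)] = [Q_\lambda(\beta)]$ forces $Q_\lambda(\beta) = Y_\lambda(\pi) = M_\lambda(\gamma)$, and since $Q_\lambda(\beta) = M_\lambda(\Delta_\lambda(\beta))$ by Proposition~\ref{prop623.8}(ii) while $M_\lambda$ is injective on $UI_\lambda(n)$ (distinct $\lambda$-increasing upper tuples give disjoint nonempty $\mathcal{Z}_\lambda$-sets with distinct maxima), we get $\Delta_\lambda(\beta) = \gamma$, and then $\beta \in UGC_\lambda(n)$ by definition of gapless core. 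Part~(iii) is the crux. Given $\mathcal{S}_\lambda(\beta) = \mathcal{D}_\lambda(\pi)$, the left side is the principal ideal $[Q_\lambda(\beta)]$ by Proposition~\ref{prop623.8}(i), hence convex in $\mathbb{Z}^{|\lambda|}$; therefore $\mathcal{D}_\lambda(\pi)$ is convex, and Theorem~\ref{theorem520} forces $\pi \in S_n^{\lambda\text{-}312}$. Now Part~(ii) applies to this $\pi$: it gives $\Delta_\lambda(\beta) = \Psi_\lambda(\pi) =: \gamma \in UG_\lambda(n)$ and $\beta \in UGC_\lambda(n)$, and also $Q_\lambda(\beta) = M_\lambda(\gamma) = Y_\lambda(\pi)$ via the same chain. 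I expect this reverse-engineering of Part~(ii) from the convexity obstruction to be the main obstacle, mostly bookkeeping about which lemma supplies which equality.

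Part~(i) then follows: given $\beta \in UGC_\lambda(n)$, set $\gamma := \Delta_\lambda(\beta)$, which lies in $UG_\lambda(n)$ by definition of gapless core, and $\pi := \Pi_\lambda(\gamma)$, which lies in $S_n^{\lambda\text{-}312}$ by Proposition~\ref{prop320.2}(ii), with $\Psi_\lambda(\pi) = \gamma$. Part~(ii) applied to this $\pi$ gives $\mathcal{D}_\lambda(\pi) = \mathcal{S}_\lambda(\gamma)$, and $\mathcal{S}_\lambda(\gamma) = \mathcal{S}_\lambda(\beta)$ since $\Delta_\lambda(\gamma) = \gamma = \Delta_\lambda(\beta)$ (Proposition~\ref{prop623.4}(i)); uniqueness of $\pi$ comes from the uniqueness clause in Part~(ii). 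Conversely, if $\beta \in U_\lambda(n)$ and $\mathcal{S}_\lambda(\beta)$ is a Demazure set, Part~(iii) forces $\Delta_\lambda(\beta) \in UG_\lambda(n)$, i.e.\ $\beta \in UGC_\lambda(n)$. The two sentences in the theorem's opening summary are corollaries: ``every flag bound set arises as a Demazure set'' because $UF_\lambda(n) \subseteq UGC_\lambda(n)$ (Proposition~\ref{prop604.4}(ii)); ``a row bound set arises as a Demazure set iff it arises as a flag bound set'' because both conditions are equivalent to $\beta \in UGC_\lambda(n)$ by Part~(i) here and Proposition~\ref{prop623.1}; and the Demazure-side statement is the content of Part~(iii) together with Part~(ii). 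Finally, for Part~(iv): by Proposition~\ref{prop623.1} the collection $\{\mathcal{S}_\lambda(\varphi) : \varphi \in UF_\lambda(n)\}$ equals $\{\mathcal{S}_\lambda(\eta) : \eta \in UGC_\lambda(n)\}$, which by Part~(i)+(ii) equals $\{\mathcal{D}_\lambda(\pi) : \pi \in S_n^{\lambda\text{-}312}\}$. For the simultaneous indexing, Proposition~\ref{prop623.4}(ii)/(iii) says these tableau sets are precisely indexed (no repeats) by $\gamma \in UG_\lambda(n)$ via $\mathcal{S}_\lambda(\gamma)$; given such a $\gamma$, Proposition~\ref{prop608.10}(i) produces the representing $\lambda$-floor flag $\varphi = \Phi_\lambda(\gamma)$ with $\mathcal{S}_\lambda(\varphi) = \mathcal{S}_\lambda(\gamma)$, and Proposition~\ref{prop320.2}(ii) (or \ref{prop608.10}(iii)) produces $\pi = \Pi_\lambda(\gamma)$ with $\mathcal{D}_\lambda(\pi) = \mathcal{S}_\lambda(\gamma)$ by Part~(ii), completing the bijective labelling.
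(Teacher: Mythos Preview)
Your proposal is correct and follows essentially the same route as the paper's proof: establish Part~(ii) first via the chain $\mathcal{D}_\lambda(\pi) = [Y_\lambda(\pi)] = [M_\lambda(\gamma)] = [Q_\lambda(\gamma)] = \mathcal{S}_\lambda(\gamma)$ from Theorems~\ref{theorem420} and~\ref{theorem340}(iii) and Proposition~\ref{prop623.8}; use Theorem~\ref{theorem520} on the principal-ideal structure of $\mathcal{S}_\lambda(\beta)$ to force $\pi \in S_n^{\lambda\text{-}312}$ in Part~(iii); and then read off Parts~(i) and~(iv) and the summary statements from the bijections of Propositions~\ref{prop320.2}(ii), \ref{prop608.10}, \ref{prop623.1}, and~\ref{prop623.4}. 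The only organizational difference is that you prove the uniqueness clause of~(ii) directly via injectivity of $M_\lambda$ on $UI_\lambda(n)$ and then cite it inside~(iii), whereas the paper instead compares maximal elements in~(iii) to get $Q_\lambda(\beta) = Y_\lambda(\pi)$ and applies the row-end list map $\Omega_\lambda$ to both sides (using $\Omega_\lambda[Q_\lambda(\beta)] = \Delta_\lambda(\beta)$ and $\Omega_\lambda[Y_\lambda(\pi)] = \Psi_\lambda(\pi)$), deferring the uniqueness clauses of~(i) and~(ii) until after~(iii); this is the same content packaged slightly differently.
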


\begin{proof}First confirm (i) - (iv):  The first and last two claims in (ii) were deduced above.  For (i), use Proposition \ref{prop320.2}(ii) to see $\pi \in S_n^{\lambda\text{-}312}$ and to express $\Delta_\lambda(\beta)$ as $\Psi_\lambda(\pi)$.  The first claim in (ii) then tells us that $\mathcal{S}_\lambda[\Delta_\lambda(\beta)] = \mathcal{D}_\lambda(\pi)$.  But Proposition \ref{prop623.4}(i) gives $\mathcal{S}_\lambda[\Delta_\lambda(\beta)] = \mathcal{S}_\lambda(\beta)$.  We return to the second claims in (i) and (ii) after we confirm (iii).  So suppose we have $\mathcal{S}_\lambda(\beta) = \mathcal{D}_\lambda(\pi)$.  Since $\mathcal{S}_\lambda(\beta)$ is a principal ideal in $\mathcal{T}_\lambda$, Theorem \ref{theorem520} tells us that we must have $\pi \in S_n^{\lambda\text{-}312}$.  The unique maximal elements of $\mathcal{S}_\lambda(\beta)$ and of $\mathcal{D}_\lambda(\pi)$ (see Proposition \ref{prop623.8}(i) and Fact \ref{fact420}(v)) must coincide: $Q_\lambda(\beta) = Y_\lambda(\pi)$.  Via consideration of $M_\lambda[\Delta_\lambda(\beta)]$, Proposition \ref{prop623.8}(ii) implies $\Omega_\lambda[Q_\lambda(\beta)] = \Delta_\lambda(\beta)$.  Section 8 noted that $\Omega_\lambda[Y_\lambda(\pi)] = \Psi_\lambda(\pi)$.  Hence $\Delta_\lambda(\beta) = \Psi_\lambda(\pi) \in UG_\lambda(n)$.  The uniqueness in (i) is obtained by applying the inverse $\Pi_\lambda$ of $\Psi_\lambda$ to the requirements in (iii) that $\pi \in S_n^{\lambda\text{-}312}$ and that $\Psi_\lambda(\pi) = \Delta_\lambda(\beta)$.  The uniqueness-up-to-$\Delta_\lambda$-equivalence in (ii) restates the second claim of (iii).  For (iv): Proposition \ref{prop623.4}(iii) says that the collection of the sets $\mathcal{S}_\lambda(\varphi)$ is precisely indexed by the $\lambda$-floor flags.  By restricting Fact \ref{fact420}(vi), these sets $\mathcal{D}_\lambda(\pi)$ are already precisely indexed by their $\lambda$-312-avoiding permutations.  By Proposition \ref{prop608.10}(i) and Proposition \ref{prop320.2}(ii), apply the bijections $\Delta_\lambda$ and $\Psi_\lambda$ to re-index these collections with gapless $\lambda$-tuples.  Use (ii) and Proposition \ref{prop623.4}(ii) to see that the same set $\mathcal{S}_\lambda(\varphi) = \mathcal{D}_\lambda(\pi)$ will arise from a given gapless $\lambda$-tuple $\gamma$ when these re-indexings are undone via $\varphi := \Phi_\lambda(\gamma)$ and $\pi := \Pi_\lambda(\gamma)$.  Three of the four initial summary statements of this theorem should now be apparent.  The third statement follows from Proposition \ref{prop623.1}. \end{proof}

\section{Flagged Schur functions and key polynomials}

We use Theorem \ref{theorem721} to improve upon the results in \cite{RS} and \cite{PS} concerning coincidences between flag Schur polynomials and Demazure polynomials.

Let $x_1,\ldots, x_n$ be indeterminants.  Let $T \in \mathcal{T}_\lambda$.  The \textit{monomial} $x^{\Theta(T)}$ of $T$ is $x_1^{\theta_1}\ldots x_n^{\theta_n}$, where $\theta$ is the content $\Theta(T)$.

Let $\beta$ be an upper $\lambda$-tuple:  $\beta \in U_\lambda(n)$.  We introduce the \textit{row bound sum} $s_\lambda(\beta; x) := \sum x^{\Theta(T)}$, sum over $T \in \mathcal{S}_\lambda(\beta)$.  In particular, to relate to the literature \cite{RS} \cite{PS}, at times we restrict our attention to flag row bounds.  Here for $\varphi \in UF_\lambda(n)$ we define the \textit{flag Schur polynomial} to be $s_\lambda(\varphi; x)$.  (Often `upper' is not required at the outset; if $\varphi$ is not upper then the empty sum would yield $0$ for the polynomial.  Following Stanley we write `flag' instead of `flagged' \cite{St2}, and following Postnikov and Stanley we write `polynomial' for `function' \cite{PS}.)  More generally, for $\eta \in UGC_\lambda(n)$ we define the \emph{gapless core Schur polynomial} to be $s_\lambda(\eta;x)$.

Let $\pi$ be a $\lambda$-permutation:  $\pi \in S_n^\lambda$.  Here we define the \textit{Demazure polynomial} $d_\lambda(\pi; x)$ to be $\sum x^{\Theta(T)}$, sum over $T \in \mathcal{D}_\lambda(\pi)$.  For Lie theorists, we make two remarks:  Using the Appendix and Sections 2 and 3 of \cite{PW1}, via the right key scanning method and the divided difference recursion these polynomials can be identified as the Demazure characters for $GL(n)$ and as the specializations of the key polynomials $\kappa_\alpha$ of \cite{RS} to a finite number of variables.  In the axis basis, the highest and lowest weights for the corresponding Demazure module are $\lambda$ and $\Theta[Y_\lambda(\pi)]$.  (Postnikov and Stanley chose `Demazure character' over `key polynomial' \cite{PS}.  By using `Demazure polynomial' for the $GL(n)$ case, which should be recognizable to Lie theorists, we leave `Demazure character' available for general Lie type.)

We say that two polynomials that are defined as sums of monomials over sets of tableaux are \textit{identical as generating functions} if the two tableau sets coincide.  So we write $s_\lambda(\beta; x) \equiv s_{\lambda'}(\beta'; x)$ if and only if $\lambda = \lambda'$ and then $\beta \sim \beta'$.  It is conceivable that the polynomial equality $s_\lambda(\beta; x) = s_{\lambda'}(\beta'; x)$ could ``accidentally'' hold between two non-identical row bound sums, that is when $\lambda \neq \lambda^\prime$ and/or $\beta \nsim \beta'$.

It is likely that Part (ii) of the following preliminary result can be deduced from the linear independence aspect of the sophisticated Corollary 7 of \cite{RS}:  One would need to show that specializing $x_{n+1} = x_{n+2} = ... = 0$ there does not create problematic linear dependences.

\begin{prop}\label{prop737}Let $\lambda, \lambda' \in \Lambda_n^+$.

\noindent (i) Let $\beta \in U_\lambda(n)$ and $\beta' \in U_{\lambda'}(n)$.  If $s_\lambda(\beta; x) = s_{\lambda'}(\beta'; x)$, then $\lambda = \lambda'$.

\noindent (ii) Let $\pi \in S_n^\lambda$ and let $\pi' \in S_n^{\lambda'}$.  If $d_\lambda(\pi; x) = d_{\lambda'}(\pi'; x)$, then $\lambda = \lambda'$ and $\pi = \pi'$. Hence $d_\lambda(\pi; x) \equiv d_{\lambda'}(\pi'; x)$.  \end{prop}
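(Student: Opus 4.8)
The plan is to recover the data $\lambda$ and $\pi$ from the polynomial $d_\lambda(\pi;x)$ by reading off, respectively, its leading monomial and its trailing monomial with respect to a suitable term order. First I would establish Part (i) of the proposition for both families simultaneously by the following observation: the monomial $x^{\Theta(T)}$ of the ``source'' tableau $T^{\mathrm{src}}$ of shape $\lambda$ in which every box in row $i$ has value $i$ is the unique monomial of top total degree that can appear, and moreover it records $\lambda$ completely, since $\Theta(T^{\mathrm{src}})_i = \lambda_i$ reads off each row length. This $T^{\mathrm{src}}$ lies in $\mathcal{S}_\lambda(\beta)$ for every $\beta\in U_\lambda(n)$ (it is the minimum element of $\mathcal{T}_\lambda$, and its row ends $i$ satisfy $i\le\beta_i$ because $\beta$ is upper), and it lies in $\mathcal{D}_\lambda(\pi)$ for every $\pi$ by Fact~\ref{fact420}(iv) together with $T^{\mathrm{src}}\le Y_\lambda(\pi)$. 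Since $x^{\Theta(T)}$ is a monomial in $x_1,\dots,x_n$ of total degree $|\lambda|$ for every $T\in\mathcal{T}_\lambda$, and since among all tableaux the exponent vector $(\lambda_1,\dots,\lambda_n)$ is maximal in dominance order (any semistandard filling other than $T^{\mathrm{src}}$ raises some entry, moving weight from a lower index to a higher one), the monomial $x_1^{\lambda_1}\cdots x_n^{\lambda_n}$ appears in $d_\lambda(\pi;x)$ with coefficient $1$ and is characterized among the monomials of the polynomial by being largest in dominance order. Hence $d_\lambda(\pi;x)=d_{\lambda'}(\pi';x)$ forces $\lambda=\lambda'$, giving both (i) and the first half of (ii).

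For the remaining assertion $\pi=\pi'$ in (ii), having fixed $\lambda=\lambda'$ I would read off $\pi$ from the \emph{minimal} tableau in $\mathcal{D}_\lambda(\pi)$. The key input is Fact~\ref{fact420}(v): the \emph{maximum} element of $\mathcal{D}_\lambda(\pi)$ is $Y_\lambda(\pi)$, and $\pi\mapsto Y_\lambda(\pi)$ is a bijection from $S_n^\lambda$ to $\lambda$-keys (Section~8). So it suffices to recover $\Theta[Y_\lambda(\pi)]$ from $d_\lambda(\pi;x)$, because a $\lambda$-key is determined by its content by Fact~\ref{fact420}(i), and then $\pi$ is determined by its $\lambda$-key. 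The natural candidate is: $x^{\Theta[Y_\lambda(\pi)]}$ is the monomial of $d_\lambda(\pi;x)$ that is \emph{minimal} in dominance order (equivalently, its reverse is dominance-maximal, i.e.\ among all $T\in\mathcal{D}_\lambda(\pi)$ the content of $Y_\lambda(\pi)$ pushes weight onto the smallest available indices as far as semistandardness and the constraint $S(T)\le Y_\lambda(\pi)$ permit). To justify that this minimal monomial is indeed $x^{\Theta[Y_\lambda(\pi)]}$, I would argue that for any $T\in\mathcal{D}_\lambda(\pi)$ one has $\Theta(T)\ge\Theta[Y_\lambda(\pi)]$ in dominance order: intuitively, replacing any $T$ by the key $S(T)$ only increases entries ($T\le S(T)$ by Fact~\ref{fact420}(iii)), and $S(T)\le Y_\lambda(\pi)$, so $T\le Y_\lambda(\pi)$ gives $\Theta(T)\le\Theta[Y_\lambda(\pi)]$ entrywise --- but that is the wrong direction, so I must instead compare via dominance of content vectors rather than entrywise comparison of tableaux, using that keys are the ``extreme'' content-maximizers among tableaux with a given right key. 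The cleanest route is: since $T\le Y_\lambda(\pi)$ entrywise for all $T\in\mathcal{D}_\lambda(\pi)$ (Fact~\ref{fact420}(iv)), and since entrywise domination of tableaux implies dominance domination of their content vectors (moving any single entry down from $v$ to $u<v$ transfers one unit of weight from index $v$ to index $u$, which lowers the content vector in dominance order), we get $\Theta(T)\preceq\Theta[Y_\lambda(\pi)]$ in dominance order, with equality only for $T=Y_\lambda(\pi)$ by Fact~\ref{fact420}(i). Therefore the dominance-maximal monomial of $d_\lambda(\pi;x)$ is exactly $x^{\Theta[Y_\lambda(\pi)]}$, with coefficient $1$; it determines $Y_\lambda(\pi)$, hence $\pi$.

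Wait --- that last paragraph actually shows the maximum, not the minimum: the dominance-maximal monomial of $d_\lambda(\pi;x)$ is $x^{\Theta[Y_\lambda(\pi)]}$, and simultaneously (from the previous paragraph, now noting $T^{\mathrm{src}}$ is the entrywise minimum of $\mathcal{T}_\lambda$ hence of $\mathcal{D}_\lambda(\pi)$) the dominance-\emph{minimal} monomial is $x^{(\lambda_1,\dots,\lambda_n)}$. So in the write-up I would simply extract \emph{both} extreme monomials of $d_\lambda(\pi;x)$ in the dominance order on exponent vectors: the bottom one reads off $\lambda$ and the top one reads off $\Theta[Y_\lambda(\pi)]$, whence $\lambda=\lambda'$ and $Y_\lambda(\pi)=Y_{\lambda'}(\pi')$, and then $\pi=\pi'$ by the bijectivity of $\pi\mapsto Y_\lambda(\pi)$. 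The identicality statement $d_\lambda(\pi;x)\equiv d_{\lambda'}(\pi';x)$ is then immediate from the definition of $\equiv$. The main obstacle I anticipate is making rigorous the claim that the dominance order has a \emph{unique} maximal element among the content vectors occurring in $\mathcal{D}_\lambda(\pi)$ --- i.e.\ that $\Theta[Y_\lambda(\pi)]$ strictly dominates $\Theta(T)$ for every other $T\in\mathcal{D}_\lambda(\pi)$ --- rather than merely being a maximal element; this follows from Fact~\ref{fact420}(iv) (entrywise $T\le Y_\lambda(\pi)$) plus Fact~\ref{fact420}(i) (distinct tableaux of a given shape have distinct contents, applied to the key $Y_\lambda(\pi)$), but the monotonicity lemma ``entrywise domination of tableaux $\Rightarrow$ dominance domination of contents, strictly unless equal'' should be stated and checked carefully, since a single chain of elementary entry-lowering moves connecting $T$ to $T^{\mathrm{src}}$ or $T$ to $Y_\lambda(\pi)$ within $\mathcal{T}_\lambda$ must be exhibited, which is available because $\mathcal{T}_\lambda$ is the distributive lattice $L(\lambda,n)$.
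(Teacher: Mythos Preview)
Your overall strategy is exactly the paper's: read off $\lambda$ from the content of the minimum tableau $T^{\mathrm{src}}=T^0_\lambda$ and read off $Y_\lambda(\pi)$ (hence $\pi$) from the content of the maximum tableau of $\mathcal{D}_\lambda(\pi)$, using Fact~\ref{fact420}(i),(iv),(v). The paper does this with the \emph{lexicographic} order on content vectors rather than dominance; since lex is a total order, the ``unique extremum'' issue you flag at the end evaporates, and the one-line observation ``$T<T'$ in $\mathcal{T}_\lambda$ implies $\Theta(T)>_{\mathrm{lex}}\Theta(T')$'' replaces your monotonicity lemma. Your dominance argument also works once the direction is fixed (see below), and in fact implies the lex statement.

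There are two concrete slips to repair. First, your justification that $T^{\mathrm{src}}\in\mathcal{D}_\lambda(\pi)$ appeals to Fact~\ref{fact420}(iv), but that fact gives the inclusion $\mathcal{D}_\lambda(\pi)\subseteq[Y_\lambda(\pi)]$, which points the wrong way. The correct argument (and the paper's) is that $T^{\mathrm{src}}$ is a key, so $S(T^{\mathrm{src}})=T^{\mathrm{src}}$ by Fact~\ref{fact420}(iii); then $T^{\mathrm{src}}\le Y_\lambda(\pi)$ gives $S(T^{\mathrm{src}})\le Y_\lambda(\pi)$, whence $T^{\mathrm{src}}\in\mathcal{D}_\lambda(\pi)$ by definition. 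Second, your dominance direction is reversed in the last two paragraphs. Moving a single entry \emph{down} from value $v$ to $u<v$ shifts one unit of weight from index $v$ to index $u$, which \emph{raises} the content in dominance order (partial sums increase). Hence $T\le Y_\lambda(\pi)$ entrywise gives $\Theta(T)\succeq\Theta[Y_\lambda(\pi)]$, strictly unless $T=Y_\lambda(\pi)$; so $x^{\Theta[Y_\lambda(\pi)]}$ is the unique dominance-\emph{minimal} monomial and $x^\lambda$ is the unique dominance-\emph{maximal} one, the opposite of what your ``Wait'' paragraph concludes (and consistent with your first paragraph). With those two fixes your proof is correct and essentially identical to the paper's.
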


\begin{proof}  Let $T^0_\lambda$ denote the unique minimal element of $\mathcal{T}_\lambda$.  Note that $\Theta(T_\lambda^0) = \lambda$.  Clearly $T^0_\lambda \in \mathcal{S}_\lambda(\beta)$ and $T^0_\lambda \leq Y_\lambda(\pi)$.  Note that if $T, T' \in \mathcal{T}_\lambda$ are such that $T < T'$, then when $P(n)$ is ordered lexicographically from the left we have $\Theta(T) > \Theta(T')$.  So when $T^0_\lambda$ is in a subset of $T_\lambda$, it is the unique tableau in that subset that attains the lexicographic maximum of the contents in $P(n)$ for that subset.  Since $T^0_\lambda$ is a key, we have $S(T^0_\lambda) = T^0_\lambda$.  So $S(T^0_\lambda) \leq Y_\lambda(\pi)$, and we have $T^0_\lambda \in \mathcal{D}_\lambda(\pi)$.  We can now see that $s_\lambda(\beta; x) = s_{\lambda'}(\beta'; x)$ and $d_\lambda(\pi; x) = d_{\lambda'}(\pi'; x)$ each imply that $\lambda = \lambda'$.  By Fact \ref{fact420}(v), we know that $Y_\lambda(\pi)$ is the unique maximal element of $\mathcal{D}_\lambda(\pi)$.  So $Y_\lambda(\pi)$ is the unique tableau in $\mathcal{D}_\lambda(\pi)$ that attains the lexicographic minimum of the contents for $\mathcal{D}_\lambda(\pi)$.  By Fact \ref{fact420}(i), since $Y_\lambda(\pi)$ is a key it is the unique tableau in $\mathcal{T}_\lambda$ with its content.  So $d_\lambda(\pi; x) = d_\lambda(\pi'; x)$ implies $Y_\lambda(\pi) \in \mathcal{D}_\lambda(\pi^\prime)$ and $Y_\lambda(\pi^\prime) \in \mathcal{D}_\lambda(\pi)$.  Hence $Y_\lambda(\pi) = Y_\lambda(\pi')$, which implies $\pi = \pi^\prime$.  \end{proof}

We now compare row bound sums to Demazure polynomials.  The first two parts of the following ``sufficient'' theorem quickly restate most of Parts (i) and (ii) of Theorem \ref{theorem721} in the current context, and the third similarly recasts Part (iv).

\begin{thm}\label{theorem737.1}Let $\lambda$ be a partition.

\noindent (i)  If $\eta \in UGC_\lambda(n)$, then $\Pi_\lambda[\Delta_\lambda(\eta)] =: \pi \in S_n^{\lambda\text{-}312}$ and $s_\lambda(\eta;x) \equiv d_\lambda(\pi;x)$.

\noindent (ii)  If $\pi \in S_n^{\lambda\text{-}312}$, then $\Psi_\lambda(\pi) =: \gamma \in UG_\lambda(n)$ and $d_\lambda(\pi;x)  \equiv s_\lambda(\gamma;x)$.

\noindent (iii) Every flag Schur polynomial is identical to a uniquely determined Demazure polynomial and every $\lambda$-312-avoiding Demazure polynomial is identical to a uniquely determined flag Schur polynomial.  \end{thm}

Next we obtain \emph{necessary} conditions for having equality between a row bound sum and a Demazure polynomial: we see that working merely with polynomials does not lead to any new coincidences.

\begin{thm}\label{theorem737.2}Let $\lambda$ and $\lambda'$ be partitions.  Let $\beta$ be an upper $\lambda$-tuple and let $\pi$ be a $\lambda'$-permutation.  Suppose $s_\lambda(\beta; x) = d_{\lambda'}(\pi; x)$.  Then $Q_\lambda(\beta) = Y_{\lambda'}(\pi)$.  Hence $\lambda = \lambda'$ and $\Delta_\lambda(\beta) = \Psi_\lambda(\pi)$.  Here $\pi$ is $\lambda$-312-avoiding and $\Delta_\lambda(\beta)$ is a gapless $\lambda$-tuple (and so $\beta \in UGC_\lambda(n)$).  Hence the only row bound sums that can arise as Demazure polynomials are the flag Schur polynomials.  We have $s_\lambda(\beta; x) \equiv d_{\lambda'}(\pi; x)$.  The row bound sum $s_\lambda(\beta; x)$ is identical to the flag Schur polynomial $s_\lambda(\Phi_\lambda[\Delta_\lambda(\beta)];x)$.  \end{thm}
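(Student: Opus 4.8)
The plan is to reduce the polynomial hypothesis $s_\lambda(\beta;x) = d_{\lambda'}(\pi;x)$ to the statement that the two underlying tableau sets coincide, after which Theorem \ref{theorem721}(iii) supplies almost everything asserted. As in the proof of Proposition \ref{prop737}, I would order $P(n)$ lexicographically from the left and record that $T < T'$ in $\mathcal{T}_\lambda$ forces $\Theta(T) > \Theta(T')$ \emph{strictly}. The minimal tableau lies in $\mathcal{S}_\lambda(\beta)$ (since $\beta$ is upper) and in $\mathcal{D}_{\lambda'}(\pi)$ (being a key it is fixed by $S(\cdot)$ by Fact \ref{fact420}(iii) and it sits below $Y_{\lambda'}(\pi)$), and on each side it is the unique tableau attaining the lexicographic maximum of the contents; as its content is $\lambda$ on the left and $\lambda'$ on the right, comparing the lexicographically largest monomials of the two equal polynomials yields $\lambda = \lambda'$.

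Next I would pin down the maximal tableaux. By Proposition \ref{prop623.8}(i), $\mathcal{S}_\lambda(\beta) = [Q_\lambda(\beta)]$, so every other member is strictly below $Q_\lambda(\beta)$ and hence has strictly lexicographically larger content; thus $x^{\Theta(Q_\lambda(\beta))}$ is the lexicographically smallest monomial of $s_\lambda(\beta;x)$ and occurs there with coefficient $1$. By Fact \ref{fact420}(iv),(v) we have $\mathcal{D}_\lambda(\pi) \subseteq [Y_\lambda(\pi)]$ with $Y_\lambda(\pi)$ its unique maximal element, so likewise $x^{\Theta(Y_\lambda(\pi))}$ is the lexicographically smallest monomial of $d_\lambda(\pi;x)$. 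Equality of the polynomials forces $\Theta(Q_\lambda(\beta)) = \Theta(Y_\lambda(\pi))$; since $Y_\lambda(\pi)$ is a key, Fact \ref{fact420}(i) says it is the only tableau in $\mathcal{T}_\lambda$ with that content, so $Q_\lambda(\beta) = Y_\lambda(\pi)$. Consequently $\mathcal{S}_\lambda(\beta) = [Q_\lambda(\beta)] = [Y_\lambda(\pi)] \supseteq \mathcal{D}_\lambda(\pi)$.

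Now I would exploit nonnegativity of coefficients: the difference $s_\lambda(\beta;x) - d_\lambda(\pi;x) = \sum_{T \in [Y_\lambda(\pi)] \backslash \mathcal{D}_\lambda(\pi)} x^{\Theta(T)} = 0$ is a sum of monomials with nonnegative coefficients, so the index set is empty, i.e. $\mathcal{D}_\lambda(\pi) = [Y_\lambda(\pi)] = \mathcal{S}_\lambda(\beta)$. This is exactly $s_\lambda(\beta;x) \equiv d_\lambda(\pi;x)$, and it places us under the hypothesis of Theorem \ref{theorem721}(iii), which gives $\Delta_\lambda(\beta) = \Psi_\lambda(\pi)$, $\pi \in S_n^{\lambda\text{-}312}$, and $\beta \in UGC_\lambda(n)$; that $\Delta_\lambda(\beta) = \Psi_\lambda(\pi)$ is then a gapless $\lambda$-tuple is Proposition \ref{prop320.2}(ii). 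Finally $\beta \in UGC_\lambda(n)$ lets me invoke Proposition \ref{prop623.1}: $\mathcal{S}_\lambda(\beta) = \mathcal{S}_\lambda(\varphi)$ for $\varphi := \Phi_\lambda[\Delta_\lambda(\beta)]$, so the row bound sum is identical, as a generating function, to that flag Schur polynomial, and no row bound sum outside $UGC_\lambda(n)$ can arise as a Demazure polynomial.

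The step I expect to need the most care is the passage from $\Theta(Q_\lambda(\beta)) = \Theta(Y_\lambda(\pi))$ to $Q_\lambda(\beta) = Y_\lambda(\pi)$: it rests on $Y_\lambda(\pi)$ being a key (so Fact \ref{fact420}(i) applies) together with the bookkeeping that each of the two extremal tableaux genuinely contributes a coefficient-$1$ monomial not duplicated by any other tableau in its set — the point where the strictness of ``$T < T' \Rightarrow \Theta(T) > \Theta(T')$'' is essential, since without it a non-maximal tableau could in principle share the extremal content. Everything else is assembled from Propositions \ref{prop623.8}, \ref{prop623.1}, \ref{prop320.2} and from Theorems \ref{theorem420} and \ref{theorem520}, already packaged into Theorem \ref{theorem721}.
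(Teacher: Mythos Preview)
Your proof is correct and follows essentially the same route as the paper's: both obtain $\lambda=\lambda'$ and $Q_\lambda(\beta)=Y_\lambda(\pi)$ via the lexicographic-content argument from Proposition~\ref{prop737}, then use the containment $\mathcal{D}_\lambda(\pi)\subseteq[Y_\lambda(\pi)]=\mathcal{S}_\lambda(\beta)$ together with nonnegativity of the monomial expansion to force equality of the tableau sets. The only cosmetic difference is that the paper phrases the nonnegativity step contrapositively (if $\pi$ were $\lambda$-312-containing then Corollary~\ref{cor520} would give a strict containment and hence a strict polynomial inequality) and then invokes Theorem~\ref{theorem721}(ii), whereas you derive the set equality directly and invoke Theorem~\ref{theorem721}(iii); the underlying inputs (Theorems~\ref{theorem420} and~\ref{theorem520}) are the same.
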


\begin{proof}\noindent Reasoning as in the first part of the proof of Proposition \ref{prop737} implies $\lambda = \lambda'$.  Since $\mathcal{S}_\lambda(\beta) = [Q_\lambda(\beta)]$ by Proposition \ref{prop623.8}(i), the tableau $Q_\lambda(\beta)$ is the unique tableau in $\mathcal{S}_\lambda(\beta)$ that attains the lexicographic minimum of the contents for $\mathcal{S}_\lambda(\beta)$.  Since the analogous remark was made in the proof of Proposition \ref{prop737} for $Y_\lambda(\pi) \in \mathcal{D}_\lambda(\pi)$, we must have $\Theta[Q_\lambda(\beta)] = \Theta[Y_\lambda(\pi)]$.  But $Y_\lambda(\pi)$ is the unique tableau in $\mathcal{T}_\lambda$ with its content.  So we must have $Q_\lambda(\beta) = Y_\lambda(\pi)$.  As for Theorem \ref{theorem721}, this implies $\Delta_\lambda(\beta) = \Psi_\lambda(\pi)$.  Since $\mathcal{D}_\lambda(\pi) \subseteq [Y_\lambda(\pi)]$, we have $\mathcal{D}_\lambda(\pi) \subseteq [Q_\lambda(\beta)] = \mathcal{S}_\lambda(\beta)$.  Suppose that $\pi$ is $\lambda$-312-containing.  Then Corollary \ref{cor520} says $\mathcal{D}_\lambda(\pi) \neq [Q_\lambda(\beta)]$.  So $\mathcal{D}_\lambda(\pi) \subset \mathcal{S}_\lambda(\beta)$.  This implies that $d_\lambda(\pi; x) \neq s_\lambda(\beta; x)$, a contradiction.  So $\pi$ must be $\lambda$-312-avoiding.  Therefore $\Psi_\lambda(\pi) =: \gamma \in UG_\lambda(n)$.  Use Proposition \ref{prop623.1} for the ``only row bound sums that can arise'' statement.  Theorem \ref{theorem721}(ii) now says that $\mathcal{D}_\lambda(\pi) = \mathcal{S}_\lambda(\gamma)$.  And $\gamma = \Delta_\lambda(\beta)$ gives $\mathcal{S}_\lambda(\gamma) = \mathcal{S}_\lambda(\beta)$.  Since $\gamma \in UG_\lambda(n)$, we can form the $\lambda$-floor flag $\Phi_\lambda[\Delta_\lambda(\beta)] \sim \beta$. \end{proof}

By using the relating of row bound sums to Demazure polynomials in this theorem, we can extend what was said in Proposition \ref{prop737}(i) concerning accidental equalities between row bound sums.  There we learned that $s_\lambda(\beta;x) = s_\lambda(\beta';x)$ forced $\lambda = \lambda'$.  So here we need consider only one $\lambda$:

\begin{cor}\label{newcor737}Let $\lambda$ be a partition.

\noindent (i)  Let $\beta \in U_\lambda(n)$ and $\eta \in UGC_\lambda(n)$.  If $s_\lambda(\beta;x) = s_\lambda(\eta;x)$ then $\beta \sim \eta$.  Hence $\beta \in UGC_\lambda(n)$ and $s_\lambda(\beta;x) \equiv s_\lambda(\eta;x)$.

\noindent (ii)  The partitionings of $UGC_\lambda(n)$ into the equivalence class intervals in Proposition \ref{prop608.4}(ii) give a complete description of the indexing ambiguity and of non-equality for gapless core Schur polynomials.

\noindent (iii) More specifically, the analogous statement for $UF_\lambda(n)$ and flag Schur polynomials follows from Proposition \ref{prop608.4}(iii).  \end{cor}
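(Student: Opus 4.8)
The plan is to derive Part (i) by chaining the two ``coincidence'' theorems already in hand, and then to read off Parts (ii) and (iii) directly from Part (i) together with the interval descriptions of the $\approx_\lambda$-classes in Proposition \ref{prop608.4}. The discussion preceding the corollary has already reduced us to a single partition $\lambda$, so there is nothing to do on the $\lambda \ne \lambda'$ front.

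For Part (i) I would start from $\eta \in UGC_\lambda(n)$ and apply Theorem \ref{theorem737.1}(i): setting $\pi := \Pi_\lambda[\Delta_\lambda(\eta)]$ gives $\pi \in S_n^{\lambda\text{-}312}$ together with $s_\lambda(\eta;x) \equiv d_\lambda(\pi;x)$, hence $\mathcal{S}_\lambda(\eta) = \mathcal{D}_\lambda(\pi)$ and in particular the polynomial equality $s_\lambda(\eta;x) = d_\lambda(\pi;x)$. Combining with the hypothesis $s_\lambda(\beta;x) = s_\lambda(\eta;x)$ yields $s_\lambda(\beta;x) = d_\lambda(\pi;x)$, and now Theorem \ref{theorem737.2}, applied with $\lambda' := \lambda$ and this very $\pi$, hands us $\beta \in UGC_\lambda(n)$ and $s_\lambda(\beta;x) \equiv d_\lambda(\pi;x)$, i.e. $\mathcal{S}_\lambda(\beta) = \mathcal{D}_\lambda(\pi)$. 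Therefore $\mathcal{S}_\lambda(\beta) = \mathcal{D}_\lambda(\pi) = \mathcal{S}_\lambda(\eta)$, which is precisely $\beta \approx_\lambda \eta$, hence $\beta \sim \eta$ by Proposition \ref{prop623.2}, and simultaneously it is the assertion $s_\lambda(\beta;x) \equiv s_\lambda(\eta;x)$. As a cross-check one can instead note $\Delta_\lambda(\beta) = \Psi_\lambda(\pi) = \Delta_\lambda(\eta)$, the middle equality coming from Proposition \ref{prop608.10}(iii) since $\Delta_\lambda(\eta) \in UG_\lambda(n)$, and then invoke Proposition \ref{prop623.4}(i).

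For Part (ii) I would specialize Part (i) to $\beta \in UGC_\lambda(n)$: for $\eta, \eta' \in UGC_\lambda(n)$ the equality $s_\lambda(\eta;x) = s_\lambda(\eta';x)$ forces $\eta \sim \eta'$, while conversely $\eta \sim \eta'$ gives $\mathcal{S}_\lambda(\eta) = \mathcal{S}_\lambda(\eta')$ and hence the polynomial equality. So two gapless core Schur polynomials agree exactly when their indices lie in one $\approx_\lambda$-class, and Proposition \ref{prop608.4}(ii) identifies these classes with the intervals $[\utilde{\eta},\tilde{\eta}]$ of $UGC_\lambda(n)$; this interval partition therefore records all the indexing ambiguity among the $s_\lambda(\eta;x)$ and, complementarily, all the strict non-equalities between them. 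Part (iii) is the same argument carried out inside $UF_\lambda(n)$, which sits in $UGC_\lambda(n)$ by Proposition \ref{prop604.4}(ii): applying Part (i) with $\beta := \varphi$ and $\eta := \varphi'$ shows $s_\lambda(\varphi;x) = s_\lambda(\varphi';x)$ if and only if $\varphi \sim \varphi'$, and Proposition \ref{prop608.4}(iii) identifies the restricted classes $\langle\varphi\rangle_\lambda^F$ with the flag intervals there.

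I expect no deep obstacle here: the whole force of the statement — the exclusion of accidental equalities — is carried by Theorem \ref{theorem737.2}, which itself rests on the necessity half of the convexity dichotomy (Corollary \ref{cor520}, hence Theorem \ref{theorem520}) and on the distinctness of Demazure polynomials (Proposition \ref{prop737}(ii)). The one place to be careful is to feed into Theorem \ref{theorem737.2} the \emph{same} permutation $\pi$ that Theorem \ref{theorem737.1}(i) produced from $\eta$, so that the two generating-function identicalities $s_\lambda(\eta;x)\equiv d_\lambda(\pi;x)$ and $s_\lambda(\beta;x)\equiv d_\lambda(\pi;x)$ share the set $\mathcal{D}_\lambda(\pi)$ and can legitimately be concatenated into $\mathcal{S}_\lambda(\beta)=\mathcal{S}_\lambda(\eta)$.
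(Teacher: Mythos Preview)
Your proof is correct and essentially identical to the paper's own argument: both construct $\pi := \Pi_\lambda[\Delta_\lambda(\eta)] \in S_n^{\lambda\text{-}312}$, obtain $s_\lambda(\eta;x) \equiv d_\lambda(\pi;x)$ (you via Theorem~\ref{theorem737.1}(i), the paper via Theorem~\ref{theorem737.1}(ii) applied to this same $\pi$, a cosmetic difference), then feed $s_\lambda(\beta;x) = d_\lambda(\pi;x)$ into Theorem~\ref{theorem737.2} to get $s_\lambda(\beta;x) \equiv d_\lambda(\pi;x)$ and hence $\beta \sim \eta$. Your treatment of Parts~(ii) and~(iii) is more explicit than the paper's, which simply remarks that they follow already from Theorem~\ref{theorem737.1}.
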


\noindent Parts (ii) and (iii) could have been derived from Theorem \ref{theorem737.1}.

\begin{proof}Create $\Pi_{\lambda}[\Delta_{\lambda}(\eta)] =: \pi \in S_n^{\lambda\text{-}312}$ from $\Delta_{\lambda}(\eta) \in UG_\lambda(n)$.  Apply Theorem \ref{theorem737.1}(ii) to obtain $d_{\lambda}(\pi; x) \equiv s_\lambda(\Delta_\lambda(\eta);x) \equiv s_\lambda(\eta;x)$.  Then apply Theorem \ref{theorem737.2} to $s_\lambda(\beta; x) = d_{\lambda}(\pi; x)$ to obtain $s_\lambda(\beta; x) \equiv d_\lambda(\pi; x)$.  So $s_\lambda(\beta; x) \equiv s_{\lambda}(\eta; x)$, which implies $\beta \sim \eta$ and $\beta \in UGC_\lambda(n)$.  \end{proof}

We do not know if it is possible to rule out accidental coincidences between all pairs of row bound sums:

\begin{prob}\label{prob14.5}Find $n \geq 1$, $\lambda \in \Lambda_n^+$, and $\beta, \beta^\prime \in U_\lambda(n) \backslash UGC_\lambda(n)$ such that $s_\lambda(\beta;x) = s_\lambda(\beta^\prime;x)$ but $\Delta_\lambda(\beta) \neq \Delta_\lambda(\beta^\prime)$. \end{prob}

Improving upon Equation 13.1 and Corollary 14.6 of \cite{PS}, in \cite{PW2} we will give a ``maximum efficiency'' determinant expression for the Demazure polynomials $d_\lambda(\pi;x)$ with $\pi \in S_n^{\lambda\text{-}312}$.

\section{Connecting to earlier work}

\noindent Working with an infinite number of variables $x_1, x_2, ...$, Reiner and Shimozono studied \cite{RS} coincidences between ``skew'' flag Schur polynomials and Demazure polynomials in their Theorems 23 and 25.  To indicate how those statements are related to our results, we consider only their ``non-skew'' flag Schur polynomials and specialize those results to having just $n$ variables $x_1, ... , x_n$.  Then their key polynomials $\kappa_\alpha(x)$ are indexed by ``(weak) compositions $\alpha$ (into $n$ parts)''.  The bijection from our pairs $(\lambda, \pi)$ with $\lambda \in \Lambda_n^+$ and $\pi \in S_n^\lambda$ to their compositions $\alpha$ that was noted in Section 3 of \cite{PW1} is indicated in the sixth paragraph of the Appendix to that paper:  Let $\pi \in S_n^\lambda$.  After creating $\alpha$ via $\alpha_{\pi_i} := \lambda_i$ for $i \in [n]$, here we write $\pi.\lambda := \alpha$.  Under this bijection the Demazure polynomial $d_\lambda(\pi;x)$ of \cite{PW1} and their key polynomial $\kappa_\alpha(x)$ are defined by the same recursion.  Reiner and Shimozono characterized the coincidences between the $s_\lambda(\phi;x)$ for $\phi \in UF_\lambda(n)$ and the $d_{\lambda'}(\pi;x)$ for $\pi \in S_n^{\lambda'}$ from the perspectives of both the flag Schur polynomials and the Demazure polynomials.  To relate the index $\phi$ to the index $\pi \in S_n^{\lambda'}$, their theorems refer to the tableau we denote $Q_\lambda(\phi)$.  Part (i) of the following fact extends the sixth paragraph of the Appendix of \cite{PW1}.  Part (ii) can be confirmed with Proposition \ref{prop623.8}(ii), Proposition \ref{prop604.4}(ii) and Lemma \ref{lemma340.1}.

\begin{fact}\label{fact824B}Let $\pi \in S_n^\lambda$.  Let $\phi \in UF_\lambda(n)$.

\noindent (i)  The content $\Theta[Y_\lambda(\pi)]$ of the key of $\pi$ is the composition that has the unique decomposition $\pi.\lambda$.

\noindent (ii)  The tableau $Q_\lambda(\phi)$ is a $\lambda$-key $Y_\lambda(\sigma)$ for a uniquely determined $\sigma \in S_n^\lambda$. \end{fact}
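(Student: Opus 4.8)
The two parts of Fact \ref{fact824B} rest on essentially disjoint machinery, so the plan is to handle them separately.

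For Part (i), the plan is to unwind the definition of the $\lambda$-key $Y_\lambda(\pi)$ from Section 8 and to count the occurrences of each value in $Y_\lambda(\pi)$ one column at a time. Recall that $Y_\lambda(\pi)$ is the left-to-right juxtaposition of $\lambda_n$ inert columns together with $\lambda_{q_h}-\lambda_{q_{h+1}}$ copies of the column $Y(B_h)$ for $h = r, r-1, \dots, 1$, where $B$ is the $R_\lambda$-chain corresponding to $\pi$. Since each column $Y(Q)$ carries each element of $Q$ exactly once, the multiplicity $\Theta[Y_\lambda(\pi)]_v$ of a value $v$ is simply the number of columns of $Y_\lambda(\pi)$ whose underlying value set contains $v$. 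Fix $i \in [n]$ and let $h$ be the carrel index with $q_{h-1} < i \le q_h$. Then $\pi_i \in B_{h'} = \{\pi_1, \dots, \pi_{q_{h'}}\}$ precisely when $h' \ge h$, so $\pi_i$ occurs in each of the $\lambda_n$ inert columns and in every copy of $Y(B_{h'})$ for $h' \ge h$; the copy-counts telescope, giving a total of $\lambda_n + \sum_{h'=h}^{r}(\lambda_{q_{h'}} - \lambda_{q_{h'+1}}) = \lambda_{q_h} = \lambda_i$, the last equality because $\lambda$ is constant on each carrel. Thus $\Theta[Y_\lambda(\pi)]_{\pi_i} = \lambda_i$ for every $i$, which is exactly the defining relation of the composition $\pi.\lambda$; hence $\Theta[Y_\lambda(\pi)]$ is the composition that decomposes as $\pi.\lambda$.

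For Part (ii), the plan is to compose the three cited results. First, Proposition \ref{prop623.8}(ii) gives $Q_\lambda(\phi) = M_\lambda(\delta)$ with $\delta := \Delta_\lambda(\phi)$. Next, since $\phi$ is an upper flag, Proposition \ref{prop604.4}(ii) shows that $\delta$ is a gapless $\lambda$-tuple. Then Lemma \ref{lemma340.1} asserts that the $\lambda$-row end max tableau $M_\lambda(\delta)$ of a gapless $\lambda$-tuple is a key, so $Q_\lambda(\phi)$ is a $\lambda$-key. Finally, invoking the bijection $\pi \leftrightarrow Y_\lambda(\pi)$ from $S_n^\lambda$ onto the set of $\lambda$-keys recorded in Section 8 produces the uniquely determined $\sigma \in S_n^\lambda$ with $Q_\lambda(\phi) = Y_\lambda(\sigma)$.

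I do not expect a genuine obstacle here. In Part (i) the only care needed is the bookkeeping: keeping the carrel index $h$, the chain $B$, and the column multiplicities $\lambda_{q_h} - \lambda_{q_{h+1}}$ aligned when summing, and remembering that $q_{r+1} = n$ so the telescoped tail is $\lambda_n$. In Part (ii) the one point worth checking explicitly is that the gaplessness of $\Delta_\lambda(\phi)$ delivered by Proposition \ref{prop604.4}(ii) is precisely the hypothesis Lemma \ref{lemma340.1} requires; an alternative would be to route Part (ii) through Theorem \ref{theorem721}(ii), but the $M_\lambda$-based argument above is the more elementary one and is the one I would present.
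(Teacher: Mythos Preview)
Your proposal is correct and matches the paper's own indicated route. The paper does not write out a proof but states that Part (i) ``extends the sixth paragraph of the Appendix of \cite{PW1}'' and that Part (ii) ``can be confirmed with Proposition \ref{prop623.8}(ii), Proposition \ref{prop604.4}(ii) and Lemma \ref{lemma340.1}''; your Part (ii) argument composes exactly those three results in the natural order and then invokes the $\pi \leftrightarrow Y_\lambda(\pi)$ bijection from Section 8, while your Part (i) supplies the direct column-by-column telescoping computation that underlies the cited reference.
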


From the perspective of flag Schur polynomials, in our language their Theorem 23 first said that every $s_\lambda(\phi;x)$ arises as a $d_{\lambda'}(\pi;x)$ for at least one pair $(\lambda', \pi)$ with $\lambda' \in \Lambda_n^+$ and $\pi \in S_n^{\lambda'}$.  Second, that $d_{\lambda'}(\pi;x)$ must be the Demazure polynomial for which $\pi.\lambda' = \Theta[Q_\lambda(\phi)]$.  Their first statement appears here as a weaker form of the first part of Theorem \ref{theorem737.1}(iii).  The fact above can be used to show that their second (uniqueness) claim is equivalent to the first (and central) ``necessary'' claim $Q_\lambda(\phi) = Y_{\lambda'}(\pi)$ in our Theorem \ref{theorem737.2} that is produced by taking $\beta := \phi$.

From the other perspective, their Theorem 25 put forward a characterization for a Demazure polynomial $d_\lambda(\pi;x)$ that arises as a flag Schur polynomial $s_{\lambda'}(\phi;x)$ for some $\phi \in UF_{\lambda'}(n)$.  This characterization is stated in terms of a flag $\phi(\alpha)$ that is specified by a recipe to be applied to a composition $\alpha$;  this is given before the statement of Theorem 25.  Let $(\lambda, \pi)$ be the pair corresponding to $\alpha$.  It appears that the recipe for $\phi(\alpha)$ should have ended with `having size $\lambda_i$' instead of `having size $\alpha_i$'; we take this fix for granted for the remainder of the discussion.  But no recipe of this form can be completely useful for general partitions $\lambda$ since any $\lambda$-tuple $\phi(\alpha)$ produced will be constant on the carrels of $[n]$ determined by $\lambda$.  So first we consider only strict $\lambda$.  Here it can be seen that their $\phi(\alpha)$ becomes our $\Psi(\pi) =: \psi$.  Thus their $T_{\lambda(\alpha),\phi(\alpha)}$ is our $Q(\psi)$, and so the condition $Key(\alpha) = T_{\lambda(\alpha),\phi(\alpha)}$ translates to $Y(\pi) = Q(\psi)$.  Following the statement of Theorem \ref{theorem340}, we noted that the converse of the first part of its Part (iii) held when $\lambda$ is strict.  Using Proposition \ref{prop623.8}(ii), Theorem \ref{theorem340}(iii), and that fact we see that this $Y(\pi) = Q(\psi)$ condition is equivalent to requiring $\pi \in S_n^{312}$.  So when $\lambda$ is strict the two directions of Theorem 25 appear in this paper as parts of Theorem \ref{theorem737.1}(ii) and Theorem \ref{theorem737.2}.

Now consider Theorem 25 for general $\lambda$.  Its hypothesis $\kappa(\alpha) = S_{\lambda / \mu}(X_\phi)$ translates to $d_\lambda(\pi;x) = s_{\lambda'}(\phi;x)$.  In the necessary direction a counterexample to their condition $Key(\alpha) = T_{\lambda(\alpha),\phi(\alpha)}$, which translates to $Y_\lambda(\pi) = Q_\lambda(\phi(\alpha))$, is given by $\alpha = (1,2,0,1)$.  Turning to the sufficient direction:  From looking at $\Omega_\lambda[Y_\lambda(\pi)]$ it can be seen that the $nn$-tuple $\phi(\alpha) =: \phi$ is in $UF_\lambda(n) \subseteq UGC_\lambda(n)$ as well as being constant on the carrels of $\lambda$.  Suppose that their condition $Y_\lambda(\pi) = Q_\lambda(\phi)$ is satisfied, and set $\Delta_\lambda(\phi) =: \gamma \in UG_\lambda(n)$.  Then $Y_\lambda(\pi) = Q_\lambda(\gamma)$, and Theorem \ref{theorem340} gives $\pi \in S_n^{\lambda\text{-}312}$.  Then Theorem \ref{theorem737.1}(ii) implies that $d_\lambda(\pi;x) = s_\lambda(\phi;x)$, which confirms this part of Theorem 25.  However, the set of cases $(\lambda,\pi)$ that are produced by this sufficient condition is smaller than that produced by the $\lambda$-312-avoiding sufficient condition:  It can be seen that each index $\gamma$ produced above has only a single critical entry in each carrel of $\lambda$, while the general indexes $\gamma'$ that can arise for such coincidences range over all of the larger set $UG_\lambda(n)$.

Discussing Theorem 25 for general $\lambda$ further, the necessary condition can be completely ``loosened up'' by replacing `$T_{\lambda(\alpha),\phi(\alpha)}$' with `$T_{\lambda',\phi}$', which translates to $Q_{\lambda'}(\phi)$.  This repaired version now gives the necessary condition $Y_\lambda(\pi) = Q_{\lambda'}(\phi)$, which is the central claim of Theorem \ref{theorem737.2}.  To include more cases, one might attempt to extend our view of the  sufficient part of Theorem 25 for strict $\lambda$ to general $\lambda$ as follows:  Let $\pi \in S_n^\lambda$ and set $\psi := \Psi_\lambda(\pi)$.  Since $\psi$ is not constant on the carrels of $\lambda$, it is hoped that all cases will now be included.  Does having $Y_\lambda(\pi) = Q_\lambda(\psi)$ imply that the Demazure polynomial $d_\lambda(\pi;x)$ is equal to the row bound sum $s_\lambda(\psi;x)$?  If this were true, then Theorem \ref{theorem737.2} tells us that $\pi \in S_n^{\lambda\text{-}312}$ and $\psi \in UGC_\lambda(n)$.  So to provide a counterexample, we do not need to compute polynomials.  It will suffice to specify an example of $Y_\lambda(\pi) = Q_\lambda(\psi)$ with either $\pi \in S_n^\lambda \backslash S_n^{\lambda\text{-}312}$ or with $\psi \in U_\lambda(n) \backslash UGC_\lambda(n)$.  We do the former, since at the same time it will also provide a counterexample to the converse of the first part of Theorem \ref{theorem340}(iii).  Choose $n = 4, \lambda = (2,1,1,0)$ and $\pi = (4;1,2;3)$.  Then $Y_\lambda(\pi) = Q_\lambda(\psi)$ with $\pi \notin S_n^{\lambda\text{-}312}$.  So this proposed condition is ``too loose''.  (The proof of the sufficient direction of Theorem 25 cites the converse of the second part of Theorem 23, not the implication itself.)

We prepare to discuss a related result \cite{PS} of Postnikov and Stanley.  Let $\pi \in S_n^\lambda$.  Our definitions of the $\lambda$-chain $B$ and the $\lambda$-key $Y_\lambda(\pi)$ can be extended to all of $S_n$ so that $Y_\lambda(\sigma') = Y_\lambda(\pi)$ exactly for the $\sigma' \in S_n$ such that $\bar{\sigma'} = \pi$.  Then our definition of Demazure polynomial can be extended from $S_n^\lambda$ to $S_n$ such that $d_\lambda(\sigma';x) = d_\lambda(\pi;x)$ for exactly the same $\sigma'$.  Their paper used this ``looser'' indexing for the Demazure polynomials.

In their Theorem 14.1, Postnikov and Stanley stated a sufficient condition for a coincidence to occur from the perspective of Demazure polynomials:  If $\pi \in S_n$ is 312-avoiding, then $d_\lambda(\pi;x) = s_\lambda(\phi;x)$ for a certain $\phi \in UF_\lambda(n)$.  After noting that this theorem followed from Theorem 20 of \cite{RS}, they provided their own proof of it.  Their bijective recipe for forming $\phi$ from $\pi$ was complicated.  Their inverse for this bijection is our inverse map $\Pi$ of Proposition \ref{prop320.1}(ii), which takes upper flags to 312-avoiding permutations.  Since the inverse of the inverse of a bijection must be the bijection, from that proposition it follows that their recipe for $\phi$ must be the restriction of our $\Psi$ to $S_n^{312}$.  The following result uses the machinery provided by our maps of $n$-tuples in Propositions \ref{prop824.2} and \ref{prop824.4} to prove that their theorem is equivalent to a weaker version of one of ours:

\begin{thm}\label{theorem824.5}Theorem 14.1 of \cite{PS} is equivalent to our Theorem \ref{theorem737.1}(ii), once `$\equiv$' in the latter result has been replaced by `$=$'.  \end{thm}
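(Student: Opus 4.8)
The plan is to set up a two-way implication, using the machinery of Propositions \ref{prop824.2} and \ref{prop824.4} to translate between the ``looser'' $S_n$-indexing of Demazure polynomials used by Postnikov and Stanley and our $S_n^{\lambda\text{-}312}$-indexing. First I would recall that Theorem 14.1 of \cite{PS}, in our notation, asserts: if $\sigma \in S_n$ is $312$-avoiding, then $d_\lambda(\sigma;x) = s_\lambda(\phi;x)$ where $\phi = \Psi(\sigma)$ (the identification of their recipe with the restriction of $\Psi$ to $S_n^{312}$ was established in the paragraph preceding the theorem). Meanwhile our Theorem \ref{theorem737.1}(ii), with $\equiv$ weakened to $=$, says: if $\pi \in S_n^{\lambda\text{-}312}$, then $\gamma := \Psi_\lambda(\pi) \in UG_\lambda(n)$ and $d_\lambda(\pi;x) = s_\lambda(\gamma;x)$.

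For the forward direction (their result implies the weakened form of ours), I would take $\pi \in S_n^{\lambda\text{-}312}$ and invoke Proposition \ref{prop824.1} to obtain a $312$-avoiding lift $\sigma \in S_n$ with $\bar\sigma = \pi$. By the extended definitions recalled in Section 15, $d_\lambda(\sigma;x) = d_\lambda(\pi;x)$. Applying their Theorem 14.1 gives $d_\lambda(\sigma;x) = s_\lambda(\Psi(\sigma);x)$, viewing $\Psi(\sigma)$ as a $\lambda$-tuple. Now Proposition \ref{prop824.4} (or \ref{prop824.2}) yields $\Delta_\lambda[\Psi(\sigma)] = \Psi_\lambda(\pi) = \gamma$, so $\Psi(\sigma) \sim_\lambda \gamma$ and hence $\mathcal{S}_\lambda(\Psi(\sigma)) = \mathcal{S}_\lambda(\gamma)$ by Proposition \ref{prop623.4}, giving $s_\lambda(\Psi(\sigma);x) = s_\lambda(\gamma;x)$. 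Chaining the equalities produces $d_\lambda(\pi;x) = s_\lambda(\gamma;x)$, which is the claimed weakened statement. For the reverse direction (ours implies theirs), I would take $\sigma \in S_n^{312}$, set $\pi := \bar\sigma \in S_n^{\lambda\text{-}312}$ (this projection preserves $312$-avoidance, as noted at the start of Section 7), and again use $d_\lambda(\sigma;x) = d_\lambda(\pi;x)$. Our Theorem \ref{theorem737.1}(ii) (weakened) gives $d_\lambda(\pi;x) = s_\lambda(\Psi_\lambda(\pi);x)$, and Proposition \ref{prop824.4} identifies $\Psi_\lambda(\pi) = \Delta_\lambda[\Psi(\sigma)] \sim_\lambda \Psi(\sigma)$, so again $s_\lambda(\Psi_\lambda(\pi);x) = s_\lambda(\Psi(\sigma);x)$. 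Thus $d_\lambda(\sigma;x) = s_\lambda(\Psi(\sigma);x)$, which is exactly their Theorem 14.1.

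The main obstacle I anticipate is purely bookkeeping: one must be careful that the two indexing conventions genuinely match up, i.e. that their $\phi$ for a $312$-avoiding $\sigma$ really is $\Psi(\sigma)$ and that the extended Demazure polynomial $d_\lambda(\sigma;x)$ for $\sigma \in S_n$ depends only on $\bar\sigma$; both points are handled in the preliminary paragraphs of Section 15, so they may be quoted. The only genuine content is the commutation $\Delta_\lambda \circ \Psi = \Psi_\lambda \circ (\ \bar{\ }\ )$ on $312$-avoiding permutations, and that is precisely Proposition \ref{prop824.4}. Since every step is a substitution or an already-proved identity, there is no delicate estimate to carry out — the proof is a short diagram chase, and I would present it as such, emphasizing that the ``$\equiv$ versus $=$'' distinction is exactly what allows us to pass freely between $\Psi(\sigma)$ and $\gamma = \Delta_\lambda[\Psi(\sigma)]$ without tracking the underlying tableau sets.
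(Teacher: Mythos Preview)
Your proposal is correct and follows essentially the same route as the paper's proof: both directions proceed by passing between a $312$-avoiding $\sigma \in S_n$ and its $\lambda$-projection $\pi = \bar\sigma \in S_n^{\lambda\text{-}312}$ (lifting via Proposition \ref{prop824.1} in one direction), invoking $d_\lambda(\sigma;x) = d_\lambda(\pi;x)$, and then using the commutation $\Delta_\lambda[\Psi(\sigma)] = \Psi_\lambda(\pi)$ from Propositions \ref{prop824.2}/\ref{prop824.4} together with Proposition \ref{prop623.4} to identify the row bound sums. The only cosmetic difference is that the paper presents the ``ours $\Rightarrow$ theirs'' direction first and specifically cites Proposition \ref{prop824.2} (rather than \ref{prop824.4}) for the minimum-length lift in the other direction.
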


\begin{proof}Let $\sigma^\prime$ be a 312-avoiding permutation.  Set $\pi := \bar{\sigma^\prime}$ and $\gamma := \Psi_\lambda(\pi)$.  Then $\pi$ is $\lambda$-312-avoiding and $d_\lambda(\sigma^\prime; x) = d_\lambda(\pi; x) \equiv s_\lambda(\gamma; x)$ by Theorem \ref{theorem737.1}(ii).  Recall the remark above that noted that our map $\Psi$ is the map $b(\cdot)$ of \cite{PS}.  Set $\varphi := \Psi(\sigma^\prime)$; this is the upper flag used in Theorem 14.1 of \cite{PS}.  Then $\varphi \sim \Psi_\lambda(\pi)$ by Proposition \ref{prop824.4}.  So $s_\lambda(\gamma; x) = s_\lambda(\varphi; x)$ gives us the result $d_\lambda(\sigma^\prime;x) = s_\lambda(\varphi;x)$ of \cite{PS}.  Conversely, let $\pi$ be a $\lambda$-312-avoiding $\lambda$-permutation.  Let $\sigma$ be the minimum length lift of $\pi$.  Again $\varphi := \Psi(\sigma)$ is the upper flag used in Theorem 14.1 of \cite{PS}.  So that result gives us $d_\lambda(\pi; x) = d_\lambda(\sigma; x) = s_\lambda(\varphi; x)$.  And Proposition \ref{prop824.2} implies $\varphi \sim \Psi_\lambda(\pi) =: \gamma$.  So $s_\lambda(\varphi;x) = s_\lambda(\gamma;x)$ gives us the $d_\lambda(\pi;x) = s_\lambda(\gamma;x)$ consequence of Theorem \ref{theorem737.1}(ii).  \end{proof}

\noindent To convert their index $\sigma' \in S_n^{312}$ for a Demazure polynomial to an index for a flag Schur polynomial, Postnikov and Stanley set $\varphi' := \Psi(\sigma')$.  For one such $\sigma'$, our unique corresponding element of $S_n^{\lambda\text{-}312}$ is $\pi := \bar{\sigma'}$.  Let $\sigma$ be the minimum length $\lambda$-312-avoiding lift of $\pi$, and let $\sigma''$ be any other such lift.  We work with $\Psi_\lambda(\pi) =: \gamma \in UG_\lambda(n)$ and $s_\lambda(\gamma;x)$.  As they apply $\Psi$ to various $\sigma''$, they produce various upper flags $\varphi''$.  By Proposition \ref{prop824.4} we see $\varphi'' \sim \gamma$.  By Proposition \ref{prop824.2} it can be seen that our ``favored'' $\Psi(\sigma)$ is the $\lambda$-floor flag $\Phi_\lambda[\gamma] =: \tau$ that is the unique minimal upper flag such that $s_\lambda(\tau;x) = s_\lambda(\varphi'';x) = d_\lambda(\sigma'';x)$.

\section{Distinctness of polynomials}

Table 16.1 summarizes our results concerning the generating functions for our tableau sets; there $\lambda, \lambda' \in \Lambda_n^+$.  The three bi-implications for the row bound sums in the ``identical'' column restate Proposition \ref{prop623.4}; the two for the Demazure generating functions restate (or specialize from) Fact \ref{fact420}(vi).  The four sufficient-for-polynomial-equality implications in Rows 3,4,6, and 7 in the ``equal'' column follow immediately.  In those rows the necessary implications for equality for the two row bound sums are in Corollary \ref{newcor737}(ii)(iii); those for the two Demazure polynomials restate (or specialize from) Proposition \ref{prop737}(ii).  In Row 5, the bi-implications respectively come from Theorems \ref{theorem721} and \ref{theorem737.2}.  For the non-identicality in Row 1, refer to the definition of `$\equiv$' or note $\Delta_\lambda(\eta) \neq \Delta_\lambda(\beta)$ and use Proposition \ref{prop623.4}(i).  The non-equality follows from Proposition \ref{prop737}(i) and Corollary \ref{newcor737}(i).

The count notations ${n \choose R}$ and $C_n^R$ were defined in Section 3.  For the count in Row 2, recall that the equivalence classes $\langle \beta \rangle_{\approx_\lambda}$ of tableau sets $\mathcal{S}_\lambda(\beta)$ can be indexed by the elements of $UI_\lambda(n)$ according to Proposition \ref{prop623.4}(i).  We know that $| UI_\lambda(n) | = {n \choose R_\lambda} =: {n \choose \lambda}$.  The Demazure tableau sets $\mathcal{D}_\lambda(\pi)$ are indexed by $\pi \in S_n^\lambda$ by Fact \ref{fact420}(vi), and we know $| S_n^\lambda | = {n \choose R_\lambda}$.  The counts of $C_n^\lambda := C_n^{R_\lambda}$ appearing in the table will be justified in the proof of Theorem \ref{theorem18.1}.  It is mysterious that ${n \choose \lambda} - C_n^\lambda$ counts both the number of row bound sums that cannot arise as Demazure polynomials as well as the number of Demazure polynomials that cannot arise as row bound sums. Can this be explained by an underlying phenomenon?

\begin{prob}\label{prob16.1}Let $\lambda$ be a partition.  Set $J := [n-1] \backslash R_\lambda$.  Is there a non-$T$-equivariant deformation of the Schubert varieties in the $GL(n)$ flag manifold $G/P_J$ that bijectively moves the torus characters $d_\lambda(\pi;x)$ to the $s_\lambda(\alpha;x)$ for $\pi \in S_n^\lambda$ and $\alpha \in UI_\lambda(n)$ with exactly $C_n^R$ fixed points, namely $d_\lambda(\sigma;x) = s_\lambda(\gamma;x)$ for $\sigma \in S_n^{\lambda\text{-}312}, \gamma \in UG_\lambda(n)$, and $\Psi_\lambda(\sigma) = \gamma$?  \end{prob}

\noindent Finding such a bijection would rule out accidental equalities among all row bound sums.  To get started, first compute the dimensions $|\mathcal{D}_\lambda(\pi)|$ and $|\mathcal{S}_\lambda(\alpha)|$ for all $\pi \in S_n^\lambda$ and $\alpha \in UI_\lambda(n)$ for some small $\lambda \in \Lambda_n^+$.  Use this data to propose a guiding bijection from $UI_\lambda(n)$ to $S_n^\lambda$ that extends our bijection $\Pi_\lambda : UG_\lambda(n) \rightarrow S_n^{\lambda\text{-}312}$.

\vspace{.25in}

\begin{table}[h!]

{
\begin{center}

\begin{tabular}{ccccc}
 & Identical as & & Equal as & \\
 & \underline{generating functions?} & \underline{Count} & \underline{polynomials?} & \underline{Count} \\ \\
 (1) & $\eta \in UGC_\lambda(n)$ and & & $\eta \in UGC_\lambda(n)$ and & \\
 & $\lambda \neq \lambda'$ or $\beta \notin UGC_\lambda(n)$ & $-$ & $\lambda \neq \lambda'$ or $\beta \notin UGC_\lambda(n)$ & $-$ \\
 & $\Rightarrow s_\lambda(\eta;x) \hspace{1mm} \cancel{\equiv} \hspace{1mm} s_{\lambda'}(\beta;x)$ & & $\Rightarrow s_\lambda(\eta;x) \neq s_{\lambda'}(\beta;x)$ & \\ \\
 (2) & $\beta \in U_\lambda(n), \beta' \in U_{\lambda'}(n)$: & & $\beta \in U_\lambda(n), \beta' \in U_{\lambda'}(n)$: & \\
 & $s_\lambda(\beta;x) \equiv s_{\lambda'}(\beta';x) \Leftrightarrow$ & \Large ${n \choose \lambda}$ & \normalsize $s_\lambda(\beta;x) = s_{\lambda'}(\beta';x) \Leftrightarrow$ ? & $-$ \\
 & $\lambda = \lambda', \beta \sim \beta'$ & & (Problem 14.5) \\ \\
 (3) & $\eta \in UGC_\lambda(n), \eta' \in UGC_{\lambda'}(n)$: & & $\eta \in UGC_\lambda(n), \eta' \in UGC_{\lambda'}(n)$: & \\
 & $s_\lambda(\eta;x) \equiv s_{\lambda'}(\eta';x) \Leftrightarrow$ & $C_n^\lambda$ & $s_\lambda(\eta;x) = s_{\lambda'}(\eta';x) \Leftrightarrow$ & $C_n^\lambda$ \\
 & $\lambda = \lambda', \eta \sim \eta'$ & & $\lambda = \lambda', \eta \sim \eta'$. \\  \\
 (4) & $\varphi \in UFlr_\lambda(n), \varphi' \in UFlr_{\lambda'}(n)$: & & $\varphi \in UFlr_\lambda(n), \varphi' \in UFlr_{\lambda'}(n)$: & \\
 & $s_\lambda(\varphi;x) \equiv s_{\lambda'}(\varphi';x) \Leftrightarrow$ & $C_n^\lambda$ & $s_\lambda(\varphi;x) = s_{\lambda'}(\varphi';x) \Leftrightarrow$ & $C_n^\lambda$ \\
 & $\lambda = \lambda', \varphi \sim \varphi'$ & & $\lambda = \lambda', \varphi \sim \varphi'$ \\  \\
 (5) & $\beta \in U_\lambda(n), \pi \in S_n^{\lambda'}$: & & $\beta \in U_\lambda(n), \pi \in S_n^{\lambda'}$: & \\
 & $s_\lambda(\beta;x) \equiv d_{\lambda'}(\pi;x) \Leftrightarrow$ & $C_n^\lambda$ & $s_\lambda(\beta;x) = d_{\lambda'}(\pi;x) \Leftrightarrow$ & $C_n^\lambda$ \\
 & $\lambda = \lambda', \Delta_\lambda(\beta) = \Psi_\lambda(\pi)$, & & $\lambda = \lambda', \Delta_\lambda(\beta) = \Psi_\lambda(\pi)$, & \\
 & $\beta \in UGC_\lambda(n), \pi \in S_n^{\lambda \text{-}312}$ & & $\beta \in UGC_\lambda(n), \pi \in S_n^{\lambda \text{-}312}$ \\ \\
 (6) & $\sigma \in S_n^{\lambda \text{-}312}, \sigma' \in S_n^{\lambda'\text{-}312}$: & & $\sigma \in S_n^{\lambda \text{-}312}, \sigma' \in S_n^{\lambda'\text{-}312}$: & \\
 & $d_\lambda(\sigma;x) \equiv d_{\lambda'}(\sigma';x) \Leftrightarrow$ & $C_n^\lambda$ & $d_\lambda(\sigma;x) = d_{\lambda'}(\sigma';x) \Leftrightarrow$ & $C_n^\lambda$  \\
 & $\lambda = \lambda', \sigma = \sigma'$ & & $\lambda=\lambda', \sigma=\sigma$ \\ \\
 (7) & $\pi \in S_n^\lambda, \pi' \in S_n^{\lambda'}$: & & $\pi \in S_n^\lambda, \pi' \in S_n^{\lambda'}$: & \\
 & $d_\lambda(\pi;x) \equiv d_{\lambda'}(\pi';x) \Leftrightarrow$ & \Large ${n \choose \lambda}$ & \normalsize $d_\lambda(\pi;x) = d_{\lambda'}(\pi';x) \Leftrightarrow$ & \Large ${n \choose \lambda}$ \\
 & \normalsize $\lambda = \lambda', \pi = \pi'$ & & $\lambda = \lambda', \pi = \pi'$

\end{tabular}\caption*{Table 16.1}
\end{center}
}

\end{table}

\section{Characterization of Gessel-Viennot determinant inputs}

In Theorem 2.7.1 of \cite{St1}, Stanley used the Gessel-Viennot technique to give a determinant expression for a generating function for certain sets of $n$-tuples of non-intersecting lattice paths.  Then in his proof of Theorem 7.16.1 of \cite{St2}, he recast that generating function for some cases by viewing such $n$-tuples of lattice paths as tableaux.  After restricting to non-skew shapes and to a finite number of variables, his generating function becomes our row bound sum $s_\lambda(\beta;x)$ for certain $\beta \in U_\lambda(n)$.  Theorem 2.7.1 required that the pair $(\lambda, \beta)$ satisfies the requirement that Gessel and Viennot call \cite{GV} ``nonpermutable''.  In \cite{PW2} we will present the following combination of his Theorems 2.7.1 and 7.16.1:

\begin{prop}\label{prop17.1}Let $\beta \in U_\lambda(n)$.  If the pair $(\lambda, \beta)$ is nonpermutable, then the row bound sum $s_\lambda(\beta;x)$ is given by the $n \times n$ determinant $| h_{\lambda_j-j+i}(i,\beta_j;x) |$ \end{prop}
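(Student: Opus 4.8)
The plan is to deduce Proposition~\ref{prop17.1} directly from Stanley's published determinant results by making the dictionary between his tableau/lattice-path generating functions and our row bound sums $s_\lambda(\beta;x)$ completely explicit. First I would recall that for a tableau $T \in \mathcal{T}_\lambda$ with entries in $[n]$, the column-strict condition on $T$ is equivalent, under the standard Lindstr\"om--Gessel--Viennot correspondence, to a family of $n$ lattice paths that start at the points corresponding to the rows of $\lambda$ and are non-intersecting. Under this correspondence the row bound $T_{\lambda_i}(i) \le \beta_i$ translates into a constraint on the endpoint of the $i^{th}$ path, and the monomial $x^{\Theta(T)}$ becomes the product weight of the path family. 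Thus $s_\lambda(\beta;x) = \sum_T x^{\Theta(T)}$, summed over $T \in \mathcal{S}_\lambda(\beta)$, is precisely the weighted count of non-intersecting lattice path families with the prescribed start points and endpoint bounds.

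Next I would invoke Theorem 2.7.1 of \cite{St1}: when the start/endpoint data (equivalently, the pair $(\lambda,\beta)$) is \emph{nonpermutable} in the sense of Gessel and Viennot, the sign-reversing involution on intersecting path families cancels all but the identity permutation term, so the weighted count of non-intersecting families equals the $n\times n$ determinant $|\,h_{e_j - s_i}\,|$ of complete homogeneous symmetric functions (in the appropriate variable specialization), where $s_i, e_j$ encode the start and end coordinates. Translating the indices back through the tableau dictionary — the $i^{th}$ path starts at a height governed by $i$ and ends at a height governed by $\beta_j$, and the shift $\lambda_j - j$ comes from the column index $j$ in the shape — the $(i,j)$ entry becomes $h_{\lambda_j - j + i}(i, \beta_j; x)$, which is exactly the truncated/flagged complete homogeneous symmetric function notation used in the statement. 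Then I would cite Theorem 7.16.1 of \cite{St2}, where Stanley already carried out the recasting of this lattice-path determinant as a tableau generating function for the relevant cases, to confirm that the index bookkeeping matches ours after restriction to non-skew shapes and to the finite variable set $x_1,\dots,x_n$.

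The main obstacle will be the index translation itself: one must check carefully that the nonpermutability hypothesis as stated for $(\lambda,\beta)$ coincides with the Gessel--Viennot condition on the derived start/endpoint tuples, and that the determinant entry $h_{\lambda_j - j + i}(i,\beta_j;x)$ — with its lower cutoff $i$ (from the row start) and upper cutoff $\beta_j$ (from the flag bound) — is the correct specialization of the generating function $h$ appearing in Stanley's statement. This is essentially a careful unwinding of conventions (which coordinate is ``row'' versus ``column,'' how the latent $0^{th}$ column enters, and how the boundary values $T_l(\zeta_l+1) := n+1$ interact with the path picture) rather than any new combinatorial idea. Since the full verification will appear in \cite{PW2}, here I would present only the statement and the outline of this translation, noting that all the substantive work — the sign-reversing involution and the determinant evaluation — is already supplied by \cite{St1} and \cite{St2}.
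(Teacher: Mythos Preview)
Your proposal is correct and matches the paper's own approach exactly: the paper does not prove Proposition~\ref{prop17.1} here but simply presents it as the combination of Stanley's Theorems 2.7.1 and 7.16.1 (restricted to non-skew shapes and finitely many variables), with the full index-translation details deferred to \cite{PW2}. Your outline of the Gessel--Viennot dictionary and the citation of the same two Stanley theorems is precisely what the paper intends.
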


In Theorem 2.7.1 Stanley noted that $(\lambda, \phi)$ is nonpermutable for every $\phi \in UF_\lambda(n)$;  this implicitly posed the problem of characterizing all $\beta \in U_\lambda(n)$ for which $(\lambda, \beta)$ is nonpermutable.  The ceiling map $\Xi_\lambda: UG_\lambda(n) \longrightarrow UF_\lambda(n)$ defined in Sections 5 and 4 can be extended to all of $U_\lambda(n)$ by ignoring the requirement in Section 4 that the critical list at hand satisfy the flag condition.  We will refer to this extension as the \emph{platform map} $\Xi_\lambda: U_\lambda(n) \longrightarrow U_\lambda(n)$.  For a given $\lambda \in \Lambda_n^+$, the main result of \cite{PW2} will characterize the $\beta \in U_\lambda(n)$ for which $(\lambda, \beta)$ is nonpermutable:

\begin{thm}Let $\lambda$ be a partition.  Let $\beta \in U_\lambda(n)$.  The pair $(\lambda, \beta)$ is nonpermutable if and only if $\beta \in UGC_\lambda(n)$ and $\beta \leq \Xi_\lambda(\beta)$. \end{thm}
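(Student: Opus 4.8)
The plan is to carry the whole argument inside the Gessel--Viennot lattice path model that underlies Proposition~\ref{prop17.1}, converting the condition ``nonpermutable'' into a purely combinatorial statement about the $R_\lambda$-critical list of $\beta$. First I would fix the source points $A_1,\dots,A_n$ and sink points $B_1,\dots,B_n$ that Stanley's construction \cite{St2} attaches to the pair $(\lambda,\beta)$, so that a tableau $T\in\mathcal{S}_\lambda(\beta)$ corresponds to an $n$-tuple of pairwise vertex-disjoint lattice paths $P_i\colon A_i\to B_i$, with the height of $B_i$ governed by $\beta_i$ and the horizontal position of $B_i$ by $\lambda_i$; rows of length $0$ and the inert columns are accommodated by degenerate sinks and paths exactly as in \cite{St2}. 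By the meaning of ``nonpermutable'' and the standard planarity argument for lattice paths, $(\lambda,\beta)$ is nonpermutable if and only if no non-identity permutation $w$ admits disjoint paths $P_i\colon A_i\to B_{w(i)}$; and, again by planarity, this is equivalent to the pairwise assertion that for every $i<j$ there is no vertex-disjoint pair of lattice paths $A_i\to B_j$ and $A_j\to B_i$. I would record, as a lemma, a clean necessary and sufficient condition, in terms of $i,j,\lambda_i,\lambda_j,\beta_i,\beta_j$, for such a disjoint ``swapping pair'' to exist; this is a direct computation with the staircase-shaped lattice regions involved.

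With this reduction in place, necessity is proved by contraposition. If $\beta\notin UGC_\lambda(n)$, then by Proposition~\ref{prop604.4}(iii) the $\lambda$-critical list of $\beta$ is not a flag critical list, so there is a carrel boundary $q_h$ with $y_{q_h}>y_k$, where $k$ is the leftmost critical index of the next carrel; and if $\beta\le\Xi_\lambda(\beta)$ fails, then at some non-critical index $i$ the entry $\beta_i$ strictly exceeds the critical entry $y_x$ of the step of $\Xi_\lambda(\beta)$ containing $i$ (here $x$ is the first critical index that is $\ge i$ and in the same carrel as $i$). In each case I would exhibit an explicit vertex-disjoint swapping pair for a suitable $i<j$ --- with $j:=q_h$ in the first case and $j:=x$ in the second --- the point being that a ``decrease across a carrel boundary'' or an ``overshoot at a non-critical index'' creates exactly the vertical room that lets the path from the later source be routed past the path from the earlier source without meeting it. By the reduction this shows $(\lambda,\beta)$ is permutable, so both stated conditions are necessary. (The failure $n=3$, $\lambda=(3,2,1)$, $\beta=(3,2,3)$, where the determinant $|h_{\lambda_j-j+i}(i,\beta_j;x)|$ acquires a monomial with a negative coefficient, is a minimal instance of the first mechanism and serves as a check.)

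For sufficiency, assume $\beta\in UGC_\lambda(n)$ and $\beta\le\Xi_\lambda(\beta)$. Since $\beta$ and $\Delta_\lambda(\beta)$ share their critical list (Fact~\ref{fact604.2}), we have $\Xi_\lambda(\beta)=\Xi_\lambda(\Delta_\lambda(\beta))$, which by Proposition~\ref{prop604.6}(ii) is the $R_\lambda$-ceiling flag for the (flag) critical list of $\beta$, hence an honest flag; so $(\lambda,\Xi_\lambda(\beta))$ is nonpermutable by Stanley's observation. I would then run the pairwise criterion directly: fix $i<j$ and suppose a disjoint swapping pair $A_i\to B_j$, $A_j\to B_i$ existed for $(\lambda,\beta)$. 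If $i$ and $j$ lie in the same carrel of $\beta$, the bound $\beta\le\Xi_\lambda(\beta)$ dominates $\beta_i$ and $\beta_j$ by the governing critical entries, and the swapping-pair lemma shows no disjoint pair can exist under that bound. If $i$ and $j$ lie in different carrels, one first uses the previous case to see that both paths stay below their carrels' ceilings, and then the flag-critical-list inequality (the defining property of $UGC_\lambda(n)$) across the intervening carrel boundaries forces $B_i$ and $B_j$ into the nested position in which, by the lemma, a disjoint swapping pair is impossible. Equivalently, the pairwise criterion for $(\lambda,\beta)$ is inherited from that of the flag $\Xi_\lambda(\beta)$ once one accounts for the comparison $\beta\le\Xi_\lambda(\beta)$; either way this gives nonpermutability, completing the equivalence.

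I expect the main obstacle to be this sufficiency step: pinning down the swapping-pair lemma precisely enough that the two hypotheses ($\beta\in UGC_\lambda(n)$ together with $\beta\le\Xi_\lambda(\beta)$) can be seen to be not merely necessary but jointly sufficient --- in particular, verifying that once every pairwise swap has been excluded, no residual family of bad permutations $w$ sneaks past the pairwise-to-global reduction when $\lambda$ has repeated parts. A secondary technical point is making the lattice-path dictionary fully precise in the presence of repeated row lengths and trivial columns, so that the cliff/carrel structure of $\lambda$ and the divider structure of the $R_\lambda$-tuple $\beta$ line up as claimed.
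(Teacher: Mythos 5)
First, a caveat about the ground truth: this theorem is only previewed in Section 17 of the paper, whose proof is deferred to the companion paper \cite{PW2}, so there is no in-paper argument to compare yours against; I am assessing your plan on its own terms.

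The central gap is your pairwise-to-global reduction, asserted ``by planarity'': that $(\lambda,\beta)$ is nonpermutable if and only if for every $i<j$ there is no vertex-disjoint pair of paths $A_i\to B_j$ and $A_j\to B_i$. This equivalence is false, and both halves of your argument lean on it. Take $n=4$, $\lambda=(1,1,1,1)$ (so $R_\lambda=\emptyset$) and $\beta=(3,2,3,4)\in U_\lambda(4)$. Here $\Delta_\lambda(\beta)=(1,2,3,4)$ is (vacuously) gapless, so $\beta\in UGC_\lambda(4)$, and $\Xi_\lambda(\beta)=(4,4,4,4)\geq\beta$; one also checks directly that $(\lambda,\beta)$ is nonpermutable in Gessel--Viennot's sense (any disjoint family realizing $w$ must have $w(4)=4$ since only $\beta_4\geq 4$, and the few remaining non-identity candidates are blocked; consistently, the determinant of Proposition~\ref{prop17.1} evaluates to $x_1x_2x_3x_4=s_\lambda(\beta;x)$). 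Yet in the standard planar realization you describe (source $A_i$ at height $i$, sink $B_j$ at height $\beta_j$, with $\lambda_j-j+i$ east steps), rows $1$ and $2$ \emph{do} admit a vertex-disjoint swapped pair: the $A_1\to B_2$ path is a single north step, and the $A_2\to B_1$ path can take its north step first and pass strictly above it. The swap simply cannot be completed to a full disjoint $4$-path family, because the forced path $A_3\to B_3$ collides with the routed $A_2\to B_1$ path. So nonpermutability (the global statement that only the identity admits a disjoint family) is strictly weaker than the absence of pairwise swaps, and no planarity argument can identify the two.

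Concretely, this breaks both directions as you have structured them. In the necessity direction, exhibiting a disjoint swapping pair from a failure of $\beta\in UGC_\lambda(n)$ or of $\beta\leq\Xi_\lambda(\beta)$ does not yet show permutability: you must extend the swap to a complete disjoint family realizing a non-identity permutation, and the example above shows that such extensions can be obstructed by the forced paths of the intermediate rows --- deciding exactly when the extension is possible is essentially the content of the theorem, not a routine afterthought. In the sufficiency direction the plan collapses at its first step, since configurations satisfying both hypotheses can still admit pairwise swaps (again the example); ruling out pairwise swaps is therefore not what you should be proving. What is needed instead is a genuinely global argument about complete families: assume a disjoint family realizes some $w\neq\mathrm{id}$, pick an extremal inversion, and derive a contradiction with the weak increase of the critical entries (the $UGC$ condition) and with $\beta\leq\Xi_\lambda(\beta)$, keeping all $n$ paths in play. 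Your lattice-path dictionary and the idea of reading the obstruction off the critical list of $\beta$ are sensible starting points, but the pairwise ``swapping-pair lemma'' cannot carry the proof.
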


Hence we will again see that restricting consideration from all upper $\lambda$-tuples $\beta \in U_\lambda(n)$ down to at least the gapless core $\lambda$-tuples $\eta \in UGC_\lambda(n)$ enables saying something nice about the row bound sums $s_\lambda(\eta;x)$.  By Corollary \ref{newcor737}, we know that $s_\lambda(\eta;x) = s_\lambda(\beta;x)$ for $\eta \in UGC_\lambda(n)$ and $\beta \in U_\lambda(n)$ if and only if $\beta \sim \eta$.  Then $\beta \in UGC_\lambda(n)$.  So to compute $s_\lambda(\eta;x)$ for a given $\eta \in UGC_\lambda(n)$, the possible inputs for the Gessel-Viennot determinant are the $\eta' \in UGC_\lambda(n)$ such that $\eta' \sim \eta$ and $\eta' \leq \Xi_\lambda(\eta')$.  We will say that a particular such $\lambda$-tuple \emph{attains maximum efficiency} if the corresponding determinant has fewer total monomials among its entries than does the determinant for any other application of Proposition \ref{prop17.1} to a $\beta \in U_\lambda(n)$ that produces $s_\lambda(\eta;x)$.

\begin{cor}\label{cor17.3}Let $\eta \in UGC_\lambda(n)$.  The gapless $\lambda$-tuple $\Delta_\lambda(\eta)$ attains maximum efficiency.\end{cor}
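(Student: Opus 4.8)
The plan is to show that among all $\beta \in U_\lambda(n)$ with $s_\lambda(\beta;x) = s_\lambda(\eta;x)$ that are admissible inputs for Proposition~\ref{prop17.1}, the gapless $\lambda$-tuple $\gamma := \Delta_\lambda(\eta)$ has entrywise-minimal entries, and then to argue that minimal entries force the minimal monomial count in the Gessel--Viennot determinant $|h_{\lambda_j-j+i}(i,\beta_j;x)|$.  First I would invoke Corollary~\ref{newcor737}(i): the only $\beta \in U_\lambda(n)$ with $s_\lambda(\beta;x) = s_\lambda(\eta;x)$ satisfy $\beta \sim \eta$, hence by Proposition~\ref{prop623.2} and Lemma~\ref{lemma608.2}(i) they all lie in the equivalence class $\langle\eta\rangle_{\sim_\lambda}$, and by Proposition~\ref{prop608.4}(ii) this class is the interval $[\gamma,\tilde\eta]$ with minimum $\gamma = \Delta_\lambda(\eta) \in UG_\lambda(n)$.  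Moreover by Fact~\ref{fact604.2} we have $\gamma \leq \Xi_\lambda(\gamma)$ is not automatic, so I must also check that $\gamma$ is itself an admissible input, i.e.\ that $(\lambda,\gamma)$ is nonpermutable; by the theorem of Section~17 this amounts to $\gamma \in UGC_\lambda(n)$ (true since $UG_\lambda(n) \subseteq UGC_\lambda(n)$ by Proposition~\ref{prop604.4}(ii)) and $\gamma \leq \Xi_\lambda(\gamma)$.  For the latter: since $\gamma$ is $\lambda$-increasing, the $R$-core construction fills each carrel leftward from its critical entries by staircases, whereas the $R$-ceiling $\Xi_\lambda(\gamma)$ fills leftward from the same critical entries by plateaus; since a staircase descending from a value $y$ lies weakly below the constant plateau at height $y$, we get $\gamma \leq \Xi_\lambda(\gamma)$ entrywise.

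Next I would compare monomial counts.  The $(i,j)$ entry of the determinant is the generating function $h_{\lambda_j-j+i}(i,\beta_j;x)$ of a complete homogeneous symmetric function in the variables $x_i, x_{i+1}, \ldots, x_{\beta_j}$ of degree $\lambda_j - j + i$; the number of monomials it contains is $\binom{\lambda_j-j+i+\beta_j-i}{\beta_j-i} = \binom{\lambda_j-j+\beta_j}{\beta_j-i}$ when this quantity is nonnegative, and is $0$ or $1$ in degenerate cases.  This count is monotonically (weakly) nondecreasing in $\beta_j$ for each fixed pair $(i,j)$.  Therefore if $\beta,\beta' \in \langle\eta\rangle_{\sim_\lambda}$ with $\beta \leq \beta'$ entrywise, the determinant for $\beta$ has in each entry no more monomials than the corresponding entry for $\beta'$, hence no more total monomials.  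Since $\gamma = \Delta_\lambda(\eta)$ is the entrywise minimum of $\langle\eta\rangle_{\sim_\lambda}$, it attains the minimum total monomial count among all admissible $\beta$ producing $s_\lambda(\eta;x)$, which is precisely the assertion that $\gamma$ attains maximum efficiency.

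The main obstacle I anticipate is handling the entries of the determinant where $\lambda_j - j + i < 0$, where $\beta_j < i$, or where the symmetric function is a single term or zero: there the naive binomial count is not literally $\binom{\lambda_j-j+\beta_j}{\beta_j-i}$, and one must verify that the count is still weakly monotone in $\beta_j$ and that no cancellation among entries of the determinant can make a larger-entried $\beta'$ actually more efficient than $\gamma$.  The cleanest way around this is to adopt the convention $h_m(i,b;x) = 0$ for $m<0$ and to note that $h_m$ in variables $x_i,\ldots,x_b$ has exactly $\binom{m + b - i}{b-i}$ monomials for $m \geq 0$ and $b \geq i$, is $0$ for $b < i$ unless $m = 0$ (in which case it is $1$), so that in every case the monomial count is a weakly increasing function of $b = \beta_j$; this settles the monotonicity uniformly.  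A secondary subtlety is that ``total monomials among its entries'' should be interpreted entrywise (counting each entry's monomials separately, as the determinant is expanded), so that no cross-entry cancellation is relevant to the efficiency comparison; with that reading the monotone comparison above is conclusive and the corollary follows from Corollary~\ref{newcor737}(i), Proposition~\ref{prop608.4}(ii), and the nonpermutability criterion of Section~17.
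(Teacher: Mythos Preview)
Your proposal is correct and follows essentially the same line as the paper. The paper does not give a detailed proof of Corollary~\ref{cor17.3} (Section~17 is a preview of \cite{PW2}); the paragraph preceding the corollary sets up exactly your argument by invoking Corollary~\ref{newcor737} to identify the admissible inputs as the $\eta' \sim \eta$ with $\eta' \leq \Xi_\lambda(\eta')$, leaving the verification that $\gamma = \Delta_\lambda(\eta)$ is admissible and the monotonicity of the entrywise monomial count implicit. You have correctly supplied both missing details: the inequality $\gamma \leq \Xi_\lambda(\gamma)$ follows from comparing the staircase fill of $\Delta_\lambda$ with the plateau fill of $\Xi_\lambda$ from the same critical pairs, and the monomial count of $h_m(i,b;x)$ is indeed weakly increasing in $b$ in all degenerate cases.
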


\section{Parabolic Catalan counts}

The section (or paper) cited at the beginning of each item in the following statement points to the definition of the concept:

\begin{thm}\label{theorem18.1}Let $R \subseteq [n-1]$.  Write the elements of $R$ as $q_1 < q_2 < ... < q_r$.  Set $q_0 := 0$ and $q_{r+1} := n$.  Let $\lambda$ be a partition $\lambda_1 \geq \lambda_2 \geq ... \geq \lambda_n \geq 0$ whose shape has the distinct column lengths $q_r, q_{r-1}, ... , q_1$.  Set $p_h := q_h - q_{h-1}$ for $1 \leq h \leq r+1$.  The number $C_n^R =: C_n^\lambda$ of $R$-312-avoiding permutations is equal to the number of:

\noindent (i) \cite{GGHP}:  ordered partitions of $[n]$ into blocks of sizes $p_h$ for $1 \leq h \leq r+1$ that avoid the pattern 312, and $R$-$\sigma$-avoiding permutations for $\sigma \in \{ 123, 132, 213, 231, 321 \}$.

\noindent (ii)  Section 3:  gapless $R$-tuples $\gamma \in UG_R(n)$, $R$-canopy tuples $\kappa$, $R$-floor flags $\tau \in UFlr_R(n)$, $R$-ceiling flags $\xi \in UCeil_R(n)$.

\noindent (iii)  Section 3:  flag $R$-critical lists and (here only) $r$-tuples $(\mu^{(1)}, ... , \mu^{(r)})$ of shapes such that $\mu^{(h)}$ is contained in a $p_h \times (n-q_h)$ rectangle for $1 \leq h \leq r$ and for $1 \leq h \leq r-1$ the length of the first row in $\mu^{(h)}$ does not exceed the length of the $p_{h+1}^{st}$ (last) row of $\mu^{(h+1)}$ plus the number of times that (possibly zero) last row length occurs in $\mu^{(h+1)}$.

\noindent (iv)  Sections 5 and 12:  the four collections of equivalence classes in $UGC_R(n) \supseteq UF_R(n)$ and $UGC_\lambda(n) \supseteq UF_\lambda(n)$ that are defined by the equivalence relations $\sim_R$ and $\approx_\lambda$ respectively.

\noindent (v)  Sections 6 and 9:  $R$-rightmost clump deleting chains and gapless $\lambda$-keys.

\noindent (vi)  Section 10:  sets of Demazure tableaux of shape $\lambda$ that are convex in $\mathbb{Z}^{|\lambda|}$.

\noindent (vii)  Sections 10 and 12:  distinct sets $\mathcal{D}_\lambda(\pi)$ of Demazure tableaux of shape $\lambda$ indexed by $\pi \in S_n^{\lambda\text{-}312}$, and distinct sets $\mathcal{S}_\lambda(\eta)$ (or $\mathcal{S}_\lambda(\phi)$) of gapless core (or flag) bound tableaux of shape $\lambda$ indexed by $\eta \in UGC_\lambda(n)$ (or $\phi \in UF_\lambda(n)$).

\noindent (viii)  Sections 12 and 10:  coincident pairs ($\mathcal{S}_\lambda(\beta), \mathcal{D}_\lambda(\pi)$) of sets of tableaux of shape $\lambda$ for upper $\lambda$-tuples $\beta \in U_\lambda(n)$ and $\lambda$-permutations $\pi \in S_n^\lambda$.

\noindent (ix)  Section 14:  Demazure polynomials $d_\lambda(\pi;x)$ indexed by $\pi \in S_n^{\lambda\text{-}312}$ that are distinct as polynomials, and gapless core Schur polynomials $s_\lambda(\eta;x)$ (or flag Schur polynomials $s_\lambda(\phi;x)$) indexed by $\eta \in UGC_\lambda(n)$ (or $\phi \in UF_\lambda(n)$) that are distinct as polynomials.

\noindent (x)  Section 14:  coincident pairs ($s_\lambda(\beta;x), d_\lambda(\pi;x)$) of polynomials indexed by upper $\lambda$-tuples $\beta \in U_\lambda(n)$ and $\lambda$-permutations $\pi \in S_n^\lambda$.

\noindent (xi)  Section 17:  valid upper $\lambda$-tuple inputs to the Gessel-Viennot determinant expressions for flag Schur polynomials on the shape $\lambda$ that attain maximum efficiency.  \end{thm}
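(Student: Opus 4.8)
The plan is to reduce each item to a bijection or counting statement already proved, organizing everything around the hub $S_n^{R\text{-}312}$, whose cardinality is $C_n^R$ by the definition in Section~3. Proposition~\ref{prop320.2}(i) identifies $S_n^{R\text{-}312}$ bijectively with the $R$-rightmost clump deleting chains, which gives one half of (v); and Proposition~\ref{prop320.2}(ii) identifies it, via $\Psi_R$ with inverse $\Pi_R$, with $UG_R(n)$, so $|UG_R(n)| = C_n^R$. For (ii) and the ``flag $R$-critical list'' half of (iii): Proposition~\ref{prop604.4}(iv) says a $\gamma \in UI_R(n)$ is gapless exactly when its $R$-critical list is flag, and Corollary~\ref{cor604.8} supplies the remaining bijections, so gapless $R$-tuples, flag $R$-critical lists, $R$-canopy tuples, $R$-floor flags, and $R$-ceiling flags are all equinumerous with $UG_R(n)$. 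For (iv): Corollary~\ref{cor608.6}(ii),(iii) precisely indexes the $\sim_R$-classes in $UGC_R(n)$ and in $UF_R(n)$ by gapless $R$-tuples, and Proposition~\ref{prop623.2} says $\approx_\lambda$ coincides with $\sim_{R_\lambda}$, so all four families of classes have size $|UG_R(n)| = C_n^R$.

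The one genuinely new bijection is the ``$r$-tuples of shapes'' half of (iii). I would encode a gapless $R$-tuple $\gamma$ by recording, for each carrel $h \in [r]$, the sequence of excesses $\gamma_i - i$ for $i \in (q_{h-1}, q_h]$; this sequence is weakly increasing, and read in reverse it is a partition $\mu^{(h)}$ with at most $p_h$ rows whose largest part $\gamma_{q_h} - q_h$ is at most $n - q_h$, i.e.\ a shape in a $p_h \times (n-q_h)$ rectangle. Because the upper and $R$-increasing conditions force the last carrel of any element of $UI_R(n)$ to be $(q_r+1, \ldots, n)$, the $(r+1)$-st carrel contributes nothing, which is why only $r$ shapes appear. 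Conversely any such $r$-tuple reassembles a unique element of $UI_R(n)$. The substantive check is that the gapless condition at the step from carrel $h$ to carrel $h+1$ translates into the stated inequality: when $\gamma_{q_h} > \gamma_{q_h+1}$, the requirement that the first $s := \gamma_{q_h} - \gamma_{q_h+1} + 1$ entries of carrel $h+1$ be the staircase $\gamma_{q_h}-s+1, \ldots, \gamma_{q_h}$ is equivalent to $s$ being at most the length of the leftmost staircase of carrel $h+1$; that length is exactly the multiplicity of the last row of $\mu^{(h+1)}$, and $\mu^{(h)}_1 - \mu^{(h+1)}_{p_{h+1}}$ equals $s$, so the condition becomes $\mu^{(h)}_1 \le \mu^{(h+1)}_{p_{h+1}} + (\text{that multiplicity})$. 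One also checks the step $h = r$ is automatic, which accounts for there being only $r-1$ inequalities. This is the step I expect to demand the most care.

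The remaining items transfer the $R$-world counts into the $\lambda$-world by taking $R := R_\lambda$ (so $C_n^\lambda := C_n^{R_\lambda}$ is well defined, which also supplies the $C_n^\lambda$ entries of Table~16.1). The gapless $\lambda$-keys of (v) are in bijection with $S_n^{\lambda\text{-}312}$ by Theorem~\ref{theorem340}(ii). For (vi) and the Demazure halves of (vii) and (ix): Corollary~\ref{cor520} says $\mathcal{D}_\lambda(\pi)$ is convex if and only if $\pi \in S_n^{\lambda\text{-}312}$, Fact~\ref{fact420}(vi) gives that distinct $\pi$ yield distinct sets $\mathcal{D}_\lambda(\pi)$, and Proposition~\ref{prop737}(ii) gives distinct Demazure polynomials. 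For the row-bound halves of (vii) and (ix): Proposition~\ref{prop623.4}(ii),(iii) precisely indexes the sets $\mathcal{S}_\lambda(\eta)$ and $\mathcal{S}_\lambda(\phi)$ by $UG_\lambda(n)$, and Corollary~\ref{newcor737}(ii),(iii) upgrades this to distinctness of the corresponding polynomials. Items (viii) and (x) are read off Theorem~\ref{theorem721} and Theorems~\ref{theorem737.1}--\ref{theorem737.2}, which show the coincident pairs of tableau sets (respectively of polynomials) are precisely indexed by $UG_\lambda(n)$. Item (xi) is Corollary~\ref{cor17.3}. Finally (i): the dictionary of Section~3 between ordered $R$-partitions and $R$-permutations matches our notion of $R$-312-avoidance with that of \cite{GGHP}, and the five other length-three patterns are covered by the reversal/complement symmetries recorded in \cite{GGHP}. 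Aside from this bookkeeping, the only real work beyond citing prior results is the shape bijection of (iii) and verifying the definitional match in (i).
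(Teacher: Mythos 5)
Your proposal is correct and follows essentially the same route as the paper's proof, which is likewise a compilation of the same citations (Proposition \ref{prop320.2}, Corollaries \ref{cor604.8} and \ref{cor608.6}, Propositions \ref{prop623.2} and \ref{prop623.4}, Theorem \ref{theorem340}, Corollary \ref{cor520}, Fact \ref{fact420}(vi), Theorem \ref{theorem721}, Proposition \ref{prop737}(ii), Corollary \ref{newcor737}, Theorems \ref{theorem737.1}--\ref{theorem737.2}, Corollary \ref{cor17.3}). Your excess-sequence bijection for the shape tuples in (iii), including the observation that the last carrel is forced and that the $h=r$ condition is automatic, is exactly the explicit form of the paper's one-phrase instruction to ``destrictify gapless $R$-tuples,'' carried out correctly.
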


\noindent As in Table 16.1, Item (vii) can be restated as:  generating functions $d_\lambda(\pi;x)$ for $\pi \in S_n^{\lambda\text{-}312}$ and $s_\lambda(\eta;x)$ for $\eta \in UGC_\lambda(n)$ (or $s_\lambda(\phi;x)$ for $\phi \in UF_\lambda(n)$) that are distinct within their respective collections in the sense of not being identical as generating functions.  Item (viii) can be similarly restated using the notion of the pairs of associated generating functions not being identical.

\begin{proof}Part (i) restates our $C_n^R$ definition with the terminology of \cite{GGHP}; for the second claim see the discussion below.  Use Proposition \ref{prop320.2}(ii), Corollary \ref{cor604.8}, Corollary \ref{cor608.6}, and Proposition \ref{prop623.2} to confirm (ii), the first part of (iii), and (iv).  For the second part of (iii), destrictify gapless $R$-tuples.  Use Proposition \ref{prop320.2}(i) and Theorem \ref{theorem340}(i) to confirm (v);  Part (vi) follows from Corollary \ref{cor520}.  Use the specialization of Fact \ref{fact420}(vi), Proposition \ref{prop623.4}, and Theorem \ref{theorem721}(iii) to confirm (vii) and (viii).  Use Proposition \ref{prop737}(ii), Corollary \ref{newcor737}, and Theorem \ref{theorem737.2} to confirm (ix) and (x).  Part (xi) is confirmed with Proposition \ref{prop623.4}(iii) and Corollary \ref{cor17.3}.  \end{proof}

To use the Online Encyclopedia of Integer Sequences \cite{Slo} to determine if the counts $C_n^R$ had been studied, we had to form sequences.  Define the \emph{total parabolic Catalan number $C_n^\Sigma$} to be $\sum C_n^R$, sum over $R \subseteq [n-1]$.  We also computed $C_n^\Sigma$ for small $n$ and found N.J.A. Sloane's 2013 contribution A226316.  These ``hits'' led us to the papers \cite{GGHP} and \cite{CDZ}.

Let $R$ be as in the theorem.  Let $2 \leq t \leq r+1$.  Fix a permutation $\sigma \in S_t$.  Apparently for the sake of generalization in and of itself with new enumeration results as a goal, Godbole, Goyt, Herdan and Pudwell defined \cite{GGHP} the notion of an ordered partition of $[n]$ with block sizes $b_1, b_2, ... , b_{r+1}$ that avoids the pattern $\sigma$.  It appears that that paper was the first paper to consider a notion of pattern avoidance for ordered partitions that can be used to produce our notion of $R$-312-avoiding permutations:  Take $b_1 := q_1$, $b_2 := q_2 - q_1$, ... , $b_{r+1} := n - q_r$, $t := 3$, and $\sigma := (3;1;2)$.  Their Theorem 4.1 implies that the number of such ordered partitions that avoid $\sigma$ is equal to the number of such ordered partitions that avoid each of the other five permutations for $t = 3$.  This can be used to confirm that the $C_{2m}^R$ sequence defined above is indeed Sequence A220097 of the OEIS (which is for avoiding the pattern 123).  Chen, Dai, and Zhou gave generating functions \cite{CDZ} in Theorem 3.1 and Corollary 2.3 for the $C_{2m}^R$ for $R = \{ 2, 4, 6, ... , 2m-2 \}$ for $m \geq 1$ and for the $C_n^\Sigma$ for $n \geq 1$.

How can the $C_n^\Sigma$ total counts be modeled?  Gathering the $R$-312-avoiding permutations or the $n$-tuples from Theorem \ref{theorem18.1}(ii) for this purpose would require retaining the ``semicolon dividers'' in those $R$-tuples.  Some other objects retain the information concerning $R$ more elegantly.  We omit definitions for some of the concepts in the next statement.  We also suspend our convention of omitting the prefix `$[n-1]$-':  Before, a `rightmost clump deleting' chain deleted one element at each stage.  Now this unadorned term describes a chain that deletes any number of elements in any number of stages, provided that they constitute entire clumps of the largest elements still present plus possibly a subset from the rightmost of the other clumps.  When $n = 3$ one has $C_n^\Sigma = 12$.  Five of these chains were displayed in Section 6.  A sixth is \cancel{1} \cancel{2} \cancel{3}.  Here are the other six, plus one such chain for $n = 17$:

\vspace{1pc}

\begin{figure}[h!]
\begin{center}
\setlength\tabcolsep{.1cm}
\begin{tabular}{ccccc}
1& &2& &\cancel{3}\\
 &\cancel{1}& &\cancel{2}
\end{tabular}\hspace{10mm}
\begin{tabular}{ccccc}
1& &\cancel{2}& &3\\
 &\cancel{1}& &\cancel{3}
\end{tabular}\hspace{10mm}
\begin{tabular}{ccccc}
\cancel{1}& &2& &3\\
 &\cancel{2}& &\cancel{3}
\end{tabular}\hspace{10mm}
\begin{tabular}{ccccc}
1& &\cancel{2}& &\cancel{3}\\
 & & \cancel{1}& &
\end{tabular}\hspace{10mm}
\begin{tabular}{ccccc}
\cancel{1}& &2& &\cancel{3}\\
 & & \cancel{2}
\end{tabular}\hspace{10mm}
\begin{tabular}{ccccc}
\cancel{1}& &\cancel{2}& &{3}\\
 & &\cancel{3}
\end{tabular}
\end{center}
\end{figure}

\vspace{-1pc}

\begin{figure}[h!]
\begin{center}
\setlength\tabcolsep{.3cm}
\begin{tabular}{ccccccccccccccccc}
1 & 2 & \cancel{3} & 4 & 5 & \cancel{6} & 7 & 8 & 9 & 10 & 11 & \cancel{12} & 13 & 14 & \cancel{15} & 16 & 17 \\
  &   & 1 & 2 & 4 & 5 & 7 & \cancel{8} & 9 & \cancel{10} & 11 & \cancel{13} & \cancel{14} & \cancel{16} & \cancel{17} & & \\
  &   &   &   &   & 1 & 2 & \cancel{4} & 5 & \cancel{7} & \cancel{9} & \cancel{11} & & & & & \\
  &   &   &   &   &   &   & \cancel{1} & \cancel{2} & \cancel{5} & & & & & & &
\end{tabular}
\end{center}
\end{figure}

\vspace{-1.5pc}

\begin{cor}\label{cor18.2}  The total parabolic Catalan number $C_n^\Sigma$ is the number of:

\noindent (i)  ordered partitions of $\{1, 2, ... , n \}$ that avoid the pattern 312.

\noindent (ii)  rightmost clump deleting chains for $[n]$, and gapless keys whose columns have distinct lengths less than $n$.

\noindent (iii)  for each $m \geq 1$, the flag Schur polynomials in $n$ variables on shapes with at most $n-1$ rows in which there are $m$ columns of each column length that is present.

\noindent (iv)  Schubert varieties in all of the flag manifolds $SL(n) / P_J$ for $J \subseteq [n-1]$ such that their ``associated'' Demazure tableaux form convex sets as in Section 11.  \end{cor}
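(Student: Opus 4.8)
The plan is to reduce each assertion to a summation over $R \subseteq [n-1]$ of the quantity $C_n^R$, invoking the already-established bijections and coincidence theorems. First I would recall that for a fixed $R$ the quantities $C_n^R = |S_n^{R\text{-}312}|$ have been matched in Theorem \ref{theorem18.1} with the various $R$-indexed objects: gapless keys of a compatible shape, $R$-rightmost clump deleting $R$-chains, Demazure tableau sets $\mathcal{D}_\lambda(\pi)$ that are convex, flag Schur polynomials on a compatible shape, and Schubert varieties in $GL(n)/P_J$ with $J = [n-1]\backslash R$. Since $C_n^\Sigma := \sum_{R \subseteq [n-1]} C_n^R$ by definition, each part of the corollary will follow once I check that the unadorned (``$[n-1]$-prefix suspended'') version of each object type is precisely the disjoint union over $R$ of the corresponding $R$-indexed object type, with the value of $R$ recoverable from the object itself.

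For Part (i), a general ordered partition $(P_1, \dots, P_{k})$ of $\{1, \dots, n\}$ into nonempty blocks determines $R := \{|P_1|, |P_1|+|P_2|, \dots, |P_1|+\cdots+|P_{k-1}|\} \subseteq [n-1]$, and it is then an ``ordered $R$-partition of $[n]$'' in the sense of Section 3; conversely each $R$ gives exactly the ordered partitions with that block-size divider set. The pattern-312-avoidance condition for an ordered partition (in the sense of \cite{GGHP}) restricts within each choice of $R$ to exactly the notion of $R$-312-avoiding $R$-permutation via the $\pi \leftrightarrow B$ correspondence and Fact \ref{fact320.4}. Hence the count is $\sum_R C_n^R = C_n^\Sigma$. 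For Part (ii), a rightmost clump deleting chain in the now-broadened sense deletes, at each stage, a union of top clumps plus possibly a subset of the next clump; the sequence of cardinalities $|B_0| < |B_1| < \cdots$ along the way recovers a set $R \subseteq [n-1]$, and once $R$ is fixed such a chain is exactly an $R$-rightmost clump deleting $R$-chain. By Theorem \ref{theorem18.1}(v) these are counted by $C_n^R$ for each $R$, so the total is $C_n^\Sigma$; the gapless key statement follows the same way, using that a gapless key with distinct column lengths less than $n$ determines $R$ as its set of column lengths and then Theorem \ref{theorem340}(ii) (gapless $\lambda$-keys $\leftrightarrow$ $\lambda$-312-avoiding permutations).

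For Part (iii), fix $m \geq 1$. A shape with at most $n-1$ rows in which each present column length occurs exactly $m$ times has $R_\lambda$ equal to its (necessarily $\leq n-1$) set of column lengths, and is compatible with that $R$; moreover every $R \subseteq [n-1]$ arises from exactly one such shape (namely the one with $m$ columns of length $q_h$ for each $h$). By Theorem \ref{theorem18.1}(vii)(ix) the distinct flag Schur polynomials on a fixed compatible $\lambda$ are counted by $C_n^{R_\lambda}$, so summing over the $2^{n-1}$ shapes of this form gives $\sum_R C_n^R = C_n^\Sigma$; I should note that the count is the same for every $m$, which explains the ``for each $m \geq 1$'' phrasing. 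For Part (iv), the Schubert varieties $X(\pi) \subseteq SL(n)/P_J$ are indexed by $\pi \in S_n^{[n-1]\backslash J}$, and as discussed in the introduction and Section 8 the associated Demazure tableau sets (for a compatible $\lambda$) are convex exactly when $\pi$ is $([n-1]\backslash J)$-312-avoiding, by Corollary \ref{cor520}. Thus for each $J$ the count is $C_n^{[n-1]\backslash J}$, and ranging over all $J \subseteq [n-1]$ gives $C_n^\Sigma$. The only genuinely delicate point — and the step I expect to need the most care — is the bookkeeping in (ii): verifying that the broadened ``rightmost clump deleting'' condition does decompose cleanly as a disjoint union over $R$ with no chain counted twice and none omitted, i.e.\ that the divider set $R$ read off from the cardinality sequence is well defined and that, for that $R$, the condition reduces verbatim to the $R$-rightmost clump deleting condition of Section 6. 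This is a routine but slightly fiddly unwinding of definitions, using Fact \ref{fact320.3}; once it is in place, all four parts are immediate consequences of $C_n^\Sigma = \sum_{R \subseteq [n-1]} C_n^R$ together with the per-$R$ results of Theorem \ref{theorem18.1}.
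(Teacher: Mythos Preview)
Your proposal is correct and follows essentially the same approach the paper takes: the paper does not give a formal proof of this corollary, treating it as an immediate consequence of Theorem \ref{theorem18.1} obtained by summing over all $R \subseteq [n-1]$, which is precisely what you do. Your explicit verification that each ``total'' object uniquely determines its $R$ (so that the union over $R$ is disjoint and exhaustive) fills in the only detail the paper leaves to the reader.
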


\noindent Part (iv) highlights the fact that the convexity result of Corollary \ref{cor520} depends only upon the information from the indexing $R$-permutation for the Schubert variety, and not upon any further information from the partition $\lambda \in \Lambda_n^+$.  In addition to their count $op_n[(3;1;2)] = C_n^\Sigma$, the authors of \cite{GGHP} and \cite{CDZ} also considered the number $op_{n,k}(\sigma)$ of such $\sigma$-avoiding ordered partitions with $k$ blocks.  The models above can be adapted to require the presence of exactly $k$ blocks, albeit of unspecified sizes.

\vspace{1pc}\noindent \textbf{Added Note.}  We learned of the paper \cite{MW} after posting this paper on the arXiv.  As at the end of Section 8, let $R$ and $J$ be such that $R \cup J = [n-1]$ and $R \cap J = \emptyset$.  It could be interesting to compare the definition for what we would call an `$R$-231-avoiding' $R$-permutation (as in \cite{GGHP}) to M{\"u}hle's and Williams' definition of a `$J$-231-avoiding' $R$-permutation in Definition 5 of \cite{MW}.  There they impose an additional condition $w_i = w_k + 1$ upon the pattern to be avoided.  For their Theorems 21 and 24, this condition enables them to extend the notions of ``non-crossing partition'' and of ``non-nesting partition'' to the parabolic quotient $S_n / W_J$ context of $R$-permutations to produce sets of objects that are equinumerous with their $J$-231-avoiding $R$-permutations.  Their Theorem 7 states that this extra condition is superfluous when $J = \emptyset$.  In this case their notions of $J$-non-crossing partition and of $J$-non-nesting partition specialize to the set partition Catalan number models that appeared as Exercises 6.19(pp) and 6.19(uu) of \cite{St2} (or as Exercises 159 and 164 of \cite{St3}).  So if it is agreed that their reasonably stated generalizations of the notions of non-crossing and non-nesting partitions are the most appropriate generalizations that can be formulated for the $S_n / W_J$ context, then the mutual cardinality of their three sets of objects indexed by $J$ and $n$ becomes a competitor to our $C_n^R$ count for the name ``$R$-parabolic Catalan number''.  This development has made the obvious metaproblem more interesting:  Now not only must one determine whether each of the 214 Catalan models compiled in \cite{St3} is ``close enough'' to a pattern avoiding permutation interpretation to lead to a successful $R$-parabolic generalization, one must also determine which parabolic generalization fits the model at hand.

\vspace{1pc}\noindent \textbf{Acknowledgments.}  We thank Keith Schneider, Joe Seaborn, and David Raps for some helpful conversations, and we are also indebted to David Raps for some help with preparing this paper.  Feedback from Vic Reiner encouraged us to complete our analysis of the related results in \cite{RS} and of their relationship to our results.

\end{spacing}

\centerline{\includegraphics[scale=.99]{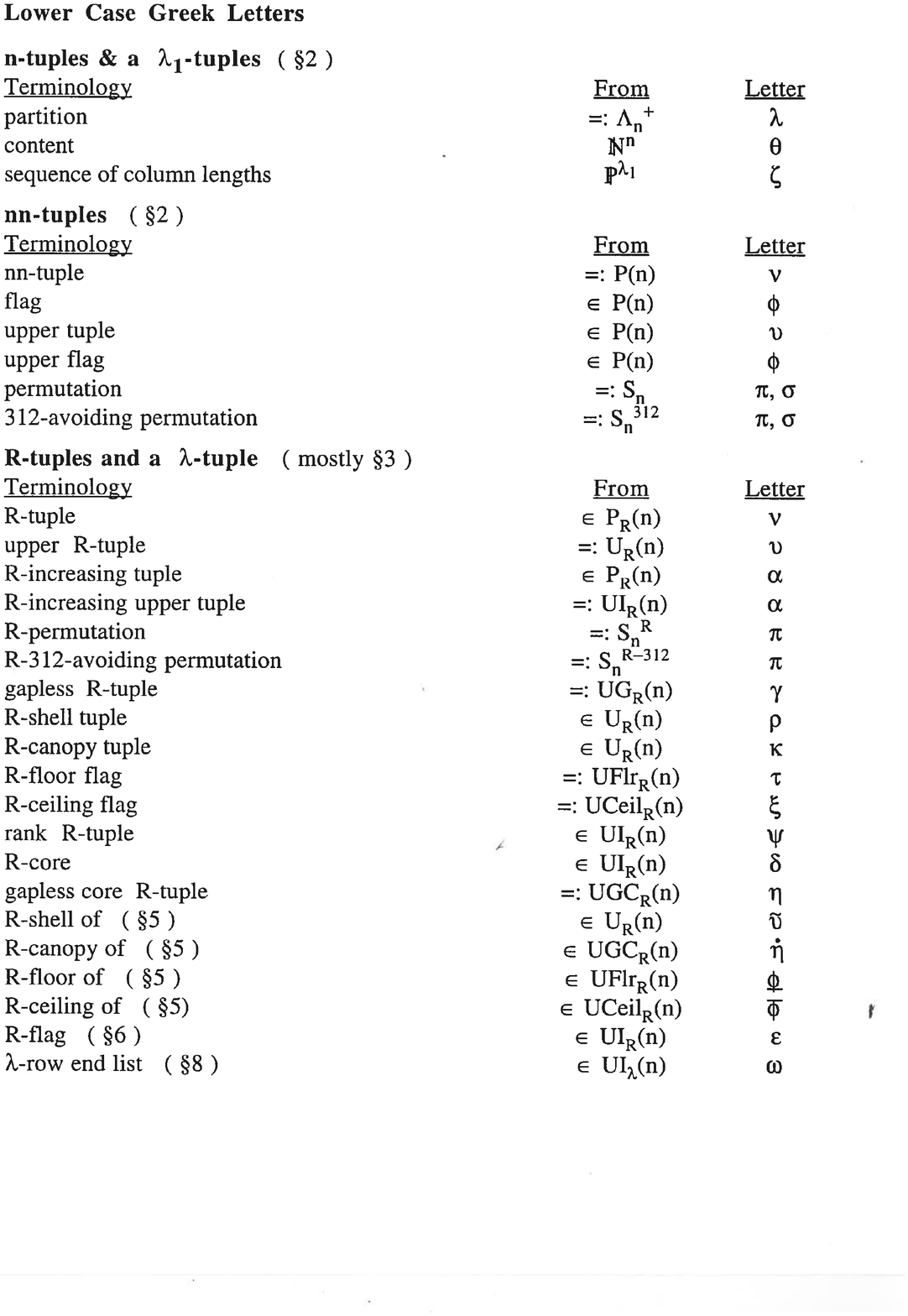}}

\centerline{\includegraphics[scale=.99]{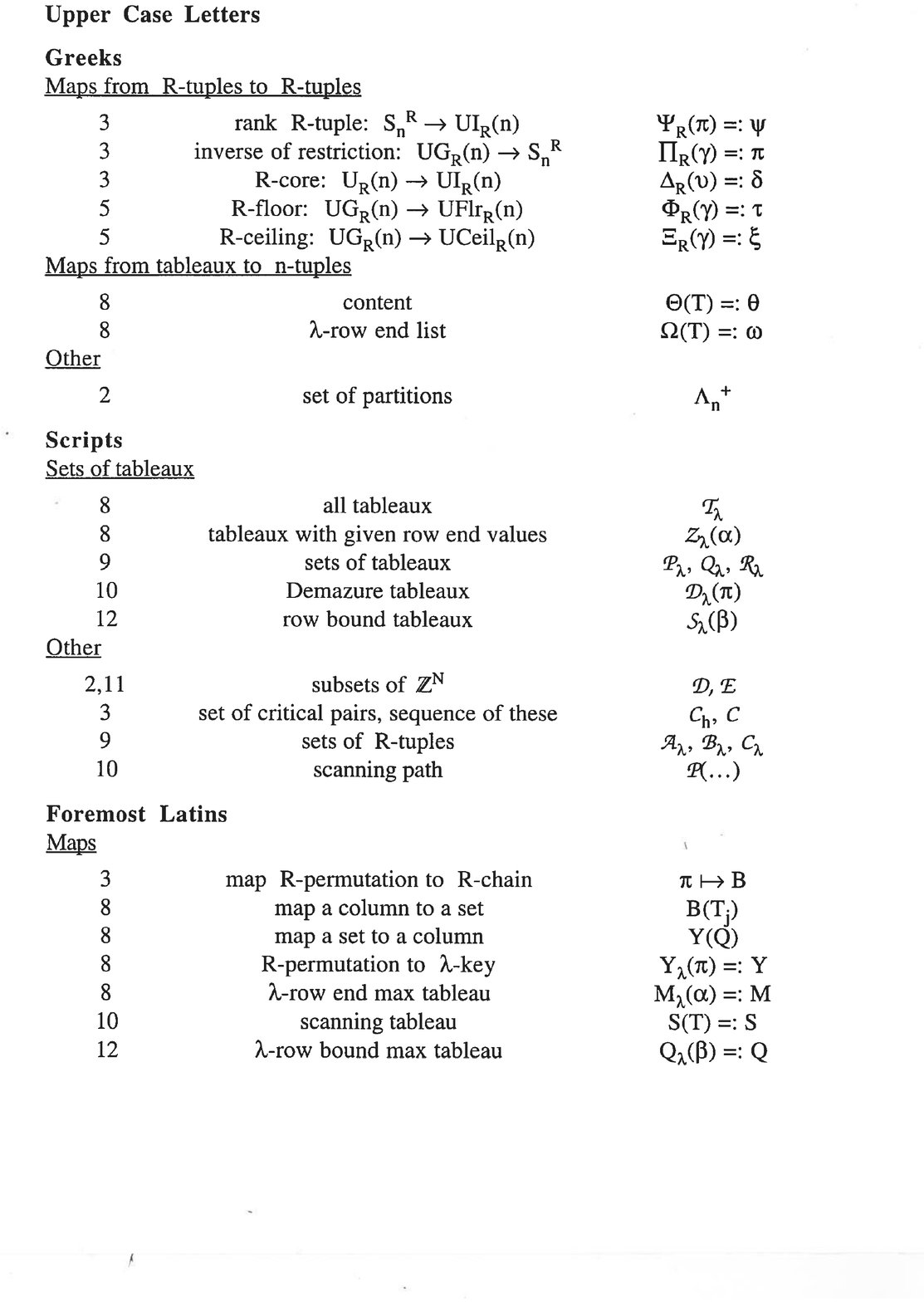}}

\end{document}